\documentclass[article]{smfart}
\usepackage[utf8]{inputenc}
\usepackage[T1]{fontenc}

\usepackage{etex}
\usepackage{hyperref}

\usepackage{style}
\usepackage{baskervald}
\usepackage{caption}
\usepackage[all]{xy}
\usepackage{extarrows}

\usepackage{smfthm}
\author{Ahmed MOUSSAOUI}
\address{Fondation Mathématique  Jacques Hadamard et Université de Versailles St-Quentin-en-Yvelines, Laboratoire de Mathématiques de Versailles, 45 avenue des Etats-Unis, 78035 Versailles Cedex, France}
\email{\href{mailto:ahmed.moussaoui@math.cnrs.fr}{ahmed.moussaoui@math.cnrs.fr}}
\email{\href{mailto:ahmedmoussaouimath@gmail.com}{ahmedmoussaouimath@gmail.com}}
\urladdr{\url{http://ahmed.moussaoui.perso.math.cnrs.fr}}
\title{Centre de Bernstein dual pour les groupes classiques}

\usepackage{array,multirow,makecell}
\usepackage{float}


\usepackage{hhline}
\usepackage{enumitem}
\usepackage{frcursive}
\usepackage[switch,pagewise,running]{lineno}
\usepackage{stmaryrd}

\usepackage{enumitem}
\usepackage{multicol}

\usepackage[usenames,dvipsnames]{xcolor} 

\usepackage{tikz,tikz-3dplot, pgfplots}
\usetikzlibrary[positioning,patterns,cd]

\newtheorem*{defintro}{Définition}
\newtheorem*{conjintro}{Conjecture}
\newtheorem*{thmintro}{Théorème}

\newcommand{\lBrace}{\left\lbrace \!\!\!\!\left\lbrace}
\newcommand{\rBrace}{\right\rbrace \!\!\!\!\right\rbrace}

\def\O{{\rm{O}}}

\usepackage[citestyle=alphabetic,bibstyle=alphabetic,maxnames=4,backend=bibtex]{biblatex}
\DeclareFieldFormat{postnote}{#1}
\DeclareFieldFormat{multipostnote}{#1}

\addbibresource{bibliothese.bib}

\begin{document}

\frontmatter

\begin{abstract} Dans cet article, nous nous intéressons aux liens entre l'induction parabolique et la correspondance de Langlands locale. Nous énonçons une conjecture concernant les paramètres de Langlands (enrichis) des représentations supercuspidales pour les groupes réductifs $p$-adiques déployés. Nous vérifions la validité de cette conjecture grâce aux résultats connus pour le groupe linéaire et les groupes classiques. À la suite de cela, en définissant la notion de support cuspidal d'un paramètre de Langlands enrichi, nous obtenons une décomposition à la Bernstein de l'ensemble des paramètres de Langlands enrichis pour les groupes classiques. Nous vérifions que ces constructions se correspondent par la correspondance de Langlands et en conséquence, nous obtenons la compatibilité de la correspondance de Langlands avec l'induction parabolique. \end{abstract}
\begin{altabstract} 
In this article, we consider the links between parabolic induction and the local Langlands correspondence. We enunciate a conjecture about the (enhanced) Langlands parameters of supercuspidal representation of split reductives $p$-adics groups. We are able to verify it thanks to the known cases of the local Langlands correspondence for linear groups and classical groups. Furthermore, in the case of classical groups, we can construct the "cuspidal support" of an enhanced Langlands parameter and get a decomposition of the set of enhanced Langlands parameters à la Bernstein. We check that these constructions match under the Langlands correspondence and as consequence, we obtain the compatibility of the Langlands correspondence with parabolic induction.
\end{altabstract}
\subjclass{22E50,20C08,20G10}
\keywords{Correspondance de Langlands, centre de Bernstein stable, correspondance de Springer généralisée} \altkeywords{Langlands correspondence, stable Bernstein center, generalized Springer correspondence}
\thanks{Ce travail est issu de ma thèse dirigée par Anne-Marie Aubert. Je la remercie chaleureusement pour son aide, sa patience, sa gentillesse, ses nombreuses relectures et corrections. Je remercie également Colette M\oe glin pour avoir répondu à mes questions et Thomas Haines pour ses remarques.} 
\maketitle
\tableofcontents 
\mainmatter

\section{Introduction}

Soient $F$ un corps $p$-adique, $G$ le groupe des $F$-points rationnels d'un groupe algébrique réductif connexe défini et déployé sur $F$. L'un des principaux résultats de la théorie du centre de Bernstein est la décomposition de la catégorie des représentations lisses de $G$ en sous-catégories pleines $\Rep(G)_{\mathfrak{s}}$, avec $\mathfrak{s}=[M,\sigma]$ la classe d'équivalence pour une certaine relation d'équivalence (dite inertielle) de $(M,\sigma)$, où $M$ est un sous-groupe de Levi de $G$ et $\sigma$ une représentation irréductible supercuspidale de $M$. On note $\Irr(G)_{\mathfrak{s}}$ l'ensemble des (classes d'isomorphismes de) représentations irréductibles de $G$ admettant $\mathfrak{s}$ pour support inertiel. À toute paire inertielle $\mathfrak{s}$ sont associés un tore $T_{\mathfrak{s}}$, un groupe fini $W_{\mathfrak{s}}$, une action de $W_{\mathfrak{s}}$ sur $T_{\mathfrak{s}}$ et on dispose de la notion de support cuspidal $\Sc : \Irr(G)_{\mathfrak{s}} \longrightarrow T_{\mathfrak{s}}/W_{\mathfrak{s}}$.\\

La correspondance de Langlands fournit conjecturalement une description des représentations irréductibles de $G$.  Notons $\widehat{G}$ le dual de Langlands de $G$, $W_F'=W_F \times \SL_2(\C)$ et $WD_F=\C \rtimes W_F$ les groupes de Weil-Deligne. À tout paramètre de Langlands $\phi : W_F' \longrightarrow \widehat{G}$ est associé un « paquet » de représentations de $G$ noté habituellement $\Pi_{\phi}(G)$. Ce paquet de représentations est paramétré par les représentations irréductibles d'un groupe fini $\mathcal{S}_{\phi}^{G}$, qui est un quotient de $A_{\widehat{G}}(\phi)=Z_{\widehat{G}}(\phi)/Z_{\widehat{G}}(\phi)^{\circ}$, le groupe des composantes du centralisateur de l'image $\phi(W_F')$ dans $\widehat{G}$. La donnée d'un couple formé d'un paramètre de Langlands $\phi$ de $G$ et d'une représentation irréductible de $\mathcal{S}_{\phi}^{G}$ sera appelé paramètre de Langlands enrichi de $G$.\\

Dans cet article, nous nous intéressons aux liens entre ces deux paramétrages. Plus précisément, nous sommes amenés à définir en terme « galoisien » les analogues des objets mis en jeu dans la théorie du centre de Bernstein et à étudier les liens entre induction parabolique et correspondance de Langlands. Nous utiliserons intensivement les résultats de Lusztig sur la correspondance de Springer généralisée.\\

En général, dans un $L$-paquet il y a des représentations de support cuspidaux différents, en particulier il y a parfois des représentations supercuspidales et des représentations non-supercuspidales dans un même $L$-paquet. Une première étape est donc d'identifier quels sont les paramètres de Langlands enrichis des représentations supercuspidales. On définit une notion de paramètre de Langlands cuspidal et on énonce ainsi une conjecture sur le paramétrage des représentations supercuspidales des groupes réductifs déployés : 

\begin{defintro}
Soit $\varphi$ un paramètre de Langlands de $G$. On a une surjection $A_{\widehat{G}}(\varphi) \twoheadrightarrow \mathcal{S}_{\varphi}^{G}$. Si $\varepsilon \in \Irr(\mathcal{S}_{\varphi}^{G})$, on note $\widetilde{\varepsilon}$ la représentation de $A_{\widehat{G}}(\varphi)$ obtenue en tirant en arrière $\varepsilon$. On dit que $\varphi$ est un paramètre cuspidal de $G$ si $\varphi$ est discret et s'il existe une représentation irréductible $\varepsilon$ de $\mathcal{S}_{\varphi}^{G}$ telle que toutes les représentations irréductibles de $A_{Z_{\widehat{G}}(\restriction{\varphi}{W_F})^{\circ}}(\restriction{\varphi}{\SL_2(\C)})$ apparaissant dans la restriction $\restriction{\widetilde{\varepsilon}}{A_{Z_{\widehat{G}}(\restriction{\varphi}{W_F})^{\circ}}(\restriction{\varphi}{\SL_2(\C)})}$, sont cuspidales au sens de Lusztig. On note $\Irr(\mathcal{S}_{\varphi}^{G})_{\cusp}$ l'ensemble des représentations irréductibles $\varepsilon$ vérifiant la condition précédente (il peut être donc être vide).
\end{defintro}

\begin{conjintro}
Soit $\varphi$ un paramètre de Langlands de $G$. Le $L$-paquet $\Pi_{\varphi}(G)$ contient des représentations supercuspidales de $G$, si et seulement si, $\varphi $ est un paramètre de Langlands cuspidal. De plus, si $\varphi$ est un paramètre de Langlands cuspidal, les représentations supercuspidales sont paramétrées par $\Irr(\mathcal{S}_{\varphi}^{G})_{\cusp}$.
\end{conjintro}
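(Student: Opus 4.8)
Pour établir l'énoncé précédent dans les deux situations où la correspondance de Langlands est disponible — le groupe linéaire (Harris--Taylor, Henniart) et les groupes classiques (Arthur, M\oe glin) — le plan est de ramener la condition « cuspidale » de la définition ci-dessus à un énoncé purement géométrique sur le groupe dual, puis de le confronter à la classification de Lusztig des paires cuspidales à support unipotent (correspondance de Springer généralisée). On commence par observer que l'implication « $\Pi_{\varphi}(G)$ contient une supercuspidale $\Rightarrow \varphi$ est discret » est acquise dans les paramétrages explicites cités, une supercuspidale n'intervenant jamais dans un $L$-paquet associé à un paramètre se factorisant par un Levi propre (ce fait reflète l'incompatibilité des supercuspidales avec l'induction parabolique propre et est de toute façon contenu dans la description de M\oe glin) ; il reste donc, $\varphi$ étant discret, à comparer la présence de supercuspidales dans $\Pi_{\varphi}(G)$ avec l'existence d'un $\varepsilon$ cuspidal, et à établir la bijection. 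L'ingrédient géométrique est le suivant : posant $H = Z_{\widehat{G}}(\restriction{\varphi}{W_F})^{\circ}$, le morphisme $\restriction{\varphi}{\SL_2(\C)} : \SL_2(\C) \to H$ définit, via Jacobson--Morozov, une classe unipotente $u$ de $H$, et le tiré en arrière $\widetilde{\varepsilon}$ restreint à $A_H(\restriction{\varphi}{\SL_2(\C)}) = A_H(u)$ s'interprète comme un système local $H$-équivariant sur $u$ ; la condition de la définition demande très exactement que toutes les composantes irréductibles de ce système local forment, avec $u$, une paire cuspidale de $H$ au sens de Lusztig.

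\textbf{Cas de $\GL_n$.} Ici $\mathcal{S}_{\varphi}^{G}$ est trivial et « $\varphi$ discret » signifie « $\restriction{\varphi}{W_F'}$ irréductible », donc de la forme $\rho \boxtimes S_d$ (où $S_d$ est la représentation irréductible de dimension $d$ de $\SL_2(\C)$) avec $\rho$ une représentation irréductible de $W_F$ ; alors $H = Z_{\GL_n(\C)}(\rho^{\oplus d}) = \GL_d(\C)$ et $u$ est l'unipotent régulier. Il suffit d'invoquer le fait, issu des travaux de Lusztig, que $\GL_d(\C)$ ne possède de paire cuspidale à support unipotent que si $d = 1$ (pour $d \geq 2$, tout couple formé d'une classe unipotente et d'un système local provient, par induction, du tore maximal) : la condition de cuspidalité n'est donc vérifiée que pour $d = 1$, c'est-à-dire exactement lorsque $\varphi$ paramètre une supercuspidale de $\GL_n$, et dans ce cas $\Irr(\mathcal{S}_{\varphi}^{G})_{\cusp}$ se réduit à la représentation triviale, en accord avec le fait que le $L$-paquet est un singleton. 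Ceci démontre la conjecture pour $\GL_n$.

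\textbf{Cas des groupes classiques.} Pour $\widehat{G}$ symplectique ou orthogonal, un paramètre discret $\varphi$ se décompose en $\bigoplus_i \rho_i \boxtimes S_{a_i}$, somme sans multiplicité de facteurs autoduaux du type imposé par $\widehat{G}$ ; alors $H = \prod_{\rho} H_{\rho}$, produit sur les $\rho$ intervenant, chaque $H_{\rho}$ étant un groupe symplectique ou orthogonal complexe, et $u = (u_{\rho})_{\rho}$ où $u_{\rho}$ a pour partition de Jordan $\mathrm{Jord}_{\rho}(\varphi) := \{a \,:\, \rho \boxtimes S_a \subset \varphi\}$. D'après la classification de Lusztig, un groupe symplectique (resp. orthogonal) n'a de paire cuspidale à support unipotent que sur la classe de partition en escalier $(2, 4, \dots, 2d)$ (resp. $(1, 3, \dots, 2d-1)$), munie d'un système local cuspidal explicite ; la condition de la définition équivaut donc à : pour tout $\rho$, l'ensemble $\mathrm{Jord}_{\rho}(\varphi)$ est un segment initial $\{1, 3, 5, \dots\}$ ou $\{2, 4, 6, \dots\}$ (c'est-à-dire sans trou), et $\varepsilon$ est tel que les constituants de $\restriction{\widetilde{\varepsilon}}{A_H(u)}$ soient ces systèmes locaux cuspidaux dans chaque facteur. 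Le point décisif sera de reconnaître que cette condition géométrique coïncide avec le critère explicite de M\oe glin décrivant, dans la correspondance de Langlands d'Arthur pour $G$, les membres supercuspidaux de $\Pi_{\varphi}(G)$ : M\oe glin affirme que $\Pi_{\varphi}(G)$ contient des supercuspidales si et seulement si chaque $\mathrm{Jord}_{\rho}(\varphi)$ est sans trou, et que les supercuspidales sont exactement celles dont le caractère $\varepsilon$ du groupe des composantes vérifie une condition de signe alternée $\varepsilon(\rho, a)\,\varepsilon(\rho, a-2) = -1$ le long de chaque escalier (assortie d'une contrainte au bas de l'escalier dépendant de $G$) — condition qui, relue sur $A_H(u)$, est précisément celle qui caractérise le système local cuspidal de Lusztig sur la classe en escalier. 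La mise en correspondance terme à terme des deux paramétrages fournit alors la bijection annoncée entre les supercuspidales de $\Pi_{\varphi}(G)$ et $\Irr(\mathcal{S}_{\varphi}^{G})_{\cusp}$.

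\textbf{Principale difficulté.} Le c\oe ur technique réside dans cette mise en correspondance « terme à terme ». Il faudra identifier soigneusement les trois groupes finis en présence — $A_{\widehat{G}}(\varphi)$, son quotient $\mathcal{S}_{\varphi}^{G}$ et le sous-groupe $A_H(\restriction{\varphi}{\SL_2(\C)})$ — ainsi que les morphismes qui les relient, en tenant compte du centre $Z(\widehat{G})$ et, pour les groupes orthogonaux pairs, de la distinction entre $\O$ et $\mathrm{SO}$ (qui modifie le groupe des composantes par un $\mathbb{Z}/2$ et fait apparaître les classes « très paires ») ; il faudra ensuite vérifier que la normalisation des caractères $\varepsilon$ de M\oe glin et celle des systèmes locaux cuspidaux de Lusztig se correspondent exactement, signes compris — c'est là que des calculs explicites seront inévitables — et contrôler les multiplicités dans la restriction $\restriction{\widetilde{\varepsilon}}{A_H(u)}$, qui peut être réductible, pour que l'exigence « tous les constituants sont cuspidaux » reproduise exactement le compte de M\oe glin. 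Une fois ce dictionnaire obtenu type par type, la conjecture en découle pour $\GL_n$ et les groupes classiques, et ce même dictionnaire servira ensuite à définir le support cuspidal d'un paramètre de Langlands enrichi et à en déduire la décomposition à la Bernstein de l'ensemble de ces paramètres.
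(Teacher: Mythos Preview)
Your proposal is correct and follows essentially the same approach as the paper's proof (Proposition~\ref{verifparamcusp}): compute $H_\varphi^G = Z_{\widehat{G}}(\restriction{\varphi}{W_F})$ explicitly as a product of classical groups, invoke Lusztig's classification of cuspidal pairs (Table~\ref{pairecusp}) to reduce the cuspidality condition to each $\mathrm{Jord}_\rho(\varphi)$ being a staircase, and identify this with M\oe glin's ``sans trou, $\varepsilon$ altern\'e'' criterion (Th\'eor\`eme~\ref{thmmoeglin}). The technical points you flag --- the $\O/\SO$ distinction and the matching of sign normalisations between M\oe glin and Lusztig --- are precisely those the paper handles, the latter in the discussion following Table~\ref{pairecusp} and the former in the appendix.
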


Nous prouvons la validité de cette conjecture à l'aide des résultats connus pour la correspondance de Langlands pour $\GL_n$ et pour les groupes classiques d'après les travaux d'Arthur et M\oe glin dans la proposition \ref{verifparamcusp}. De plus, cette conjecture est aussi compatible avec une propriété attendue de la correspondance de Langlands, à savoir qu'un paramètre de Langlands discret $\lambda : W_F \longrightarrow \widehat{G}$, définit un $L$-paquet qui n'est constitué que de représentations supercuspidales (voir la proposition \ref{cocaracusp}).\\

Ayant identifié les paramètres des représentations supercuspidales, la seconde étape consiste à déterminer les paramètres qui devraient correspondre aux sous-quotients irréductibles d'une induite parabolique d'une représentation supercuspidale. Ici, nous sommes guidés par la conjecture de compatibilité entre l'induction parabolique et la correspondance de Langlands. \\

Revenons à la décomposition de Bernstein précédemment évoquée. Fixons une paire inertielle $\mathfrak{s}$ de $G$ et rappelons qu'à une telle paire inertielle est associée un tore $T_{\mathfrak{s}}$, un groupe fini $W_{\mathfrak{s}}$ et une action de $W_{\mathfrak{s}}$ sur $T_{\mathfrak{s}}$. \\

Au début des années 90, Vogan a introduit un analogue pour les paramètres de Langlands du centre
de Bernstein, qu’il a qualifié de centre de Bernstein « stable » et qui a été revisité récemment
par Haines dans \cite{Haines:2014}. Pour résumer très grossièrement, dans \cite[\textsection 5]{Haines:2014}, Haines définit une décomposition à la Bernstein de l'ensemble des paramètres de Langlands, mais ceci n'est pas suffisant si nous souhaitons étudier plus précisément les liens entre l'induction parabolique et la correspondance de Langlands. En effet, il est nécessaire de considérer en plus du paramètre de Langlands $\phi$, la représentation irréductible de $\mathcal{S}_{\phi}^G$. En suivant le même esprit, nous sommes amenés à considérer les triplets $(\widehat{L},\varphi,\varepsilon)$ constitués d'un sous-groupe de Levi $\widehat{L}$ de $\widehat{G}$, d'un paramètre de Langlands cuspidal $\varphi$ de $L$ et d'une représentation irréductible $\varepsilon \in \Irr(\mathcal{S}_{\varphi}^{L})_{\cusp}$. On introduit alors deux relations d'équivalences sur ces triplets : la conjugaison et la conjugaison à un caractère non-ramifié près. On notera respectivement $\Omega_{e}^{\st}(G)$ et $\mathcal{B}_{e}^{\st}(G)$ les classes d'équivalence. À toute classe d'équivalence inertielle $\cj=[\widehat{L},\varphi,\varepsilon] \in \mathcal{B}_{e}^{\st}(G)$, on associe un tore complexe $\mathcal{T}_{\cjp}$ et un groupe fini $\mathcal{W}_{\cjp}$ qui agit sur $\mathcal{T}_{\cjp}$. En supposant la correspondance de Langlands pour les représentations supercuspidales des sous-groupes de Levi de $G$, une classe d'équivalence inertielle $\mathfrak{s}$ devrait correspondre à une classe d'équivalence inertielle $\cj$. Et même, plus précisément, \begin{conjintro} La correspondance de Langlands induit des isomorphismes : 
$$\begin{array}{ccc}
T_{\mathfrak{s}} & \longrightarrow & \mathcal{T}_{\cjp} \\
t & \longmapsto & \widehat{t}
\end{array}, \quad \begin{array}{ccc}
W_{\mathfrak{s}} & \longrightarrow & \mathcal{W}_{\cjp} \\
w & \longmapsto & \widehat{w}
\end{array},$$
tels que pour tout $t\in T_{\mathfrak{s}}, \,\, w \in W_{\mathfrak{s}}$ : $$\widehat{w \cdot t}=\widehat{w} \cdot \widehat{t}.$$
\end{conjintro}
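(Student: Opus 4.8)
La stratégie consistera à se ramener à la correspondance de Langlands pour les représentations supercuspidales des sous-groupes de Levi de $G$, qui est disponible dans le cas considéré : un sous-groupe de Levi $L$ d'un groupe classique $G$ est de la forme $\GL_{n_1}(F)\times\cdots\times\GL_{n_r}(F)\times G_0$ avec $G_0$ un groupe classique du même type que $G$ (ou le groupe trivial), et la correspondance est connue pour les facteurs $\GL_{n_i}$ (Harris--Taylor, Henniart) comme pour $G_0$ (Arthur, M{\oe}glin). D'après la proposition~\ref{verifparamcusp}, elle induit une bijection entre classes d'isomorphie de représentations supercuspidales $\sigma$ de $L$ et paramètres de Langlands enrichis cuspidaux $(\varphi,\varepsilon)$ de $L$. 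Quitte à choisir un représentant de $\mathfrak{s}$, on supposera $M=L$, $\sigma\longleftrightarrow(\varphi,\varepsilon)$ et $\cj=[\widehat{L},\varphi,\varepsilon]$. On rappellera que $T_{\mathfrak{s}}$ est le quotient du groupe $X_{\mathrm{nr}}(L)$ des caractères non-ramifiés de $L$ par le sous-groupe $\mathrm{Stab}(\sigma)=\{\chi\in X_{\mathrm{nr}}(L) : \sigma\otimes\chi\simeq\sigma\}$, tandis que $W_{\mathfrak{s}}$ est le stabilisateur de $\mathfrak{s}$ dans $N_G(L)/L$, c'est-à-dire l'ensemble des $w$ tels que $w\sigma\simeq\sigma\otimes\chi$ pour un $\chi\in X_{\mathrm{nr}}(L)$ ; de même, $\mathcal{T}_{\cjp}$ sera le quotient de $Z(\widehat{L})^{\circ}$ par le stabilisateur de $(\varphi,\varepsilon)$, et $\mathcal{W}_{\cjp}$ le stabilisateur de la classe $\cj$ dans $N_{\widehat{G}}(\widehat{L})/\widehat{L}$.

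Le premier pas concernera le tore. Puisque $G$ est déployé, la correspondance de Langlands pour le tore $L/[L,L]$ fournira un isomorphisme canonique $X_{\mathrm{nr}}(L)\simeq Z(\widehat{L})^{\circ}$, $\chi\mapsto\widehat{\chi}$ (pour $L=\GL_{n_i}(F)$ c'est l'identification $\C^{\times}\simeq\C^{\times}$ via le déterminant, et la composante $G_0$ ne contribue pas, $X_{\mathrm{nr}}(G_0)$ et $Z(\widehat{G_0})^{\circ}$ étant triviaux). On invoquera ensuite la compatibilité de la correspondance de Langlands à la torsion par un caractère non-ramifié --- connue pour $\GL_n$ et établie pour les groupes classiques --- pour obtenir $\sigma\otimes\chi\longleftrightarrow(\widehat{\chi}\cdot\varphi,\varepsilon)$ ; il en résultera que $\mathrm{Stab}(\sigma)$ s'envoie isomorphiquement sur le stabilisateur de $(\varphi,\varepsilon)$ dans $Z(\widehat{L})^{\circ}$, donc que l'isomorphisme ci-dessus passe au quotient en l'isomorphisme recherché $T_{\mathfrak{s}}\simeq\mathcal{T}_{\cjp}$, $t\mapsto\widehat{t}$.

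Le deuxième pas concernera le groupe fini et l'équivariance. Le système de racines relatif de $(G,L)$ étant dual de celui de $(\widehat{G},\widehat{L})$, on disposera d'un isomorphisme naturel $N_G(L)/L\simeq N_{\widehat{G}}(\widehat{L})/\widehat{L}$, $w\mapsto\widehat{w}$, explicite pour les groupes classiques (produit de groupes symétriques permutant les facteurs $\GL_{n_i}$ de même taille et de groupes d'ordre $2$ agissant sur la composante $G_0$). On utilisera alors l'équivariance de la correspondance de Langlands vis-à-vis de l'action de $N_G(L)/L$ par conjugaison --- qui résulte de sa caractérisation et de la construction d'Arthur--M{\oe}glin ---, soit $w\sigma\longleftrightarrow\widehat{w}\cdot(\varphi,\varepsilon)$, cette action préservant la cuspidalité de l'enrichissement. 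On en déduira que $w\in W_{\mathfrak{s}}$ si et seulement si $\widehat{w}\cdot(\varphi,\varepsilon)\simeq(\widehat{\chi}\cdot\varphi,\varepsilon)$ pour un $\chi$, c'est-à-dire $\widehat{w}\in\mathcal{W}_{\cjp}$, d'où l'isomorphisme $W_{\mathfrak{s}}\simeq\mathcal{W}_{\cjp}$. Enfin, un point de $T_{\mathfrak{s}}$ étant une classe $[\sigma']$ avec $\sigma'\simeq\sigma\otimes\chi'$ et l'action de $w$ étant $[\sigma']\mapsto[w\sigma']$ (de même du côté galoisien avec $[\varphi',\varepsilon']\mapsto[\widehat{w}\cdot(\varphi',\varepsilon')]$), la commutativité du diagramme --- donc la relation $\widehat{w\cdot t}=\widehat{w}\cdot\widehat{t}$ --- résultera de ce que la correspondance de Langlands entrelace ces deux actions.

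La difficulté principale se situera dans le deuxième pas, et plus précisément dans le traitement de l'enrichissement $\varepsilon$. Il faudra vérifier que l'action de $N_{\widehat{G}}(\widehat{L})/\widehat{L}$ par conjugaison et celle de $Z(\widehat{L})^{\circ}$ par translation opèrent sur $\mathcal{S}_{\varphi}^{L}$ en préservant $\Irr(\mathcal{S}_{\varphi}^{L})_{\cusp}$, que ceci est compatible avec la correspondance de Springer généralisée servant à définir le support cuspidal du côté galoisien, et surtout que le groupe $\mathcal{W}_{\cjp}$ ainsi obtenu coïncide effectivement avec $W_{\mathfrak{s}}$ : cette dernière égalité repose, du côté des représentations, sur le calcul des $R$-groupes (Knapp--Stein, Goldberg, Arthur) et, du côté galoisien, sur la description explicite par M{\oe}glin des centralisateurs et des paquets supercuspidaux. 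C'est l'identification, terme à terme et de manière équivariante, de ces deux descriptions combinatoires --- $\GL_n$ par $\GL_n$ pour les facteurs linéaires, et séparément pour la composante $G_0$ --- qui constituera le c{\oe}ur de la preuve.
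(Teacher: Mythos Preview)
Votre stratégie aboutit au même endroit que la démonstration du papier --- l'identification combinatoire explicite, facteur par facteur, des deux groupes $W_{\mathfrak{s}}$ et $\mathcal{W}_{\cjp}$ --- mais le chemin d'accès diffère sensiblement, et votre raccourci abstrait par l'équivariance est moins solide qu'il n'y paraît.

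Le papier ne passe pas par une propriété d'équivariance de la correspondance de Langlands sous $N_G(L)/L$. Il décrit $W_{\mathfrak{s}}$ directement via les résultats d'Heiermann \cite{Heiermann:2011,Heiermann:2010} : après normalisation de $\sigma$, on distingue trois cas pour chaque $\pi_i$ selon que $\pi_i$ est autoduale et que $\pi_i|\cdot|^s \rtimes \sigma'$ est ou non réductible pour un $s>0$, ce qui donne des facteurs de type $B/C$, $D$ ou $A$ dans $W_{\mathfrak{s}}^{\circ}$, plus un groupe $R_{\mathfrak{s}}$ explicite. Du côté galoisien, le papier calcule $\mathcal{W}_{\cjp}$ en décrivant $Z_{\widehat{G}_*}(\restriction{\varphi}{W_F})$ comme produit de groupes $\Sp$, $\O$ ou $\GL$ indexés par les composantes isotypiques $\pi\in I$, puis en lisant les groupes de Weyl relatifs $W_{\widehat{L}_\pi}^{\widehat{G}_\pi}$ dans une table. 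La clé est alors que la trichotomie d'Heiermann sur $\pi_i$ (autodualité et réductibilité) correspond exactement, via le théorème de M{\oe}glin sur les points de réductibilité, à la trichotomie galoisienne ($\pi_i$ de type orthogonal/symplectique/linéaire et $m_{\pi_i}'=0$ ou non). C'est cette correspondance entre conditions de réductibilité et structure du bloc de Jordan qui fait marcher la preuve, et elle n'est pas capturée par une simple équivariance formelle.

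Votre invocation de l'équivariance sous $N_G(L)/L$ est délicate : pour les facteurs $\GL_n$ elle est claire, mais pour le facteur $G_0$ classique elle revient déjà à savoir que la paramétrisation d'Arthur--M{\oe}glin des supercuspidales commute à la conjugaison par le Weyl relatif, enrichissement $\varepsilon$ compris. Ce n'est pas faux, mais ce n'est pas non plus un énoncé qu'on cite prêt à l'emploi ; le vérifier reviendrait essentiellement à refaire le calcul explicite du papier. Votre dernier paragraphe le reconnaît d'ailleurs : vous admettez que le c{\oe}ur est l'identification terme à terme des deux descriptions combinatoires. C'est exactement ce que fait le papier, mais en court-circuitant l'enrobage abstrait et en s'appuyant sur Heiermann plutôt que sur les $R$-groupes de Goldberg. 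Votre proposition est donc correcte dans les grandes lignes, mais l'économie conceptuelle promise par l'équivariance est illusoire : les deux preuves ont la même substance.
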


Cette conjecture sera vérifiée pour les groupes classiques dans le théorème \ref{verifcompatibilite} et on obtient :

\begin{thmintro}
Soit $G$ l'un des groupes $\SO_{N}(F)$ ou $\Sp_{N}(F)$. Soient $\mathfrak{s}=[L,\sigma] \in \mathcal{B}(G)$ et $\cj=[\widehat{L},\varphi,\varepsilon] \in \mathcal{B}_{e}^{\st}(G)$ le triplet inertiel de $\widehat{G}$ correspondant par la correspondance de Langlands. Notons $T_{\mathfrak{s}}, \mathcal{T}_{\cjp}, W_{\mathfrak{s}}, \mathcal{W}_{\cjp}$ les tores de Bernstein et groupes de Weyl associés. La correspondance de Langlands induit un isomorphisme  $T_{\mathfrak{s}} \simeq \mathcal{T}_{\cjp}$. De plus, les groupes $W_{\mathfrak{s}}$ et $\mathcal{W}_{\cjp}$ sont isomorphes et l' action de chacun sur $\mathcal{T}_{\cjp}$ et $T_{\mathfrak{s}}$ est compatible avec cet isomorphisme.
\end{thmintro}

On notera $\Phi_{e}(G)$ l'ensemble des paramètres de Langlands enrichis de $G$, c'est-à-dire l'ensemble des couples $(\phi,\eta)$ formés d'un paramètre de Langlands $\phi$ de $G$ et d'une représentation irréductible $\eta$ de $\mathcal{S}_{\phi}^{G}$.\\

À la suite de cela, nous construisons le « support cuspidal partiel » d'un paramètre de Langlands enrichi d'un groupe déployé $G$. Plus précisément, on associe à tout paramètre de Langlands enrichi de $G$, la classe d'un triplet formé d'un sous-groupe de Levi $\widehat{L}$ de $\widehat{G}$, d'un paramètre de Langlands  cuspidal de $L$ et d'une représentation irréductible d'un sous-groupe de $\mathcal{S}_{\varphi}^{L}$. Ceci fait intervenir la correspondance de Springer généralisée (pour des groupes non-connexes) et des constructions de Lusztig. Pour les groupes classiques $\GL_N, \, \SO_N, \, \Sp_{N}$ nous définissons entièrement le support cuspidal de tout paramètre de Langlands enrichi. On obtient ainsi le théorème suivant (voir le théorème \ref{supportcuspidalpartiel}) :

\begin{thmintro}
Il existe une application de support cuspidal surjective $$\cSc : \begin{array}[t]{ccc}
\Phi_{e}(G) & \longrightarrow & \Omega_{e}^{\st}(G) \\
(\phi,\eta)  & \longmapsto & (\widehat{L}, \varphi,\varepsilon)
\end{array}.$$ De plus, les fibres de cette application sont paramétrées par les représentations irréductibles de $N_{Z_{\widehat{G}}(\restriction{\varphi}{W_F} \chi_{c}})(A_{\widehat{L}})/Z_{\widehat{L}}(\restriction{\varphi}{W_F}\chi_{c})$, où $c$ parcourt l'ensemble (fini) des cocaractères de correction de $\varphi$ dans $\widehat{G}$ (voir \ref{cocaractercorrection} pour la définition de cette notion).
\end{thmintro}

On peut ainsi partitionner en blocs l'ensemble des classes de paramètres de Langlands enrichis, indexé par $\mathcal{B}_{e}^{\st}(G)$ : $$\Phi_{e}(G) = \bigsqcup_{\cjp \in \mathcal{B}_{e}^{\st}(G)} \Phi_{e}(G)_{\cjp}.$$ Cette partition permet d'énoncer la conjecture suivante :

\begin{conjintro}
Soit $G$ un groupe réductif $p$-adique connexe déployé. On suppose la correspondance de Langlands satisfaite pour les représentations supercuspidales des sous-groupes de Levi de $G$ ainsi que la conjecture sur le paramétrage des représentations supercuspidales. Soient $\mathfrak{s} \in \mathcal{B}(G)$ et $\cj \in \mathcal{B}_{e}^{\st}(G)$ se correspondant. On a alors une bijection : $$\Irr(G)_{\mathfrak{s}} \simeq \Phi_{e}(G)_{\cjp},$$ telle que les applications de support cuspidal font commuter le diagramme :\\
\begin{center}
\begin{tikzcd}
\Irr(G)_{\mathfrak{s}} \arrow[leftrightarrow]{r} \arrow{d}[swap]{\Sc}  & \Phi_{e}(G)_{\cjp} \arrow{d}{\cScp} \\
T_{\mathfrak{s}}/W_{\mathfrak{s}} \arrow[leftrightarrow]{r} & \mathcal{T}_{\cjp}/\mathcal{W}_{\cjp} 
\end{tikzcd}
\end{center}
\end{conjintro}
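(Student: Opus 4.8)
Le plan est de ramener l'énoncé à une comparaison fibre à fibre au-dessus de l'isomorphisme $T_{\mathfrak{s}}/W_{\mathfrak{s}} \simeq \mathcal{T}_{\cjp}/\mathcal{W}_{\cjp}$, fourni pour les groupes classiques par le théorème \ref{verifcompatibilite} et, en général, par la conjecture qui précède. Je commencerais par remarquer que la correspondance de Langlands pour les représentations supercuspidales des sous-groupes de Levi (supposée), jointe à la conjecture sur le paramétrage des supercuspidales, identifie $\sigma$ à la paire $(\varphi,\varepsilon)$, donc la paire inertielle $\mathfrak{s}=[L,\sigma]$ au triplet inertiel $\cj=[\widehat{L},\varphi,\varepsilon]$ ; il ne reste alors qu'à construire une bijection $\Irr(G)_{\mathfrak{s}} \simeq \Phi_{e}(G)_{\cjp}$ compatible avec $\Sc$ et $\cSc$.

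Du côté des représentations, j'utiliserais la description de $\Irr(G)_{\mathfrak{s}}$ issue de la théorie du centre de Bernstein : cet ensemble s'identifie aux représentations irréductibles d'une algèbre de Hecke affine étendue attachée à $(T_{\mathfrak{s}},W_{\mathfrak{s}})$, dont les paramètres sont déterminés par les points de réductibilité des induites paraboliques de $\sigma$ tordue par des caractères non ramifiés. Via le support cuspidal, la fibre $\Sc^{-1}(W_{\mathfrak{s}}t)$ s'exprime alors comme l'ensemble des représentations irréductibles d'une algèbre de Hecke, éventuellement tordue par un $2$-cocycle, associée au fixateur $W_{\mathfrak{s},t}$ de $t$ dans $W_{\mathfrak{s}}$.

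Du côté galoisien, j'emploierais le théorème \ref{supportcuspidalpartiel}, qui fournit $\cSc$ et décrit ses fibres par les représentations irréductibles des groupes $N_{Z_{\widehat{G}}(\restriction{\varphi}{W_F}\chi_{c})}(A_{\widehat{L}})/Z_{\widehat{L}}(\restriction{\varphi}{W_F}\chi_{c})$, où $c$ parcourt les cocaractères de correction de $\varphi$. Restreint à $\Phi_{e}(G)_{\cjp}$ et combiné à l'isomorphisme $\mathcal{T}_{\cjp}\simeq T_{\mathfrak{s}}$, ceci donne une application $\Phi_{e}(G)_{\cjp} \to \mathcal{T}_{\cjp}/\mathcal{W}_{\cjp}$ dont la fibre au-dessus de $\mathcal{W}_{\cjp}\widehat{t}$ se paramètre, via la correspondance de Springer généralisée de Lusztig (pour des groupes non nécessairement connexes), par les représentations irréductibles d'un groupe attaché au fixateur $\mathcal{W}_{\cjp,\widehat{t}}$. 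Il resterait alors à confronter les deux paramétrages : vérifier que les isomorphismes $T_{\mathfrak{s}}\simeq\mathcal{T}_{\cjp}$ et $W_{\mathfrak{s}}\simeq\mathcal{W}_{\cjp}$ échangent $W_{\mathfrak{s},t}$ et $\mathcal{W}_{\cjp,\widehat{t}}$ ; que les paramètres des deux algèbres de Hecke coïncident, autrement dit que les points de réductibilité calculés par M\oe glin en termes de blocs de Jordan s'identifient aux sauts prédits du côté galoisien par les cocaractères de correction et la géométrie des orbites nilpotentes ; et enfin que les $2$-cocycles de part et d'autre coïncident — ils sont triviaux pour les groupes classiques, $W_{\mathfrak{s}}$ et $\mathcal{W}_{\cjp}$ étant alors des produits de groupes de Weyl de types $A$, $B$, $C$ et $D$. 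Les deux bijections étant construites fibre à fibre au-dessus de $T_{\mathfrak{s}}/W_{\mathfrak{s}} \simeq \mathcal{T}_{\cjp}/\mathcal{W}_{\cjp}$, la commutativité du diagramme en découle.

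Le principal obstacle est ce dernier recollement, et plus précisément la coïncidence des paramètres : elle demande de traduire dans un langage commun les formules de réductibilité de M\oe glin et la combinatoire de la correspondance de Springer généralisée de Lusztig pour les groupes classiques (ce que rend possible le théorème \ref{verifcompatibilite}). En dehors des groupes classiques et de $\GL_n$, l'obstacle devient double, l'absence d'une correspondance de Langlands établie et celle d'un calcul explicite des points de réductibilité, et l'énoncé ne peut alors rester que conditionnel, comme il est indiqué dans la conjecture.
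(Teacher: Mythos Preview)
Votre plan est correct et suit essentiellement la démarche de l'article : réduction aux fibres via l'équivalence d'Heiermann avec une algèbre de Hecke affine étendue, passage aux algèbres de Hecke graduées par la réduction de Lusztig au-dessus de chaque caractère central, paramétrage des fibres galoisiennes par les modules simples de ces mêmes algèbres via la correspondance de Springer généralisée (c'est la proposition \ref{propparamparamlang}), puis identification des fonctions paramètres grâce à la formule de M\oe glin $2x_i = a_{\sigma_i}+1$ comparée aux tables de Lusztig. Votre mention des $2$-cocycles est une précaution légitime mais que l'article n'aborde pas, précisément parce que pour les groupes classiques l'extension $\mathcal{H}_{\mathfrak{s}} \rtimes \C[R_{\mathfrak{s}}]$ est toujours un produit semi-direct non tordu ; hors de ce cadre, comme vous le soulignez, l'énoncé reste conjectural.
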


Cette conjecture sera prouvée pour les groupes classiques au théorème \ref{theorembijection}. Par ailleurs, Vogan et Haines ont énoncé une conjecture de compatibilité de la correspondance de Langlands avec l'induction parabolique (voir \ref{conjindpar}). Cette conjecture sera prouvée pour les groupes classiques dans le théorème \ref{compatibiliteinductionpreuve}.
\begin{thmintro}
Soit $G$ un groupe classique déployé. Soient $\mathfrak{s}=[L,\sigma] \in \mathcal{B}(G)$ et $\cj=[\widehat{L},\varphi,\varepsilon] \in \mathcal{B}_{e}^{\st}(G)$ correspondant. Notons $\Irr(G)_{\mathfrak{s},2}$ (resp. $\Irr(G)_{\mathfrak{s},\temp}$) l'ensemble des représentations essentiellement discrètes (resp. tempérées) dans $\Irr(G)_{\mathfrak{s}}$ et $\Phi_{e}(G)_{\cjp,2}$ (resp. $\Phi_{e}(G)_{\cjp,\bdd}$) l'ensemble des paramètres de Langlands enrichis discrets (resp. tels que la restriction à $W_F$ est bornée) dans $\Phi_{e}(G)_{\cjp}$. On a une bijection $$\Irr(G)_{\mathfrak{s}} \leftrightarrow \Phi_{e}(G)_{\cjp},$$ induisant des bijections $$\Irr(G)_{\mathfrak{s},2} \leftrightarrow\Phi_{e}(G)_{\cjp,2},$$ et $$\Irr(G)_{\mathfrak{s},\temp} \leftrightarrow \Phi_{e}(G)_{\cjp,\bdd}.$$ En conséquence, la correspondance de Langlands est compatible avec l'induction parabolique.
\end{thmintro}

Ayant toutes ces constructions en main, on se propose de comparer nos résultats à ceux obtenus par M\oe glin et M\oe glin-Tadi{{\'c}} pour les représentations de la série discrète. Nous obtenons une description explicite du support cuspidal d'un paramètre de Langlands enrichi discret et que nos résultats coïncident avec ceux de M\oe glin et M\oe glin-Tadi{{\'c}} (voir les propositions \ref{suppcuspidalexplicite}, \ref{comparaisonsupportcuspidalmoeglin} et le théorème \ref{suppcuspicompatible}).
\begin{thmintro}
Soient $G$ un groupe classique, $\phi \in \Phi(G)_2$ un paramètre de Langlands discret d'un groupe classique $G$, $N$ le rang semi-simple de $\widehat{G}$ et $\eta \in \Irr(A_{\widehat{G}}(\phi))$. On note $\Std_G : \widehat{G} \hookrightarrow \GL_N(\C)$ le plongement standard de $\widehat{G}$ et on décompose $\Std_G \circ \phi = \bigoplus_{\pi \in I} \pi \boxtimes S_{\pi}$, où $I$ est l'ensemble des représentations irréductibles de $W_F$ apparaissant dans $\restriction{\phi}{W_F}$ et $S_{\pi}$ une représentation de $\SL_2(\C)$. Pour tout $p \in \N^*$, on note $S_p$ la représentation irréductible de dimension $p$ de $\SL_2(\C)$. Pour tout $\pi \in I$, on peut décomposer $S_{\pi}=\bigoplus_{i=1}^{k_{\pi}} S_{p_{\pi,i}}$ avec $k_{\pi}$ et $p_{\pi,i}$ des entiers naturels. L'image de la représentation $S_{\pi}$ est incluse dans un groupe orthogonal ou un groupe symplectique qu'on notera $\widehat{G}_{\pi}$. On a un isomorphisme $A_{\widehat{G}}(\phi) \simeq \prod_{\pi \in I} A_{\widehat{G}_{\pi}}(S_{\pi})$ et pour tout $\pi \in I, \,\, A_{\widehat{G}_{\pi}}(S_{\pi})=\prod_{i=1}^{k_{\pi}} \langle z_{p_{\pi,i}} \rangle$ avec $z_{p_{\pi,i}}$ d'ordre $2$. Écrivons $\eta \simeq \boxtimes_{\pi \in I} \eta_{\pi}$.\\
Pour tout $\pi \in I$, on pose : $$n_{\pi}=\dim \pi, \,\, m_{\pi}=\dim S_{\pi}, \,\, d_{\pi}'=\left\{ \begin{array}{ll}
 \sum_{i=1}^{k_{\pi}} (-1)^{i+k_{\pi}} \eta_{\pi}(z_{p_{\pi,i}}) +2k_{\pi}+2-2 \left[ \frac{k_\pi+1}{2}\right] & \text{si $\widehat{G}_{\pi}$ est symplectique} \\
 \sum_{i=1}^{k_{\pi}} (-1)^{i+1} \eta_{\pi}(z_{p_{\pi,i}}) & \text{si $\widehat{G}_{\pi}$ est orthogonal} \\
 \end{array} \right. ,$$ $$\ell_{\pi}=\left\{ \begin{array}{ll}
 \frac{m_{\pi}-d_{\pi}'(d_{\pi}'-1)}{2} & \text{si $\widehat{G}_{\pi}$ est symplectique} \\
 \frac{m_{\pi}-d_{\pi}'^2}{2} & \text{si $\widehat{G}_{\pi}$ est orthogonal} \\
 \end{array} \right. \quad \text{et} \quad N^{\sharp}=\sum_{\substack{\pi \in I \\ \widehat{G}_{\pi} \;\; \text{symp.}}} n_{\pi} d_{\pi}'(d_{\pi}'-1) + \sum_{\substack{\pi \in I \\ \widehat{G}_{\pi} \;\; \text{orth.}}} n_{\pi} d_{\pi}'^2.$$  Si $\widehat{G}_{\pi}$ est symplectique, soit $d_{\pi} \in \N$ l'unique entier naturel tel que $(d_{\pi}+1)d_{\pi}=d_{\pi}'(d_{\pi}'-1)$ et si $\widehat{G}_{\pi}$ est orthogonal, soit $d_{\pi}= |d_{\pi}'|$. Considérons les multiensembles définis par $$E_{c,\pi}=\left\{ \begin{array}{ll}
\bigcup_{i=1}^{k_{\pi}} \lBrace \frac{p_{\pi,i}-1}{2}-j,j \in \llbracket 0, p_{\pi,i}-1 \rrbracket \rBrace - \bigcup_{i=1}^{d_{\pi}} \lBrace \frac{2i-1}{2}-j,j \in \llbracket 0, 2i-1 \rrbracket \rBrace, & \text{si $\widehat{G}_{\pi}$ est symplectique} \\
\bigcup_{i=1}^{k_{\pi}} \lBrace \frac{p_{\pi,i}-1}{2}-j,j \in \llbracket 0, p_{\pi,i}-1 \rrbracket \rBrace - \bigcup_{i=1}^{d_{\pi}} \lBrace \frac{2i-2}{2}-j,j \in \llbracket 0, 2i-2 \rrbracket \rBrace, & \text{si $\widehat{G}_{\pi}$ est orthogonal} \\
 \end{array} \right.$$ et notons $E_{c,\pi}'$ le sous-multiensemble de $E_{c,\pi}$ constitué des éléments positifs si $\widehat{G}_{\pi}$ est symplectique et constitué des éléments strictement positifs et de $0$ compté avec multiplicité $\frac{m_{\pi}-d_{\pi}}{2}$ si $\widehat{G}_{\pi}$ est orthogonal, de sorte que $E_{c,\pi}=E_{c,\pi}' \sqcup -E_{c,\pi}'$.\\
 
Le support cuspidal de $(\phi,\eta)$ que nous avons défini au théorème \ref{theoremesupportcuspidal} est $(\widehat{L},\varphi,\varepsilon)$ avec : \begin{itemize}
\item $\displaystyle \widehat{L}=\prod_{\pi \in I} \GL_{n_{\pi}}(\C)^{\ell_{\pi}} \times \widehat{G}'$, avec $\widehat{G^{\sharp}}$ un groupe classique de rang $N^{\sharp}$ de même type que $\widehat{G}$ ;
\item $\displaystyle \varphi=\left(\bigoplus_{\pi \in I} \bigoplus_{e \in E_{c,\pi}'} |\cdot|^{e} \pi \oplus \left( |\cdot|^{e} \pi \right)^{\vee} \right)\oplus \left( \bigoplus_{\substack{\pi \in I \\ \widehat{G}_{\pi} \;\; \text{symp.}}} \bigoplus_{a=1}^{d_{\pi}} \pi \boxtimes S_{2a} \oplus \bigoplus_{\substack{\pi \in I \\ \widehat{G}_{\pi} \;\; \text{orth.}}} \bigoplus_{a=1}^{d_{\pi}} \pi \boxtimes S_{2a-1} \right)$;
\item $\varepsilon \simeq \boxtimes_{\substack{\pi \in I \\ \widehat{G}_{\pi} \;\; \text{symp.}}} \varepsilon_{d_{\pi}(d_{\pi}+1)}^{\S} \boxtimes \boxtimes_{\substack{\pi \in I \\ \widehat{G}_{\pi} \;\; \text{orth.}}} \varepsilon_{d_{\pi}^2}^{\O \pm}$.
\end{itemize}
De plus, le support cuspidal des séries discrètes défini par M\oe glin et M\oe glin-Tadi{{\'c}} coïncide avec le nôtre.
\end{thmintro}

Décrivons l'organisation de l'article.\\

La section $2$ sert de préliminaire. Nous rappelons les définitions et donnons des exemples de la correspondance de Springer généralisée. Après avoir énoncé la conjecture locale de Langlands, nous rappelons le théorème de M\oe glin concernant le paramétrage des représentations supercuspidales des groupes classiques. Dans la suite, nous décrivons brièvement la théorie du centre de Bernstein et celle du centre de Bernstein stable par Haines.\\

Dans la section $3$ nous énonçons notre conjecture sur le paramétrage des représentations supercuspidales puis nous vérifions qu'elle est compatible avec les cas connus et certaines propriétés attendues. La suite est consacrée à la définition du support cuspidal d'un paramètre de Langlands enrichi de façon « intrinsèque ».\\

Dans la section $4$ nous faisons les liens entre nos constructions précédentes du coté des paramètres de Langlands et ce qui devrait correspondre en terme de représentations. En particulier, nous montrons que les blocs de paramètres de Langlands enrichis que nous avons définis sont en bijection avec les blocs de Bernstein. En conséquence, nous obtenons la compatibilité de la correspondance de Langlands avec l'induction parabolique. En fin de section, on s'intéresse à la généricité et à la question du changement de paramétrage dans un $L$-paquet.\\

Enfin, dans la section $5$, nous donnons pour les séries discrètes des formules pour calculer explicitement le support cuspidal qu'on a défini et nous vérifions que notre application de support cuspidal est compatible avec la correspondance de Langlands (voir théorème \ref{suppcuspicompatible}).\\

Dans l'appendice nous étendons la correspondance de Springer généralisée au groupe orthogonal ainsi qu'à un certain sous-groupe d'un produit de groupes orthogonaux.

\section{Préliminaires}

\subsection{Correspondance de Springer généralisée}
Soient $H$ un groupe algébrique linéaire complexe réductif et $x \in H$ un élément de $H$. On note $Z_H(x)=\{ g \in H \mid gxg^{-1}=x\}$ le centralisateur de $x$ dans $H$ et $A_H(x)=Z_H(x)/Z_H(x)^{\circ}$ le groupe des composantes du centralisateur de $x$ de $H$. Supposons désormais que $H$ est connexe. Il y a une bijection naturelle entre les représentations irréductibles de $A_H(x)$ et les systèmes locaux $H$-équivariants irréductibles sur $\mathcal{C}_x^H$, où $\mathcal{C}_x^H=\{gxg^{-1}, \, g \in H \}$ désigne la $H$-classe de conjugaison de $x$ (voir \cite[12.10]{Jantzen:2004}). Dans la suite, on s'intéressera aux orbites unipotentes de $H$ (ou aux orbites nilpotentes de l'algèbre de Lie de $H$), c'est à dire aux $H$-classes de conjugaison d'éléments unipotents de $H$. On note $\mathcal{N}_H^{+}$ le cône unipotent « complet », c'est à dire l'ensemble des $H$-classes de conjugaison des couples formés d'un élément unipotent $u$ de $H$ et d'une représentation irréductible du groupe des composantes du centralisateur dans $H$ de $u$ :
\begin{align*}
\mathcal{N}_H^{+} &= \left\{(u,\eta) \mid u \in H \,\, \mbox{unipotent} ,\,\, \eta \in \Irr(A_H(u)) \right\}_{ / H-\rm{conj}}  \\
 &\simeq \left\{(\mathcal{C}_{u}^{H},\mathcal{F}) \mid u \in H \,\, \mbox{unipotent} ,\,\, \mathcal{F} \,\, \text{système local $H$-équivariant irréductible sur $\mathcal{C}_u^{H}$} \right\}.
\end{align*}

\begin{defi}[{\cite[6.2]{Lusztig:1984}}]\phantomsection\label{defsyslocusp}
Soit $u \in H$ un élément unipotent. Un système local $H$-équivariant irréductible $\mathcal{L}$ sur $\mathcal{C}_{u}^{H}$ est dit \emph{cuspidal} lorsque pour tout sous-groupe parabolique propre $P$ de $H$, de radical unipotent $U$ et pour tout élément unipotent $g \in P$, la cohomologie à support compact de $gU \cap \mathcal{C}_{u}^{H}$ à coefficient dans la restriction de $\mathcal{L}$ est nulle.
\end{defi}
Décrivons une définition équivalente en terme de représentations de $A_H(u)$. Soit $P$ un sous-groupe parabolique de $H$ de décomposition de Levi $P=LU$ et soient $u\in H, v \in L$ des éléments unipotents. On note $$Y_{P,u,v}=\left\{g Z_{L}(v)^{\circ}U \in H/Z_{L}(v)^{\circ}U  \mid g \in H, g^{-1}ug \in vU \right\} \quad \text{et} \quad d_{P,u,v}=\frac{1}{2} \left( \dim Z_{H}(u) -\dim Z_{L}(v) \right).$$ D'après Springer et Lusztig \cite[\textsection 1]{Lusztig:1984}, $\dim Y_{P,u,v} \leqslant d_{P,u,v}$ et le groupe $Z_H (u)$ agit par translation à gauche sur $Y_{P,u,v}$. L'action de $Z_H(u)$ se factorise en une action de $A_H(u)=Z_H(u)/Z_H(u)^{\circ}$ et on note $S_{P,u,v}$ la représentation de $A_H(u)$ par permutation sur les composantes irréductibles de dimension $d_{P,u,v}$ de $Y_{P,u,v}$. 

\begin{defi}\phantomsection\label{suv} 
Soient $u \in H$ unipotent et $\varepsilon\in \Irr(A_{H}(u))$. On dit que $\varepsilon$ est \emph{cuspidale} lorsque pour tout sous-groupe parabolique propre $P=LU$ de $H$, pour tout élément unipotent $v \in L$, $$\Hom_{A_H(u)}(\tau,S_{P,u,v})=0.$$
\end{defi}

Si $\mathcal{L}$ est un système local $H$-équivariant irréductible cuspidal sur $\mathcal{C}_{u}^{H}$, alors la représentation irréductible du groupe fini $A_{H}(u)$ est cuspidale dans le sens précédent et réciproquement. On appelera paire cuspidale de $H$, tout couple $(u,\varepsilon)$ (ou $(\mathcal{C}_{u}^{H},\mathcal{L})$) formé d'un élément unipotent $u \in H$ et d'une représentation irréductible cuspidale de $A_{H}(u)$ (ou d'un système local $H$-équivariant irréductible cuspidal).\\
Notons $\mathcal{S}_H$ l'ensemble des classes (de conjugaison par $H$) de triplets $\mathfrak{t}=[L,\mathcal{C}_v^{L},\mathcal{L}]$ formés de \begin{itemize}
\item un sous-groupe de Levi $L$ de $H$ ;
\item une $L$-orbite $\mathcal{C}_v^L$ d'un élément unipotent $v \in L$ ;
\item un système local $L$-équivariant irréductible cuspidal $\mathcal{L}$ sur $\mathcal{C}_v^L$.
\end{itemize}%
Dans \cite[6.3]{Lusztig:1984}, Lusztig associe à chaque couple $(\mathcal{C}_{u}^{H},\mathcal{F}) \in \mathcal{N}_H^{+}$ un unique triplet $\mathfrak{t} \in \mathcal{S}_H$. Notons $\Psi_{H} : \mathcal{N}_H^{+} \longrightarrow \mathcal{S}_H$ cette application. Elle induit alors une partition de $\mathcal{N}_H^{+}$ : $$\mathcal{N}_H^{+}=\bigsqcup_{\mathfrak{t} \in \mathcal{S}_H} \mathcal{M}_{\mathfrak{t}},$$ où $\mathcal{M}_{\mathfrak{t}}=\Psi_{H}^{-1}(\mathfrak{t})$. Pour tout $\mathfrak{t}=[L,\mathcal{C},\mathcal{L}] \in \mathcal{S}_H$, soient $S=Z_{L}^{\circ} \cdot \mathcal{C}$ et $\mathcal{E}$ le système local $L$-équivariant sur $S$ obtenu en tirant en arrière $\mathcal{L}$ via la projection $Z_{L}^{\circ} \cdot \mathcal{C} \longrightarrow \mathcal{C}$. Notons $$W_{\mathfrak{t}}=\left\{ n \in N_{H}(L) \mid nSn^{-1}=S, \Ad(n^{-1})^{*} \mathcal{E} \simeq \mathcal{E} \right\}/L.$$ D'après \cite[9.2]{Lusztig:1984}, $$W_{\mathfrak{t}} \simeq N_{H}(L)/L,$$ et c'est un groupe de Coxeter fini.

\begin{rema}\phantomsection\label{interpretationW}
Soient $v \in \mathcal{C}$ et $\gamma : \SL_2(\C) \longrightarrow L$ un morphisme de groupes algébriques tel que $\gamma \left( \begin{smallmatrix}
1 & 1 \\  0 & 1 \end{smallmatrix} \right) = v$. Notons $T=Z_{L}^{\circ}$ et $Z=Z_{H}(\gamma)^{\circ}$. D'après \cite[2.6]{Lusztig:1988}, $T$ est un tore maximal de $Z$ et on a l'isomorphisme suivant : $$W_{\mathfrak{t}} \simeq N_{H}(L)/L \simeq N_{Z}(T)/T.$$ 
\end{rema}

Rappelons l'ordre partiel sur l'ensemble des orbites unipotentes de $H$ : si $\mathcal{O}$ et $\mathcal{O}'$ sont deux orbites unipotentes de $H$, $$\mathcal{O}\leqslant \mathcal{O}' \Leftrightarrow \mathcal{O} \subseteq \overline{\mathcal{O}'},$$ où $\overline{\mathcal{O}'}$ désigne l'adhérence de $\mathcal{O}'$. Soit $\mathfrak{t}=[L,\mathcal{C},\mathcal{L}] \in \mathcal{S}_H$. D'après \cite[6.5]{Lusztig:1984} et \cite[6.5]{Lusztig:1995a}, pour tout $(\mathcal{O},\mathcal{F}) \in \mathcal{M}_{\mathfrak{t}}$, on a : $$H \cdot \mathcal{C} \leqslant \mathcal{O} \leqslant \Ind_L^H(\mathcal{C}),$$ où $\Ind_L^H(\mathcal{C})$ est l'orbite obtenue par induction (de Lusztig-Spalteinstein) $\mathcal{C}$ de $L$ à $H$ (voir \cite{Lusztig:1979}). Notons $\mathcal{C}_{\mathfrak{t}}^{\min}=H \cdot \mathcal{C}$ et $\mathcal{C}_{\mathfrak{t}}^{\max}=\Ind_L^H(\mathcal{C})$. Chacune de ces orbites supporte un système local $H$-équivariant irréductible particulier défini en \cite[9.2,9.5]{Lusztig:1984} qu'on note $\mathcal{L}_{\mathfrak{t}}^{\min}$ pour $\mathcal{C}_{\mathfrak{t}}^{\min}$ et $\mathcal{L}_{\mathfrak{t}}^{\max}$ pour $\mathcal{C}_{\mathfrak{t}}^{\max}$. À présent, on peut énoncer la correspondance de Springer généralisée due à Lusztig.
\begin{theo}[Lusztig,{\cite[6.5]{Lusztig:1984}}]
Soit $H$ un groupe réductif connexe complexe. L'application $\Psi_H : \mathcal{N}_H^+ \longrightarrow \mathcal{S}_H$ est surjective et pour tout $\mathfrak{t} \in \mathcal{S}_H$, on a unique une bijection $$\Sigma_{\mathfrak{t}} : \mathcal{M}_{\mathfrak{t}} \longrightarrow \Irr(W_\mathfrak{t}),$$ telle que : $$\Sigma_{\mathfrak{t}}(\mathcal{C}_{\mathfrak{t}}^{\min},\mathcal{L}_{\mathfrak{t}}^{\min})=\mathrm{triv} \quad\mbox{et} \quad \Sigma_{\mathfrak{t}}(\mathcal{C}_{\mathfrak{t}}^{\max},\mathcal{L}_{\mathfrak{t}}^{\max})=\mathrm{sgn}.$$ 
On obtient ainsi une bijection : $$\mathcal{N}_H^{+}  \leftrightarrow \bigsqcup_{\mathfrak{t} \in \mathcal{S}_H} \Irr(W_\mathfrak{t}).$$
\end{theo}

\begin{rema}\phantomsection\label{remarquethm92}
Dans \cite{Lusztig:1984}, la correspondance de Springer généralisée est telle que $\Sigma_{\mathfrak{t}}(\mathcal{C}_{\mathfrak{t}}^{\min},\mathcal{L}_{\mathfrak{t}}^{\min})=\mathrm{sgn}$ et $\Sigma_{\mathfrak{t}}(\mathcal{C}_{\mathfrak{t}}^{\max},\mathcal{L}_{\mathfrak{t}}^{\max})=\mathrm{triv}$ (voir \cite[9.2,9.5]{Lusztig:1984}). La normalisation que l'on a choisie est donc la tensorisation par le caractère signature de la correspondance de Springer définie par Lusztig.
\end{rema}

\subsection{Orbites unipotentes et paires cuspidales pour les groupes classiques} On se propose de rappeler brièvement la classification des orbites unipotentes et de leurs centralisateurs dans certains groupes classiques (voir \cite[\textsection 5.1 \& \textsection 6.1]{Collingwood:1993} et \cite[\textsection 3.8]{Jantzen:2004}).\\

On appelle partition $\mathbf{p}$ d'un entier $N \geqslant 1$ une suite croissante d'entiers $1 \leqslant p_1 \leqslant \ldots \leqslant p_k $ telle que $N=p_1+\ldots+p_k$.  A priori, les entiers $p_i$ ne sont pas distincts deux à deux. Soient $q_1 < \ldots < q_s $ les entiers deux à deux distincts tels que $\{p_i, \, 1 \leqslant i \leqslant k \}=\{ q_j , \, 1 \leqslant j \leqslant s\}$ et $r_{q}$ le nombre de fois que $q$ apparait dans $\mathbf{p}$. On notera $\mathbf{p}=(q_1^{r_{q_1}},\ldots,q_s^{r_{q_s}})$, si bien que, $n=p_1+\ldots+p_k=r_{q_1} q_1 + \ldots + r_{q_s} q_s$. Les $q_j$ (ou $p_i$) s'appellent les parts de la partition $\mathbf{p}$ et $r_{q_j}$ la multiplicité de la part $q_j$.\\

Pour toute partition $\mathbf{p}$, on note $\mathcal{O}_{\mathbf{p}}$ l'orbite unipotente associé à la partition $\mathbf{p}$ par la décomposition de Jordan. Dans un cas, il correspondra à une même partition $\mathbf{p}$ deux orbites unipotentes distinctes que l'on notera $\mathcal{O}_{\mathbf{p}}^{I}$ et $\mathcal{O}_{\mathbf{p}}^{II}$. \begin{itemize}
\item Pour $\GL_{N}(\C)$, les orbites unipotentes sont en bijection avec les partitions de $N$ via la décomposition de Jordan.
\item Pour $\Sp_{N}(\C)$, les orbites unipotentes sont en bijection avec les partitions de $N$ pour lesquelles les parts impaires admettent une multiplicité paire.
\item Pour $\SO_{N}(\C)$, les partitions de $N$ pour lesquelles les parts paires admettent une multiplicité paire et toutes les parts ne sont pas paires correspondent à une orbite unipotente.  Les partitions de $N$ qui n'admettent que des parts paires, de multiplicités paires correspondent à deux orbites unipotentes distinctes. Ce dernier cas ne se produit que si $N$ est pair.
\end{itemize}

La table suivante résume la description des orbites unipotentes et des centralisateurs correspondants:
$$\renewcommand{\arraystretch}{1.3}
\begin{array}{| >{\displaystyle}c | >{\displaystyle}c | >{\displaystyle}c | >{\displaystyle}c |}
\hline
H  & \mathrm{partition} & \mathrm{orbite} & \mathrm{centralisateur} \\ 
\hline
\hline 
\GL_{N}(\C)& &\mathbf{p} \leftrightarrow \mathcal{O}_{\mathbf{p}} & Z_{\GL_N(\C)}(u)_{\red} \simeq \prod_{q \in \mathbf{p}} \GL_{r_q}(\C)  \\ \hline
\Sp_{N}(\C)&\forall q \in \mathbf{p},  q \,\, \mathrm{impair} \Rightarrow r_q \,\, \mathrm{pair} & \mathbf{p} \leftrightarrow \mathcal{O}_{\mathbf{p}}  & Z_{\Sp_{N}(\C)}(u)_{\red} \simeq \prod_{\substack{q \in \mathbf{p} \\ q \,\, \mathrm{pair}}} \O_{r_q}(\C)  \times \prod_{\substack{q \in \mathbf{p} \\ q \,\, \mathrm{impair}}} \Sp_{r_q}(\C)  \\ \hline
\multirow{2}*{$\SO_{N}(\C)$} & \forall q \in \mathbf{p},  q \,\, \mathrm{pair} \Rightarrow r_{q} \,\, \mathrm{pair} & \mathbf{p} \leftrightarrow \mathcal{O}_{\mathbf{p}}  & Z_{\O_{N}(\C)}(u)_{\red} \simeq \prod_{{\substack{q \in \mathbf{p} \\ q \,\, \mathrm{impair}}}} \O_{r_q}(\C)  \times \prod_{{\substack{q \in \mathbf{p} \\ q \,\, \mathrm{pair}}}} \Sp_{r_q}(\C)  \\ \cline{2-3}
& \forall q \in \mathbf{p}, \,\, q \,\,\mathrm{et}\,\,r_q\,\,\mathrm{pair} & \mathbf{p} \leftrightarrow \left\{ \begin{array}{l}
\mathcal{O}_{\mathbf{p}}^{I}\\
\mathcal{O}_{\mathbf{p}}^{II}
\end{array} \right.   & Z_{\SO_{N}(\C)}(u)_{\red} \simeq S\left(\prod_{{\substack{q \in \mathbf{p} \\ q \,\, \mathrm{impair}}}} \O_{r_q}(\C) \right) \times \prod_{{\substack{q \in \mathbf{p} \\ q \,\, \mathrm{pair}}}} \Sp_{r_q}(\C) \\ \hline
\end{array}$$\captionof{table}{Orbites unipotentes des groupes classiques\\} 

Où l'on a noté $Z_{H}(u)_{\red}$ le quotient de $Z_{H}(u)$ par son radical unipotent et : $$S\left(\prod_{{\substack{q \in \mathbf{p} \\ q \,\, \mathrm{impair}}}} \O_{r_q}(\C) \right)=\left\{ (x_q)\in \prod_{{\substack{q \in \mathbf{p} \\ q \,\, \mathrm{impair}}}} \O_{r_q}(\C) \Biggm| \prod_{{\substack{q \in \mathbf{p} \\ q \,\, \mathrm{impair}}}} \det(x_q)=1 \right\}.$$

Les paires cuspidales sont assez rares. Elles apparaissent comme les blocs fondamentaux dans la correspondance de Springer généralisée. Dans \cite{Lusztig:1984}, Lusztig classifie complètement les paires cuspidales pour les groupes presque simples et simplement connexes. Plus précisément, dans \cite[\textsection 10, 12.4 \& 13.4]{Lusztig:1984} et \cite{Lusztig:2014}, il classifie les paires cuspidales pour les groupes classiques. Nous résumons les résultats dans le tableau ci-dessous.

$${\renewcommand{\arraystretch}{1.3}
\begin{array}{|c|c|c|c|c|}
\hline
H & \text{condition} & \text{orbite unipotente} & A_{H}(u) & \text{paire cuspidale} \;\; (\mathcal{C},\varepsilon) \\ 
\hline 
\hline 
\GL_{N}(\C) & N=1 & \mathcal{O}_{(1)} & \{1\} & (\mathcal{O}_{(1)},1) \\ 
\hline 
\Sp_{N}(\C) & N=d(d+1)  & \mathcal{O}_{(2,4,\ldots,2d-2,2d)} & (\Z/2\Z)^{d} & (\mathcal{O}_{(2,4,\ldots,2d-2,2d)},\varepsilon_{N}^{\S}) \\ 
\hline 
\SO_{N}(\C) & N=d^2  & \mathcal{O}_{(1,3,\ldots,2d-3,2d-1)} & (\Z/2\Z)^{d-1} & (\mathcal{O}_{(1,3,\ldots,2d-3,2d-1)},\varepsilon_{N}^{\O}) \\ 
\hline 
\end{array}}$$
\captionof{table}{Paires cuspidales des groupes classiques}\phantomsection\label{pairecusp}

Précisons les notations employées. Dans les cas des groupes symplectiques et orthogonaux, les orbites unipotentes qui interviennent dans les paires cuspidales sont paramétrées par des partitions dont les parts sont de multiplicité $1$. Par suite, le groupe des composantes est un produit de $\Z/2\Z$ indexé par les parts paires (resp. impaires) pour le cas symplectique (resp. orthogonal). \\ Dans le cas symplectique, soit $u \in \mathcal{O}_{(2,4,\ldots,2d-2,2d)}$ et pour tout $a \in \{2,\ldots,2d\}$, notons $z_a \in A_{\Sp_{N}(\C)}(u)$ tels que : $$A_{\Sp_{N}(\C)}(u) = \prod_{i=1}^{d} \langle z_{2i} \rangle \simeq (\Z / 2\Z)^{d}. $$ La représentation cuspidale $\varepsilon_{N}^{\S}$ de $A_{\Sp_{N}(\C)}(u)$ vérifie : pour tout $i \in \llbracket 1,d \rrbracket $, $$\varepsilon_{N}^{\S}(z_{2i})=(-1)^{i}.$$ Dans le cas orthogonal, soit $u \in \mathcal{O}_{(1,3,\ldots,2d-3,2d-1)}$ et pour tout $a \in \{1,\ldots,2d-1\}$, notons $z_a \in A_{\O_{N}(\C)}(u)$ tels que : $$
A_{\O_{N}(\C)}(u) = \prod_{i=1}^{d} \langle z_{2i-1} \rangle \simeq (\Z/ 2\Z)^{d} \quad \text{et} \quad A_{\SO_{N}(\C)}(u)  = \prod_{i=1}^{d-1} \langle z_{2i-1}z_{2i+1} \rangle \simeq (\Z/ 2\Z)^{d-1}.$$ La représentation cuspidale $\varepsilon_{N}^{\O}$ de $A_{\SO_{N}(\C)}(u)$ vérifie : pour tout $i \in \llbracket 1,d-1 \rrbracket $, $$\varepsilon_{N}^{\O}(z_{2i-1}z_{2i+1})=-1.$$ Les représentations irréductibles ${\varepsilon_{N}^{\O +}}$ et ${\varepsilon_{N}^{\O -}}$ de $A_{\O_{N}(\C)}(u)$  telles que leurs restrictions à $A_{\SO_{N}(\C)}(u)$ est $\varepsilon_{N}^{\O}$ vérifient : pour tout  $i \in \llbracket 1,d \rrbracket $, $${\varepsilon_{N}^{\O +}}(z_{2i-1})=(-1)^{i+1} \quad \text{et} \quad {\varepsilon_{N}^{\O -}} (z_{2i-1})=(-1)^{i}.$$ 

\subsection{Correspondance de Springer tordue}\phantomsection\label{springertordue}

Rappelons brièvement la définition des algèbres de Hecke graduées associées à des triplets cuspidaux et le théorème de classification qui s'ensuit.\\

Soient $H$ un groupe réductif connexe complexe, $X$ une variété algébrique sur laquelle $H$ agit algébriquement et $\mathcal{E}$ un système local $H$-équivariant sur $X$. On considère les espaces de cohomologies équivariantes $H^{j}_{H}(X,\mathcal{E})$ et d'homologies équivariantes $H^{H}_{j}(X,\mathcal{E})$ que Lusztig considère dans \cite[\textsection 1]{Lusztig:1988}. On notera $H^{\bullet}_{H}(X,\mathcal{E})= \bigoplus_{j \in \Z} H^{j}_{H}(X,\mathcal{E})$, $H^{H}_{\bullet}(X,\mathcal{E})= \bigoplus_{j \in \Z} H^{H}_{j} (X,\mathcal{E})$. Pour le système local constant, on écrira plus simplement $H^{\bullet}_{H}(X)=H^{\bullet}_{H}(X,\C)$, $H^{H}_{\bullet}(X)=H^{H}_{\bullet}(X,\C)$ et pour tout $\mathrm{point} \in X$, $H^{\bullet}_{H}=H^{\bullet}_{H}(\{\mathrm{point}\})$ et $H^{H}_{\bullet}=H^{H}_{\bullet}(\{\mathrm{point}\})$. \\

Soient $P=LU$ un sous-groupe parabolique de $H$ et $\mathfrak{p}=\mathfrak{l} \oplus \mathfrak{u}$ son algèbre de Lie. Soit $\mathcal{C} \subset \mathfrak{l}$ une $L$-orbite nilpotente supportant un système local irréductible cuspidal $L$-équivariant noté $\mathcal{L}$. Notons $\mathfrak{t}=[L,\mathcal{C},\mathcal{L}] \in \mathcal{S}_H$ le triplet cuspidal correspondant et $W_L^H=N_H(L)/L$ le groupe de Weyl relatif associé.\\

Le groupe $H \times \C^{\times}$ agit sur l'algèbre de Lie $\mathfrak{h}$ de $H$ par : $$\forall (h,t) \in H \times \C^{\times}, \forall x \in \mathfrak{h}, \;\; (h,t) \cdot x = t^{-2} \Ad(h) x.$$ On note $M_H(x)=\{(h,t) \in H \times \C^{\times} \mid \Ad(h)x=t^2 x \}$ le stabilisateur de $x \in \mathfrak{h}$ pour cette action. D'après \cite[2.1.f)]{Lusztig:1988}, on a un isomorphisme $M_H(x)/M_H(x)^{\circ} \simeq Z_H(x) / Z_H(x)^{\circ} =A_H(x)$.\\

Soit $x \in \mathfrak{h}$ un élément nilpotent et considérons la variété $\mathcal{P}_x=\{hP \in H/P \mid \Ad(h^{-1})x \in \mathcal{C}+\mathfrak{u} \}$. Le groupe $M_H(x)$ agit sur $\mathcal{P}_x$ par $(h,t) \cdot (h'P) =(hh')P$. Soit $\dot{\mathcal{L}}$ le système local $M_H(x)^{\circ}$-équivariant sur $\mathcal{P}_{x}$ obtenu à partir de $\mathcal{L}$ et défini en \cite[3.4,8.1.e)]{Lusztig:1988}. On note $\mathcal{Z}$ la variété des classes de conjugaisons sous $M_H(x)^{\circ}$ d'éléments semi-simples de $\mathfrak{m}_{H}(x)=\{(s,r_0) \in \mathfrak{h} \oplus \C \mid [s,x]=2r_{0}x \}$. D'après \cite[8.7]{Lusztig:1988} l'anneau des fonctions régulières sur $\mathcal{Z}$ est isomorphe à $H_{M_H(x)^{\circ}}^{\bullet}$.\\

Soit $(s,r_{0}) \in \mathfrak{m}_{H}(x)$ un élément semi-simple. On note $A_H(s,x)$ le groupe des composantes de $Z_H(x,s)=Z_H(x) \cap Z_H(s)$. L'évaluation au point $(s,r_0)$ définit une structure de $H_{M_H(x)^{\circ}}^{\bullet}$-algèbre sur $\C$ notée $\C_{(s,r_0)}$. On considère le module standard de Lusztig défini en \cite[\textsection 8]{Lusztig:1988} et \cite[10.8]{Lusztig:1995a} par : $$M(x,s,r_0)=\C_{(s,r_0)} \otimes_{H_{M_H(x)^{\circ}}^{\bullet}} H^{M_H(x)^{\circ}}_{\bullet}(\mathcal{P}_x, \dot{\mathcal{L}}) .$$ L'espace $M(x,s,r_0)$ est muni d'une action de $W_L^H \times A_H(x,s)$ (plus précisément il n'est pas seulement muni d'une action de $W_{L}^{H}$ mais c'est un module sur une algèbre de Hecke graduée). Pour tout $\eta \in \Irr(A_H(x,s))$, on note $$M(x,s,r_0,\eta)=\Hom_{A_H(s,x)}(\eta,M(x,s,r_0)).$$

Notons $\Irr(A_H(x))_{\mathfrak{t}}$ l'ensemble des représentations irréductibles $\widetilde{\eta}$ de $A_H(x)$ telles que $\Psi_{H}(\mathcal{C}_X^{H},\widetilde{\eta})=\mathfrak{t}$. Par ailleurs, on a une injection de $A_{H}(x,s)$ dans $A_{H}(x)$ et on note $\Irr(A_{H}(x,s))_{\mathfrak{t}}$ l'ensemble des représentations irréductibles de $A_{H}(x,s)$ apparaissant dans la restriction à $A_{H}(x,s)$ d'une représentation $\widetilde{\eta} \in \Irr(A_{H}(x))_{\mathfrak{t}}$.\\

\begin{prop}[{\cite[8.10]{Lusztig:1988}}]
En reprenant les notations précédentes, $M(x,s,r_0,\eta) \neq \{0\}$, si et seulement si, $\eta \in \Irr(A_{H}(x,s))_{\mathfrak{t}}$.
\end{prop}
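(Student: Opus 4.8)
\medskip

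\noindent\emph{Proof proposal.} Following Lusztig, the plan is to reduce the statement for an arbitrary semisimple $(s,r_0)$ to the case $(s,r_0)=(0,0)$ --- which is essentially a reformulation of the generalized Springer correspondence --- by a rigidity argument for the equivariant homology of $\mathcal P_x$ along the line through $(s,r_0)$.

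First I would treat $(s,r_0)=(0,0)$ (for $x\ne 0$ the relation $[s,x]=2r_0x$ forces $s=0$, hence $r_0=0$): then $Z_H(x,0)=Z_H(x)$, so $A_H(x,0)=A_H(x)$. Since $\mathcal P_x$ has homology concentrated in even degrees, $H^{M_H(x)^\circ}_\bullet(\mathcal P_x,\dot{\mathcal L})$ is a free graded $H^\bullet_{M_H(x)^\circ}$-module whose specialization at the point $(0,0)$ of $\mathcal Z$ is the ordinary homology $H_\bullet(\mathcal P_x,\dot{\mathcal L})$; thus $M(x,0,0)\cong H_\bullet(\mathcal P_x,\dot{\mathcal L})$ is the homology of the generalized Springer fibre over $x$ attached to $\mathfrak t$, with its $W_L^H\times A_H(x)$-action. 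By the generalized Springer correspondence recalled above --- using that the stalks of the simple summands of the generalized Springer complex attached to $\mathfrak t$ stay, as $A_H$-modules, inside the block $\mathfrak t$ --- the $A_H(x)$-module $M(x,0,0)$ has irreducible constituents exactly the $\widetilde\eta\in\Irr(A_H(x))$ with $(\mathcal C_x^H,\widetilde\eta)\in\mathcal M_{\mathfrak t}$, i.e. exactly $\Irr(A_H(x))_{\mathfrak t}$. Restricting along $A_H(x,s)\hookrightarrow A_H(x)$, an irreducible $\eta$ of $A_H(x,s)$ occurs in $M(x,0,0)$ iff it occurs in $\widetilde\eta|_{A_H(x,s)}$ for some $\widetilde\eta\in\Irr(A_H(x))_{\mathfrak t}$, that is iff $\eta\in\Irr(A_H(x,s))_{\mathfrak t}$; this settles $(s,r_0)=(0,0)$.

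For a general semisimple $(s,r_0)\in\mathfrak m_H(x)$ I would interpolate along the ray $\lambda\mapsto(\lambda s,\lambda r_0)$, which lies in $\mathfrak m_H(x)$ and passes through $(0,0)$; let $\Gamma\subset\mathcal Z$ be the Zariski closure of its image, an irreducible subvariety containing the point $(0,0)$. Each $g\in Z_H(x,s)$ yields $(g,1)\in M_H(x)$ fixing every point of the ray, so $A_H(x,s)$ acts on $\mathcal Z$ fixing $\Gamma$ pointwise and acts compatibly on $H^{M_H(x)^\circ}_\bullet(\mathcal P_x,\dot{\mathcal L})$ over $H^\bullet_{M_H(x)^\circ}\cong\C[\mathcal Z]$; hence it acts $\C[\Gamma]$-linearly on the restriction of this module to $\Gamma$, which by the freeness above is free over $\C[\Gamma]$, and thus on each fibre $M(x,\lambda s,\lambda r_0)$. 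As $\Gamma$ is irreducible, for every $a\in A_H(x,s)$ the trace $\lambda\mapsto\mathrm{tr}\!\left(a\mid M(x,\lambda s,\lambda r_0)\right)$ is a regular function on $\Gamma$ taking only finitely many values (bounded sums of $|A_H(x,s)|$-th roots of unity), hence is constant; so $M(x,s,r_0)\cong M(x,0,0)|_{A_H(x,s)}$ as $A_H(x,s)$-modules, and taking $\Hom_{A_H(x,s)}(\eta,-)$ and invoking the previous paragraph gives the assertion. The delicate point is precisely this last step --- one must know that the $A_H(x,s)$-module type of $M(x,s,r_0)$ does not jump along the ray --- which rests on the freeness of the equivariant homology of $\mathcal P_x$ over $H^\bullet_{M_H(x)^\circ}$, i.e. ultimately on the parity vanishing for generalized Springer fibres; the case $(0,0)$ itself is an unconditional repackaging of the generalized Springer correspondence.
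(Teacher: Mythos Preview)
The paper does not give its own proof of this proposition; it is simply quoted from \cite[8.10]{Lusztig:1988} and used as input. So there is nothing in the paper to compare your argument against.

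Your outline is essentially Lusztig's original proof: the parity vanishing for the generalized Springer fibres $\mathcal P_x$ (with coefficients in $\dot{\mathcal L}$) gives freeness of $H^{M_H(x)^\circ}_\bullet(\mathcal P_x,\dot{\mathcal L})$ over $H^\bullet_{M_H(x)^\circ}\cong\C[\mathcal Z]$; this forces the $A_H(x,s)$-character of the fibre $M(x,\cdot,\cdot)$ to be locally constant along any irreducible subvariety of $\mathcal Z$ on which $A_H(x,s)$ acts trivially, in particular along the line through $(0,0)$ and $(s,r_0)$; at $(0,0)$ the statement is a restatement of the generalized Springer correspondence. Your trace-rigidity formulation (a regular function on an irreducible variety with finitely many values is constant) is a clean way to phrase this step.

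One correction: the parenthetical ``for $x\neq 0$ the relation $[s,x]=2r_0x$ forces $s=0$, hence $r_0=0$'' is false and should be deleted. For any nilpotent $x$ there are plenty of nonzero semisimple $(s,r_0)$ with $[s,x]=2r_0x$ (e.g.\ take $s=r_0 h$ for $h$ the neutral element of an $\mathfrak{sl}_2$-triple through $x$); the point $(0,0)$ is simply one convenient point of $\mathfrak m_H(x)$, not the only one. This slip does not affect the rest of your argument.
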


Ajoutons une propriété qui nous sera utile pour la suite. D'après \cite[10.12.(d)]{Lusztig:1995a}, on a : $$M(x,s,r_0)=\C_{(s,r_0)} \otimes_{H_{M_H(x)^{\circ}}^{\bullet}} H^{M_H(x)^{\circ}}_{\bullet}(\mathcal{P}_x, \dot{\mathcal{L}})=H_{\bullet}^{\{1\}} (\mathcal{P}_x^{s},\widetilde{\mathcal{L}}),$$ où $\mathcal{P}_x^{s}=\{P' \in \mathcal{P}_x \mid s \in \mathfrak{p}'\}$. En particulier, si $M(x,s,r_0,\eta) \neq 0$ alors quitte à conjuguer, on peut supposer $s \in \mathfrak{p}$ et $x \in \mathcal{C}+\mathfrak{u}$. L'élément $s \in \mathfrak{p}$ étant semi-simple, il appartient à une sous-algèbre de Levi de $\mathfrak{p}$ qu'on note temporairement $\mathfrak{l}'$. Soit $x=y+n$ la décomposition de $x \in \mathfrak{l}+\mathfrak{u}$ et $y=y'+u' \in \mathfrak{l}' \oplus \mathfrak{u}$ la décomposition de $y \in \mathfrak{p}$. Par conséquent, puisque $P/U \simeq L \simeq L'$ et la $L$-orbite nilpotente de $y$ est isomorphe à la $L'$-orbite nilpotente de $y'$ et $s \in \mathfrak{l}'$. Ainsi, quitte à faire ces considérations, on peut supposer que $s \in \mathfrak{l}$. 

\begin{prop}\phantomsection\label{actionss}
En reprenant les notations précédentes, si $\eta \in \Irr(A_{H}(x,s))_{\mathfrak{t}}$, alors on peut supposer que $s \in \mathfrak{l}$ et on a : $$[s,y]=2r_0 y.$$
\end{prop}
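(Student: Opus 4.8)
The plan is to make rigorous the reduction sketched in the paragraph preceding the statement, and then to extract the semisimplicity relation $[s,y]=2r_0y$ from the fact that $(s,r_0)\in\mathfrak{m}_H(x)$. First I would invoke \cite[10.12.(d)]{Lusztig:1995a}, which rewrites the standard module as $M(x,s,r_0)=H_\bullet^{\{1\}}(\mathcal{P}_x^s,\widetilde{\mathcal{L}})$ with $\mathcal{P}_x^s=\{P'\in\mathcal{P}_x\mid s\in\mathfrak{p}'\}$. The hypothesis $\eta\in\Irr(A_H(x,s))_{\mathfrak t}$ together with Proposition \cite[8.10]{Lusztig:1988} gives $M(x,s,r_0,\eta)\neq\{0\}$, hence $M(x,s,r_0)\neq\{0\}$, hence $\mathcal{P}_x^s\neq\varnothing$; so there is a parabolic $P'$ conjugate to $P$ with $s\in\mathfrak{p}'$ and $\Ad(h^{-1})x\in\mathcal{C}+\mathfrak{u}'$ for the corresponding $h$. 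Replacing $(x,s)$ by a conjugate, I may thus assume from the outset that $s\in\mathfrak p$ and $x\in\mathcal C+\mathfrak u$.

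Next comes the descent of $s$ into the Levi. Since $s$ is semisimple and lies in $\mathfrak p$, it is contained in some Levi subalgebra $\mathfrak l'$ of $\mathfrak p$; after conjugating by an element of $P$ (which preserves both the condition $x\in\mathcal C+\mathfrak u$ up to the $L$-orbit and the class of $\eta$), I may assume $\mathfrak l'=\mathfrak l$, i.e. $s\in\mathfrak l$. I would spell this out using the decompositions $x=y+n$ with $y\in\mathfrak l$, $n\in\mathfrak u$, and $y=y'+u'$ with $y'\in\mathfrak l'$, $u'\in\mathfrak u$: conjugating $P$ so that $\mathfrak l'$ and $\mathfrak l$ coincide identifies, via $P/U\simeq L\simeq L'$, the $L$-orbit of $y$ with the $L'$-orbit of $y'$, so cuspidality of $\mathcal L$ is unaffected and we genuinely land back in the situation $s,x$ with $s\in\mathfrak l$ and $x\in\mathcal C+\mathfrak u$. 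This is the step I expect to require the most care, because one must check that conjugating the parabolic does not destroy the normalization $x\in\mathcal C+\mathfrak u$ nor change which $A_H(x,s)$-representation $\eta$ is; the key point is that all these conjugations take place inside $P$, under which $\mathcal C+\mathfrak u$ is stable up to the $L$-action, and $A_H(x,s)$ transforms equivariantly.

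Finally, with $s\in\mathfrak l$ and $x=y+n$, $y\in\mathcal C\subset\mathfrak l$, $n\in\mathfrak u$, I use that $(s,r_0)\in\mathfrak m_H(x)$ means $[s,x]=2r_0x$ by definition of $\mathfrak m_H(x)$. Decomposing along $\mathfrak h=\mathfrak l\oplus\mathfrak u\oplus\mathfrak u^-$ (or simply along $\mathfrak p=\mathfrak l\oplus\mathfrak u$), the bracket $[s,x]=[s,y]+[s,n]$ splits with $[s,y]\in\mathfrak l$ (as $s,y\in\mathfrak l$) and $[s,n]\in\mathfrak u$ (as $s\in\mathfrak l$ normalizes $\mathfrak u$). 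Comparing the $\mathfrak l$-components of the identity $[s,y]+[s,n]=2r_0y+2r_0n$ and using $y\in\mathfrak l$, $n\in\mathfrak u$ yields $[s,y]=2r_0y$, which is the asserted relation. I would close by remarking that this shows in particular $(s,r_0)\in\mathfrak m_L(y)$, so the whole standard-module construction restricts compatibly to the Levi, which is what makes the statement useful downstream. \qed
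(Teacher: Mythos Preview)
Your proof is correct and follows essentially the same route as the paper: the argument is given in the paragraph immediately preceding the proposition, invoking \cite[10.12.(d)]{Lusztig:1995a} to get $\mathcal{P}_x^s\neq\varnothing$, conjugating so that $s\in\mathfrak{p}$ and $x\in\mathcal{C}+\mathfrak{u}$, then using semisimplicity of $s$ to pass to a Levi $\mathfrak{l}'$ and identifying $L\simeq L'$ via $P/U$. Your final step, extracting $[s,y]=2r_0y$ by projecting $[s,x]=2r_0x$ onto the $\mathfrak{l}$-component of $\mathfrak{p}=\mathfrak{l}\oplus\mathfrak{u}$, is in fact more explicit than what the paper writes (the paper simply states the conclusion after establishing $s\in\mathfrak{l}$).
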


\subsection{Correspondance de Langlands}

Soient $F$ un corps $p$-adique, $v_F : F \twoheadrightarrow\Z \cup \{ \infty\}$ sa valuation discrète normalisée et $q$ le cardinal de son corps résiduel. Soit $G$ (les $F$-points d'un) groupe réductif connexe défini et \emph{déployé} sur $F$. On notera $X^{*}(G)$ (resp. $X_{*}(G)$) le groupe des caractères (cocaractères) rationnels de $G$. La catégorie des représentations (complexes) lisses de $G$ est noté $\Rep(G)$ et l'ensemble des (classes de) représentations irréductibles de $G$ est noté $\Irr(G)$. Si $M$ est un sous-groupe de $G$ et $\rho$ une représentation de $M$, pour tout $g \in G$, on note ${}^{g} M=gMg^{-1}=\{gmg^{-1}, m \in M\}$ et $\rho^{g}$ la représentation de ${}^{g} M$ définie pour tout $h \in {}^{g} M$, par $\rho^{g}(h)=\rho(g^{-1}hg)$.\\
On note $W_F$ le groupe de Weil de $F$ et $W_F'=W_F \times \SL_2(\C)$ (resp. $WD_F=W_F \ltimes \C$) le groupe de Weil-Deligne (resp. le groupe Weil-Deligne « originel ») de $F$. Le groupe réductif complexe dual de Langlands de $G$ est noté $\widehat{G}$.\\
La correspondance de Langlands locale prévoit un paramétrage des représentations irréductibles admissibles de $G$ par certaines représentations du groupe de Weil-Deligne ou plus précisément, par des paramètres de Langlands. Un \emph{paramètre de Langlands de $G$} est un morphisme de groupes continu $\phi : W_F' \longrightarrow \widehat{G}$ tel que : 
\begin{itemize}
\item la restriction $\restriction{\phi}{\SL_2(\C)} : \SL_2(\C) \longrightarrow \widehat{G}$ est un morphisme de groupes algébriques ;
\item l'ensemble $\phi(W_F)$ est constitué d'éléments semi-simples.
\end{itemize}
Notons $\Phi(G)$ l'ensemble des classes de conjugaison par $\widehat{G}$ de paramètres de Langlands de $G$. La plupart du temps on ne fera pas de distinction entre un paramètre et sa classe de conjugaison par $\widehat{G}$. Lorsque qu'on veut distinguer ces objets, on notera $(\phi)_{\widehat{G}}$ la classe de conjugaison par $\widehat{G}$ d'un paramètre de Langlands $\phi$ de $G$.

La correspondance de Langlands locale prédit l'existence d'une surjection à fibres finies : $$\rec_{G} : \Irr(G) \longrightarrow \Phi(G).$$ 

Soit $\phi \in \Phi(G)$ un paramètre de Langlands de $G$. On note $\Pi_{\phi}(G)$ le $L$-paquet défini par $\phi$, c'est-à-dire l'ensemble fini des représentations irréductibles de $G$ ayant pour image $\phi$ par $\rec_{G}$.  Conjecturalement, $\Pi_{\phi}(G)$  est paramétré \textit{grosso modo} par les représentations irréductibles du groupe des composantes du centralisateur dans $\widehat{G}$ de l'image de $\phi$. Plus rigoureusement, considérons $Z_{\widehat{G}}(\phi)$ le centralisateur dans $\widehat{G}$ de l'image de $\phi$. On note
\begin{itemize}
\item $A_{\widehat{G}}(\phi)=\pi_{0}(Z_{\widehat{G}}(\phi)) \simeq Z_{\widehat{G}}(\phi)/ Z_{\widehat{G}}(\phi)^{\circ}$, le groupe des composantes de $Z_{\widehat{G}}(\phi)$ ;
\item $\mathcal{S}_{\phi}^{G}=\pi_{0}(Z_{\widehat{G}}(\phi)/Z_{\widehat{G}}) \simeq Z_{\widehat{G}}(\phi) / \left( Z_{\widehat{G}}\cdot Z_{\widehat{G}}(\phi)^{\circ} \right)$, le groupe des composantes de $Z_{\widehat{G}}(\phi)/Z_{\widehat{G}}$ ;
\item $\Irr(G)_{\cusp}$ l'ensemble des (classes de) représentations irréductibles \emph{cuspidales} de $G$ ;
\item $\Irr(G)_{2}$ l'ensemble des (classes de) représentations irréductibles \emph{essentiellement de carré intégrable }de $G$ ;
\item $\Irr(G)_{\temp}$ l'ensemble des (classes de) représentations irréductibles \emph{tempérées} $G$ ;
\item $\Phi(G)_{2}$ l'ensemble des (classes de) paramètres \emph{discrets} de $G$, c'est-à-dire les paramètres de Langlands dont l'image n'est contenue dans aucun sous-groupe Levi propre de $\widehat{G}$ ;
\item $\Phi(G)_{\bdd}$ l'ensemble des (classes de) paramètres de Langlands tels que l'image de $W_F$ est bornée ;
\item $\chi : W_F \longrightarrow Z_{\widehat{G}}$ le paramètre associé à un caractère $\chi$ de $G$ par la correspondance de Langlands pour les caractères (on les notera donc de la même façon).
\end{itemize}

On appelle \emph{paramètre de Langlands enrichi de $G$} un couple $(\phi,\eta)$ formé d'un paramètre de Langlands $\phi$ de $G$ et d'une représentation irréductible $\eta$ de $\mathcal{S}_{\phi}^{G}$. On note $\Phi_{e}(G)$ l'ensemble des (classes de conjugaison par $\widehat{G}$ de) paramètres de Langlands enrichis de $G$ : $$\Phi_{e}(G)= \left\{ (\phi,\eta) \mid \phi \in \Phi(G), \,\, \eta \in \Irr(\mathcal{S}_{\phi}^{G}) \right\}_{\widehat{G}-\rm{conj}}.$$

\begin{conj}[Correspondance de Langlands locale]
Avec les notations précédentes (on suppose donc $G$ déployé), il existe une surjection à fibres finies $$\rec_{G} : \Irr(G) \longrightarrow \Phi(G),$$ vérifiant les propriétés suivantes : \begin{itemize}
\item pour tout caractère $\chi$ de $G$ et pour tout $\pi \in \Irr(G)$, $\rec_{G}(\pi \otimes \chi)=\rec_{G}(\pi)\chi$;
\item pour tout $\phi \in \Phi(G)$, les conditions suivantes sont équivalentes  \begin{enumerate}[label=(\roman*)]
\item un élément de $\Pi_{\phi}(G)$ est une représentation essentiellement de carré intégrable modulo le centre;
\item tous les éléments de $\Pi_{\phi}(G)$ sont des représentations essentiellement de carré intégrable modulo le centre;
\item $\phi \in \Phi(G)_{2}$.
\end{enumerate}
\item pour tout $\phi \in \Phi(G)$, les conditions suivantes sont équivalentes \begin{enumerate}[label=(\roman*)]
\item un élément de $\Pi_{\phi}(G)$ est une représentation tempérée;
\item tous les éléments de $\Pi_{\phi}(G)$ sont des représentations tempérées;
\item $\phi \in \Phi(G)_{\bdd}$.
\end{enumerate}
\item pour tout $\phi \in \Phi(G)$,  il y a une bijection $\Pi_{\phi}(G) \leftrightarrow \Irr(\mathcal{S}_{\phi}^{G})$. Ainsi, la correspondance de Langlands se prolonge en une bijection $$\rec_{G}^{e} : \Irr(G) \xlongrightarrow{\sim} \Phi_{e}(G).$$
\end{itemize}
\end{conj}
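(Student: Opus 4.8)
The statement is the local Langlands correspondence for an arbitrary split connected reductive group $G$; in that generality it is of course open, and the role of the present article is to \emph{use} it, not to prove it. What can actually be established --- and what every later construction rests on --- is the conjecture in the two families of groups that occur here: $G=\GL_n(F)$, and $G$ a split classical group $\SO_N(F)$ or $\Sp_N(F)$. The plan is to treat these separately, the first by direct citation, the second by reduction to the first through the standard embedding and Arthur's endoscopic classification.

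For $G=\GL_n(F)$ one has $\widehat{G}=\GL_n(\C)$, so $Z_{\widehat{G}}(\phi)/Z_{\widehat{G}}$ is trivial, every $L$-packet is a singleton, and it suffices to produce the bijection $\rec_{\GL_n}\colon\Irr(\GL_n)\to\Phi(\GL_n)$ with the four listed properties. Its existence is the theorem of Harris--Taylor and of Henniart (with Scholze's alternative proof), and compatibility with twisting is immediate from the construction. For the last two bullets one invokes the classical dictionary: a parameter of $\GL_n$ is discrete iff it is irreducible, which matches the essentially square-integrable representations, and it is bounded iff the attached representation is tempered, since the tempered representations of $\GL_n$ are exactly the irreducible parabolic inductions of discrete series and $\rec_{\GL_n}$ is compatible with parabolic induction. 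So the $\GL_n$ case is pure citation.

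For $G$ a split classical group one pushes everything through the standard embedding $\Std_G\colon\widehat{G}\hookrightarrow\GL_N(\C)$. To $\phi\in\Phi(G)$, first for $\phi$ bounded, one attaches a tempered $L$-packet $\Pi_\phi(G)$ together with a map $\Pi_\phi(G)\to\Irr(\mathcal{S}_\phi^{G})$, using the stabilised (twisted) trace formula for $(\GL_N,\theta)$ and the theory of endoscopy; this is Arthur's classification, and the general case follows from it by the Langlands quotient construction. The four properties then read off. Twisting $\pi$ by a character $\chi$ twists $\rec_G(\pi)$ by the central parameter $\chi\colon W_F\to Z_{\widehat{G}}$. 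The condition $\phi\in\Phi(G)_2$ is, by definition, that the image of $\phi$ lies in no proper Levi of $\widehat{G}$, equivalently that $Z_{\widehat{G}}(\phi)/Z_{\widehat{G}}$ is finite, and Arthur characterises the discrete packets as exactly those consisting of essentially square-integrable representations; the tempered/bounded equivalence is his analogous characterisation of tempered packets. Finally, the bijection $\Pi_\phi(G)\leftrightarrow\Irr(\mathcal{S}_\phi^{G})$ is Arthur's internal parametrisation, turned into an honest bijection once a Whittaker datum has been fixed to normalise it, in the refined form due to M\oe glin.

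The main obstacle is precisely the reason the statement is phrased as a conjecture. For a general split $G$ there is no construction of $\rec_G$ at all --- one would need, conjecturally, the stable trace formula for $G$ itself --- so the plan above applies only to $\GL_n$ and to the classical groups. And even in the classical case the delicate point is not the \emph{existence} of the packets but the \emph{precise} normalisation of the parametrisation $\Pi_\phi(G)\leftrightarrow\Irr(\mathcal{S}_\phi^{G})$: its dependence on the Whittaker datum, its behaviour under the isogeny relating $\SO_N$ and $\O_N$, and the bookkeeping relating $\mathcal{S}_\phi^{G}$ to the full component group $A_{\widehat{G}}(\phi)$. It is exactly this normalised form --- Arthur's classification together with M\oe glin's explicit description of it --- that is needed for the cuspidal-support constructions of the rest of the paper, and checking that it satisfies the listed properties in the normalisation used here is where the real work sits.
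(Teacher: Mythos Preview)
The paper does not prove this statement at all: it is explicitly stated as a \emph{conjecture} and functions as the ambient framework for everything that follows, not as a result to be established. You correctly identify this at the outset, so there is no ``paper's own proof'' to compare against.

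Your shift to explaining how the conjecture is a theorem in the two families actually used --- $\GL_n$ via Harris--Taylor, Henniart, and Scholze, and the split classical groups via Arthur's endoscopic classification refined by M\oe glin --- is accurate and matches exactly the authorities the paper invokes (see for instance the proof of Proposition~\ref{verifparamcusp}, the discussion around Th\'eor\`eme~\ref{thmmoeglin}, and the appeal to \cite{Xu:2015} in Th\'eor\`eme~\ref{suppcuspicompatible}). Your remarks on the Whittaker-datum dependence of the internal parametrisation are also to the point and anticipate the paper's own discussion in the section on changement de param\'etrage. In short, there is nothing to correct: the statement has no proof in the paper, and your account of its status and of the known cases is sound.
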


Le dernier point permet d'identifier une représentation irréductible de $G$ à un paramètre de Langlands enrichi $(\phi,\eta)$. De plus, le second point énonce que la caractérisation, en terme de paramètres de Langlands, d'être essentiellement de carré intégrable pour une représentation, ne se traduit que sur le paramètre de Langlands (pas sur la représentation du $S$-groupe). Ainsi, conjecturalement, un paramètre de Langlands de $G$ discret définit un $L$-paquet constitué uniquement de représentations essentiellement de carré intégrable et réciproquement, une représentation essentiellement de carré intégrable est associée à un paramètre discret. On a le même phénomène concernant les représentations tempérées. En revanche, il est remarquable qu'on puisse avoir dans un même $L$-paquet des représentations irréductibles supercuspidales et des représentations  irréductibles non-supercuspidales. Par conséquent, la caractérisation des paramètres de Langlands des représentations supercuspidales porte nécessairement une condition sur $\Irr(\mathcal{S}_{\phi}^{G})$. On reviendra sur ce point plus tard (voir conjecture \ref{conjcusp}). \\ 

\subsection{Paramètres de Langlands des supercuspidales des groupes classiques d'après M\oe glin}

À présent, on se propose de rappeler succinctement la notion de bloc de Jordan, introduite par M\oe glin, pour les représentations et les paramètres de Langlands des groupes classiques. Ceci nous permettra d'énoncer le théorème de M\oe glin sur le paramétrage de Langlands des représentations irréductibles supercuspidales des groupes classiques d'après la construction d'Arthur.\\

Soit $G$ l'un des groupes déployés $\Sp_{N}(F)$ ou $\SO_{N}(F)$. Les sous-groupes de Levi de $G$ sont de la forme $\GL_{n_1}(F) \times \ldots \times \GL_{n_r}(F) \times G'$, où $G'$ est un groupe de même type que $G$ mais de rang semi-simple inférieur. Par conséquent, si $\pi_{i}$ est une représentation de $\GL_{n_i}(F)$ et $\sigma'$ une représentation de $G'$, comme il est d'usage de noter, nous noterons $\pi_{1} \times \ldots \times \pi_{r} \rtimes \sigma'$ pour l'induite parabolique normalisée de $\pi_{1} \boxtimes \ldots \boxtimes \pi_{r} \boxtimes \sigma$, relativement à un sous-groupe parabolique standard (pour le sous-groupe de Borel des matrices triangulaires supérieurs et du tore maximal diagonal). Enfin, si $\pi$ est une représentation irréductible supercuspidale unitaire d'un groupe linéaire $\GL_{d}(F)$ et $a \geqslant 1$ un entier, la représentation induite $$\pi |\, \, \,|^{\frac{a-1}{2}} \times \pi |\, \, \,|^{\frac{a-3}{2}} \times \ldots \times \pi | \, \, \,|^{\frac{1-a}{2}}$$ admet une unique sous-représentation irréductible, c'est une représentation de la série discrète de $\GL_{ad}(F)$ et on la note $\St(\pi,a)$.\\ Soit $\sigma$ une représentation irréductible de la série discrète de $G$. On appelle bloc de Jordan de $\sigma$ et on note $\Jord(\sigma)$ l'ensemble des couples $(\pi,a)$ formés d'une représentation irréductible supercuspidale irréductible unitaire $\pi$ d'un groupe $\GL_{n_{\pi}}(F)$ et d'un entier $a \geqslant 1$ tel qu'il existe un entier $a' \geqslant 1$ et $$\left\{\begin{array}{l}
a \equiv a' \mod 2  \\  
\St(\pi,a) \rtimes \sigma \;\;  \text{irréductible} \\  
\St(\pi,a') \rtimes \sigma \;\;   \text{réductible}
\end{array} \right.$$ De plus, Arthur a associé à $\sigma'$ un caractère d'un certain groupe fini que l'on note $\varepsilon_{\sigma'}$ (voir \cite[2.5]{Moeglin:2011}).\\ 

Notons $\Std_{G} : \widehat{G} \hookrightarrow \GL_{N}(\C)$ la « représentation standard » de $\widehat{G}$, c'est-à-dire :
$$
\renewcommand{\arraystretch}{1.3}
\begin{array}{|c|c|}
\hline
G & \Std_{G}  \\ \hline\hline
\SO_{2n+1}(F) & \Sp_{2n}(\C) \hookrightarrow \GL_{2n}(\C) \\  \hline
\Sp_{2n}(F) & \SO_{2n+1}(\C) \hookrightarrow \GL_{2n+1}(\C) \\  \hline
\SO_{2n}(F) & \SO_{2n}(\C) \hookrightarrow \GL_{2n}(\C) \\ \hline 
\end{array}$$

Soit $\varphi \in \Phi(G)_{2}$ un paramètre discret de $G$. Considérons la décomposition en composantes isotypiques de $\Std_{G} \circ \varphi$ : $$\Std_{G} \circ \varphi=\bigoplus_{\pi \in I} \bigoplus_{a \in \Jord(\varphi)_{\pi}} \pi \boxtimes S_{a},$$ où $S_a$ est la représentation irréductible de dimension $a$ de $\SL_2(\C)$, $I$ est un ensemble de (classes de) représentations irréductibles de $W_F$ et pour $\pi \in I$, $\Jord(\varphi)_{\pi}$ l'ensemble des entiers $a \geqslant 1$ tels que $\pi \boxtimes S_a$ soit une sous-représentation irréductible de $\Std_{G} \circ \varphi$. La discrétion du paramètre $\varphi$ implique qu'il n'y a pas de multiplicité. On appelle bloc de Jordan de $\varphi$ l'ensemble $\Jord(\varphi)=\left\{(\pi,a) \mid \pi \in I, a \in \Jord(\varphi)_{\pi} \right\}$. On dira que $\Jord(\varphi)$ est sans trou si pour tout $(\pi,a) \in \Jord(\varphi)$ avec $a \geqslant 3$ alors $(\pi,a-2) \in \Jord(\varphi)$.\\ Pour $(\pi,a) \in \Jord(\varphi)$ est associé un élément $z_{\pi,a} \in Z_{\widehat{G}}(\varphi)$ qui agit par le scalaire $-1$ sur l'espace associé à $\pi \boxtimes S_a$ et par $1$ ailleurs. On notera également $z_{\pi,a}$ son image $A_{\widehat{G}}(\varphi)$. Le groupe $A_{\widehat{G}}(\varphi)$ est engendré par $$\left\{\begin{array}{ll}
z_{\pi,a} & \mbox{pour $(\pi,a) \in \Jord(\varphi)$ et $a$ pair}\\  
z_{\pi,a}z_{\pi,a'} &  \mbox{pour $(\pi,a), (\pi,a') \in \Jord(\varphi)$ sans hypothèse de parité sur $a$ et $a'$}   
\end{array} \right.$$ Pour $(\pi,a), (\pi,a') \in \Jord(\varphi)$, avec $a < a$, on dit qu'ils sont consécutifs si pour tout $b \in \llbracket a+1,a'-1 \rrbracket$, $(\pi,b) \not \in \Jord(\varphi)$. Enfin, on note $a_{\pi,\min}$ le plus petit entier $a \geqslant 1$ tel que $(\pi,a) \in \Jord(\varphi)$. Un caractère $\varepsilon$ de $A_{\widehat{G}}(\varphi)$ sera dit alterné si pour tout $(\pi,a), (\pi,a') \in \Jord(\varphi)$ consécutifs, $\varepsilon(z_{\pi,a}z_{\pi,a'} )=-1$ et si pour tout $(\pi,a_{\pi,\min}) \in \Jord(\varphi)$ avec $a_{\pi,\min}$ pair, $\varepsilon(z_{\pi,a_{\pi,\min}})=-1$.\\

À présent, nous pouvons énoncer le paramétrage des représentations irréductibles supercuspidales de $G$.

\begin{theo}[M\oe glin, {\cite[2.51]{Moeglin:2011}}]\phantomsection\label{thmmoeglin}
La classification de Langlands des séries discrètes de $G$ telle qu'établie par Arthur, induit une bijection entre l'ensemble des classes des représentations irréductibles supercuspidales de $G$ et l'ensemble des couples $(\varphi,\varepsilon)$ tel que $\Jord(\varphi)$ est sans trou et $\varepsilon$ est alternée ; la bijection $\sigma \mapsto (\varphi,\varepsilon)$ est définie par le fait que $\Jord(\varphi)=\Jord(\sigma)$ et $\varepsilon=\varepsilon_{\sigma}$.
\end{theo}

Ajoutons pour conclure un théorème dont on se servira par la suite qui permet de calculer les points de réductibilités d'une induite de supercuspidale.

\begin{theo}[M\oe glin, {\cite[\textsection 3.2]{Moeglin:2014}}]\phantomsection\label{thmmoeglinreduc}
En reprenant les notations précédentes, si $\pi$ est une représentation supercuspidale irréductible unitaire de $\GL_{d}(F)$ alors le réel $x \in \R^{+}$ tel que $\pi | \cdot |^{x} \rtimes \sigma$ est réductible,  vaut $$x = \left\{
\begin{array}{ll}
\frac{a_{\pi}+1}{2} & \mbox{si $\pi \in \Jord(\sigma)$, avec $a_{\pi}=\max \{a \in \N, (\pi ,a) \in \Jord(\sigma)\}$} \\
\frac{1}{2} & \mbox{si $\pi \not \in \Jord(\sigma)$ et ($\pi $ et $\widehat{G}$) sont de même type} \\
0 & \mbox{si $\pi \not \in \Jord(\sigma)$ et ($\pi $ et $\widehat{G}$) sont de type différent} 
\end{array} \right.$$
\end{theo}

\subsection{Centre de Bernstein}\phantomsection\label{centredebernstein}

Soit $G$ (les $F$-points d'un) groupe réductif connexe défini et déployé sur un corps $p$-adique $F$. Soit $P=MN$ un sous-groupe parabolique de $G$, de facteur de Levi $M$. Le foncteur d'induction normalisé $i_P^G : \Rep(M) \longrightarrow \Rep(G)$ et le foncteur de Jacquet $r_P^G : \Rep(G) \longrightarrow \Rep(M)$ sont deux foncteurs essentiels en théorie des représentations. En effet, l'induction permet de construire des représentations de $G$ à partir des représentations d'un sous-groupe de ce dernier. Dès lors, l'étude des représentations de $G$ se décompose en l'étude des représentations qu'on obtient par le procédé d'induction et l'étude des représentations qui ne sont pas obtenues ainsi. C'est l'objet de la théorie du centre de Bernstein. On obtient une décomposition de $\Rep(G)$ en produit de sous-catégories pleines indécomposables dont le centre (de chacune de ces sous-catégories) est isomorphe à l'algèbre des fonctions régulières du quotient d'un tore par l'action d'un groupe fini.\\

Notons $H_{G} : G \longrightarrow X_{*}(G)$ l'application défini par la formule $$\forall g \in G, \chi \in X^{*}(G), \,\,  \langle \chi,H_{G} \rangle = v_{F}(\chi(g)).$$ On note $G^{1}=\{g \in G \, \mid \, \forall \chi \in X^{*}(G), \, v_F(\chi(g))=0 \}$ le noyau de cette application et $\Lambda(G) \subset X_{*}(G)$ son image. Soient $\mathcal{X}(G)=\Hom(G/G^{1},\C^{\times})$ le groupe des caractères non-ramifiés de $G$ et $\mathfrak{a}_{G,\C}^{*}=X^{*}(G) \otimes_{\Z} \C$. On a une surjection \begin{equation*}
\begin{array}{ccc}
\mathfrak{a}_{G,\C}^{*} & \twoheadrightarrow &  \mathcal{X}(G) \\
\chi \otimes s & \longmapsto & \left[ g \mapsto | \chi(g) |^{s} \right]
\end{array},
\phantomsection\label{appcarnr}
\end{equation*} de noyau de la forme $2\pi i /(\log q) R$, où $R$ est un certain réseau. Ceci définit une structure de tore algébrique complexe sur $\mathcal{X}(G)$. De plus, pour tout $\nu \in \mathfrak{a}_{G,\C}^{*}$, on notera $\chi_{\nu}$ le caractère non-ramifié de $G$ associé par la surjection précédente. \\

Soit $\pi$ une représentation irréductible lisse de $G$. Soit $P$ un sous-groupe parabolique de $G$ de facteur de Levi $M$ tel que $r_{P}^{G}(\pi) \neq 0$ et minimal pour cette propriété. Soit $\sigma$ un sous-quotient irréductible de $r_{P}^{G}(\pi)$. Alors $\sigma$ est une représentation irréductible supercuspidale de $M$. De plus, si $(M',\sigma')$ est un couple obtenu de la même façon pour le choix d'un autre sous-groupe parabolique $P'$ vérifiant la même propriété, alors il existe $g \in G$ tel que $M'={}^g M$ et $\sigma \simeq \sigma'^{g}$ (voir \cite[Lemme VI.7.1]{Renard:2010}). On appelle \emph{support cuspidal de $\pi$} la classe de conjugaison par $G$ du couple $(M,\sigma)$.\\

Considérons l'ensemble des \emph{données cuspidales de $G$}, c'est-à-dire l'ensemble des couples $(M,\sigma)$, où $M$ est un sous-groupe de Levi de $G$ et $\sigma$ une représentation irréductible supercuspidale de $M$. On définit les relations d'équivalences suivantes sur l'ensemble des données cuspidales de $G$. Soient $(M_1,\sigma_1)$ et $(M_2,\sigma_2)$ deux données cuspidales de $G$. On dit qu'elles sont : \begin{enumerate}[label=(\roman*)]
\item équivalentes s'il existe $g \in G$ tel que : ${}^g M_1 = M_2 $ et $\sigma_2 \simeq \sigma_1^{g}$ ;
\item inertiellement équivalentes s'il existe $g \in G$ et $\chi_2 \in \mathcal{X}(M_2)$ tels que : ${}^g M_1 = M_2 $ et $\sigma_2 \simeq \sigma_1^{g} \otimes \chi_2$.
\end{enumerate}

On appelle \emph{paire cuspidale} (resp. \emph{paire inertielle}) de $G$ une classe d'équivalence pour la relation (i) (resp. (ii)) et on notera $\Omega(G)$ (resp. $\mathcal{B}(G)$) l'ensemble des classes d'équivalence pour la relation (i) (resp. (ii)). De plus, on notera $(M,\sigma) \in \Omega(G)$ la paire cuspidale (resp. $[M,\sigma] \in \mathcal{B}(G)$ la paire inertielle) définie à partir de $M$ et $\sigma$.\\

On a vu précédemment comment associer à toute représentation irréductible $\pi$ de $G$ son support cuspidal $(M,\sigma) \in \Omega(G)$ : c'est la classe de conjugaison par $G$ d'une donnée cuspidale $(M,\sigma)$ telle que $\sigma$ soit un facteur de composition de $r_P^G(\pi)$, où $P$ est un sous-groupe parabolique de facteur de Levi $M$ ou de façon équivalente, telle que $\pi$ soit un facteur de composition de $i_P^G(\sigma)$. Ceci définit des applications de support cuspidal et support inertiel : $$\Sc : \Irr(G) \longrightarrow \Omega(G) \quad \text{et} \quad \Si : \Irr(G) \longrightarrow \Omega(G) \twoheadrightarrow \mathcal{B}(G).$$

Soit $\mathfrak{s}=[M,\sigma] \in \mathcal{B}(G)$ une paire inertielle de $G$. On définit les objets suivants : \begin{itemize}
\item un tore algébrique $T_{\mathfrak{s}}=\{ \rho \otimes \chi, \chi \in \mathcal{X}(M)\} \simeq \mathcal{X}(M)/\mathcal{X}(M)(\rho)$, où $\mathcal{X}(M)(\rho)=\{ \chi \in \mathcal{X}(M) \mid \rho \simeq \rho \otimes \chi \}$ ;
\item un groupe fini $W_{\mathfrak{s}}=N_G(\mathfrak{s})/M=\{g \in N_G(M) \mid \exists \chi \in \mathcal{X}(M), \rho^{w} \simeq \rho \otimes \chi \}/M$.
\end{itemize}

Le groupe fini $W_\mathfrak{s}$ agit sur $T_\mathfrak{s}$ et le quotient $T_\mathfrak{s} / W_\mathfrak{s}$ s'identifie aux paires cuspidales de $G$ dans $\mathfrak{s}$, c'est-à-dire à la fibre au-dessus de $\mathfrak{s}$ par la projection $\Omega(G) \twoheadrightarrow \mathcal{B}(G)$. Ainsi, $\Omega(G)$ est muni d'une structure de variété algébrique dont les composantes connexes, indexées par $\mathfrak{s} \in \mathcal{B}(G)$, sont des quotients de tores algébriques complexes par l'action de groupes finis (voir \cite[Théorème VI.7.1]{Renard:2010}). Ceci s'écrit $$\Omega(G) = \bigsqcup_{\mathfrak{s} \in \mathcal{B}(G)} T_{\mathfrak{s}}/W_{\mathfrak{s}}.$$

L'application $\Si$ induit une partition de $\Irr(G)$ suivant le support inertiel $$\Irr(G)=\bigsqcup_{\mathfrak{s} \in \mathcal{B}(G)} \Irr(G)_{\mathfrak{s}},$$ où $\Irr(G)_{\mathfrak{s}}=\Si^{-1}(\mathfrak{s})$. Plus concrètement, $\Irr(G)_{\mathfrak{s}}$ est l'ensemble des sous-quotients irréductibles des induites $i_P^G(\rho \otimes \chi)$, pour $\chi$ parcourant $\mathcal{X}(M)$, c'est-à-dire $$\Irr(G)_{\mathfrak{s}}=\bigsqcup_{\rho \otimes \chi \in T_\mathfrak{s}/W_\mathfrak{s}} \mathcal{JH}(i_P^G(\rho \otimes \chi)),$$ où $\mathcal{JH}(i_P^G(\rho \otimes \chi))$ désigne l'ensemble des sous-quotients irréductibles de $i_P^G(\rho \otimes \chi)$.\\

A présent, pour tout $\mathfrak{s} \in \mathcal{B}(G)$, notons $\Rep(G)_{\mathfrak{s}}$ la sous-catégorie pleine de $\Rep(G)$ des représentations dont tous les sous-quotients irréductibles sont dans $\Irr(G)_{\mathfrak{s}}$, $\mathfrak{Z}(G)$ (resp. $\mathfrak{Z}(G)_{\mathfrak{s}}$) le centre de la catégorie $\Rep(G)$ (resp. $\Rep(G)_{\mathfrak{s}}$).

\begin{theo} [Bernstein,{{\cite[2.10, 2.13]{Bernstein:1984}}, \cite[Théorème VI.7.2, VI.10.3]{Renard:2010}}]
Toute représentation $\pi$ de $G$ est scindée selon $\Omega(G)$. La catégorie $\Rep(G)$ se décompose en produit de catégories $$\Rep(G)=\prod_{\mathfrak{s} \in \mathcal{B}(G)}  \Rep(G)_{\mathfrak{s}}.$$ De plus, $$\mathfrak{Z}(G)_{\mathfrak{s}} \simeq \C[T_{\mathfrak{s}}/W_{\mathfrak{s}}].$$ Ainsi, $$\mathfrak{Z}(G) \simeq \C[\Omega(G)]$$
\end{theo}

\subsection{Centre de Bernstein stable}

On garde les notations de la section précédente, ainsi $G$ désigne un groupe réductif connexe défini et déployé sur un corps $p$-adique $F$. On dispose de deux paramétrages des représentations irréductibles de $G$. L'un par la décomposition de Bernstein, l'autre par la correspondance (conjecturale en général) de Langlands. \textit{A priori}, il n'y a pas de bonne compatibilité entre ces paramétrages. Afin d'énoncer un analogue des résultats qu'il a obtenu dans le cas réel au cas $p$-adique, Vogan introduit un analogue « galoisien » du centre de Bernstein. Le but de cette partie est de rappeler la construction de cet analogue « galoisien » du centre de Bernstein par Haines, puis de le relier au centre de Bernstein via la correspondance de Langlands. On s'intéressera à décrire l'analogue du tore et du groupe fini. Pour cela, on doit définir les analogues « galoisiens » d'une représentation supercuspidale, du tore des caractères non ramifiés d'un Levi, du groupe fini qui agit sur le tore et enfin du support cuspidal. On suit l'article de Haines \cite{Haines:2014} dont on a modifié quelques notations et définitions. En particulier, on ne traite que le cas où $G$ est déployé alors que \cite{Haines:2014} traite le cas général (c'est-à-dire non quasi-déployé). Les définitions changent aussi légèrement, voir \cite[Rem. 5.3.5 et 5.3.6]{Haines:2014} et la note de bas de page p.10.

\begin{defi}[{\cite[5.1]{Haines:2014}}]
On appelle \emph{caractère infinitésimal} de $G$, d'un paramètre de Langlands de $G$ de la forme $$\lambda : W_F \longrightarrow \widehat{G},$$ ou sa classe de conjugaison par $\widehat{G}$ et on le notera $(\lambda)_{\widehat{G}}$.
\end{defi}

Pour tout $w \in W_F$, on note $d_w=\left( \begin{smallmatrix} |w|^{1/2} & 0 \\0 &  |w|^{-1/2} \end{smallmatrix} \right) \in \SL_2(\C)$, où $|\cdot |$ désigne la valeur absolue définie pour tout $w \in W_F$ par $|w|=q^{-\nu_F(w)}$ et $\nu_F : W_F \longrightarrow \Z$ la valuation qui envoie tout Frobenius géométrique sur $1$. À tout paramètre de Langlands de $G$ est associé un caractère infinitésimal de la manière suivante. 

\begin{defi}
Soit $\phi : W_F' \longrightarrow \widehat{G}$ un paramètre de Langlands de $G$. On appelle \emph{caractère infinitésimal} de $\phi$, le morphisme $\lambda_{\phi} : W_F \longrightarrow \widehat{G}$ (ou sa classe de conjugaison par $\widehat{G}$) défini pour tout $w \in W_F$ par $\lambda_{\phi}(w)=\phi(w,d_w)$.
\end{defi}

\begin{rema}
Tel qu'il est défini ici, le caractère infinitésimal d'un paramètre de Langlands correspond à la restriction au groupe de Weil du paramètre de Langlands pour le groupe de Weil-Deligne « originel ». Nous reviendrons sur ce point dans la section \ref{groupesweildeligne}
\end{rema}

Soit $M$ un sous-groupe de Levi de $G$ et posons $\Lambda=\left(X^{*}(Z_{\widehat{M}})_{I_F}\right)^{\langle \Fr \rangle}=X^{*} \left( \left({Z_{\widehat{M}}}^{I_F}\right)_{\langle \Fr \rangle} \right)$. Dans \cite{Kottwitz:1997}, Kottwitz a défini un morphisme surjectif $\kappa_M : M \twoheadrightarrow \Lambda$, tel que $M^{1}$ (voir section \ref{centredebernstein}) est le noyau du morphisme $M \twoheadrightarrow \Lambda/\Lambda_{\mathrm{tors}}$.  Par suite, on obtient une bijection $$\mathcal{X}(M) \leftrightarrow \left(\left({Z_{\widehat{M}}}^{I_F}\right)_{\langle \Fr \rangle}\right)^{\circ}.$$ Posons $\mathcal{X}(\widehat{M})=H^{1}\left(\langle \Fr \rangle, \left({Z_{\widehat{M}}}^{I_F}\right)^{\circ}\right)$. Puisqu'on suppose $G$ déployé, on a alors $\mathcal{X}(\widehat{M})=\Hom(\langle \Fr \rangle,Z_{\widehat{M}}^{\circ})$. Tout élément de $\mathcal{X}(\widehat{M})$ est identifié à un morphisme $W_F \longrightarrow \widehat{M}$, trivial sur $I_F$, à valeur dans $Z_{\widehat{M}}^{\circ}$. On obtient ainsi une bijection entre $\mathcal{X}(M)$ et $\mathcal{X}(\widehat{M})$, compatible avec la correspondance de Langlands pour les caractères d'après \cite[4.5.2]{Kaletha:2012}.

\begin{defi}[{\cite[5.3.3]{Haines:2014}}]
On appelle \emph{donnée cuspidale de $\widehat{G}$} un couple $(\widehat{M},\lambda)$ formé d'un sous-groupe de Levi $\widehat{M}$ de $\widehat{G}$ et d'un paramètre de Langlands $\lambda : W_F \longrightarrow \widehat{M}$ discret de $M$. On définit les relations d'équivalences suivantes sur l'ensemble des données cuspidales de $\widehat{G}$. Soient $(\widehat{M}_1,\lambda_1)$ et $(\widehat{M}_2,\lambda_2)$ deux données cuspidales de $\widehat{G}$. On dit qu'elles sont : \begin{enumerate}[label=(\roman*)]
\item équivalentes s'il existe $g \in \widehat{G}$ tel que : ${}^g  \widehat{M}_1 =\widehat{M}_2 \,\, \text{et} \,\, \lambda_2= {}^g \lambda_1$ ;
\item inertiellement équivalentes s'il existe $g \in \widehat{G}$ et $\chi_2  \in \mathcal{X}(\widehat{M_2})$ tels que : ${}^g \widehat{M}_1=\widehat{M}_2 \,\, \text{et} \,\, \lambda_2 ={}^g \lambda_1 \chi_2$.
\end{enumerate}
\end{defi}

On appelle \emph{paire cuspidale} (resp. \emph{paire inertielle}) de $\widehat{G}$ une classe d'équivalence pour la relation (i) (resp. (ii)) et on notera $\Omega^{\st}(G)$ (resp. $\mathcal{B}^{\st}(G)$) l'ensemble des classes d'équivalence pour la relation (i) (resp. (ii)). De plus, on notera $(\widehat{M},\lambda) \in \Omega^{\st}(G)$ la paire cuspidale (resp. $[\widehat{M},\lambda] \in \mathcal{B}^{\st}(G)$ la paire inertielle) définie à partir de $\widehat{M}$ et $\lambda$.\\

À tout paramètre de Langlands on peut associer son \emph{support cuspidal stable} de la façon suivante. Soit $\phi \in \Phi(G)$ un paramètre de Langlands de $G$. Considérons son caractère infinitésimal $\lambda_{\phi}$. Le centralisateur dans $\widehat{G}$ d'un tore maximal de $Z_{\widehat{G}}(\lambda_{\phi})^{\circ}$ est un sous-groupe de Levi $\widehat{M}_{\lambda_{\phi}}$ de $\widehat{G}$ qui contient l'image de $\lambda_{\phi}$ et qui est minimal pour cette propriété. Tous les sous-groupes de Levi de $\widehat{G}$ qui contiennent minimalement l'image de $\lambda_{\phi}$ sont obtenus de cette façon. En particulier, ils sont tous conjugués sous l'action de $Z_{\widehat{G}}(\lambda_{\phi})^{\circ}$. Par conséquent, l'application $$ \cSc^{\st} : \begin{array}[t]{rcl} \Phi(G) & \longrightarrow & \Omega^{\st}(G) \\ \phi & \longmapsto & (\widehat{M}_{\lambda_{\phi}},\lambda_{\phi}) \end{array},$$ est bien définie et on notera $\Sil^{\st}$ la composée de $\cSc^{\st}$ avec la projection $\Omega^{\st}(G) \twoheadrightarrow \mathcal{B}^{\st}(G)$.

Comme pour $\Omega(G)$, l'ensemble $\Omega^{\st}(G)$ est muni d'une structure de variété algébrique dont les composantes connexes sont indexées par $\mathcal{B}^{\st}(G)$ et sont des quotients de tores complexes par l'action de groupes finis.

Soit $\wii=[\widehat{M},\lambda] \in \mathcal{B}^{\st}(G)$ une paire inertielle de $\widehat{G}$. On définit les objets suivants : \begin{itemize}
\item un tore algébrique $\mathcal{T}_{\ci}=\left\{(\lambda \chi)_{\widehat{M}}, \, \chi \in \mathcal{X}(\widehat{M})\right\} \simeq \mathcal{X}(\widehat{M})/\mathcal{X}(\widehat{M})(\lambda)$, où $\mathcal{X}(\widehat{M})(\lambda)=\{\chi \in \mathcal{X}(\widehat{M}) \mid (\lambda)_{\widehat{M}}=(\lambda \chi)_{\widehat{M}} \}$ ;
\item un groupe fini $\mathcal{W}_\ci=\left\{ w \in N_{\widehat{G}}(\widehat{M}) \mid \exists \chi \in  \mathcal{X}(\widehat{M}), ({}^w \lambda)_{\widehat{M}}=(\lambda \chi)_{\widehat{M}} \right\}/\widehat{M}.$
\end{itemize}

Le groupe fini $\mathcal{W}_\ci$ agit sur $\mathcal{T}_{\ci}$ et le quotient $\mathcal{T}_\ci/\mathcal{W}_\ci$ s'identifie aux paires cuspidales de $\widehat{G}$ dans $\wii$, c'est-à-dire à la fibre au-dessus de $\wii$ par la projection naturelle $\Omega^{\st}(G) \twoheadrightarrow \mathcal{B}^{\st}(G)$. Ainsi, $\Omega^{\st}(G)$ est muni d'une structure de variété algébrique dont les composantes connexes, indexées par $\wii \in \mathcal{B}^{\st}(G)$, sont des quotients de tores algébriques complexes par l'action de groupes finis. Ceci s'écrit $$\Omega^{\st}(G) = \bigsqcup_{\wii \in \mathcal{B}^{\st}(G)} \mathcal{T}_\ci/\mathcal{W}_\ci.$$

L'application $\Sil^{\st}$ induit une partition de $\Phi(G)$ suivant le support inertiel stable $$\Phi(G)= \bigsqcup_{ \ci \in \mathcal{B}^{\st}(G)} \Phi(G)_{\ci},$$ où $\Phi(G)_{\ci}={\Sil^{\st}}^{-1}(\wii)$ désigne l'ensemble des classes de paramètres de Langlands de $G$ qui admettent un support cuspidal stable dans $\wii$.

\begin{defi}[{\cite[p.15]{Haines:2014}}]
On appelle centre de Bernstein stable de $G$ et on note $\mathfrak{Z}^{\st}(G)$ l'anneau des fonctions régulières sur $\Omega^{\st}(G)$ : $$\mathfrak{Z}^{\st}(G)=\C[\Omega^{\st}(G)].$$
\end{defi} 

La terminologie « stable » trouve son origine dans la volonté de faire agir un sous-anneau du centre de Bernstein sur les distributions stables de $G$ (voir \cite[\textsection 7]{Vogan:1993}, \cite[Rem. 5.5.4]{Haines:2014} et \cite[\textsection 6]{Scholze:2013}).\\ 

\begin{conj}[de compatibilité de la correspondance de Langlands avec l'induction parabolique, {Vogan \cite[7.18]{Vogan:1993}, Haines \cite[5.2.2]{Haines:2014}} ]\phantomsection\label{conjindpar}
Soit $P$ un sous-groupe parabolique de $G$ de facteur de Levi $M$. Soient $\sigma$ une représentation irréductible supercuspidale de $M$ et $\pi$ un sous-quotient irréductible de $i_P^G(\sigma)$. Soient $\phi_\sigma : W_F' \longrightarrow \widehat{M}$ et $\phi_\pi : W_F' \longrightarrow \widehat{G}$ les paramètres de Langlands respectifs de $\sigma$ et $\pi$. À $\widehat{G}$-conjugaison près, on a un plongement naturel $\widehat{M} \hookrightarrow \widehat{G}$ et on peut voir $\phi_\sigma : W_F' \longrightarrow \widehat{M} \hookrightarrow \widehat{G}$ comme un paramètre de Langlands de $G$. La correspondance de Langlands devrait être compatible avec l'égalité des caractères infinitésimaux suivante : $$(\lambda_{\phi_\pi})_{\widehat{G}}=(\lambda_{\phi_\sigma})_{\widehat{G}}.$$
\end{conj}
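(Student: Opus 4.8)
Cette conjecture étant formulée en termes de la correspondance de Langlands, on ne peut l'aborder que dans les cas où celle-ci est disponible : pour $\GL_n$ et pour les groupes classiques déployés $\SO_N$, $\Sp_N$. Pour $\GL_n$, l'énoncé est presque immédiat : $\mathcal{S}_\phi^{\GL_n}$ est trivial, $\rec_{\GL_n}$ est une bijection compatible avec l'induction parabolique au sens fort (Harris--Taylor, Henniart), et si $\pi$ est un sous-quotient de $\tau_1\times\cdots\times\tau_r$ avec les $\tau_i$ supercuspidales, alors $\restriction{\phi_\pi}{W_F}$ est la somme directe des $\restriction{\phi_{\tau_i}}{W_F}$, d'où l'égalité des caractères infinitésimaux après le décalage par les $d_w$. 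L'essentiel de la démonstration concerne donc les groupes classiques, et le plan est de le ramener aux constructions développées dans l'article.

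\textbf{Réduction à la compatibilité aux supports cuspidaux.} Soient $G$ classique déployé, $P=MN$ un sous-groupe parabolique, $\sigma$ une représentation supercuspidale de $M$ et $\pi$ un sous-quotient irréductible de $i_P^G(\sigma)$. Comme la correspondance de Langlands est connue pour $M$ (produit de groupes linéaires et d'un groupe classique plus petit) et que la conjecture sur le paramétrage des supercuspidales y est vérifiée (proposition \ref{verifparamcusp}), le paramètre enrichi $\rec_M^e(\sigma)=(\varphi,\varepsilon)$ est un paramètre de Langlands enrichi cuspidal de $M$, que l'on voit dans $\widehat{G}$ via $\widehat{M}\hookrightarrow\widehat{G}$. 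Posons $\mathfrak{s}=[M,\sigma]\in\mathcal{B}(G)$ et soit $\cj=[\widehat{M},\varphi,\varepsilon]\in\mathcal{B}_e^{\st}(G)$ la classe inertielle correspondante (théorème \ref{verifcompatibilite}). D'après le théorème \ref{theorembijection}, $\rec_G^e$ induit une bijection $\Irr(G)_{\mathfrak{s}}\simeq\Phi_e(G)_{\cjp}$ faisant commuter le diagramme des applications de support cuspidal, à valeurs dans $T_{\mathfrak{s}}/W_{\mathfrak{s}}\simeq\mathcal{T}_{\cjp}/\mathcal{W}_{\cjp}$ (l'isomorphisme du théorème \ref{verifcompatibilite} envoyant point base sur point base, grâce à la compatibilité de $\rec$ à la torsion par un caractère). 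Or, $\pi$ étant un sous-quotient de $i_P^G(\sigma)$, on a $\Sc(\pi)=(M,\sigma)\in\Omega(G)$, c'est-à-dire que l'image de $\pi$ dans $T_{\mathfrak{s}}/W_{\mathfrak{s}}$ est le point base ; la commutativité du diagramme donne alors $\cSc(\phi_\pi,\eta_\pi)=(\widehat{M},\varphi,\varepsilon)$ à $\widehat{G}$-conjugaison près : le support cuspidal galoisien du paramètre enrichi de $\pi$ est exactement le paramètre enrichi de $\sigma$.

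\textbf{Du support cuspidal galoisien au caractère infinitésimal.} Il reste à passer de $\cSc$ à l'invariant « stable » de Haines, à savoir le caractère infinitésimal $\lambda_\phi : w\mapsto\phi(w,d_w)$. Le point-clé est que $\cSc$ raffine $\cSc^{\st}$ : si $\cSc(\phi,\eta)=(\widehat{L},\psi,\delta)$, alors $\lambda_\phi$ et $\lambda_\psi$ (ce dernier vu dans $\widehat{G}$) sont $\widehat{G}$-conjugués, c'est-à-dire $\cSc^{\st}(\phi)=(\widehat{M}_{\lambda_\psi},\lambda_\psi)$. C'est là le rôle des cocaractères de correction (\ref{cocaractercorrection}) : lorsqu'on remplace la paire $(\restriction{\phi}{W_F},\restriction{\phi}{\SL_2})$ par la paire cuspidale que lui associe la correspondance de Springer généralisée dans $Z_{\widehat{G}}(\restriction{\phi}{W_F})$, la partie $W_F$ est tordue par un cocaractère $\chi_c$ et la partie $\SL_2$ remplacée par un $\SL_2$ « plus petit » $\restriction{\psi}{\SL_2}$ ; le cocaractère $c$ est choisi précisément pour que $\chi_c(w)\,\restriction{\psi}{\SL_2}(d_w)$ demeure $\widehat{G}$-conjugué à $\restriction{\phi}{\SL_2}(d_w)$, donc pour que $\lambda_\psi=\lambda_\phi$. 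Appliqué à $(\phi_\pi,\eta_\pi)$, dont on vient d'identifier le support cuspidal à $(\widehat{M},\varphi,\varepsilon)$, ceci fournit $(\lambda_{\phi_\pi})_{\widehat{G}}=(\lambda_\varphi)_{\widehat{G}}$ ; comme $\varphi=\phi_\sigma$ est le paramètre de $\sigma$, on conclut $(\lambda_{\phi_\pi})_{\widehat{G}}=(\lambda_{\phi_\sigma})_{\widehat{G}}$, et la compatibilité de la correspondance de Langlands avec l'induction parabolique s'ensuit.

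\textbf{Difficulté principale.} L'obstacle est double. D'une part, la compatibilité de la bijection du théorème \ref{theorembijection} avec les applications de support cuspidal : c'est le résultat central de l'article, qui repose sur la description d'Arthur et M\oe glin des paquets des groupes classiques, sur la construction intrinsèque du support cuspidal d'un paramètre de Langlands enrichi (théorème \ref{theoremesupportcuspidal}) et sur la concordance des actions de $W_{\mathfrak{s}}$ et de $\mathcal{W}_{\cjp}$ (théorème \ref{verifcompatibilite}). D'autre part, l'affirmation que $\cSc$ raffine $\cSc^{\st}$ : elle demande de suivre, à travers la correspondance de Springer généralisée pour des groupes non connexes (étendue dans l'appendice) et les constructions de Lusztig, l'effet de la procédure sur la restriction à $W_F$, ce qui est exactement ce que codent les cocaractères de correction. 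Une fois ces deux points établis, la conjecture pour $\GL_n$ et les groupes classiques en résulte, et avec elle l'énoncé du théorème \ref{compatibiliteinductionpreuve}.
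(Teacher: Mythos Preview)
Ta démarche est correcte et suit essentiellement le même raisonnement que l'article. Il faut toutefois noter que l'énoncé en question est une \emph{conjecture} : l'article ne la démontre pas à l'endroit où elle est énoncée, mais en établit la validité pour les groupes linéaires et classiques déployés au théorème~\ref{compatibiliteinductionpreuve}. La preuve donnée dans l'article y est très brève : elle invoque le fait que le théorème~\ref{theorembijection} construit la bijection $\Irr(G)_{\omega}\leftrightarrow\Phi_e(G)_{\widehat{\omega}}$ caractère infinitésimal par caractère infinitésimal (via la chaîne d'équivalences $\Irr(G)_{\omega}\leftrightarrow\Irr(\mathcal{H}_{\mathfrak{s}}')_{\omega}\leftrightarrow\Irr(\mathbb{H}_{\mathfrak{s},\zeta,\mu_\zeta}')_{\overline{\omega}}\leftrightarrow\Phi_e(G)_{\cjp,\widehat{\omega}}$), de sorte que la compatibilité aux supports cuspidaux --- et donc aux caractères infinitésimaux --- est automatique par construction. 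Tu explicites davantage ce point, ainsi que le passage de $\cSc$ à $\cSc^{\st}$ via la préservation $(\lambda_\phi)_{\widehat{G}}=(\lambda_\varphi)_{\widehat{G}}$ déjà affirmée au théorème~\ref{theoremesupportcuspidal}, mais l'architecture de l'argument est identique à celle de l'article.
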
 
 
\begin{defi}[{\cite[5.1 \& 5.3.3]{Haines:2014}}]
Soit $\ci=[\widehat{M},\lambda] \in \mathcal{B}^{\st}(G)$ une paire inertielle de $\widehat{G}$. On appelle \emph{classe infinitésimale} (ou \emph{paquet infinitésimal}) de $\lambda$ la réunion des $L$-paquets de paramètres admettant $\lambda$ pour caractère infinitésimal et on note $$\Pi_{\lambda}^{+}(G)=\bigsqcup_{\substack{\phi \in \Phi(G) \\ (\lambda_{\phi})_{\widehat{G}}=(\lambda)_{\widehat{G}}}} \Pi_{\phi}(G).$$ On appelle \emph{paquet inertiel de $\ci$} la réunion des $L$-paquets de paramètres admettant $\lambda$ pour caractère infinitésimal à un cocaractère non-ramifié près de $\widehat{M}$ et on note $$\Pi_{\ci}^{+}(G)=\bigsqcup_{\substack{\phi \in \Phi(G) \\ (\lambda_{\phi})_{\widehat{G}}=(\lambda\chi)_{\widehat{G}} \\ \chi \in \mathcal{X}(\widehat{M})}} \Pi_{\phi}(G)=\bigsqcup_{(\lambda\chi)_{\widehat{M}} \in \mathcal{T}_{\ci}/\mathcal{W}_{\ci}} \Pi_{\lambda\chi}^{+}(G).$$
\end{defi}

\section{Centre de Bernstein dual}

Dans cette section nous allons conjecturer une propriété qui devrait caractériser les paramètres de Langlands enrichis des représentations supercuspidales des groupes déployés. Le principal ingrédient de cette conjecture et des constructions qui suivront est la correspondance de Springer généralisée. Nous énonçons donc notre conjecture sur le paramétrage des représentations supercuspidales, nous vérifions que cette conjecture est vraie pour les cas connus de la correspondance de Langlands, puis nous définissons une application de support cuspidal sur les paramètres de Langlands enrichis. Ainsi, nous sommes en mesure de définir l'analogue galoisien de la décomposition et du centre de Bernstein : le centre de Bernstein dual.\\

En reprenant les notations de la conjecture \ref{conjindpar}, la conjecture de compatibilité de la correspondance de Langlands avec l'induction parabolique implique que le caractère infinitésimal de $\phi_\pi$ ne dépend que du support cuspidal de $\pi$. Bien que $\sigma$ soit supercuspidale, la restriction à $\SL_2(\C)$ de $\phi_\sigma$ n'est pas nécessairement triviale. Cette situation n'a pas lieu pour $\GL_n(F)$ et $\SL_n(F)$ mais dans $\GSp_4(F), \Sp_4(F)$ et $\SO_5(F)$ elle peut se produire. Donnons un exemple.

\begin{exem} Considérons le groupe $G=\SO_5(F)$ et le paramètre $\varphi \in \Phi(G)$ donné par $\Std_{G} \circ \varphi=\mu_1 \boxtimes S_2 \oplus \mu_2 \boxtimes S_2$ de $\SO_5(F)$, où $\mu_1$ et $\mu_2$ sont deux caractères quadratiques distincts de $W_F$ et $S_2$ la représentation irréductible de dimension $2$ de $\SL_2(\C)$. Il définit un $L$-paquet constitué d'une représentation de carré intégrable qui est un sous-quotient irréductible de l'induite $\nu^{1/2}\mu_1 \times \mu_2 \nu^{1/2} \rtimes 1$ et d'une représentation supercuspidale de $\SO_5(F)$. \end{exem}

Reprenons les notations de la section précédentes et soit $(\widehat{M},\lambda) \in \Omega^{\st}(G)$. Notons $\mathcal{L}(\widehat{G})$ l'ensemble des (classes de conjugasion par $\widehat{G}$ de) sous-groupes de Levi de $\widehat{G}$. Pour tout sous-groupe de Levi $L$ de $G$ et pour tout paramètre de Langlands $\varphi \in \Phi(L)$ de $L$, notons $\Pi_{\varphi}(L)_{\cusp}$ les représentations irréductibles supercuspidales dans le paquet $\Pi_{\varphi}(L)$.\\

Supposons que la conjecture \ref{conjindpar} soit vraie. Soient $\pi \in \Pi_{\lambda}^{+}(G)$ et $\phi_{\pi} \in \Phi(G)$ le paramètre de Langlands de $\pi$. Soient $(L,\tau) \in \Omega(G)$ le support cuspidal de $\pi$ et $\varphi_{\tau}\in \Phi(L)$ le paramètre de Langlands de $\tau$. Par hypothèse, $(\lambda_{\phi_{\pi}})_{\widehat{G}}=(\lambda_{\varphi_{\tau}})_{\widehat{G}}=(\lambda)_{\widehat{G}}$ et ainsi $\mathcal{JH}(i_P^G(\tau)) \subseteq \Pi_{\lambda}^{+}(G)$. On obtient donc $$\Pi_{\lambda}^{+}(G)\subseteq \bigsqcup_{\widehat{L} \in \mathcal{L}(\widehat{G})} \bigsqcup_{\substack{ \varphi \in \Phi(L)\\ (\lambda_{\varphi})_{\widehat{G}}=(\lambda)_{\widehat{G}}}} \bigsqcup_{\sigma \in \Pi_{\varphi}(L)_{\cusp}} \mathcal{JH}(i_{LU}^{G}(\sigma)).$$ L'inclusion réciproque étant immédiate, et on obtient : 

\begin{prop}\phantomsection\label{compatibilitebernsteinLLC}
La conjecture de compatibilité de la correspondance de Langlands pour $G$ avec l'induction parabolique (conjecture \ref{conjindpar}) est équivalente à ce que pour tout sous-groupe de Levi $\widehat{M}$ de $\widehat{G}$, pour tout caractère infinitésimal discret $\lambda : W_F \longrightarrow \LL{M}$, on a : $$\Pi_{\lambda}^{+}(G) = \bigsqcup_{\widehat{L} \in \mathcal{L}(\widehat{G})} \bigsqcup_{\substack{ \varphi \in \Phi(L)\\ (\lambda_{\varphi})_{\widehat{G}}=(\lambda)_{\widehat{G}}}} \bigsqcup_{\sigma \in \Pi_{\varphi}(L)_{\cusp}} \mathcal{JH}(i_{LU}^{G}(\sigma)).$$ 
\end{prop}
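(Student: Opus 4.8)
The plan is to prove the asserted equivalence by establishing the two implications, in each case bookkeeping the \emph{cuspidal support} of a representation against the \emph{infinitesimal character} of its parameter. For $\tau$ an irreducible representation of a Levi we write $\phi_\tau$ for its Langlands parameter and $\lambda_{\phi_\tau}$ for the associated infinitesimal character, and we recall that $\Pi_\lambda^{+}(G)$ is by definition the set of $\pi \in \Irr(G)$ with $(\lambda_{\phi_\pi})_{\widehat{G}} = (\lambda)_{\widehat{G}}$. The only non-formal ingredient is the observation, needed in the converse direction, that a Levi of $\widehat{G}$ minimally containing the image of $\lambda_{\phi_\sigma}$ can be realised inside $\widehat{M}$; the rest is unwinding definitions.

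Assume first Conjecture~\ref{conjindpar}. The inclusion of $\Pi_\lambda^{+}(G)$ into the right-hand side is the computation already made before the statement: if $\pi \in \Pi_\lambda^{+}(G)$ has cuspidal support $(L,\tau) \in \Omega(G)$, then $\tau$ is supercuspidal of $L$, $\pi \in \mathcal{JH}(i_{LU}^{G}(\tau))$, and the conjecture gives $(\lambda_{\varphi_\tau})_{\widehat{G}} = (\lambda_{\phi_\pi})_{\widehat{G}} = (\lambda)_{\widehat{G}}$, so $\pi$ belongs to the indexed union. Conversely, let $\pi \in \mathcal{JH}(i_{LU}^{G}(\sigma))$ with $\widehat{L} \in \mathcal{L}(\widehat{G})$, $\sigma \in \Pi_\varphi(L)_{\cusp}$ and $(\lambda_\varphi)_{\widehat{G}} = (\lambda)_{\widehat{G}}$; then $\sigma$ is supercuspidal of $L$ with $\phi_\sigma = \varphi$, and applying Conjecture~\ref{conjindpar} to the pair $(L,\sigma)$ gives $(\lambda_{\phi_\pi})_{\widehat{G}} = (\lambda_{\phi_\sigma})_{\widehat{G}} = (\lambda)_{\widehat{G}}$, i.e. $\pi \in \Pi_\lambda^{+}(G)$. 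This yields the displayed equality.

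For the converse, assume the displayed equality for every Levi $\widehat{M}$ of $\widehat{G}$ and every discrete $\lambda : W_F \longrightarrow \widehat{M}$, and let $P = MN$ be a parabolic of $G$, $\sigma$ a supercuspidal of $M$ with parameter $\phi_\sigma$ (regarded in $\widehat{M} \hookrightarrow \widehat{G}$), and $\pi \in \mathcal{JH}(i_P^{G}(\sigma))$. Since $\lambda_{\phi_\sigma}(W_F) \subseteq \widehat{M}$, the torus $Z(\widehat{M})^{\circ}$ centralises it, hence lies in $Z_{\widehat{G}}(\lambda_{\phi_\sigma})^{\circ}$ and can be enlarged to a maximal torus $S$ of the latter; then $\widehat{M}' := Z_{\widehat{G}}(S)$ is a Levi of $\widehat{G}$ that minimally contains $\mathrm{im}(\lambda_{\phi_\sigma})$ and satisfies $\widehat{M}' \subseteq Z_{\widehat{G}}(Z(\widehat{M})^{\circ}) = \widehat{M}$. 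Thus $\lambda := \lambda_{\phi_\sigma}$, viewed as a morphism $W_F \to \widehat{M}'$, is a discrete infinitesimal character with $(\lambda)_{\widehat{G}} = (\lambda_{\phi_\sigma})_{\widehat{G}}$. Applying the hypothesis to $(\widehat{M}',\lambda)$, the term with $\widehat{L} = \widehat{M}$, $\varphi = \phi_\sigma$ and $\sigma \in \Pi_{\phi_\sigma}(M)_{\cusp}$ occurs (because $(\lambda_{\phi_\sigma})_{\widehat{G}} = (\lambda)_{\widehat{G}}$), so $\pi \in \mathcal{JH}(i_P^{G}(\sigma)) \subseteq \Pi_\lambda^{+}(G)$, that is $(\lambda_{\phi_\pi})_{\widehat{G}} = (\lambda)_{\widehat{G}} = (\lambda_{\phi_\sigma})_{\widehat{G}}$ — precisely the identity of infinitesimal characters in Conjecture~\ref{conjindpar}.

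The point to be careful about is therefore the choice of $\widehat{M}'$ above: one must know that a Levi of $\widehat{G}$ minimally containing $\mathrm{im}(\lambda_{\phi_\sigma})$ is realised inside $\widehat{M}$ and that the discrete parameter it carries coincides, up to $\widehat{G}$-conjugacy, with $\lambda_{\phi_\sigma}$. This is supplied by the description of $\cSc^{\st}$ recalled above (the centraliser of a maximal torus of $Z_{\widehat{G}}(\lambda_{\phi_\sigma})^{\circ}$); everything else reduces to comparing the definitions of $\Pi_\lambda^{+}(G)$, of the cuspidal-support maps on both sides, and of the indexed unions.
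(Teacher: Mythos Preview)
Your proof is correct and follows the same idea as the paper for the forward implication; the paper in fact only writes out the inclusion $\Pi_\lambda^{+}(G)\subseteq\text{RHS}$ under Conjecture~\ref{conjindpar} and declares the reverse inclusion immediate, without spelling out the converse implication at all. Your argument for the converse, via the choice of a Levi $\widehat{M}'\subseteq\widehat{M}$ obtained as the centraliser of a maximal torus of $Z_{\widehat{G}}(\lambda_{\phi_\sigma})^{\circ}$ so that $\lambda_{\phi_\sigma}$ becomes discrete there, is precisely the missing step and is handled correctly (this is the construction of $\cSc^{\st}$ recalled just before the proposition); once that Levi is fixed, the term $\widehat{L}=\widehat{M}$, $\varphi=\phi_\sigma$ appears on the right-hand side and forces $(\lambda_{\phi_\pi})_{\widehat{G}}=(\lambda_{\phi_\sigma})_{\widehat{G}}$ as desired.
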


La correspondance de Langlands prédit un paramétrage d'un paquet $\Pi_\phi(G)$ par les représentations irréductibles du groupe fini $\mathcal{S}_{\phi}^{G}$, qui est essentiellement le groupe des composantes du centralisateur dans $\widehat{G}$ de l'image $\phi$. Notre but dans ce qui suit va être, en se donnant un caractère infinitésimal discret $\lambda : W_F \longrightarrow \widehat{M}$, de déterminer « explicitement » cette décomposition, c'est-à-dire de préciser les sous-groupes de Levi, les paramètres de Langlands qui apparaissent en terme de paramètre de Langlands enrichis. 

\subsection{Conjecture sur les paramètres de Langlands des représentations supercuspidales}

Dans cette section, comme son titre l'indique, nous conjecturons la propriété qui caractérise les paramètres de Langlands enrichis des représentations supercuspidales. Pour commencer, remarquons les faits et les tautologies suivants. La correspondance de Langlands prédit qu'une représentation (essentiellement) de la série discrète $\sigma$ de $G$ admet un paramètre de Langlands $\phi_{\sigma} \in \Phi(G)$ discret, c'est-à-dire dont l'image ne se factorise dans un sous-groupe de Levi propre de $\widehat{G}$. Puisque cette caractérisation ne porte que sur le paramètre de Langlands $\phi_{\sigma}$, tous les éléments du $L$-paquet $\Pi_{\phi_{\sigma}}(G)$ sont des représentations de la série discrète. Le même phénomène a lieu quand on remplace série discrète par tempérée. En revanche, on sait (et on a vu) qu'il y a des paquets qui contiennent des représentations supercuspidales et des non-supercuspidales. Ainsi, si on veut caractériser les paramètres de Langlands $\varphi$ des représentations supercuspidales, on devra nécessairement prendre en compte les représentations irréductibles de $\Irr(\mathcal{S}_{\varphi}^{G})$. Dans ce qui suit on énonce une conjecture concernant les paramètres de Langlands des représentations supercuspidales. Puisque qu'une représentation supercuspidale est en particulier une représentation essentiellement de la série discrète, son paramètre est discret.\\

Soient $\varphi : W_F' \longrightarrow \widehat{G}$ un paramètre de Langlands de $G$ discret et $\varepsilon$ une représentation irréductible de $\mathcal{S}_{\varphi}^{G}$. Définissons $$H_{\varphi}^{G}=Z_{\widehat{G}}(\restriction{\varphi}{W_F}).$$ On a l'égalité remarquable suivante \begin{align*}
Z_{\widehat{G}}(\varphi) =& Z_{\widehat{G}}(\restriction{\varphi}{W_F},\restriction{\varphi}{\SL_2(\C)}) \\
=& Z_{Z_{\widehat{G}}(\restriction{\varphi}{W_F})}(\restriction{\varphi}{\SL_2(\C)}) \\
=& Z_{H_{\varphi}^{G}} (\restriction{\varphi}{\SL_2(\C)}).
\end{align*} Ainsi, $A_{\widehat{G}}(\varphi)=A_{H_{\varphi}^{G}}(\restriction{\varphi}{\SL_2(\C)})$ et $A_{{(H_{\varphi}^{G})}^{\circ}}(\restriction{\varphi}{\SL_2(\C)})$ est un sous-groupe distingué de ce dernier. Considérons le diagramme suivant 

$$
\begin{tikzcd}
A_{H_{\varphi}^{G}}(\restriction{\varphi}{\SL_2(\C)}) \ar[r,twoheadrightarrow] & \mathcal{S}_{\varphi}^{G}   \\
A_{{(H_{\varphi}^{G})}^{\circ}}(\restriction{\varphi}{\SL_2(\C)}) \ar[u,hook]  & \\
\end{tikzcd}
$$

Notons $\widetilde{\varepsilon}$ la représentation irréductible de $A_{H_{\varphi}^{G}}(\restriction{\varphi}{\SL_2(\C)})$ qui est la composée de $\varepsilon$ par la surjection $A_{H_{\varphi}^{G}}(\restriction{\varphi}{\SL_2(\C)}) \twoheadrightarrow \mathcal{S}_{\varphi}^{G}$. D'après \cite[10.1.1]{Kottwitz:1984}, $H_{\varphi}^{G}$ est un groupe réductif et un théorème de Kostant \cite[3.7.3 \& 3.7.23]{Chriss:2010} montre qu'en notant $\displaystyle u_{\varphi}=\varphi\left(1,\left(\begin{smallmatrix} 1 & 1 \\  0 & 1\end{smallmatrix} \right)\right) \in {(H_{\varphi}^{G})}^{\circ}$, alors $Z_{H_{\varphi}^{G}}(\restriction{\varphi}{\SL_2(\C)})$ est un sous-groupe réductif maximal de $Z_{H_{\varphi}^{G}}(u_{\varphi})$. En particulier, on a : $$A_{H_{\varphi}^{G}}(\restriction{\varphi}{\SL_2(\C)}) \simeq A_{H_{\varphi}^{G}}(u_{\varphi}).$$ 

\begin{defi}\phantomsection\label{defcusp}
Soient $\varphi \in \Phi(G)$ un paramètre discret de $G$, $\varepsilon \in \Irr(\mathcal{S}_{\varphi}^{G})$ et $\widetilde{\varepsilon}$ la représentation irréductible de $A_{\widehat{G}}(\varphi) \simeq A_{H_{\varphi}^{G}}(\restriction{\varphi}{\SL_2(\C)}) \simeq A_{H_{\varphi}^{G}}(u_{\varphi})$ décrit ci-dessus. On dit que \begin{itemize}
\item $\varepsilon$ est \emph{cuspidale} lorsque toutes les représentations irréductibles de $ A_{(H_{\varphi}^{G})^{\circ}}(u_{\varphi})$ apparaissant dans la restriction de $\widetilde{\varepsilon}$ à $A_{(H_{\varphi}^{G})^{\circ}}(u_{\varphi})$ sont cuspidales au sens de Lusztig (définition \ref{defsyslocusp}) et on notera $\Irr(\mathcal{S}_{\varphi}^{G})_{\cusp}$ l'ensemble des représentations irréductibles cuspidales de $\mathcal{S}_{\varphi}^{G}$ ;
\item $\varphi$ un paramètre de Langlands \emph{cuspidal} de $G$ lorsque $\Irr(\mathcal{S}_{\varphi}^{G})_{\cusp}$ est non vide.\end{itemize}
\end{defi}

\begin{conj}\phantomsection\label{conjcusp}
Soit $\varphi \in \Phi(G)$ un paramètre de Langlands de $G$. Le $L$-paquet $\Pi_{\varphi}(G)$ contient des représentations supercuspidales de $G$, si et seulement si, $\varphi$ est un paramètre de Langlands cuspidal. Si tel est le cas, les représentations supercuspidales de $\Pi_{\varphi}(G)$ sont paramétrées par $\Irr(\mathcal{S}_{\varphi}^{G})_{\cusp}$. Autrement dit, il existe une bijection : $$\Pi_{\varphi}(G)_{\cusp} \leftrightarrow \Irr(\mathcal{S}_{\varphi}^{G})_{\cusp}.$$
\end{conj}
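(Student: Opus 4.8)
Le plan est d'établir la conjecture~\ref{conjcusp} dans les cas où la correspondance de Langlands est disponible, à savoir $G=\GL_n(F)$ et $G$ un groupe classique déployé $\SO_N(F)$ ou $\Sp_N(F)$ (c'est la proposition~\ref{verifparamcusp}). Comme une représentation supercuspidale est en particulier essentiellement de carré intégrable, son paramètre est discret, et l'on peut donc supposer $\varphi\in\Phi(G)_{2}$~; on sait déjà que $A_{\widehat{G}}(\varphi)\simeq A_{H_{\varphi}^{G}}(u_{\varphi})$, avec $H_{\varphi}^{G}=Z_{\widehat{G}}(\restriction{\varphi}{W_F})$ réductif et $u_{\varphi}$ l'image de l'unipotent régulier de $\SL_2(\C)$ (par le théorème de Kostant déjà utilisé plus haut). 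Pour $\GL_n$, les $L$-paquets sont des singletons et $\mathcal{S}_{\varphi}^{G}$ est trivial, donc $\widetilde{\varepsilon}$ est triviale~; la discrétion donne $\Std\circ\varphi=\pi\boxtimes S_a$ avec $\pi$ irréductible, d'où $H_{\varphi}^{G}\simeq\GL_a(\C)$ et $u_{\varphi}$ régulier, et d'après la table~\ref{pairecusp} le système local trivial sur l'orbite régulière de $\GL_a(\C)$ est cuspidal (définition~\ref{defsyslocusp}) si et seulement si $a=1$, c'est-à-dire si et seulement si $\varphi$ est une représentation irréductible de $W_F$~: ce sont exactement les paramètres des représentations supercuspidales de $\GL_n(F)$, ce qui règle ce cas.

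Pour les groupes classiques, la première étape est d'expliciter $H_{\varphi}^{G}$, $u_{\varphi}$ et les groupes de composantes en jeu à partir du bloc de Jordan $\Jord(\varphi)$. En décomposant $\Std_{G}\circ\varphi=\bigoplus_{\pi\in I}\pi\boxtimes S_{\pi}$, la discrétion force chaque $\pi$ à être autoduale et la somme à être sans multiplicité (sinon $Z_{\widehat{G}}(\varphi)$ contiendrait un tore non central)~; le centralisateur $H_{\varphi}^{G}=Z_{\widehat{G}}(\restriction{\varphi}{W_F})$ est le produit, sur $\pi\in I$, de groupes classiques $\widehat{G}_{\pi}$ (à savoir $\O_{m_{\pi}}(\C)$ ou $\Sp_{m_{\pi}}(\C)$) agissant chacun sur l'espace des multiplicités de $\pi$, sur lequel $\SL_2(\C)$ opère via $S_{\pi}=\bigoplus_{i}S_{p_{\pi,i}}$. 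Ainsi $u_{\varphi}=(u_{\varphi,\pi})_{\pi}$, avec $u_{\varphi,\pi}\in\widehat{G}_{\pi}$ de type de Jordan la partition sans multiplicité $\mathbf{p}_{\pi}=(p_{\pi,1}<\cdots<p_{\pi,k_{\pi}})$, dont toutes les parts ont la même parité (impaire dans le cas orthogonal, paire dans le cas symplectique). On a alors $(H_{\varphi}^{G})^{\circ}=\prod_{\pi}\SO_{m_{\pi}}(\C)\times\prod_{\pi}\Sp_{m_{\pi}}(\C)$, et l'observation centrale est que, d'après la classification de Lusztig (table~\ref{pairecusp}), l'orbite $\mathbf{p}_{\pi}$ supporte un système local cuspidal dans $\SO_{m_{\pi}}(\C)$ (resp. $\Sp_{m_{\pi}}(\C)$) si et seulement si $\mathbf{p}_{\pi}=(1,3,\dots,2d-1)$ (resp. $(2,4,\dots,2d)$) pour un certain $d\geqslant 0$~; ceci équivaut exactement à ce que $\Jord(\varphi)_{\pi}$ soit stable par $a\mapsto a-2$ pour $a\geqslant 3$, c'est-à-dire à ce que $\Jord(\varphi)$ soit sans trou. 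Donc $\varphi$ est un paramètre cuspidal (définition~\ref{defcusp}) si et seulement si $\Jord(\varphi)$ est sans trou.

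Il reste à faire correspondre les enrichissements. Supposons $\varphi$ cuspidal~: alors $\widetilde{\varepsilon}$ est un caractère de $A_{\widehat{G}}(\varphi)=\prod_{\pi}(\Z/2\Z)^{k_{\pi}}$, et sa restriction à $A_{(H_{\varphi}^{G})^{\circ}}(u_{\varphi})$ s'écrit $\boxtimes_{\pi}\restriction{\widetilde{\varepsilon}_{\pi}}{A_{\widehat{G}_{\pi}^{\circ}}(u_{\varphi,\pi})}$. À l'aide des générateurs $z_{p_{\pi,i}}$ et de la description explicite de $\varepsilon_{m_{\pi}}^{\S}$ et $\varepsilon_{m_{\pi}}^{\O}$ rappelée après la table~\ref{pairecusp}, une courte récurrence montre que $\widetilde{\varepsilon}_{\pi}$ se restreint en la représentation cuspidale (au sens de Lusztig) de $A_{\SO_{m_{\pi}}(\C)}(u_{\varphi,\pi})$ (resp. $A_{\Sp_{m_{\pi}}(\C)}(u_{\varphi,\pi})$) si et seulement si $\varepsilon$ est alterné sur le bloc associé à $\pi$, c'est-à-dire $\varepsilon(z_{\pi,a}z_{\pi,a'})=-1$ pour $(\pi,a),(\pi,a')$ consécutifs et $\varepsilon(z_{\pi,a_{\pi,\min}})=-1$ lorsque $a_{\pi,\min}$ est pair. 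Par suite, $\Irr(\mathcal{S}_{\varphi}^{G})_{\cusp}$ est exactement l'ensemble des caractères alternés, et le théorème~\ref{thmmoeglin} de M\oe glin fournit alors la bijection $\Pi_{\varphi}(G)_{\cusp}\leftrightarrow\Irr(\mathcal{S}_{\varphi}^{G})_{\cusp}$. À titre de vérification de cohérence, lorsque $\restriction{\varphi}{\SL_2(\C)}$ est triviale --- c'est-à-dire $\varphi=\lambda : W_F\longrightarrow\widehat{G}$ discret --- on obtient $u_{\varphi}=1$ et $A_{(H_{\varphi}^{G})^{\circ}}(1)=\{1\}$, donc tout $\varepsilon$ est cuspidal et $\Pi_{\varphi}(G)$ n'est constitué que de représentations supercuspidales, en accord avec la proposition~\ref{cocaracusp}.

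Le principal obstacle réside dans le suivi des groupes de composantes~: contrôler le passage d'indice $2$ de $A_{\O_{m_{\pi}}(\C)}(u_{\varphi,\pi})$ à $A_{\SO_{m_{\pi}}(\C)}(u_{\varphi,\pi})$, ainsi que le passage de $A_{\widehat{G}}(\varphi)$ à son quotient $\mathcal{S}_{\varphi}^{G}$ par l'image du centre $Z_{\widehat{G}}$, et vérifier qu'un caractère alterné se factorise bien par $\mathcal{S}_{\varphi}^{G}$ (afin que les caractères alternés paramètrent réellement les supercuspidales de $\Pi_{\varphi}(G)$, le théorème de M\oe glin étant ici formulé via $\Irr(\mathcal{S}_{\varphi}^{G})$). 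Conceptuellement, tout se ramène à une comparaison directe entre la combinatoire des blocs de Jordan de M\oe glin et les tables de paires cuspidales de Lusztig --- les équivalences « sans trou » $\Leftrightarrow$ « orbite de type $(2,4,\dots)$ ou $(1,3,\dots)$ » $\Leftrightarrow$ « supporte un système local cuspidal », et « alterné » $\Leftrightarrow$ « se restreint en la représentation cuspidale » --- mais c'est dans le soin apporté aux conditions de parité et aux générateurs que réside la difficulté.
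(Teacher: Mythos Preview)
Your proposal is correct and follows essentially the same approach as the paper. Note that the statement is a \emph{conjecture}, not a theorem: what you are actually proving is its verification for $\GL_n$ and the split classical groups, which is precisely the content of Proposition~\ref{verifparamcusp}, and your argument --- decompose $\restriction{\varphi}{W_F}$, compute $H_{\varphi}^{G}$ as a product of classical groups, invoke Lusztig's classification of cuspidal pairs (table~\ref{pairecusp}) to force the partitions $(2,4,\dots,2d)$ or $(1,3,\dots,2d-1)$, then match the resulting alternating characters with M\oe glin's theorem~\ref{thmmoeglin} --- is exactly the paper's proof, with the only cosmetic difference that for $\GL_n$ you invoke discreteness first (writing $\varphi=\pi\boxtimes S_a$ directly) whereas the paper applies the cuspidality condition first and discreteness afterwards.
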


\begin{rema}
La condition de discrétion de $\varphi$ est nécessaire. En effet, prenons $G=\GL_2(F)$ et soient $\chi_1, \chi_2$ deux caractères distincts de $W_F$. Définissons $\varphi : W_F' \longrightarrow \GL_2(\C)$, pour tout $(w,x) \in W_F \times \SL_2(\C)$, par $\varphi(w,x)=\diag(\chi_1(w),\chi_2(w))$. Alors $\varphi$ n'est pas discret (son image se factorise à travers un tore maximal de $\GL_2(\C)$) et $H_{\varphi}^{G} \simeq (\C^{\times})^{2}$. La représentation triviale de $\mathcal{S}_{\varphi}^{G}=A_{H_{\varphi}}(1)=\{1\}$ est cuspidale (pour l'orbite unipotente $\mathcal{C}_{1}^{H_{\varphi}^{G}})$ mais $\varphi$ n'est pas le paramètre d'une représentation supercuspidale de $\GL_2(F)$.
\end{rema}

On se propose de décrire la forme des paramètres de Langlands cuspidaux dans les cas du groupe linéaire, symplectique et spécial orthogonal. Pour décrire la forme des paramètres et le calcul des divers centralisateurs, on se réfère à \cite{Gan:2012}. On notera $I^{\O}$ (resp. $I^{\S}$) un certain ensemble de représentations irréductibles de $W_F$ de type orthogonal (resp. symplectique) et $S_d$ la représentation irréductible de dimension $d$ de $\SL_2(\C)$.

\begin{prop}\phantomsection\label{verifparamcusp}
Pour un groupe linéaire ou un groupe classique déployé, les paramètres de Langlands cuspidaux (définition \ref{defcusp}) sont : \begin{itemize}
\item pour $\GL_n(F)$, $$\varphi : W_F \longrightarrow \GL_n(\C), \,\, \text{irréductible (ou de façon équivalente discret)} \,\, ;$$
\item pour $\SO_{2n+1}(F)$, $$\Std_{G} \circ \varphi=\bigoplus_{\pi \in I^{\O}} \bigoplus_{a=1}^{d_{\pi}} \pi \boxtimes  S_{2a} \bigoplus_{\pi \in I^{\S}} \bigoplus_{a=1}^{d_{\pi}} \pi \boxtimes  S_{2a-1}, \,\, \forall \pi \in I^{\O}, d_{\pi} \in \N, \,\, \forall \pi \in I^{\S}, d_{\pi} \in \N^{*} ;$$
\item pour $\Sp_{2n}(F)$ ou $\SO_{2n}(F)$, $$\Std_{G} \circ \varphi=\bigoplus_{\pi \in I^{\S}} \bigoplus_{a=1}^{d_{\pi}} \pi \boxtimes  S_{2a} \bigoplus_{\pi \in I^{\O}} \bigoplus_{a=1}^{d_{\pi}} \pi \boxtimes  S_{2a-1}, \,\, \forall \pi \in I^{\O}, d_{\pi} \in \N^{*}, \,\, \forall \pi \in I^{\S}, d_{\pi} \in \N.$$
\end{itemize}
De plus, d'après la construction de la correspondance de Langlands pour $\GL_n$ par Harris-Taylor et Henniart et le théorème \ref{thmmoeglin} de M\oe glin qui décrit les paramètres de Langlands des représentations supercuspidales pour les groupes classiques, les représentations supercuspidales de $G$ sont paramétrées par $(\varphi,\varepsilon)$ avec $\varphi$ un paramètre de Langlands cuspidal de $G$ et $\varepsilon \in \Irr(\mathcal{S}_{\varphi}^{G})_{\cusp}$. Autrement dit, la conjecture \ref{conjcusp} est vraie lorsque $G$ est un groupe linéaire ou un groupe classique déployé.
\end{prop}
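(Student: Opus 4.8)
The plan is to treat $\GL_n(F)$ and the classical groups separately, in each case making Definition \ref{defcusp} explicit by computing, for a \emph{discrete} parameter $\varphi$, the reductive group $H_\varphi^G=Z_{\widehat G}(\restriction{\varphi}{W_F})$, the unipotent element $u_\varphi$, and the finite groups $A_{\widehat G}(\varphi)\simeq A_{H_\varphi^G}(u_\varphi)$ and $\mathcal S_\varphi^G$; one then reads the cuspidality condition against the classification of cuspidal pairs (Table \ref{pairecusp}) and compares with Moeglin's Theorem \ref{thmmoeglin} on the automorphic side.

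For $\GL_n(F)$ this is immediate: a discrete parameter is $\varphi=\pi\boxtimes S_d$ with $\pi\in\Irr(W_F)$ irreducible, so by Schur $H_\varphi^G\simeq\GL_d(\C)$, $u_\varphi$ is regular unipotent and $A_{\widehat G}(\varphi)=\mathcal S_\varphi^G=1$. The unique enriched parameter is cuspidal iff the regular unipotent orbit of $\GL_d(\C)$ carries a cuspidal local system, which by Table \ref{pairecusp} occurs only for $d=1$; hence the cuspidal parameters are exactly the irreducible $\varphi:W_F\to\GL_n(\C)$. Since the Harris--Taylor/Henniart construction of $\rec_{\GL_n}$ matches the supercuspidal representations with the irreducible representations of $W_F$, Conjecture \ref{conjcusp} holds, $\Irr(\mathcal S_\varphi^G)_{\cusp}$ being a singleton.

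For a classical $G$, write $\Std_G\circ\varphi=\bigoplus_{(\pi,a)\in\Jord(\varphi)}\pi\boxtimes S_a$, which is multiplicity free by discreteness; moreover discreteness forbids any non-self-dual $\pi$ (otherwise $Z_{\widehat G}(\varphi)^\circ$ would contain a $\GL_1$), so every $\pi$ is self-dual. Grouping the isotypic pieces, the centralizer computations of \cite{Gan:2012} give $H_\varphi^G\simeq\prod_\pi G_\pi$ (with a determinant-one condition when $\widehat G=\SO_{2n}$), where $G_\pi$ is symplectic or orthogonal according to whether the duality types of $\pi$ and of $\Std_G$ are opposite or equal, and $u_\varphi$ corresponds in $G_\pi$ to a unipotent of multiplicity-free Jordan type $(a)_{a\in\Jord(\varphi)_\pi}$; accordingly $A_{\widehat G}(\varphi)=\prod_\pi A_{G_\pi}(u_\pi)$ is generated by the $z_{\pi,a}$, and $\mathcal S_\varphi^G$ is its quotient by the image of $-\mathrm{Id}\in Z_{\widehat G}$, whose class in $A_{\widehat G}(\varphi)$ is $\prod_{(\pi,a)}z_{\pi,a}$. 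Since $(H_\varphi^G)^\circ=\prod_\pi G_\pi^\circ$ and a cuspidal local system on a product is an external product of cuspidal ones, Definition \ref{defcusp} forces each $\mathcal C_{u_\pi}$ to carry a cuspidal local system on $G_\pi^\circ$; by Table \ref{pairecusp} (and the extension to orthogonal groups given in the appendix) this holds iff $\Jord(\varphi)_\pi=\{2,4,\dots,2d_\pi\}$ when $G_\pi^\circ$ is symplectic and $\Jord(\varphi)_\pi=\{1,3,\dots,2d_\pi-1\}$ when $G_\pi^\circ$ is special orthogonal, which are exactly the staircase shapes in the statement, and it pins $\widetilde\varepsilon_\pi$ down to be $\varepsilon_{m_\pi}^{\S}$ (symplectic case), resp.\ one of $\varepsilon_{m_\pi}^{\O+},\varepsilon_{m_\pi}^{\O-}$ (orthogonal case). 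It then remains to recognize when such a $\widetilde\varepsilon$ descends to $\mathcal S_\varphi^G$, i.e.\ is trivial on $\prod_{(\pi,a)}z_{\pi,a}$, and to see that this is exactly the condition that an \emph{alterné} character of $A_{\widehat G}(\varphi)$ exist: ``$\Jord(\varphi)$ sans trou'' is equivalent to the staircase shapes, and the explicit formulas for $\varepsilon_N^{\S}$ and $\varepsilon_N^{\O\pm}$ show that an alterné character is determined on the even blocks (equalling $\varepsilon_{m_\pi}^{\S}$ there) and, on the odd blocks, inflates to precisely $\varepsilon_{m_\pi}^{\O+}$ or $\varepsilon_{m_\pi}^{\O-}$. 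Feeding this into Moeglin's bijection of Theorem \ref{thmmoeglin} then produces the bijection $\Pi_\varphi(G)_{\cusp}\leftrightarrow\Irr(\mathcal S_\varphi^G)_{\cusp}$ and the claimed description of the cuspidal parameters, proving Conjecture \ref{conjcusp} for these $G$.

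The hard part is precisely this last comparison: matching, on the nose, Lusztig's cuspidal representations $\varepsilon_N^{\S}$, $\varepsilon_N^{\O\pm}$ of the component groups of unipotents in $\Sp_N(\C)$ and $\O_N(\C)$ with Moeglin's alterné characters of $A_{\widehat G}(\varphi)$, while carefully tracking (i) the passage from $A_{\widehat G}(\varphi)$ to $\mathcal S_\varphi^G$ — the quotient by the image of $-\mathrm{Id}$, which is what rules out degenerate cases such as the twists of $\Std_{\SO_3}\circ\varphi=\mu\boxtimes S_2$ — (ii) the $\O_{m_\pi}$-versus-$\SO_{m_\pi}$ (the $S(\prod\O_{r_q})$) discrepancy in the reductive centralizer when $\widehat G=\SO_{2n}$, and (iii) the combinatorics of which summands $\pi\boxtimes S_a$ are symplectic resp.\ orthogonal inside each $\Std_G$. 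Theorem \ref{thmmoeglinreduc} furnishes an independent check through the reducibility points of $\pi\lvert\cdot\rvert^{x}\rtimes\sigma$, and the small-rank cases $\SO_3,\SO_5,\Sp_4$ serve as sanity checks.
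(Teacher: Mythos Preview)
Your proposal is correct and follows essentially the same route as the paper: compute $H_\varphi^G$ from the isotypic decomposition of $\restriction{\varphi}{W_F}$, read off the cuspidality condition factor by factor using the classification of cuspidal pairs in Table~\ref{pairecusp}, use discreteness to kill the $I^{\GL}$ part, and then match the resulting ``staircase'' Jordan blocks and cuspidal characters with M\oe glin's \emph{sans trou}/\emph{altern\'e} description in Theorem~\ref{thmmoeglin}. The only cosmetic difference is in the $\GL_n$ case: you invoke the known shape $\varphi=\pi\boxtimes S_d$ of a discrete parameter and then force $d=1$, whereas the paper starts from a general isotypic decomposition and lets cuspidality force all multiplicities to $1$ before discreteness reduces $I$ to a singleton. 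Your discussion of the descent from $A_{\widehat G}(\varphi)$ to $\mathcal S_\varphi^G$ and of the $\O_{m_\pi}$ versus $\SO_{m_\pi}$ centralizer issue is more explicit than the paper's treatment, which simply asserts that the cuspidal characters $\varepsilon_N^{\S},\varepsilon_N^{\O\pm}$ coincide with M\oe glin's altern\'e characters; so on that point you are being more careful, not taking a different path.
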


\begin{proof}
Soit $\varphi : W_F' \longrightarrow \GL_n(\C)$ un paramètre cuspidal de $G=\GL_{n}(F)$. Écrivons la décomposition en composante isotypique de la restriction à $W_F$ de $\varphi$, $$\restriction{\varphi}{W_F}=\bigoplus_{\pi \in I} \pi \otimes M_{\pi},$$ où $I$ est un ensemble fini de représention irréductible de $W_F$. D'où,  \begin{align*}
H_{\varphi}^{G} &= Z_{\GL_n(\C)}(\restriction{\varphi}{W_F})\\
     & \simeq \prod_{\pi \in I} \GL_{m_{\pi}}(\C),
\end{align*} où $m_{\pi}=\dim M_{\pi}$. On peut donc écrire $u_{\varphi}=(u_{\pi})_{\pi \in I}$, où $u_{\pi} \in \GL_{m_{\pi}}(\C)$ et se ramener à un étudier un seul facteur pour l'existence d'une paire cuspidale. Or, d'après la classification des paires cuspidales (voir table \ref{pairecusp}), ceci est équivalent au fait que pour tout $\pi \in I, \, m_{\pi}=1$ et $u_{\pi}=1$. D'où $Z_{\GL_n(\C)}(\varphi) = \prod_{\pi \in I} \GL_1(\C)$. À présent, la discrétion du paramètre impose que $Z(\GL_n(\C))^{\circ} = \GL_1(\C)$ soit un tore maximal de $Z_{\GL_n(\C)}(\varphi) $, par suite $I$ est un singleton et $Z_{\GL_n(\C)}(\varphi) =\GL_{1}(\C)$.\\
 
Ceci montre que, $\varphi$ est un paramètre cuspidal de $\GL_n$, si et seulement si, $\varphi$ est un paramètre de Langlands discret trivial sur $\SL_2(\C)$.\\

Soit $\varphi : W_F' \longrightarrow \Sp_{2n}(\C)$ un paramètre cuspidal de $G=\SO_{2n+1}(F)$. Écrivons la décomposition en composantes isotypiques de la restriction à $W_F$ de $\varphi$, $$\restriction{\varphi}{W_F}=\bigoplus_{\pi \in I^{\O}} \pi \otimes M_{\pi} \bigoplus_{\pi \in I^{\S}} \pi \otimes M_{\pi} \bigoplus_{\pi \in I^{\GL}} (\pi \oplus \pi^{\vee}) \otimes M_{\pi}.$$  Ainsi, $$H_{\varphi}^{G} \simeq \prod_{\pi \in I^{\O}} \Sp_{m_{\pi}}(\C) \times \prod_{\pi \in I^{\S}} \O_{m_{\pi}}(\C) \times \prod_{\pi \in I^{\GL}} \GL_{m_{\pi}}(\C). $$ D'après la classification des paires cuspidales (table \ref{pairecusp}), on obtient les conditions suivantes : \begin{itemize}
\item pour $\pi \in I^{\O}, \, m_{\pi}=d_{\pi}(d_{\pi}+1), \,\, d_{\pi} \in \N$ ;
\item pour $\pi \in I^{\S}, \, m_{\pi}=d_{\pi}^{2}, \,\, d_{\pi} \in \N^{*}$ ;
\item pour $\pi \in I^{\GL}, \, m_{\pi}=1$.
\end{itemize} De plus, $\varphi$ est de la forme (on rappelle que $S_d$ désigne la représentation irréductible de $\SL_2(\C)$ de dimension $d$), $$\Std_{G} \circ \varphi=\bigoplus_{\pi \in I^{\O}} \bigoplus_{a=1}^{d_{\pi}} \pi \boxtimes  S_{2a} \bigoplus_{\pi \in I^{\S}} \bigoplus_{a=1}^{d_{\pi}} \pi \boxtimes  S_{2a-1} \bigoplus_{\pi \in I^{\GL}} (\pi \oplus \pi^{\vee}).$$ Par suite, on trouve $$Z_{\Sp_{2n}(\C)}(\varphi)=\prod_{\pi \in I^{\O}}\prod_{a=1}^{d_{\pi}} (\Z/2\Z) \times \prod_{\pi \in I^{\S}}\prod_{a=1}^{d_{\pi}} (\Z/2\Z) \times \prod_{\pi \in I^{\GL}} \GL_1(\C).$$ Puisque $\varphi$ est discret, $Z_{\Sp_{2n}(\C)}(\varphi)$ ne contient aucun tore non trivial. Par suite, $I^{\GL}$ est vide et donc $$Z_{\Sp_{2n}(\C)}(\varphi)=\prod_{\pi \in I^{\O} \sqcup I^{\S}} (\Z/2\Z)^{d_{\pi}},$$ et $$\Std_{G} \circ \varphi=\bigoplus_{\pi \in I^{\O}} \bigoplus_{a=1}^{d_{\pi}}\pi \boxtimes  S_{2a} \bigoplus_{\pi \in I^{\S}} \bigoplus_{a=1}^{d_{\pi}} \pi \boxtimes  S_{2a-1}.$$ Écrivons le bloc de Jordan correspondant à ce paramètre de Langlands $$\Jord(\varphi)=\left\{(\pi,2),\ldots,(\pi,2d_{\pi}-2),(\pi,2d_{\pi}), \pi \in I^{\O}\right\} \sqcup \left\{(\pi,1),\ldots,(\pi,2d_{\pi}-3),(\pi,2d_{\pi}-1), \pi \in I^{\S}\right\}.$$ Ainsi, $\Jord(\varphi)$ est sans trou et de plus, nous avons décrit à la suite de la table \ref{pairecusp} les caractères cuspidaux $\varepsilon_{d}^{\S}, \,\, {\varepsilon_{d}^{\O}} '$ et ${\varepsilon_{d}^{\O}}''$. Ceux-ci correspondent exactement aux caractères alternés du théorème \ref{thmmoeglin} de M\oe glin . Réciproquement, un bloc de Jordan sans trou correspond à ces partitions d'unipotents dans $\Sp_{m_{\pi}}(\C)$ et $\SO_{m_{\pi}}(\C)$ et les caractères alternées à ces caractères cuspidaux.\\

Pour un groupe symplectique ou un groupe spécial orthogonal pair, c'est essentiellement la même démonstration.
\end{proof}

Supposons temporairement que $G$ est un groupe réductif connexe défini et quasi-déployé sur $F$. La définition \ref{defcusp} et la conjecture \ref{conjcusp} ont toujours un sens dans ce contexte.

\begin{lemm}
Soient $M$ un sous-groupe de Levi de $G$ et $\lambda : W_F \longrightarrow \LL{M}$ un paramètre de Langlands de $M$ discret. Alors, $Z_{\widehat{M}}(\lambda)^{\circ}=\left( Z_{\widehat{M}}^{\Gamma_F}\right)^{\circ}$. En particulier,  $Z_{\widehat{M}}(\lambda)^{\circ}$ est un tore. 
\end{lemm}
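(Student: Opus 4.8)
Le plan est de démontrer les deux inclusions, l'inclusion $\left(Z_{\widehat{M}}^{\Gamma_F}\right)^{\circ}\subseteq Z_{\widehat{M}}(\lambda)^{\circ}$ étant élémentaire et l'inclusion opposée réclamant l'hypothèse de discrétion. Pour la première, je remarquerais qu'un élément $z$ de $Z_{\widehat{M}}^{\Gamma_F}$ commute à l'image de $\lambda$ : en écrivant $\lambda(w)=m_w\rtimes w$ dans $\LL{M}$, la conjugaison par $z$ remplace $m_w$ par $z\,m_w\,{}^{w}(z)^{-1}=m_w$, puisque $z$ est central dans $\widehat{M}$ et fixé par $\Gamma_F$ (dans le cas déployé, $\lambda$ est à valeurs dans $\widehat{M}$ et c'est immédiat) ; comme $Z_{\widehat{M}}^{\Gamma_F}$ n'a qu'un nombre fini de composantes, on en déduit l'inclusion des composantes neutres.

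Pour l'inclusion réciproque, je partirais du fait que $Z_{\widehat{M}}(\lambda)$ est réductif (\cite[10.1.1]{Kottwitz:1984}, ou bien parce que l'image de $\lambda$ engendre un sous-groupe linéairement réductif de $\widehat{M}$), de sorte que $T:=Z_{\widehat{M}}(\lambda)^{\circ}$ est réductif connexe. Soit $A$ un tore maximal de $T$ : comme $A$ centralise l'image de $\lambda$, cette image est contenue dans $Z_{\widehat{M}}(A)$, qui est un sous-groupe de Levi de $\widehat{M}$, propre dès que $A\not\subseteq Z_{\widehat{M}}$, et stable sous l'action de $\lambda(W_F)$ (qui centralise $A$), donc dual d'un sous-groupe de Levi de $M$. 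La discrétion de $\lambda$ empêche ce Levi d'être propre, donc $A\subseteq Z_{\widehat{M}}$ : tout tore maximal de $T$ est central dans $\widehat{M}$. Deux tores maximaux de $T$ étant alors contenus dans un même groupe abélien, leur produit est un tore et ils coïncident par maximalité ; $T$ possède donc un unique tore maximal, ce qui force $T$ à être lui-même un tore, égal à $A\subseteq Z_{\widehat{M}}$.

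Il restera à voir que ce tore est fixé par $\Gamma_F$ : pour $t\in T$ et $w\in W_F$, la relation $t\in Z_{\widehat{M}}(\lambda)$ et la centralité de $t$ donnent, par le calcul du premier paragraphe, ${}^{w}t=t$, d'où $T\subseteq Z_{\widehat{M}}^{\Gamma_F}$ puis $T\subseteq\left(Z_{\widehat{M}}^{\Gamma_F}\right)^{\circ}$ par connexité ; avec la première inclusion, cela donne l'égalité, et ce groupe est un tore comme composante neutre d'un groupe diagonalisable. L'obstacle principal me paraît être de s'assurer que $Z_{\widehat{M}}(A)$ correspond bien à un sous-groupe de Levi \emph{propre} de $M$ au sens employé dans la définition de la discrétion — en particulier le contrôle de l'équivariance galoisienne dans le cas quasi-déployé. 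Si l'on adopte plutôt la caractérisation équivalente de la discrétion par la finitude de $Z_{\widehat{M}}(\lambda)/Z_{\widehat{M}}^{\Gamma_F}$, l'énoncé est une conséquence immédiate.
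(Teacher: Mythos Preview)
Your proof is correct and follows the same line as the paper's, but you unpack by hand the one step the paper simply cites. The paper's argument is: by \cite[10.3.1]{Kottwitz:1984} (the characterisation of discreteness), $Z_{\widehat{M}}(\lambda)^{\circ}\subseteq Z_{\widehat{M}}$; since $Z_{\widehat{M}}(\lambda)\cap Z_{\widehat{M}}=Z_{\widehat{M}}^{\Gamma_F}$ (exactly your final computation), one gets $Z_{\widehat{M}}(\lambda)^{\circ}\subseteq\bigl(Z_{\widehat{M}}^{\Gamma_F}\bigr)^{\circ}$, and the reverse inclusion is immediate. Your maximal-torus argument is precisely a direct proof of that Kottwitz inclusion (any torus in $Z_{\widehat{M}}(\lambda)$ gives rise to a Levi of ${}^{L}M$ through which $\lambda$ factors, forcing it into $Z_{\widehat{M}}$ by discreteness); your closing remark about the ``caractérisation équivalente'' is exactly the shortcut the paper takes. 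The worry you flag about $Z_{\widehat{M}}(A)$ corresponding to a genuine Levi of $M$ is handled by the fact that $A\subset Z_{\widehat{M}}(\lambda)$ makes $Z_{{}^{L}M}(A)$ a Levi subgroup of ${}^{L}M$ in the sense of Borel containing the image of $\lambda$, which is all that discreteness requires.
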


\begin{proof}
D'après \cite[10.3.1]{Kottwitz:1984}, $Z_{\widehat{M}}(\lambda)^{\circ} \subseteq Z_{\widehat{M}}$. De plus, $Z_{\widehat{M}}(\lambda) \cap Z_{\widehat{M}}= Z_{\widehat{M}}^{\Gamma_F}$, d'où $Z_{\widehat{M}}(\lambda)^{\circ} \subseteq \left( Z_{\widehat{M}}^{\Gamma_F}\right)^{\circ}$. L'autre inclusion étant évidente, il y a égalité.
\end{proof}

\begin{prop}\phantomsection\label{cocaracusp}
On reprend les notations du lemme précédent. Si $\phi \in \Phi(M)$ est un paramètre de Langlands de $M$ de caractère infinitésimal $(\lambda)_{\widehat{M}}$, alors $(\phi)_{\widehat{M}}=(\lambda)_{\widehat{M}}$. Ceci implique que le paquet infinitésimal et $L$-paquet défini par $\lambda$ coïncident : $$\Pi_{\lambda}^{+}(M)=\Pi_{\lambda}(M).$$  De plus, toutes les représentations irréductibles de $\mathcal{S}_{\lambda}(M)$ sont cuspidales, c'est-à-dire $$\Irr(\mathcal{S}_{\lambda}^{M})=\Irr(\mathcal{S}_{\lambda}^{M})_{\cusp}.$$
\end{prop}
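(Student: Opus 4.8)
Le plan est de démontrer d'abord l'assertion clé — tout paramètre $\phi \in \Phi(M)$ de caractère infinitésimal $(\lambda)_{\widehat{M}}$ est trivial sur $\SL_2(\C)$ — dont les deux autres affirmations découleront presque formellement. Pour cette étape clé, on conjugue $\phi$ par un élément de $\widehat{M}$ afin de se ramener au cas $\lambda_{\phi} = \lambda$, puis on introduit le cocaractère $\mu : \C^{\times} \longrightarrow \widehat{M}$, $\mu(t) = \phi\left( 1, \left( \begin{smallmatrix} t & 0 \\ 0 & t^{-1} \end{smallmatrix} \right) \right)$, de sorte que $\phi(1,d_w) = \mu(|w|^{1/2})$ et donc $\lambda(w) = \restriction{\phi}{W_F}(w)\,\mu(|w|^{1/2})$ pour tout $w \in W_F$. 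Puisque $W_F$ et $\SL_2(\C)$ commutent dans $W_F'$, l'image de $\mu$ commute à $\restriction{\phi}{W_F}(W_F)$ ; étant de surcroît commutative, elle commute à $\lambda(W_F)$, qui est contenue dans le sous-groupe engendré par $\restriction{\phi}{W_F}(W_F)$ et $\mu(\C^{\times})$. On obtient $\mu(\C^{\times}) \subseteq Z_{\widehat{M}}(\lambda)$, puis $\mu(\C^{\times}) \subseteq Z_{\widehat{M}}(\lambda)^{\circ}$ par connexité. Le paramètre $\lambda$ étant discret, le lemme qui précède donne $Z_{\widehat{M}}(\lambda)^{\circ} = \left( Z_{\widehat{M}}^{\Gamma_F} \right)^{\circ} \subseteq Z_{\widehat{M}}$, c'est-à-dire que $\mu$ est un cocaractère central de $\widehat{M}$.

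La trivialité de $\restriction{\phi}{\SL_2(\C)}$ s'en déduit : pour tout $t \in \C^{\times}$, $\restriction{\phi}{\SL_2(\C)}\left( \begin{smallmatrix} 1 & t^2 \\ 0 & 1 \end{smallmatrix} \right) = \mu(t)\,\restriction{\phi}{\SL_2(\C)}\left( \begin{smallmatrix} 1 & 1 \\ 0 & 1 \end{smallmatrix} \right)\mu(t)^{-1} = \restriction{\phi}{\SL_2(\C)}\left( \begin{smallmatrix} 1 & 1 \\ 0 & 1 \end{smallmatrix} \right)$, de sorte que le morphisme $s \longmapsto \restriction{\phi}{\SL_2(\C)}\left( \begin{smallmatrix} 1 & s \\ 0 & 1 \end{smallmatrix} \right)$ de $\C$ dans $\widehat{M}$ est constant sur $\C^{\times}$, donc sur $\C$, et vaut sa valeur en $0$, à savoir $1$ ; le noyau de $\restriction{\phi}{\SL_2(\C)}$ contient alors $\left( \begin{smallmatrix} 1 & 1 \\ 0 & 1 \end{smallmatrix} \right)$, donc, les sous-groupes distingués propres de $\SL_2(\C)$ étant contenus dans le centre $\{\pm I\}$, ce noyau est $\SL_2(\C)$ tout entier. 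Ainsi $\phi = \restriction{\phi}{W_F} = \lambda_{\phi} = \lambda$ (modulo la conjugaison initiale par $\widehat{M}$), d'où $(\phi)_{\widehat{M}} = (\lambda)_{\widehat{M}}$. Comme $\lambda$, vu comme paramètre de $M$ trivial sur $\SL_2(\C)$, est encore discret (son image est inchangée) et de caractère infinitésimal $(\lambda)_{\widehat{M}}$, c'est l'unique paramètre de Langlands de $M$ ayant ce caractère infinitésimal ; l'égalité $\Pi_{\lambda}^{+}(M) = \Pi_{\lambda}(M)$ résulte alors immédiatement de la définition de $\Pi_{\lambda}^{+}(M)$.

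Pour la dernière assertion, on applique la définition \ref{defcusp} au paramètre discret $\varphi = \lambda$ : comme $\restriction{\lambda}{\SL_2(\C)}$ est triviale, $u_{\lambda} = 1$ et $H_{\lambda}^{M} = Z_{\widehat{M}}(\restriction{\lambda}{W_F}) = Z_{\widehat{M}}(\lambda)$, dont la composante neutre est, d'après le lemme, le tore $\left( Z_{\widehat{M}}^{\Gamma_F} \right)^{\circ}$. Le groupe $A_{(H_{\lambda}^{M})^{\circ}}(u_{\lambda}) = A_{(H_{\lambda}^{M})^{\circ}}(1)$ est trivial, donc la restriction de n'importe quelle $\widetilde{\varepsilon}$ à ce groupe ne fait intervenir que la représentation triviale du groupe trivial ; celle-ci correspond à l'orbite unipotente $\{1\}$ du tore $(H_{\lambda}^{M})^{\circ}$ munie du système local constant, qui est cuspidal car un tore n'a aucun sous-groupe parabolique propre (la condition de la définition \ref{defsyslocusp} est alors vide). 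Toute $\varepsilon \in \Irr(\mathcal{S}_{\lambda}^{M})$ est donc cuspidale, ce qui établit $\Irr(\mathcal{S}_{\lambda}^{M}) = \Irr(\mathcal{S}_{\lambda}^{M})_{\cusp}$.

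Le point délicat — en réalité le seul qui demande un argument non trivial — est la première étape : il s'agit de voir que le cocaractère $\mu$ issu de $\restriction{\phi}{\SL_2(\C)}$ centralise $\lambda$, puis d'exploiter la discrétion de $\lambda$ (via le lemme) pour en conclure que $\mu$ est central dans $\widehat{M}$, ce qui interdit à $\restriction{\phi}{\SL_2(\C)}$ d'être non triviale. Une fois ce point acquis, les deux autres affirmations de la proposition s'en déduisent formellement.
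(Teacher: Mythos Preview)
Ton argument est correct. L'idée maîtresse est la même que celle du papier --- exploiter la discrétion de $\lambda$ via le lemme précédent pour forcer $\restriction{\phi}{\SL_2(\C)}$ à être triviale --- mais l'exécution diffère légèrement.

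Le papier travaille avec $Z_{\widehat{M}}(\restriction{\phi}{W_F})$ plutôt qu'avec $Z_{\widehat{M}}(\lambda)$ : il introduit un tore maximal $A$ de $Z_{\widehat{M}}(\restriction{\phi}{W_F})^{\circ}$ contenant $\phi(T_{\SL_2})$, observe que le Levi $\LL{L}=Z_{\LL{M}}(A)$ contient l'image de $\lambda$ (car $\lambda(w)=\restriction{\phi}{W_F}(w)\,\phi(1,d_w)$ avec $\phi(1,d_w)\in A$), puis utilise la discrétion de $\lambda$ pour conclure $\LL{L}=\LL{M}$, donc $\restriction{\phi}{W_F}$ est lui-même discret. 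Le lemme s'applique alors à $\restriction{\phi}{W_F}$ et donne que $Z_{\widehat{M}}(\restriction{\phi}{W_F})^{\circ}$ est un tore ; comme ce groupe contient \emph{toute} l'image $\phi(\SL_2(\C))$, la trivialité de $\restriction{\phi}{\SL_2(\C)}$ est immédiate.

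Ton approche, elle, applique le lemme directement à $\lambda$ et montre seulement que le cocaractère diagonal $\mu$ tombe dans $Z_{\widehat{M}}(\lambda)^{\circ}\subseteq Z_{\widehat{M}}$, ce qui t'oblige à un petit argument supplémentaire (centralité de $\mu$ $\Rightarrow$ l'unipotent $\restriction{\phi}{\SL_2(\C)}\left(\begin{smallmatrix}1&1\\0&1\end{smallmatrix}\right)$ est fixe par conjugaison par le tore, donc vaut $1$). En contrepartie, tu évites le détour par la construction du Levi $\LL{L}$ et la référence à \cite[3.6]{Borel:1979}, ce qui rend ta preuve un peu plus élémentaire. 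La dernière partie (cuspidalité automatique) est identique au papier.
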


\begin{proof}
Soit $\phi : W_F' \longrightarrow \LL{M}$ un paramètre de Langlands de $M$ de caractère infinitésimal $(\lambda)_{\widehat{M}}$, quitte à conjuger par élement de $\widehat{M}$, ceci signifie que pour tout $w \in W_F, \,\, \phi(w,d_w)=\lambda(w)$. \\ Par connexité, $\phi(\SL_2(\C)) \subseteq Z_{\widehat{M}}(\restriction{\phi}{W_F})^{\circ}$. L'image par $\phi$ de $T_{\SL_2}=\left\{ \left(\begin{smallmatrix} t &  \\  & t^{-1}\end{smallmatrix} \right), t \in \C^{\times} \right\}$, tore maximal de $\SL_2(\C)$, est un tore (éventuellement trivial) de $Z_{\widehat{M}}(\restriction{\phi}{W_F})^{\circ}$. Soit $A$ un tore maximal de $Z_{\widehat{M}}(\restriction{\phi}{W_F})^{\circ}$ contenant $\phi(T_{\SL_2})$. Notons $\LL{L}=Z_{\LL{M}}(A)$ ; d'après \cite[3.6]{Borel:1979} c'est un sous-groupe de Levi de $\LL{M}$ qui contient minimalement l'image de $\restriction{\phi}{W_F}$. Puisque pour tout $w \in W_F, \,\, \phi(1,d_w) \in A \subset \widehat{L}$, on a pour $w \in W_F, \lambda(w)=\phi(w,d_w) \in \LL{L}$. Par discrétion de $\lambda$, $\LL{L}=\LL{M}$ et $\restriction{\phi}{W_F}$ est un paramètre discret de $M$. D'après ce qui précède, $Z_{\widehat{M}}(\restriction{\phi}{W_F})^{\circ}$ est un tore. L'image de $\SL_2(\C)$ par $\phi$ est contenue dans un tore, cette image est donc triviale. D'où $\phi=\lambda$.\\ Soient $\varepsilon \in \Irr(\mathcal{S}_{\lambda}^{M})$ et $\widetilde{\varepsilon} \in \Irr(A_{\widehat{M}}(\lambda))$. Puisque ${(H_{\lambda}^{M})}^{\circ}=Z_{\widehat{M}}(\lambda)^{\circ}$ est un tore, la seule sous-représentation irréductible de la restriction de $\widetilde{\varepsilon}$ à $A_{{(H_{\lambda}^{M})}^{\circ}}(1)$ est la représentation triviale et la paire $(\mathcal{C}^{{(H_{\lambda}^{M})}^{\circ}}_{1},\triv)$ est automatiquement cuspidale. D'où $\Irr(\mathcal{S}_{\lambda}^{M})=\Irr(\mathcal{S}_{\lambda}^{M})_{\cusp}$.
\end{proof}

\begin{rema} Cette proposition montre que la conjecture \ref{conjcusp} sur le paramétrage des représentations supercuspidales est compatible avec une propriété de la correspondance de Langlands, à savoir que le $L$-paquet d'un paramètre discret $\lambda : W_F \longrightarrow \LL{M}$ n'est constitué que de représentations supercuspidales de $M$. Notons par ailleurs, qu'Heiermann dans \cite{Heiermann:2006} construit sous diverses hypothèses les paramètres de séries discrètes non-cuspidales à partir du paramètre de leurs support cuspidal. Sa construction montre que nécessairement la restriction à $\SL_2(\C)$ est non triviale pour ces séries discrètes. Ainsi un paramètre discret trivial sur $\SL_2(\C)$ ne peut contenir que des représentations supercuspidales. Dans \cite[5.6.1]{Haines:2014}, Haines prouve, en supposant (essentiellement) la conjecture de compatibilité \ref{conjindpar}, que $\Pi_{\lambda}^{+}(G)$ ne contient que des représentations supercuspidales, si et seulement $G=M$. \end{rema}

Nous introduisons la définition suivante afin d'éviter la répétition des phrases du type : «une supercuspidale dont le paramètre de Langlands n'est pas trivial sur le facteur $\SL_2$». 

\begin{defi}
Soit $\sigma$ une représentation irréductible supercuspidale de $G$. On dira que $\sigma$ est \emph{ordinaire} lorsque son paramètre de Langlands est de la forme $\lambda : W_F \longrightarrow {}^L G$, c'est-à-dire trivial sur le facteur $\SL_2(\C)$. Dans le cas contraire, on dira que $\sigma$ est \emph{orpheline}. 
\end{defi}

\begin{rema}
Par conséquent, les représentations supercuspidales ordinaires sont (conjecturalement) dans des $L$-paquets ne contenant que des supercuspidales et les représentations supercuspidales orphelines sont dans des $L$-paquets pouvant contenir des séries discrètes qui ne sont pas supercuspidales. La conjecture \ref{conjcusp} caractérise essentiellement les représentations supercuspidales orphelines.
\end{rema}

On suppose que $G$ est un groupe réductif connexe défini et déployé sur $F$.

\begin{defi}
On appelle \emph{donnée cuspidale enrichie de $\widehat{G}$} un triplet $(\widehat{L},\varphi,\varepsilon)$ formé d'un sous-groupe de Levi $\widehat{L}$ de $\widehat{G}$, d'un paramètre de Langlands cuspidal $\varphi: W_F' \longrightarrow \widehat{L}$ de $L$ et d'une représentation irréductible cuspidale $\varepsilon$ de $\mathcal{S}_{\varphi}^{L}$. On définit les relations d'équivalences suivantes sur l'ensemble des données cuspidales enrichies de $\widehat{G}$. Soient $(\widehat{L}_1,\varphi_1,\varepsilon_1)$ et $(\widehat{L}_2,\varphi_2, \varepsilon_2)$ deux données cuspidales enrichies de $\widehat{G}$. On dit qu'elles sont : \begin{enumerate}[label=(\roman*)]
\item équivalentes s'il existe $g \in \widehat{G}$ tel que : ${}^g  \widehat{L}_1 =\widehat{L}_2, \,\, \varphi_2= {}^g \varphi_1  \,\, \text{et} \,\, \varepsilon_2 \simeq \varepsilon_1^{g}$ ;
\item inertiellement équivalentes s'il existe $g \in \widehat{G}$ et $\chi_2  \in \mathcal{X}(\widehat{L_2})$ tels que : ${}^g \widehat{L}_1=\widehat{L}_2, \,\, \varphi_2 ={}^g \varphi_1 \chi_2 \,\, \text{et} \,\, \varepsilon_2 \simeq \varepsilon_1^{g}$.
\end{enumerate}
\end{defi}

On appelle \emph{triplet cuspidal} (resp. \emph{triplet inertiel}) de $\widehat{G}$ une classe d'équivalence pour la relation (i) (resp. (ii)) et on notera $\Omega_{e}^{\st}(G)$ (resp. $\mathcal{B_{e}}^{\st}(G)$) l'ensemble des classes d'équivalence pour la relation (i) (resp. (ii)). De plus, on notera $(\widehat{L},\varphi,\varepsilon) \in \Omega_{e}^{\st}(G)$ le triplet cuspidal (resp. $[\widehat{L},\varphi,\varepsilon] \in \mathcal{B}_{e}^{\st}(G)$ le triplet inertiel) défini à partir de $\widehat{L}$, $\varphi$ et $\varepsilon$.\\

Comme pour $\Omega(G)$, l'ensemble $\Omega_{e}^{\st}(G)$ est muni d'une structure de variété algébrique dont les composantes connexes sont indexées par $\mathcal{B}_{e}^{\st}(G)$ et sont des quotients de tores complexes par l'action de groupes finis.

Soit $\cj=[\widehat{L},\varphi,\varepsilon] \in \mathcal{B}_{e}^{\st}(G)$ un triplet inertiel de $\widehat{G}$. On définit les objets suivants : \begin{itemize}
\item un tore algébrique $\mathcal{T}_{\cjp}=\left\{(\varphi \chi)_{\widehat{L}}, \, \chi \in \mathcal{X}(\widehat{L})\right\} \simeq \mathcal{X}(\widehat{L})/\mathcal{X}(\widehat{L})(\varphi)$, où $\mathcal{X}(\widehat{L})(\varphi)=\{\chi \in \mathcal{X}(\widehat{L}) \mid (\varphi)_{\widehat{L}}=(\varphi\chi)_{\widehat{L}} \}$ ;
\item un groupe fini $\mathcal{W}_{\cjp}=\left\{ w \in N_{\widehat{G}}(\widehat{L}) \mid \exists \chi \in  \mathcal{X}(\widehat{L}), ({}^w \varphi)_{\widehat{L}}=(\varphi \chi)_{\widehat{L}} , \,\, \varepsilon^w \simeq \varepsilon  \right\}/\widehat{L}.$
\end{itemize}

Le groupe fini $\mathcal{W}_{\cjp}$ agit sur $\mathcal{T}_{\cjp}$ et le quotient $\mathcal{T}_{\cjp}/\mathcal{W}_{\cjp}$ s'identifie aux triplets cuspidaux de $\widehat{G}$ dans $\cj$, c'est-à-dire à la fibre au-dessus de $\cj$ par la projection naturelle $\Omega_{e}^{\st}(G) \twoheadrightarrow \mathcal{B}_{e}^{\st}(G)$. Ainsi, $\Omega_{e}^{\st}(G)$ est muni d'une structure de variété algébrique dont les composantes connexes, indexées par $\cjp \in \mathcal{B}_{e}^{\st}(G)$, sont des quotients de tores algébriques complexes par l'action de groupes finis. Ceci s'écrit $$\Omega_{e}^{\st}(G) = \bigsqcup_{\cjp \in \mathcal{B}_{e}^{\st}(G)} \mathcal{T}_{\cjp}/\mathcal{W}_{\cjp}.$$

\begin{defi}
On appelle \emph{centre de Bernstein dual} de $G$ et on note $\mathfrak{Z}_{e}^{\st}(G)$ l'anneau des fonctions régulières sur $\Omega_{e}^{\st}(G)$ : $$\mathfrak{Z}_{e}^{\st}(G)=\C[\Omega_{e}^{\st}(G)].$$
\end{defi} 

Contrairement à la décomposition de la section précédente, pour le moment, on ne sait pas associer à tout paramètre de Langlands enrichi $(\phi,\eta) \in \Phi_{e}(G)$ un triplet cuspidal de $\widehat{G}$. Ce sera l'objet de la section suivante. La conjecture ci-dessous établit le lien entre le centre de Bernstein et le centre de Bernstein dual.
\begin{conj}\phantomsection\label{compatibiliteaction}
Supposons que pour tous les sous-groupes de Levi $L$ de $G$, les paramètres de Langlands enrichis cuspidaux de $L$ sont les paramètres des représentations irréductibles supercuspidales de $L$. Soient $L$ un sous-groupe de Levi de $G$, $\varphi : W_F' \longrightarrow \widehat{L}$ un paramètre de Langlands cuspidal de $L$, $\sigma \in \Pi_\varphi(L)_{\cusp}$ un représentation irréductible supercuspidale paramétrée par $\varepsilon \in \Irr(\mathcal{S}_{\varphi}^{L})_{\cusp}$. On note $\mathfrak{s}=[L,\sigma] \in \mathcal{B}(G)$ et $\cj=[\widehat{L},\varphi,\varepsilon]\in \mathcal{B}_{e}^{\st}(G)$. La correspondance de Langlands induit des isomorphismes : $$\begin{array}{ccc}
T_{\mathfrak{s}} & \longrightarrow & \mathcal{T}_{\cjp} \\
t & \longmapsto & \widehat{t}
\end{array}, \quad \begin{array}{ccc}
W_{\mathfrak{s}} & \longrightarrow & \mathcal{W}_{\cjp} \\
w & \longmapsto & \widehat{w}
\end{array},$$
tels que pour tout $t \in T_{\mathfrak{s}}, \,\, w \in W_{\mathfrak{s}}$ : $$\widehat{w \cdot t}=\widehat{w} \cdot \widehat{t}.$$
\end{conj}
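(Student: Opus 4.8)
\emph{Esquisse de preuve (réduction à la correspondance de Langlands pour les caractères ; cas des groupes classiques).} Les deux membres de la comparaison cherchée sont bâtis à partir des mêmes données élémentaires : d'un côté le groupe $\mathcal{X}(L)$ des caractères non ramifiés de $L$ et le groupe de Weyl relatif $N_{G}(L)/L$, de l'autre le groupe dual $\mathcal{X}(\widehat{L})$ et $N_{\widehat{G}}(\widehat{L})/\widehat{L}$. Le plan est : (i) produire l'isomorphisme de tores à partir de la correspondance de Langlands pour les caractères ; (ii) produire l'isomorphisme de groupes de Weyl à partir de la description des sous-groupes de Levi duaux et de la correspondance de Langlands enrichie pour $L$ ; (iii) constater que les deux isomorphismes proviennent de la \emph{même} correspondance, ce qui rendra la compatibilité $\widehat{w\cdot t}=\widehat{w}\cdot\widehat{t}$ automatique. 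Pour les groupes classiques, l'hypothèse de l'énoncé est acquise (proposition \ref{verifparamcusp}) et chaque étape s'effectue au moyen du théorème \ref{thmmoeglin} de M\oe glin.

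\emph{L'isomorphisme de tores.} La correspondance de Langlands pour les caractères fournit un isomorphisme de groupes $\mathcal{X}(L)\xlongrightarrow{\sim}\mathcal{X}(\widehat{L})$, $\chi\mapsto\widehat{\chi}$, avec $\rec_{L}(\sigma\otimes\chi)=\rec_{L}(\sigma)\,\widehat{\chi}=\varphi\,\widehat{\chi}$ (voir \cite[4.5.2]{Kaletha:2012} et la discussion qui précède). Comme $\widehat{\chi}$ est à valeurs dans $Z_{\widehat{L}}^{\circ}$, on a $Z_{\widehat{L}}(\varphi\widehat{\chi})=Z_{\widehat{L}}(\varphi)$, de sorte que la correspondance enrichie pour $L$ envoie $\sigma\otimes\chi$ sur $(\varphi\widehat{\chi},\varepsilon)$. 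L'application $\sigma\otimes\chi\mapsto(\varphi\widehat{\chi})_{\widehat{L}}$ est donc une surjection bien définie $T_{\mathfrak{s}}\twoheadrightarrow\mathcal{T}_{\cjp}$, et il reste à établir son injectivité, c'est-à-dire l'égalité $\mathcal{X}(L)(\sigma)=\{\chi\mid\widehat{\chi}\in\mathcal{X}(\widehat{L})(\varphi)\}$ (une inclusion est immédiate). Pour $G=\SO_{N},\Sp_{N}$ cette égalité est transparente : $L=\prod_{i}\GL_{n_i}(F)\times G'$ avec $\mathcal{X}(G')=1$ (le facteur classique est semi-simple), d'où $\mathcal{X}(L)=\prod_{i}\mathcal{X}(\GL_{n_i}(F))$ ; le caractère $\varepsilon$ ne vit que sur le facteur classique, la torsion par $\chi$ n'affecte que les blocs $\GL$, et pour chaque $\GL_{n_i}$ la relation $\pi_i\otimes\chi_i\simeq\pi_i$ équivaut à $\varphi_{\pi_i}\widehat{\chi_i}\simeq\varphi_{\pi_i}$ par la correspondance de Langlands pour $\GL_{n_i}$. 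On en déduit $T_{\mathfrak{s}}\simeq\mathcal{T}_{\cjp}$.

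\emph{L'isomorphisme de groupes de Weyl.} Comme $\widehat{L}$ est le sous-groupe de Levi de $\widehat{G}$ dual de $L$, la description combinatoire des sous-groupes de Levi de $\SO_{N}$ et $\Sp_{N}$ (et de leurs duaux) fournit un isomorphisme canonique $N_{G}(L)/L\simeq N_{\widehat{G}}(\widehat{L})/\widehat{L}$ : dans les deux cas c'est le groupe de permutations (signées) attendu, qui permute les blocs $\GL$ de même taille et les remplace par leur contragrédiente. Sous cette identification, $w\in W_{\mathfrak{s}}$ signifie $\sigma^{w}\simeq\sigma\otimes\chi$ pour un $\chi\in\mathcal{X}(L)$ ; en appliquant $\rec_{L}^{e}$ et le point précédent, $\rec_{L}^{e}(\sigma\otimes\chi)=(\varphi\widehat{\chi},\varepsilon)$ et $\rec_{L}^{e}(\sigma^{w})=({}^{w}\varphi,{}^{w}\varepsilon)$, de sorte que $\sigma^{w}\simeq\sigma\otimes\chi$ équivaut aux deux conditions $({}^{w}\varphi)_{\widehat{L}}=(\varphi\widehat{\chi})_{\widehat{L}}$ et $\varepsilon^{w}\simeq\varepsilon$ — exactement celles qui définissent $\mathcal{W}_{\cjp}$. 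On obtient ainsi $W_{\mathfrak{s}}=\mathcal{W}_{\cjp}$ à l'intérieur de $N_{G}(L)/L=N_{\widehat{G}}(\widehat{L})/\widehat{L}$.

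\emph{Compatibilité des actions et principale difficulté.} L'action de $w\in W_{\mathfrak{s}}$ sur $T_{\mathfrak{s}}$ est $\sigma\otimes\chi\mapsto\sigma^{w}\otimes\chi^{w}$ et celle de $\widehat{w}$ sur $\mathcal{T}_{\cjp}$ est $(\varphi\widehat{\chi})_{\widehat{L}}\mapsto({}^{\widehat{w}}(\varphi\widehat{\chi}))_{\widehat{L}}$ ; comme l'isomorphisme de tores entrelace $\chi\mapsto\chi^{w}$ avec $\widehat{\chi}\mapsto{}^{\widehat{w}}\widehat{\chi}$ (fonctorialité de la correspondance des caractères le long de $N_{G}(L)/L=N_{\widehat{G}}(\widehat{L})/\widehat{L}$) et comme $\rec_{L}^{e}$ est équivariant pour la conjugaison, la relation $\widehat{w\cdot t}=\widehat{w}\cdot\widehat{t}$ en découle formellement. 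Les points réellement délicats — et c'est là que servent l'hypothèse de l'énoncé et la proposition \ref{verifparamcusp} — sont : d'une part l'équivariance de $\rec_{L}^{e}$ sous $N_{G}(L)$ et sa compatibilité à la torsion par $\mathcal{X}(L)$, qui pour $G$ classique reposent sur la construction d'Arthur et M\oe glin via le plongement standard et sur le comportement des blocs de Jordan et du caractère d'Arthur par torsion ; d'autre part, s'assurer que l'isomorphisme abstrait $N_{G}(L)/L\simeq N_{\widehat{G}}(\widehat{L})/\widehat{L}$ est bien celui par lequel $\rec_{L}^{e}$ est équivariant, c'est-à-dire qu'aucune symétrie extérieure ne vient le perturber. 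Je m'attends à ce que ce travail de vérification combinatoire — notamment du côté des groupes orthogonaux pairs, où le centre de $\widehat{G}$ et la distinction entre $\O$ et $\SO$ interviennent à la fois dans $\mathcal{S}_{\varphi}^{L}$ et dans $N_{G}(L)/L$ — soit l'obstacle principal ; l'énoncé sur le tore est, comparativement, plus élémentaire.
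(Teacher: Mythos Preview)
Your sketch is correct in spirit but follows a genuinely different route from the paper's proof (théorème \ref{verifcompatibilite}). The paper does \emph{not} argue via equivariance of $\rec_{L}^{e}$ under $N_{G}(L)$; instead it computes $W_{\mathfrak{s}}$ and $\mathcal{W}_{\cjp}$ independently and explicitly, then matches them. On the representation side it invokes Heiermann's description \cite[1.12--1.15]{Heiermann:2011}, writing $W_{\mathfrak{s}}=W_{\mathfrak{s}}^{\circ}\rtimes R_{\mathfrak{s}}$ with explicit generators $s_{i,j}$ of type $A/B/C/D$ according to the reducibility behaviour of $\pi_i|\cdot|^{s}\rtimes\sigma'$. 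On the Galois side it reinterprets $\mathcal{W}_{\cjp}$ as $N_{Z_{D_{\varphi}^{G}}(\restriction{\varphi}{\SL_2})}(A_{\widehat{L}})/Z_{D_{\varphi}^{L}}(\restriction{\varphi}{\SL_2})$, decomposes $\Std_{G}\circ\varphi$ into isotypic pieces indexed by $\pi\in I$, reads off the Weyl group of each factor $(\widehat{G}_{\pi},\widehat{L}_{\pi})$ from a table, and obtains a parallel decomposition $\mathcal{W}_{\cjp}=\mathcal{W}_{\cjp}^{\circ}\rtimes\mathcal{R}_{\cjp}$. The action compatibility is then checked generator by generator.

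Each approach has its advantages. Yours is conceptually cleaner and isolates the content of the statement: once one knows that $\rec_{L}^{e}$ intertwines conjugation by $N_{G}(L)/L\simeq N_{\widehat{G}}(\widehat{L})/\widehat{L}$ and torsion by $\mathcal{X}(L)\simeq\mathcal{X}(\widehat{L})$, everything is formal. You are right that for classical $L=\prod_{i}\GL_{n_i}\times G'$ this equivariance reduces to standard properties of the $\GL$ correspondence (compatibility with $\pi\mapsto\pi^{\vee}$ and with permutation of factors), the action on the $G'$-factor and on $\varepsilon$ being trivial; but you leave this as a ``point délicat'' rather than carrying it out. The paper's explicit route, by contrast, dispenses with any appeal to equivariance of $\rec_{L}^{e}$, and --- crucially for the rest of the paper --- produces as a by-product the root systems $\Sigma_{\mathfrak{s}}$, $\Sigma_{\cjp}$ and their types, which are indispensable in \S\ref{equivcat} for identifying the graded Hecke algebras $\mathbb{H}_{\mathfrak{s},\zeta,\mu_{\zeta}}$ and $\mathbb{H}_{\cjp,\zeta,\mu_{\zeta}}$ (tables \ref{fonctionparametrerep} and \ref{tableparametregal}). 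Your argument would establish the conjecture but would not furnish this structural information; you would still need the paper's computation later.
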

 
Cette conjecture énonce principalement que la correspondance de Langlands pour les représentation supercuspidales des sous-groupes de Levi de $G$ produit un isomorphisme $$\Omega(G) \simeq \Omega_{e}^{\st}(G),$$ et donc $$\mathfrak{Z}(G) \simeq \mathfrak{Z}_{e}^{\st}(G),$$ le membre de droite étant défini en terme de paramètres de Langlands. Par ailleurs, si $\mathfrak{s}=[L,\sigma] \in \mathcal{B}(G)$, l'isomorphisme $T_{\mathfrak{s}} \simeq \mathcal{T}_{\cjp}$ résulte de la correspondance de Langlands pour les caractères et la compatibilité de la correspondance de Langlands pour $L$ avec les caractères, c'est-à-dire, pour tout caractère $\chi$ de $L$, $\rec_L(\sigma \otimes \chi)=\rec_{L}(\sigma) \chi$.\\ La conjecture \ref{compatibiliteaction} sera prouvée pour les groupes classiques au théorème \ref{verifcompatibilite}.

\subsection{Les groupes de Weil-Deligne}\phantomsection\label{groupesweildeligne}
Commençons par un rappel sur les deux notions de groupes de Weil-Deligne. À l'origine, Deligne a introduit dans \cite[8.3.6]{Deligne:1973} le groupe de Weil-Deligne comme le produit semi-direct $$WD_F=\C \rtimes W_F,$$ l'action de $W_F$ sur $\C$ étant définie de la façon suivante : pour tout $w \in W_F, z \in \C, \,\, wzw^{-1}=|w| z$. Ainsi, pour tout $(z,w), \, (z',w') \in WD_F$, $$(z,w)(z',w')=(z+|w| z',ww').$$ On reprend les notations précédentes et on note $\widehat{\mathfrak{g}}$ l'algèbre de Lie de $\widehat{G}$. Dans ce contexte, un paramètre de Langlands de $G$ est la donnée d'un couple $(\lambda,N)$ avec \begin{itemize}
\item $\lambda: W_F \longrightarrow \widehat{G}$, un morphisme continu et dont l'image est constitué d'éléments semi-simples ;
\item $N \in \widehat{\mathfrak{g}}$, un élément nilpotent tel que pour tout $w \in W_F, \,\, \Ad(\lambda(w))N=|w| N$.
\end{itemize} La relation d'équivalence sur ces couples est toujours la conjugaison par $\widehat{G}$, c'est-à-dire $(\lambda,N)$ est équivalent à $(\lambda',N')$, si et seulement si, il existe $g \in \widehat{G}$ tel que $\lambda'={}^{g}\lambda$ et $N'=\Ad(g)N$. Pour distinguer les deux notions de paramètres de Langlands, on appelera les couples définis précédemment des paramètres de Langlands originels de $G$.\\ 

Rappelons que pour tout $w \in W_F$, on note $d_w=\left( \begin{smallmatrix} |w|^{1/2} & 0 \\0 &  |w|^{-1/2} \end{smallmatrix} \right) \in \SL_2(\C)$. À un paramètre de Langlands de $G$ de la forme $\phi : W_F' \longrightarrow \widehat{G}$, on définit un paramètre de Langlands originel $(\lambda_{\phi},N_{\phi})$ de $G$ de la façon suivante : pour tout $w \in W_F$, $$\lambda_{\phi}(w)=\phi(w,d_w) \quad \mathrm{et} \quad N_{\phi}=d\restriction{\phi}{\SL_2(\C)}\left( \begin{smallmatrix} 0 & 1 \\  0 & 0\end{smallmatrix} \right).$$
Réciproquement, à un paramètre de Langlands originel $(\lambda,N)$ de $G$ on peut associer un paramètre de Langlands de $G$ de la forme $\phi : W_F' \longrightarrow \widehat{G}$. Pour cela, on note $K_{\lambda}^{G}=Z_{\widehat{G}}(\restriction{\lambda}{I_F})$ le centralisateur dans $\widehat{G}$ de l'image de l'inertie par $\lambda$, $\mathfrak{k}_{\lambda}^{G} = \{ x \in \widehat{\mathfrak{g}} \mid \forall w \in I_F, \, \Ad(\lambda(w))x=x\}$ son algèbre de Lie et $f_{\lambda}=\lambda(\Fr)$. L'élément semi-simple $f_{\lambda}$ normalise $\lambda(I_F)$, il agit donc par adjonction sur $\mathfrak{k}_{\lambda}^{G}$. Pour $\alpha \in \C$, notons $\mathfrak{k}_{\lambda}^{G}(\alpha)$ l'espace propre relatif à l'action de $f_{\lambda}$ dans $\mathfrak{k}_{\lambda}^{G}$ associé à la valeur propre $\alpha$. La condition que $N$ doit vérifier est donc équivalente à $N \in \mathfrak{k}_{\lambda}^{G}(q^{-1})$. L'élément $N \in \mathfrak{k}_{\lambda}^{G} $ étant nilpotent, d'après le théorème de Jacobson-Morozov, il existe un $\mathfrak{sl}_2$-triplet $(x,y,z)$, c'est-à-dire des éléments $x,y,z \in \mathfrak{k}_{\lambda}^{G}$ vérifiant $$[z,x]=2x, \,\, [z,y]=-2y, \,\, [x,y]=z,$$ avec $x=N$. De plus, d'après un raffinement de Kostant du théorème de Jacobson-Morozov (voir \cite[lemma 2.1]{Gross:2010}), on peut supposer que le $\mathfrak{sl}_2$-triplet $(x,y,z)$ vérifie $$x \in \mathfrak{k}_{\lambda}^{G} (q^{-1}), \quad z \in \mathfrak{k}_{\lambda}^{G} (1) \quad \text{et} \quad y \in\mathfrak{k}_{\lambda}^{G} (q).$$ Soit $\gamma : \SL_2(\C) \longrightarrow \left(K_{\lambda}^{G} \right) ^{\circ}$ le morphisme défini par le $\mathfrak{sl}_{2}$-triplet $(x,y,z)$. En particulier, on a $\gamma \left( \begin{smallmatrix}1 & 1 \\ 0 & 1 \end{smallmatrix} \right)=\exp(N)$. On définit un morphisme $\chi_{\phi} : W_F \longrightarrow \left(K_{\lambda}^{G} \right) ^{\circ}$ pour tout $w \in W_F$, par $$\chi_{\phi}(w)=\gamma(d_w)^{-1}.$$ Il est clair que $\chi_{\phi}$ est entièrement déterminé par l'image de $\Fr$, c'est-à-dire par $s_{\gamma}=\gamma \left( \begin{smallmatrix} q^{1/2} & 0 \\  0 & q^{-1/2} \end{smallmatrix} \right)=\exp\left(\frac{\log q}{2}Z \right).$ Puisque $\frac{\log q}{2}Z \in \mathfrak{j}_{\lambda}(1)$, $s_{\lambda}=\lambda(\Fr)$ commute à $s_{\gamma}$. De plus, on a les relations $$\Ad(s_{\gamma})x=q x, \,\, \Ad(s_{\gamma})Y=q^{-1}Y.$$ Ceci montre que $x,Y$ et $Z$ sont fixés par l'action adjointe des éléments de $(\lambda \chi_{\phi})(W_F)$, donc $\gamma(\SL_2(\C))$ est contenu dans $Z_{\widehat{G}}((\lambda \chi_{\phi})(W_F))$. Ceci permet de définir $\phi : W_F' \longrightarrow \widehat{G}$ pour tout $w \in W_F, x \in \SL_2(\C)$, par $$\phi(w,x)=\lambda(w) \chi_{\phi}(w) \gamma(x).$$ On obtient ainsi une bijection entre les classes de paramètres de Langlands de $G$ et les classes de paramètres de Langlands originels de $G$ (voir \cite[proposition 2.2]{Gross:2010}).\\ 

Dans ce qui suit nous montrons que des paramètres de Langlands pour $W_F'$ et pour $WD_F$ associés, ont leurs groupes de composantes du centralisateur de leurs images isomorphes. D'après un théorème de Kostant, puisque $K_{\lambda}^{G}$ est réductif, $Z_{K_{\lambda}^{G}}(\gamma) $ est un sous-groupe réductif maximal de $Z_{K_{\lambda}^{G}}(N)$ et $ Z_{K_{\lambda}^{G}}(N) =Z_{K_{\lambda}^{G}}(\gamma) U$, où $U$ est le radical unipotent de $Z_{K_{\lambda}^{G}}(N)$. De plus, $N \in \mathfrak{k}_{\lambda}^{G}(q^{-1})$ et $f_{\lambda}$ agit par conjugaison sur $Z_{K_{\lambda}^{G}}(N)$. Le sous-groupe $U$ étant caractéristique on a $f_{\lambda} U f_{\lambda}^{-1}=U$ et d'après \cite[Theorem 18.3]{Humphreys:1975} $Z_{U}(f_{\lambda})$ est un groupe unipotent et connexe. Pour tout $k \in Z_{K_{\lambda}^{G}}(\gamma)$, l'élément $f_{\lambda} k f_{\lambda}^{-1}$ fixe le $\mathfrak{sl}_2$-triplet $(x,y,z)$ et cela montre que $Z_{K_{\lambda}^{G}}(\gamma)$ est stable par $f_{\lambda}$. Par suite, $Z_{\widehat{G}}(\lambda,N)=Z_{K_{\lambda}^{G}}(f_{\lambda},N)$ et $Z_{K_{\lambda}^{G}}(f_{\lambda},N)=Z_{K_{\lambda}^{G}}(f_{\lambda},\gamma) Z_{U}(f_{\lambda})$. D'où $$Z_{\widehat{G}}(\lambda,N)=Z_{\widehat{G}}(\phi) Z_U(f_{\lambda}) \quad \text{et} \quad A_{\widehat{G}}(\lambda,N) \simeq A_{\widehat{G}}(\phi).$$

Afin de pouvoir appliquer certain résultats (essentiellement ceux de Lusztig \cite{Lusztig:1988}, \cite{Lusztig:1995a} et \cite{Lusztig:1995a}) on souhaite remplacer $f_{\lambda}$ par un élément semi-simple de l'algèbre de Lie d'un certain groupe (qui n'est certainement pas $K_{\lambda}^{G}$ puisque en général $f_{\lambda} \not \in K_{\lambda}^{G}$). On note $f_{\lambda}=s_{\lambda} t_{\lambda}$ la décomposition de l'élément semi-simple $f_{\lambda}$ en produit d'un élément semi-simple hyperbolique $s_{\lambda}$ et elliptique $t_{\lambda}$. Ceci signifie par exemple que pour tout caractère rationnel $\chi$ d'un tore maximal qui contient $f_{\lambda}$, alors $\chi(s_{\lambda}) \in \R_{+}^{*}$ et $\chi(t_{\lambda}) \in \mathbf{S}^{1}$. Soit $N \in \widehat{\mathfrak{g}}$. Puisque $f_{\lambda}$ agit algébriquement sur $\mathfrak{k}_{\lambda}^{G}$, $N \in \mathfrak{k}_{\lambda}^{G}(q^{-1})$, si et seulement si, $\Ad(t_{\lambda})N=N$ et $\Ad(s_{\lambda})N=q^{-1}N$. Notons $$J_{\lambda}^{G}=Z_{\widehat{G}}(\restriction{\lambda}{I_F},t_{\lambda}) \,\, \text{et} \,\, \mathfrak{j}_{\lambda}^{G}= \{ x \in \widehat{\mathfrak{g}} \mid \forall w \in I_F, \, \Ad(\lambda(w))x=x \,\, \text{et} \,\, \Ad(t_{\lambda})x=x\}.$$ Puisque $f_{\lambda}$ normalise $\lambda(I_F)$, il normalise également $K_{\lambda}^{G}=Z_{\widehat{G}}(\restriction{\lambda}{I_F})$. Ainsi, il existe $n \in \N^{*}$ tel que $f_{\lambda}^{n} \in K_{\lambda}^{G}$ et en particulier $s_{\lambda}^{n} \in K_{\lambda}^{G} $.  L'élément semi-simple $s_{\lambda}$ commute à $t_{\lambda}$ et la composante neutre $\left(J_{\lambda}^{G} \right)^{\circ}$ est d'indice fini dans $J_{\lambda}^{G}$. Il existe donc une certaine puissance $s_{\lambda}^{m} \in \left(J_{\lambda}^{G} \right)^{\circ}$. Puisque $s_{\lambda}$ est semi-simple hyperbolique, ceci implique $s_{\lambda}\in \left(J_{\lambda}^{G} \right)^{\circ}$.\\

L'intérêt du paragraphe précédent est le suivant. À partir d'un paramètre de Langlands originel $(\lambda,N)$, nous avons défini trois groupes $H_{\lambda}^{G}$, $J_{\lambda}^{G}$ et $K_{\lambda}^{G}$ vérifiant $H_{\lambda}^{G} \subseteq J_{\lambda}^{G} \subseteq K_{\lambda}^{G}$. Les éléments semi-simples $f_{\lambda}$ et $s_{\lambda}$ agissent de façon identique sur $N$. Par ailleurs, $f_{\lambda} \not \in K_{\lambda}^{G}$ alors que $s_{\lambda} \in J_{\lambda}$ (il peut même se descendre à l'algèbre de Lie) et $H_{\lambda}^{G}=Z_{J_{\lambda}^{G}}(s_{\lambda})=Z_{K_{\lambda}^{G}}(f_{\lambda})$.

\subsection{Support cuspidal d'un paramètre de Langlands enrichi}

La correspondance de Langlands locale relie les (classes de) représentations irréductibles de $G$ et les (classes de) paramètres de Langlands enrichis de $G$. Nous avons énoncé (et vérifié pour les groupes linéaires et classiques déployés) précédemment une conjecture concernant les paramètres de Langlands enrichis des représentations irréductibles supercuspidales. Par la correspondance de Langlands locale, il correspond à l'application de support cuspidal (et de support inertiel) une application de support cuspidal (et de support inertiel) pour les paramètres de Langlands enrichis. Autrement dit, il existe une application $$\cSc : \Phi_{e}(G) \longrightarrow \Omega_{e}^{\st}(G),$$ qui à tout paramètre enrichi $(\phi,\eta) \in \Phi_{e}(G)$, associe un triplet $(\widehat{L},\varphi,\epsilon)$ avec $\widehat{L}$ un sous-groupe de Levi de $\widehat{G}$, $\varphi \in \Phi(L)_{\cusp}$ un paramètre de Langlands cuspidal de $L$ et $\varepsilon \in \Irr(\mathcal{S}_{\varphi}^{L})$ une représentation irréductible cuspidale de $\mathcal{S}_{\varphi}^{L}$, au sens de la définition \ref{defcusp}. Par ailleurs, la correspondance de Langlands permet de définir une application $\rec_{\Omega(G)}^{e} : \Omega(G) \longrightarrow \Omega_{e}^{\st}(G)$, qui à une paire cuspidale $(L,\sigma)$ associe $(\widehat{L},\rec_{L}^{e}(\sigma))$. Ainsi, l'application de support cuspidal pour les paramètres de Langlands enrichis devrait être compatible avec la commutativité du diagramme suivant : 
\begin{center}
\begin{tikzcd}
\Irr(G)  \arrow{d}[swap]{\Sc} \arrow{r}{{\rec_{G}^{e}}}  &  \Phi_{e}(G) \arrow{d}{\cScp} \\
\Omega(G)          \arrow{r}{{\rec_{\Omega(G)}^{e}}} &  \Omega_{e}^{\st}(G)
\end{tikzcd}
\end{center} c'est-à-dire, pour tout $\pi \in \Irr(G)$, en notant $\Sc(\pi)=(L,\sigma)$ et $\rec_L^{e}(\sigma)=(\varphi,\varepsilon)$, alors $\cSc(\rec_{G}^{e}(\pi))=(\widehat{L},\varphi,\varepsilon)$.\\ 

Cette section est consacré à la définition du support cuspidal d'un paramètre de Langlands enrichi $(\phi,\eta)$ « intrinsèquement », c'est-à-dire uniquement en terme de paramètre enrichi (sans supposer la correspondance de Langlands). Pour cela nous utilisons les techniques de Lusztig pour définir le sous-groupe de Levi et le paramètre de Langlands cuspidal. Nous retrouvons (et nous nous inspirons) d'une construction similaire chez Lusztig (\cite{Lusztig:1988}, \cite{Lusztig:1995}, \cite{Lusztig:1995a}) et Waldspurger \cite{Waldspurger:2004} dans leur travaux sur la classification des représentations de réductions unipotentes d'un groupe simple adjoint (voir notamment \cite[\textsection 2]{Waldspurger:2004}).\\

\begin{theo}\phantomsection\label{supportcuspidalpartiel}
Soit $G$ un groupe réductif connexe défini et déployé sur $F$. Soit $(\phi,\eta)\in \Phi_{e}(G)$ un paramètre de Langlands enrichi de $G$. Il existe un sous-groupe de Levi $\widehat{L}$ de $\widehat{G}$ et un paramètre de Langlands cuspidal $\varphi$ de $L$ tels que : \begin{itemize}
\item $\lambda_{\phi}=\lambda_{\varphi}$ ;
\item pour tout $w \in W_F, \,\, \chi_{\phi}(w) \in \widehat{L}$ ;
\item en posant pour tout $w \in W_F,\,\, \chi_c(w)=\chi_{\phi}(w)/\chi_{\varphi}(w)$ alors $\chi_c(W_F) \subset Z_{\widehat{L}}^{\circ}$.
\end{itemize}
De plus, si $\widehat{L}'$ et $\varphi'$ sont respectivement, un autre sous-groupe de Levi de $\widehat{G}$ et un paramètre de Langlands cuspidal de $L'$ alors ils sont conjugués par un même élément de $Z_{\widehat{G}}(\restriction{\phi}{W_F})^{\circ}$ à $\widehat{L}$ et $\varphi$.
\end{theo}

\begin{rema}
Nous verrons dans la section suivante que dans le cas des groupes classiques on peut définir un support cuspidal pour tous paramètres de Langlands enrichis, c'est-à-dire un triplet $(\widehat{L},\varphi,\varepsilon) \in \Omega_e^{\st}(G)$. Par ailleurs, la construction est définie afin de satisfaire la conjecture \ref{conjindpar} de compatibilité entre l'induction parabolique et la correspondance de de Langlands.
\end{rema}

\begin{proof}
Notons $\lambda$ le caractère infitinitésimal de $\phi$ et rappelons que pour tout $w \in W_F$, $\chi_{\phi}(w)=\phi \left( 1,\left( \begin{smallmatrix} |w|^{-1/2} & 0 \\ 0 & |w|^{1/2} \end{smallmatrix} \right) \right)$, si bien que $\restriction{\phi}{W_F}=\lambda \chi_{\phi}$. De plus, on notera $\lambda(\Fr)=s_{\lambda} t_{\lambda}$ la décomposition de $\lambda(\Fr)$ avec $s_{\lambda}$ (resp. $t_{\lambda}$) un élément semi-simple hyperbolique (resp. elliptique), $H_{\phi}^{G}=Z_{\widehat{G}}(\restriction{\phi}{W_F})$, $J_{\lambda}=Z_{\widehat{G}}(\restriction{\lambda}{I_F},s_{\lambda})$, $N_{\phi}=d\restriction{\phi}{\SL_2(\C)}\left( \begin{smallmatrix} 0 & 1 \\ 0 & 0\end{smallmatrix} \right)$, $s_{\phi}=s_{\lambda} \chi_{\phi}(\Fr)$ et $r_{0}=-\frac{\log q}{2}$. On a alors :
\begin{center}
\begin{tikzcd}
A_{\widehat{G}}(\phi)=A_{H_{\phi}^{G}}(N_{\phi}) \ar[r,twoheadrightarrow] & \mathcal{S}_{\phi}^{G}   \\
A_{{(H_{\phi}^{G})}^{\circ}}(N_{\phi}) \ar[u,hook]  & 
\end{tikzcd}
\end{center}
Soit $\widetilde{\eta}$ la représentation de $A_{H_{\phi}^{G}}(N_{\phi})$ obtenue en tirant en arrière $\eta$ et soit $\eta_{0}$ une sous-représentation irréductible de la restriction de $\widetilde{\eta}$ au sous-groupe distingué  $A_{{(H_{\phi}^{G})}^{\circ}}(N_{\phi})$. \\

La correspondance de Springer généralisée appliquée au groupe $(H_{\phi}^{G})^{\circ}$ et au couple $(\mathcal{C}_{N_{\phi}}^{(H_{\phi}^{G})^{\circ}},\eta_{0})$ associe un triplet $((H_{\phi}^{L})^{\circ},\mathcal{C}_{N_{\varphi}}^{(H_{\phi}^{L})^{\circ}},\varepsilon_0)$ avec $(H_{\phi}^{L})^{\circ}$ un sous-groupe de Levi de $(H_{\phi}^{G})^{\circ}$, $N_{\varphi} \in \mathfrak{h}_{\phi}^{L}$ et $\varepsilon \in \Irr(A_{(H_{\phi}^{L})^{\circ}} (N_{\varphi}))$ une représentation cuspidale. Par ailleurs, on a vu que $H_{\phi}^{G}=Z_{J_{\lambda}}(s_{\phi})$ et d'après le théorème de Steinberg, $(H_{\phi}^{G})^{\circ}=Z_{(J_{\lambda\chi_{\phi}})^{\circ}}(s_{\phi})$. Ainsi, on peut appliquer la proposition \ref{actionss} et d'après le théorème de Jacobson-Morozov-Kostant, il existe un morphisme $$\theta : \SL_2(\C) \longrightarrow (H_{\phi}^{L})^{\circ},$$ tel que $d\theta\left( \begin{smallmatrix} 0 & 1 \\  0 & 0 \end{smallmatrix} \right)=N_{\varphi} $ et pour tout $t \in \C^{\times}$, $\theta \left( \begin{smallmatrix} t & 0 \\ 0 & t^{-1} \end{smallmatrix} \right)$ commute à $s_{\phi}$.\\

Définissons les cocaractères $\chi_{\varphi}$ et $\chi_{c}$, pour tout $w\in W_F$, par : $$\chi_{\varphi}=\theta(d_w)^{-1}, \,\, \chi_{c}(w)=\chi_{\phi}(w)/\chi_{\varphi}(w),$$ si bien que l'on a une décomposition $\chi_{\phi}=\chi_{\varphi} \chi_c=\chi_c \chi_{\varphi}$. \\

Puisque pour tout $w \in W_F$, $$\Ad(\lambda \chi_{\phi}(w))N_{\varphi}=N_{\varphi}, \quad \Ad(\chi_{\phi}(w))N_{\varphi}=|w|^{-1}N_{\varphi}, \quad \Ad(\chi_{\varphi}(w))N_{\varphi}=|w|^{-1}N_{\varphi},$$ on en déduit que pour tout $w \in W_F$, on a : $$\Ad(\chi_c(w))N_{\varphi}= N_{\varphi}, \,\, \Ad(\lambda(w))N_{\varphi}=|w| N_{\varphi}.$$ Par suite, on peut définir un paramètre de Langlands $\varphi$ pour tout $(w,x) \in W_F'$, par $$\varphi(w,x)=\lambda(w)\chi_{\varphi}(w) \theta(x).$$ D'après \cite[2.8]{Lusztig:1984}, puisque $\mathcal{C}_{N_{\varphi}}^{(H_{\phi}^{L})^{\circ}}$ supporte un système local cuspidal, $N_{\varphi}$ est un élément nilpotent distingué de $\mathfrak{h}_{\phi}^{L}$. De plus, pour tout $w \in W_F$ l'élément semi-simple $\chi_c(w) \in (H_{\phi}^{L})^{\circ}$ commute à $N_{\varphi}$. Donc pour tout $w \in W_F, \,\, \chi_c(w) \in A_{\widehat{L}}$. On obtient ainsi, \begin{align*}
Z_{H_{\phi}^{G}}(A_{\widehat{L}}) &= Z_{\widehat{G}}(A_{\widehat{L}},\lambda \chi_{\phi})& (H_{\phi}^{L})^{\circ} &=Z_{(H_{\phi}^{G})^{\circ}}(A_{\widehat{L}})\\
 &=Z_{\widehat{L}}(\lambda \chi_{\phi})  & &=Z_{\widehat{L}}(\lambda \chi_{\varphi})^{\circ} \\
 &=Z_{\widehat{L}}(\lambda \chi_{\varphi} \chi_c)   & &=(H_{\varphi}^{L})^{\circ}\\
 & =Z_{\widehat{L}}(\lambda \chi_{\varphi}) \\
 &=H_{\varphi}^{L} 
\end{align*} De plus, d'après \cite[2.3.b)]{Lusztig:1988}, $A_{\widehat{L}}$ est un tore maximal de $Z_{H_{\varphi}}(\theta)^{\circ}=Z_{\widehat{L}}(\lambda \chi_{\varphi}, \theta)^{\circ}=Z_{\widehat{L}}(\varphi)^{\circ}$. Ceci prouve que $$\varphi : W_F' \longrightarrow \widehat{L},$$ est un paramètre de Langlands discret de $L$. De plus, par construction, il est cuspidal et de caractère infinitésimal $\lambda$.\\

Pour finir, expliquons pourquoi $\mathcal{C}$ ne dépend pas du choix de $\eta_0$. En effet, si on prend une autre sous-représentation irréductible de $\widetilde{\eta}$, alors elle est de la forme $\eta_0^{x}$, où $x \in A_{H_{\phi}^{G}}(N_{\phi})$. Or, la correspondance de Springer généralisée pour ${(H_{\phi}^{G})}^{\circ}$ est $H_{\phi}^{G}$-équivariante. Ainsi, l'orbite nilpotente associée à $({}^{x} \mathcal{O},\eta_0^{x})$ est ${}^{x}\mathcal{C}$. Pour le cas des groupes classiques qui nous intéressera dans la suite, c'est-à-dire lorsque ${(H_{\phi}^{G})}^{\circ}$ est un produit de groupes symplectiques, spéciaux orthogonaux et linéaires, il y a au plus une orbite nilpotente supportant un système local cuspidal. Nécessairement, ${}^{x}\mathcal{C}=\mathcal{C}$. En général, on peut supposer ${(H_{\phi}^{G})}^{\circ}$ simplement connexe, puis le décomposer en produit presque direct de groupes simples (et d'un tore central), on vérifie à l'aide de la classification des paires cuspidales décrite dans \cite{Lusztig:1984}),que ${}^{x}\mathcal{C}=\mathcal{C}$.\\
\end{proof}

Profitons d'être dans ce contexte pour introduire les définitions suivantes.

\begin{defi}\phantomsection\label{paramadapte}
On reprend les notations précédentes. Soient $\varphi : W_F' \longrightarrow \widehat{L}$ un paramètre de Langlands cuspidal de $L$ et $\phi : W_F' \longrightarrow \widehat{G}$ un paramètre de Langlands de $G$. On note $H_{\phi}^{G}=Z_{\widehat{G}}(\restriction{\phi}{W_F}), \,\, H_{\varphi}^{L}=Z_{\widehat{L}}(\restriction{\varphi}{W_F})$ et $\mathfrak{h}_{\phi}^{G}, \,\, \mathfrak{h}_{\varphi}^{L}$ leurs algèbres de Lie respectives.
On dit que $\phi$ est adapté à $\varphi$ si : \begin{itemize}
\item $\phi$ et $\varphi$ ont le même caractère infinitésimal $\lambda$ ;
\item pour tout $t \in \C^{\times}, \,\, \phi \left(1, \left( \begin{smallmatrix} t & 0 \\ 0 & t^{-1} \end{smallmatrix} \right)  \right) \in H_{\varphi}^{L}$ ;
\item il existe une sous-algèbre parabolique $\mathfrak{p}=\mathfrak{h}_{\varphi}^{L} \oplus \mathfrak{u}$ de $\mathfrak{h}_{\phi}^{G}$, admettant $\mathfrak{h}_{\varphi}^{L}$ pour sous-algèbre de Levi telle que : $N_{\phi} \in \mathfrak{p}$ et $N_{\phi}=N_{\varphi}+U$ (avec $U \in \mathfrak{u}$) ;
\end{itemize}
\end{defi}

\begin{defi}\label{cocaractercorrection}
Soit $\varphi : W_F' \longrightarrow \widehat{L}$. On appelle cocaractère de correction du paramètre de $\varphi$ dans $\widehat{G}$, tout cocaractère  $c : \C^{\times} \longrightarrow Z_{\widehat{L}}^{\circ}$ défini pour tout $t \in \C^{\times}$, par : $$c(t)=\frac{\phi \left( 1,\left( \begin{smallmatrix} t & 0 \\ 0 & t^{-1} \end{smallmatrix} \right) \right)}{\varphi\left( 1,\left( \begin{smallmatrix} t & 0 \\ 0 & t^{-1} \end{smallmatrix} \right) \right)},$$ où $\phi$ est un paramètre de Langlands de $G$ adapté à $\varphi$. On note alors $\chi_{c} : W_F \longrightarrow Z_{\widehat{L}}^{\circ}$ le cocaractère non ramifié de $W_F$ défini pour tout $w \in W_F$ par $\chi_{c}=c(|w|^{-1/2})$.
\end{defi}

\begin{rema}
Le fait que $c$ est à valeur dans $Z_{\widehat{L}}^{\circ}$ résulte du fait que les éléments de  $c(W_F) \subset {(H_{\varphi}^{L})}^{\circ}$ commutent à $N_{\varphi}$, sont semi-simples et que $N_{\varphi}$ est un élément nilpotent distingué dans $\mathfrak{h}_{\varphi}^{L}$.
\end{rema}

\begin{rema}
La définition de $c$ montre qu'il y a un nombre fini de cocaractère de correction du paramètre de $\varphi$ dans $\widehat{G}$. Plus précisément, ces cocaractères sont uniquement déterminés (à conjugaison près) par l'orbite nilpotente de $N_{\phi}$. Si on se place du point de vue du groupe de Weil-Deligne originel, notons $\lambda$ le caractère infinitésimal de $\varphi$. Les cocaractères de correction de $\varphi$ dans $\widehat{G}$ sont en bijection avec les $Z_{\widehat{G}}(\lambda)$-orbites nilpotentes $\mathcal{O} \subset \mathfrak{h}_{\Lambda}(q^{-1})$, tel qu'il existe un élément $N \in \mathcal{O}$ et une décomposition $N=N_{\varphi}+U$ où $U$ est un élément dans le radical unipotent d'une sous-algèbre parabolique admettant $\mathfrak{l}$ pour facteur de Levi.
\end{rema}

Pour définir de façon satisfaisante le support cuspidal d'un paramètre de Langlands enrichi, il nous faut associer non pas une représentation irréductible cuspidale de $A_{Z_{\widehat{L}}(\varphi)^{\circ}}(\restriction{\varphi}{\SL_2(\C)})$ mais une représentation irréductible cuspidale de $A_{Z_{\widehat{L}}(\varphi)}(\restriction{\varphi}{\SL_2(\C)})=A_{\widehat{L}}(\varphi)$. C'est l'objet de la section suivante dans le cas où $G$ est un groupe classique.

\subsection{Support cuspidal des paramètres de Langlands enrichis dans le cas des groupes classiques}\label{sec:suppcusp}

Explicitons cette construction dans le cas qui nous intéresse et introduisons quelques notations. Le groupe $G$ désigne l'un des groupes suivants $\Sp_{N}(F)$ ou $\SO_{N}(F)$, $\widehat{G}$ son dual de Langlands. Avant de commencer, introduisons et rappelons le calcul de centralisateurs des paramètres de Langlands. On note $\widehat{G}_{*}$ le groupe orthogonal associé si $\widehat{G}$ est un groupe spécial orthogonal et $\widehat{G}_{*}=\widehat{G}$ si $\widehat{G}$ est un groupe symplectique.\\

Soit $\phi \in \Phi(G)$ un paramètre de Langlands de $G$. On note $I$ l'ensemble des (classes de) représentations irréductibles de $W_F$ qui apparaissent dans $\Std_G \circ \restriction{\phi}{W_F}$ et qu'on décompose $I=I^{\O} \sqcup I^{\S} \sqcup I^{\GL}$, où $I^{\O}$ (resp. $I^{\S}$) désigne le sous-ensemble de $I$ formé des représentations de type orthogonales (resp. symplectiques) et $I^{\GL}$ un sous-ensemble maximal de $I$ formés de représentations qui ne sont pas autoduales et tel que si $\pi \in I^{\GL}$ alors $\pi^{\vee} \not\in I^{\GL}$. Si bien que : $$\Std_G \circ \restriction{\phi}{W_F} = \bigoplus_{\pi \in I^{\O}} \pi \otimes M_{\pi} \bigoplus_{\pi \in I^{\S}} \pi \otimes M_{\pi} \bigoplus_{\pi \in I^{\GL}} (\pi \oplus \pi^{\vee}) \otimes M_{\pi} ,$$ où pour tout $\pi \in I$, $M_{\pi}$ est l'espace de multiplicité de $\pi$.\\

En fonction du type de $\widehat{G}$ et de $\pi \in I$, $M_{\pi}$ est un espace orthogonal, symplectique ou totalement isotrope. Pour tout $\pi \in I$, notons $\widehat{G}_{\phi,\pi}$ le groupe des isométries de $M_{\pi}$. Pour être plus précis, en notant $m_{\pi}=\dim M_{\pi}$, on a : $$\widehat{G}_{\phi,\pi}= \left\{ \begin{tabular}{ll}
$\O_{m_{\pi}}(\C)$ & si ($\widehat{G}$ est un groupe spécial orthogonal et $\pi \in I^{\O}$) ou ($\widehat{G}$ est un groupe symplectique et $\pi \in I^{\S}$)\\
$\Sp_{m_{\pi}}(\C)$ & si ($\widehat{G}$ est un groupe spécial orthogonal et $\pi \in I^{\S}$) ou ($\widehat{G}$ est un groupe symplectique et $\pi \in I^{\O}$) \\
$\GL_{m_{\pi}}(\C)$ & si $\pi \in I^{\GL}$
\end{tabular} \right.$$

Par conséquent, $A_{\widehat{G}_{*}}(\restriction{\phi}{W_F}) \simeq \prod_{\pi \in I} \widehat{G}_{\phi,\pi}$. Notons $S_{\phi}=\restriction{\phi}{\SL_2(\C)}$ et remarquons qu'à travers ce dernier isomorphisme, il correspond à $S_{\phi}$ une famille de morphismes indexée par $\pi \in I $ qu'on note $S_{\phi,\pi} : \SL_2(\C) \rightarrow \widehat{G}_{\phi,\pi}$, si bien que $$A_{\widehat{G}_{*}}(\phi)=A_{Z_{\widehat{G}_{*}}(\restriction{\phi}{W_F})}(\restriction{\phi}{\SL_2(\C)}) \simeq \prod_{\pi \in I} A_{\widehat{G}_{\phi,\pi}}(S_{\phi,\pi}).$$

À présent, supposons que $\widehat{G}$ est un groupe spécial orthogonal. Notons $I^{\O,\imp}=\{ \pi \in I^{\O} \mid \dim \pi \equiv 1 \mod 2 \}$ et $I^{O,\pair}=\{ \pi \in I^{\O} \mid \dim \pi \equiv 0 \mod 2 \}$. On obtient ainsi les isomorphismes :\begin{align*}
Z_{\widehat{G}_{*}}(\restriction{\phi}{W_F}) & \simeq \prod_{\pi \in I^{\O}} \O_{m_{\pi}}(\C) \times \prod_{\pi \in I^{\S}} \Sp_{m_{\pi}}(\C) \times \prod_{\pi \in I^{\GL}}\GL_{m_{\pi}}(\C)  \\
Z_{\widehat{G}}(\restriction{\phi}{W_F}) & \simeq S\left( \prod_{\pi \in I^{\O}} \O_{m_{\pi}}(\C) \right) \times \prod_{\pi \in I^{\S}} \Sp_{m_{\pi}}(\C) \times \prod_{\pi \in I^{\GL}}\GL_{m_{\pi}}(\C) \\
																	 & \simeq S\left( \prod_{\pi \in  I^{\O,\imp}} \O_{m_{\pi}}(\C) \right) \times \prod_{\pi \in I^{\O,\pair}} \O_{m_{\pi}}(\C)  \times \prod_{\pi \in I^{\S}} \Sp_{m_{\pi}}(\C) \times \prod_{\pi \in I^{\GL}}\GL_{m_{\pi}}(\C)\\
																	 & \simeq \widehat{G}_{\phi,i} \times \prod_{\pi \in I^{\O,\pair} \sqcup I^{\S} \sqcup I^{\GL}} \widehat{G}_{\phi,\pi},
\end{align*} 

où l'on a noté $$\widehat{G}_{\phi,i}=S\left( \prod_{\pi \in I^{\O,\imp}} \O_{m_{\pi}}(\C) \right)=\left\{(z_{\pi}) \in \prod_{\pi \in I^{\O,\imp}} \O_{m_{\pi}}(\C) \Biggm| \prod_{\pi \in I^{\O,\imp}} \det(z_{\pi})=1 \right\}.$$ Comme précédemment, on note $S_{\phi,i} : \SL_2(\C) \rightarrow \widehat{G}_{\phi,i}$ le morphisme correspondant. En notant $I'=I$ si $\widehat{G}$ est un groupe symplectique et $I'=I^{\O,\pair} \sqcup I^{\S} \sqcup I^{\GL} \sqcup \{i \}$ si $\widehat{G}$ est un groupe spécial orthogonal, on obtient : $$A_{\widehat{G}}(\phi) \simeq \prod_{\pi \in I'} A_{\widehat{G}_{\phi,\pi}}(S_{\phi,\pi}).$$

\begin{theo}\phantomsection\label{theoremesupportcuspidal}
Soit $G$ un groupe linéaire ou un groupe classique déployé. Il existe une application de support cuspidal : $$\cSc : \begin{array}[t]{ccc}
\Phi_{e}(G) & \longrightarrow & \Omega_{e}^{\st}(G) \\
(\phi,\eta)  & \longmapsto & (\widehat{L}, \varphi,\varepsilon)
\end{array}.$$ Cette application est surjective et respecte l'induction parabolique, c'est-à-dire : si $\cSc \,(\phi,\eta)=(\widehat{L},\varphi,\varepsilon)$, alors $(\lambda_{\phi})_{\widehat{G}}=(\lambda_{\varphi})_{\widehat{G}}$.
\end{theo}
 
\begin{proof}[Démonstration du théorème \ref{theoremesupportcuspidal}]
Soient $(\phi,\eta) \in \Phi_e(G)$ un paramètre de Langlands enrichi de $G$ et $\widetilde{\eta} \in \Irr(A_{\widehat{G}}(\phi))$ la représentation irréductible de $A_{\widehat{G}}(\phi)$ obtenue en composant la projection $A_{\widehat{G}}(\phi) \twoheadrightarrow \mathcal{S}_{\phi}^{G}$ avec $\eta$.\\

D'après ce qui précède, l'isomorphisme $H_{\phi}^{G} \simeq \prod_{\pi \in I'} H_{\phi,\pi}^{G}$ induit les décompositions suivantes : $\widetilde{\eta} \simeq \boxtimes_{\pi \in I'} \widetilde{\eta}_{\pi}$ où $\widetilde{\eta}_{\pi} \in \Irr(A_{H_{\phi,\pi}^{G}}(S_{\phi,\pi}))$, $S_{\phi}=(S_{\phi,\pi})_{\pi \in I'}$ et $\chi_{\phi}=(\chi_{\phi,\pi})_{\pi \in I'}$.\\

Soit $\pi \in I'$. En utilisant la correspondance de Springer généralisée pour chacun de ces groupes (d'après Lusztig pour les groupes connexes et l'appendice pour les groupes de type orthogonal) et la construction décrite dans la démonstration du théorème \ref{supportcuspidalpartiel}, on associe à $S_{\phi,\pi}$ et $\widetilde{\eta}_{\pi}$ pour le groupe $H_{\phi,\pi}^{G}$ : \begin{itemize}
\item une sous-groupe de quasi-Levi $H_{\varphi,\pi}^{L}$ de $H_{\phi,\pi}^{G}$ ;
\item un morphisme $S_{\varphi,\pi}: \SL_2(\C) \rightarrow H_{\varphi,\pi}^{L}$ ;
\item une décomposition du cocaractère $\chi_{\phi,\pi} = \chi_{\varphi,\pi} \chi_{c_{\pi}}$ ;
\item une représentation irréductible cuspidale $\widetilde{\varepsilon}_{\pi} \in \Irr(A_{H_{\varphi,\pi}^{L}}(S_{\varphi,\pi}))$ ;
\item une représentation irréductible $\rho_{\pi} \in \Irr(W_{H_{\phi,\pi}^{G}}^{H_{\phi,\pi}^{G}})$.
\end{itemize}

Après avoir interprété la représentation irréductible $\widetilde{\eta}$ de $A_{\widehat{G}}(\phi)$ comme une représentation de $A_{Z_{\widehat{G}}(\restriction{\phi}{W_F})}(\restriction{\phi}{\SL_{2}(\C)})$, nous avons associé à $(\phi,\widetilde{\eta})$ un paramètre de Langlands cuspidal $\varphi$ pour un certain sous-groupe de Levi $L$ de $G$ et les données ci-dessus. À présent, nous allons parcourir le chemin inverse, c'est-à-dire interpréter les données associées ci-dessus en terme d'une représentation irréductible de $A_{\widehat{L}}(\varphi)$ (et d'une représentation d'un groupe de Weyl étendu).\\

Soit $A_{\pi}$ le plus grand tore central de $H_{\varphi,\pi}^{L}$. Notons $A \simeq \prod_{\pi \in I'} A_{\pi}$ le tore de $\widehat{G}$ correspondant et $\widehat{L}=Z_{\widehat{G}}(A)$ le sous-groupe de Levi de $\widehat{G}$ correspondant. On obtient ainsi une décomposition $H_{\varphi}^{L} \simeq \prod_{\pi \in I'} H_{\varphi,\pi}^{L}$ et on peut  définir $$S_{\varphi}=(S_{\varphi,\pi})_{\pi \in I'}, \;\; \chi_{\varphi}=(\chi_{\varphi,\pi})_{\pi \in I'} \;\; \text{et} \;\; \varphi : \begin{array}[t]{lcl}
W_F \times \SL_2(\C) & \rightarrow & \widehat{L}\\
(w,x)  & \mapsto & \lambda(w) \chi_{\varphi}(w) S_{\varphi}(x)
\end{array}.$$ On obtient alors $A_{\widehat{L}}(\varphi)=A_{H_{\varphi}^{L}}(S_{\varphi}) \simeq \prod_{\pi \in I'} A_{H_{\varphi,\pi}^{L}}(S_{\varphi,\pi})$. Ceci permet donc de définir une représentation irréductible cuspidale $\widetilde{\varepsilon}$ de $A_{\widehat{L}}(\varphi)$ correspondante à $\boxtimes_{\pi \in I'} \widetilde{\varepsilon}_{\pi}$. Il s'agit de voir à présent que $\varepsilon$ se factorise en une représentation de $\mathcal{S}_{\varphi}^{L}$. Lorsque $\widehat{G}=\SO_{2n+1}(\C)$, il n'y a rien à faire puisque son centre est trivial.\\ Soit $\pi \in I'$. Si $H_{\phi,\pi}^{G}$ est un groupe symplectique, d'après \cite[5.23]{Lusztig:1995}, $-1$ agit sur $\widetilde{\eta}_{\pi}$ par le même scalaire que sur $\widetilde{\varepsilon}_{\pi}$. Ainsi, $\eta_{\pi}(-1)=\varepsilon_{\pi}(-1)$. Si $H_{\phi,\pi}^{G}$ est un groupe de type orthogonal, la construction de la correspondance de Springer pour le groupe orthogonal de la section \ref{springerorth} permet d'affirmer que l'on a aussi $\eta_{\pi}(-1)=\varepsilon_{\pi}(-1)$ et $\widetilde{\eta}_{i}(-1)=\widetilde{\varepsilon}_{i}(-1)$. Ainsi, on a : $$\widetilde{\eta}(-1)=\prod_{\pi \in I'} \eta_{\pi}(-1) =\prod_{\pi \in I'} \varepsilon_{\pi}(-1)=\widetilde{\varepsilon}(-1).$$ Par suite, $\widetilde{\varepsilon}$ se factorise en une représentation irréductible $\varepsilon$ de $\mathcal{S}_{\varphi}^{L}$.\\

Revenons à présent sur la représentation du groupe de Weyl. On a l'isomorphisme : $$W_{H_{\phi}^{L}}^{H_{\phi}^{G}} \simeq \prod_{\pi \in I'} W_{H_{\phi,\pi}^{L}}^{H_{\phi,\pi}^{G}}.$$ Par suite, il correspond une unique représentation irréductible $\rho$ de $W_{H_{\phi}^{L}}^{H_{\phi}^{G}} $ correspondant à $\boxtimes_{\pi \in I'} \rho_{\pi}$.
\end{proof}

\begin{rema}
Comme on peut le constater dans la preuve, le théorème reste vrai pour les formes intérieures pures des groupes classiques.
\end{rema}

\section{Paramétrage de Langlands du dual admissible des groupes classiques}

Dans la section précédente, nous avons vu que les pour les groupes classiques, les paramètres de Langlands des représentations supercuspidales correspondent aux paramètres de Langlands cuspidaux (définition \ref{defcusp}). De plus, nous avons construit une application de support cuspidal pour les paramètres de Langlands enrichis. Dans cette section, on construit une paramétrisation de Langlands des représentations irréductibles. Cette construction est basée sur la comparaison de deux algèbres de Hecke qui paramètrent d'une part les représentations irréductibles dans un bloc de Bernstein et d'autre part les paramètres de Langlands enrichis. Pour la première paramétrisation, nous utilisons les résultats d'Heiermann qui établit une équivalence de catégorie entre les représentations lisses dans un bloc de Bernstein et les modules sur une algèbre de Hecke affine étendue. Pour la seconde, nous utilisons les résultats de Lusztig. Le lien entre les deux (plus précisément entre les algèbres de Hecke graduées associées) se fait par comparaison des fonctions paramètres des algèbres de Hecke. Ceci sera une conséquence des travaux de M\oe glin qui explicite les points de réductiblités de certaines induites en terme de paramètres de Langlands.

\subsection{Relation entre le centre de Bernstein et le centre de Bernstein dual}

Nous avons construit en terme de paramètres de Langlands enrichis une variété $\Omega_{e}^{\st}(G)$ qu'on conjecture être isomorphe à $\Omega(G)$. Nous vérifions dans cette section que c'est le cas pour les groupes classiques déployés en prouvant le théorème suivant.

\begin{theo}\phantomsection\label{verifcompatibilite}
Soit $G$ l'un des groupes $\SO_{N}(F)$ ou $\Sp_{N}(F)$. Soient $\mathfrak{s}=[L,\sigma] \in \mathcal{B}(G)$ et $\cj=[\widehat{L},\varphi,\varepsilon] \in \mathcal{B}_{e}^{\st}(G)$ le triplet inertiel de $\widehat{G}$ correspondant par la correspondance de Langlands. Notons $T_{\mathfrak{s}}, \mathcal{T}_{\cjp}, W_{\mathfrak{s}}, \mathcal{W}_{\cjp}$ les tores de Bernstein et groupes de Weyl définis dans les sections précédentes. La correspondance de Langlands induit un isomorphisme  $T_{\mathfrak{s}} \simeq \mathcal{T}_{\cjp}$. De plus, les groupes $W_{\mathfrak{s}}$ et $\mathcal{W}_{\cjp}$ sont isomorphes et les actions de chacun sur $\mathcal{T}_{\cjp}$ et $T_{\mathfrak{s}}$ est compatible avec cet isomorphisme. Plus précisément, la conjecture \ref{compatibiliteaction} est vraie pour $G$.
\end{theo}

\begin{proof}
Nous allons décrire explicitement $W_{\mathfrak{s}}$ et $\mathcal{W}_{\cjp}$. Commençons par décrire $W_{\mathfrak{s}}$, en suivant les résultats d'Heiermann \cite[1.12,1.13,1.15]{Heiermann:2011} et \cite[2.2]{Heiermann:2010}.\\

En reprenant les notations de l'énoncé, on peut supposer que $$L=\prod_{i=1}^r \GL_{n_i}(F)^{\ell_i} \times G' \quad \text{et}  \quad
 \sigma = \underbrace{\pi_1 \boxtimes \ldots \boxtimes \pi_1}_{\ell_1} \boxtimes \ldots \boxtimes \underbrace{\pi_r \boxtimes \ldots \boxtimes \pi_r}_{\ell_r} \boxtimes \sigma',$$ avec $\pi_i$ une représentation irréductible supercuspidale unitaire de $\GL_{n_i}(F)$ et $\sigma'$ une représentation irréductible supercuspidale d'un groupe $G'$ de même type que $G$ mais de rang semi-simple inférieur. Notons $T_{\pi_{i}}$ l'orbite inertielle de $\pi_{i}$, c'est-à-dire $T_{\pi_{i}}=\left\{ \pi_{i} \chi, \chi \in \mathcal{X}(\GL_{n_i}(F))\right\}$. On peut alors de plus supposer que (voir \cite[1.13]{Heiermann:2011} et \cite[4.1]{Heiermann:2012}) :
\begin{itemize}
\item pour tout $i \in \llbracket 1,r \rrbracket$, si $T_{\pi_i}=T_{\pi_i^{\vee}}$, alors $\pi_i \simeq \pi_i^{\vee}$ ;
\item pour tout $i,j \in \llbracket 1,r \rrbracket$, si $i \neq j $, alors $T_{\pi_i} \neq T_{\pi_j}$ et $T_{\pi_i} \neq T_{\pi_j^{\vee}}$ ;
\item pour tout $i \in \llbracket 1,r \rrbracket$, ou bien il existe $s \in \R_{+}^{*}$ tel que $\pi_i  |\,\,|^{s} \rtimes \sigma'$ est réductible, ou bien pour tout $s \in \R_{+}^{*}$, $\pi_i  |\,\,|^{s} \rtimes \sigma'$ est irréductible.
\end{itemize}

On peut ainsi distinguer trois cas de figure : \begin{enumerate}[label=(\roman*)]
\item $T_{\pi_i}=T_{\pi_i^{\vee}}$, $\pi_i \simeq \pi_i^{\vee}$ et il existe $s \in \R_{+}^{*}$ tel que $\pi_i  |\,\,|^{s} \rtimes \sigma'$ est réductible ;
\item $T_{\pi_i}=T_{\pi_i^{\vee}}$, $\pi_i \simeq \pi_i^{\vee}$ et pour tout $s \in \R_{+}^{*}$, $\pi_i  |\,\,|^{s} \rtimes \sigma'$ est irréductible ;
\item $T_{\pi_i} \neq T_{\pi_i^{\vee}}$.
\end{enumerate}

Un élément $m \in \prod_{i=1}^r \GL_{n_i}(F)^{\ell_i}$ peut être décomposé $(m_{i,j})_{i \in \llbracket 1, r \rrbracket, j \in\llbracket 1,\ell_i \rrbracket}$. Pour tout $i \in \llbracket 1, r \rrbracket, j \in\llbracket 1,\ell_i-1 \rrbracket$, notons $s_{i,j} \in N_{G}(L)/L$ dont l'action sur $m$ permute les éléments $m_{i,j}$ et $m_{i,j+1}$. Notons $s_{i,\ell_i} \in N_{G}(L)/L$ lorsque $G$ est un groupe symplectique ou spécial orthogonal impair et $s_{i,\ell_i} \in N_{\O_N(F)}(L)/L$ lorsque $G$ est un groupe spécial orthogonal pair, l'élément dont l'action sur $m$ échange $m_{i,\ell_{i}}$ et ${}^{\tau} m_{i,\ell_{i}}^{-1}$.\\

On a une décomposition  $W_{\mathfrak{s}}=W_{\mathfrak{s}}^{\circ} \rtimes R_{\mathfrak{s}}$. Le groupe $W_{\mathfrak{s}}^{\circ}$ est un produit direct de groupe de Weyl $W_{\mathfrak{s},i}^{\circ}$, avec pour tout $i \in \llbracket 1,r \rrbracket$ :

\begin{equation} \label{Ws}
W_{\mathfrak{s},i}^{\circ} = \left\{ \begin{array}{lll}
\langle s_{i,j}, j \in \llbracket 1,\ell_{i} \rrbracket \rangle & \mbox{si $\pi_i$ vérifie (i),} & \mbox{groupe de Weyl de type $B_{\ell_i}/C_{\ell_i}$} \\
\langle s_{i,j}, j \in \llbracket 1,\ell_{i}-1 \rrbracket,\,\, s_{i,\ell_{i}}s_{i,\ell_{i}-1}s_{i,\ell_{i}}^{-1} \rangle & \mbox{si $\pi_i$ vérifie (ii),} & \mbox{groupe de Weyl de type $D_{\ell_i}$} \\
\langle s_{i,j}, j \in \llbracket 1,\ell_{i}-1 \rrbracket \rangle & \mbox{si $\pi_i$ vérifie (iii),} &\mbox{groupe de Weyl de type $A_{\ell_i-1}$} \\
\end{array} \right.
\end{equation}

Décrivons le groupe $R_{\mathfrak{s}}$ (voir \cite[1.5]{Goldberg:2011}) et pour cela, notons $$
C =\{ i \in \llbracket 1,r \rrbracket \mid \pi_{i} \mbox{ vérifie (ii)} \},  \;\; C_{\pair} = \{ i \in C \mid n_i \equiv 0 \mod 2 \} \;\; \text{et} \;\; C_{\imp} = \{ i \in C \mid n_i \equiv 1 \mod 2 \}.$$
Il vient alors :
\begin{equation} \label{Rs}
R_{\mathfrak{s}} = \left\{ \begin{array}{ll}
\prod_{i \in C} \langle s_{i,\ell_{i}} \rangle & \text{si $G=\Sp_{N}(F)$ ou $G=\SO_{N}(F)$ avec $N$ impair} \\
\prod_{i \in C} \langle s_{i,\ell_{i}} \rangle & \text{si $G=\SO_{N}(F)$ et $L=\GL_{n_1}(F)^{\ell_1} \times \ldots \times \GL_{n_r}(F)^{\ell_r} \times \SO_{N'}(F)$ avec $N$ pair et $N' \geqslant 4$} \\
\prod_{i \in C_{\pair}} \langle s_{i,\ell_{i}} \rangle  \times \langle s_{i,\ell_{i}}s_{j,\ell_{j}} \mid i,j \in C_{\imp} \rangle & \text{si $G=\SO_{N}(F)$ et $L=\GL_{n_1}(F)^{\ell_1} \times \ldots \times \GL_{n_r}(F)^{\ell_r}$ avec $N$ pair} \\
\end{array} \right.
\end{equation}

On s'intéresse à présent au groupe $\mathcal{W}_{\cjp}$. Notons $A_{\widehat{L}}=Z_{\widehat{L}}^{\circ}$ et posons $$D_{\varphi}^{G}=\{ g \in \widehat{G} \mid \exists \chi \in \mathcal{X}(\widehat{L}), {}^g \restriction{\varphi}{W_F}=\restriction{\varphi}{W_F}\chi \}  =\{g \in Z_{\widehat{G}}(\restriction{\varphi}{I_F})  \mid \, g\varphi(\Fr)g^{-1}\varphi(\Fr) ^{-1} \in A_{\widehat{L}}  \}.$$ Alors, ${(D_{\varphi}^{G})}^{\circ}={(H_{\varphi}^{G})}^{\circ}$ car $H_{\varphi}^{G} \subset D_{\varphi}^{G}$ et ils ont la même algèbre de Lie. Le groupe $Z_{D_{\varphi}^{G}}(\restriction{\varphi}{\SL_2(\C)})$ est l'ensemble des éléments $g \in \widehat{G}$ tels qu'il existe $\chi \in \mathcal{X}(\widehat{L})$ et ${}^g \varphi= \varphi \chi$. Considérons $$N_{Z_{D_{\varphi}^{G}}(\restriction{\varphi}{\SL_2(\C)})}(A_{\widehat{L}},\varepsilon)=\{g \in N_{\widehat{G}}(A_{\widehat{L}})  \mid \exists \chi \in \mathcal{X}(\widehat{L}), \, {}^{g}\varphi=\varphi \chi, \,\, {}^{g}\varepsilon \simeq \varepsilon \}.$$ Les propriétés de $\varepsilon$ (unicité de sa restriction à $A_{(H_{\varphi}^{G})^{\circ}}(\restriction{\varphi}{\SL_2(\C)})$ et le fait que ce soit un caractère) entraînent que la dernière condition est automatiquement vérifiée. On a un morphisme évident $N_{Z_{D_{\varphi}^{G}}(\restriction{\varphi}{\SL_2(\C)})}(A_{\widehat{L}}) \rightarrow \mathcal{W}_{\cjp}$ induit un isomorphisme $\mathcal{W}_{\cjp} \simeq N_{Z_{D_{\varphi}^{G}}(\restriction{\varphi}{\SL_2(\C)})}(A_{\widehat{L}},\varepsilon)/Z_{D_{\varphi}^{L}}(\restriction{\varphi}{\SL_2(\C)})$.

Notons \begin{align*}
\mathcal{W}_{\cjp,\varphi}^{\circ} &=N_{Z_{D_{\varphi}^{G}}(\restriction{\varphi}{\SL_2(\C)})^{\circ}}(A_{\widehat{L}})/Z_{D_{\varphi}^{L}}(\restriction{\varphi}{\SL_2(\C)})^{\circ} \\
&\simeq N_{Z_{(H_{\varphi}^{G})^{\circ}}(\restriction{\varphi}{\SL_2(\C)})^{\circ}}(A_{\widehat{L}})/Z_{(H_{\varphi}^{L})^{\circ}}(\restriction{\varphi}{\SL_2(\C)})^{\circ} \\
&\simeq N_{Z_{\widehat{G}}(\varphi)^{\circ}}(A_{\widehat{L}})/A_{\widehat{L}}.
\end{align*}

Remarquons que $\mathcal{W}_{\cjp,\varphi}^{\circ}$ est un sous-groupe distingué de $\mathcal{W}_{\cjp}$. De plus, $\mathcal{W}_{\cjp,\varphi}^{\circ}$ est le groupe de Weyl du groupe réductif $Z_{\widehat{G}}(\varphi)^{\circ}$, admettant $A_{\widehat{L}}$ pour tore maximal. Notons $\Sigma_{\cjp,\varphi}$ le système de racines associé à $(Z_{\widehat{G}}(\varphi)^{\circ},A_{\widehat{L}})$ et $\Sigma_{\cjp,\varphi}^{+}$ un sous-ensemble de racines positives. Soit $$\mathcal{R}_{\cjp,\varphi}=\left\{w \in \mathcal{W}_{\cjp} \mid w\Sigma_{\cjp,\varphi}^{+}=\Sigma_{\cjp,\varphi}^{+} \right\}.$$ Puisque $\mathcal{W}_{\cjp,\varphi}^{\circ}$ agit simplement transitivement sur les systèmes de racines positives, on a la décomposition suivante $$\mathcal{W}_{\cjp} = \mathcal{W}_{\cjp,\varphi}^{\circ} \rtimes \mathcal{R}_{\cjp,\varphi}.$$

On peut choisir $\varphi$ tel que pour tout $\chi \in \mathcal{X}(\widehat{L})$, $\Sigma_{\cjp,\varphi\chi} \subseteq \Sigma_{\cjp,\varphi}$, ou de façon à ce que $\mathcal{W}_{\cjp,\varphi}^{\circ}$ soit maximal. Comme précédemment on peut supposer avoir regroupé les représentations dans la même orbite inertielle. Notons $\widehat{L}=\prod_{i=1}^r \GL_{n_i}(\C)^{\ell_i} \times \widehat{G}'$ le dual de Langlands de $L$. Pour tout $i \in \llbracket 1,r \rrbracket$, soient \begin{itemize}
\item $\pi_i : W_F \longrightarrow \GL_{n_i}(\C)$ un paramètre de Langlands cuspidal de $\pi_{i}$ ; 
\item $\varphi' : W_F' \longrightarrow \widehat{G}'$ un paramètre de Langlands cuspidal de $\sigma'$ ;
\item $\varphi: W_F' \longrightarrow \widehat{L}$ un paramètre de Langlands de $\sigma$.
\end{itemize} Décomposons $\Std_G \circ \varphi$  : \begin{align*}
\Std_G \circ \varphi &=\underbrace{(\pi_1 \oplus \pi_1^{\vee}) \oplus \ldots \oplus  (\pi_1 \oplus \pi_1^{\vee})}_{\ell_1} \oplus  \ldots \oplus \underbrace{(\pi_r \oplus \pi_r^{\vee}) \oplus \ldots \oplus (\pi_r \oplus \pi_r^{\vee})}_{\ell_r} \oplus \varphi' \\
&= \bigoplus_{i=1}^{r} \ell_{i} (\pi_i \oplus \pi_i^{\vee}) \bigoplus \varphi'
\end{align*}

Pour tout $i \in \llbracket 1,r \rrbracket$, notons $\mathcal{T}_{\pi_i}=\left\{ \pi_{i} \chi, \chi \in \mathcal{X}(\GL_{n_i}(\C)) \right\}$ l'orbite inertielle de $\pi_i$. \\

Soit $I$ l'ensemble des représentations irréductibles de $W_F$ apparaissant dans $\restriction{\varphi}{W_F}$. On a vu qu'on peut décomposer $I=I^{\O} \sqcup I^{\S} \sqcup I^{\GL}$. Décomposons de la même manière $\Std_{G'} \circ \varphi'$ : \begin{align*}
\Std_{G'}\circ \varphi' &= \bigoplus_{(\pi,q) \in \Jord(\varphi')} \pi \boxtimes S_{q} \\
&= \bigoplus_{\pi \in I_{\varphi'}} \pi \boxtimes S_{\pi},
\end{align*} où $I_{\varphi'}$ est l'ensemble des représentations irréductibles de $W_F$ apparaissant dans $\restriction{\varphi'}{W_F}$ (elles sont de type orthogonal ou symplectique) et $\displaystyle S_{\pi}=\bigoplus_{\substack{q \in \N \\ (\pi,q) \in \Jord(\varphi')}} S_q$.\\

Pour tout $\pi \in I$, notons $$\ell_{\pi}=\left\{ \begin{array}{ll}
\ell_{i} & \text{si $\pi \in \left\{\pi_i, i \in \llbracket 1,r \rrbracket \right\} $} \\
0 & \text{sinon}
\end{array} \right. \;\; \text{et} \;\; m_{\pi}'=\left\{ \begin{array}{ll}
\displaystyle \sum_{\substack{q \in \N \\ (\pi,q) \in \Jord(\varphi')}} q& \text{si $\pi \in I_{\varphi'}$} \\
0 & \text{sinon}
\end{array} \right.$$ Puisque $\varphi'$ est sans multiplicité, il y a au plus un $\pi_i$ qui apparait dans cette décomposition. Ainsi, nous pouvons écrire : \begin{align*}
\Std_G \circ \varphi&=\bigoplus_{\pi \in I^{\O} \sqcup I^{\S}} \left(2\ell_{\pi} \pi \oplus \pi \boxtimes S_{\pi} \right) \bigoplus_{\pi \in I^{\GL}}  \ell_{\pi}(\pi \oplus \pi^{\vee}) \\
\Std_G \circ \restriction{\varphi}{W_F}&=\bigoplus_{\pi \in I^{\O} \sqcup I^{\S}} (2\ell_{\pi}+m_{\pi}') \pi  \bigoplus_{\pi \in I^{\GL}} \ell_{\pi} (\pi \oplus \pi^{\vee})
\end{align*}

Pour tout $\pi \in I^{\O} \sqcup I^{\S}$, notons $m_{\pi}=2 \ell_{\pi}+m_{\pi}'$. On obtient ainsi :

$$Z_{\widehat{G}_{*}}(\restriction{\varphi}{W_F}) \simeq \prod_{\pi \in I} \widehat{G}_{\pi} \quad \text{et} \quad
Z_{\widehat{L}_{*}}(\restriction{\varphi}{W_F}) \simeq \prod_{\pi \in I} \widehat{L}_{\pi},$$

avec $$\widehat{G}_{\pi}= \left\{ \begin{tabular}{ll}
$\O_{m_{\pi}}(\C)$ & si ($\widehat{G}$ est un groupe spécial orthogonal et $\pi \in I^{\O}$) ou ($\widehat{G}$ est un groupe symplectique et $\pi \in I^{\S}$)\\
$\Sp_{m_{\pi}}(\C)$ & si ($\widehat{G}$ est un groupe spécial orthogonal et $\pi \in I^{\S}$) ou ($\widehat{G}$ est un groupe symplectique et $\pi \in I^{\O}$) \\
$\GL_{\ell_{\pi}}(\C)$ & si $\pi \in I^{\GL}$
\end{tabular} \right.$$

$$\widehat{L}_{\pi}= \left\{ \begin{tabular}{ll}
$(\C^{\times})^{\ell_{\pi}} \times \O_{m_{\pi}'}(\C)$ & si ($\widehat{G}$ est un groupe spécial orthogonal et $\pi \in I^{\O}$) ou ($\widehat{G}$ est un groupe symplectique et $\pi \in I^{\S}$)\\
$(\C^{\times})^{\ell_{\pi}} \times \Sp_{m_{\pi}'}(\C)$ & si ($\widehat{G}$ est un groupe spécial orthogonal et $\pi \in I^{\S}$) ou ($\widehat{G}$ est un groupe symplectique et $\pi \in I^{\O}$) \\
$(\C^{\times})^{\ell_{\pi}}$ & si $\pi \in I^{\GL}$
\end{tabular} \right.$$

Remarquons que $\pi_i \not \in \Jord(\varphi')$, si et seulement si, $m_{\pi_i}' =0$. Décrivons dans le tableau ci-dessous le système de racines, ainsi que le groupe de Weyl (étendue) associé par la correspondance de Springer généralisée et les travaux de Lusztig.

$$
\renewcommand{\arraystretch}{1.3}
\begin{array}{|*7{>{\displaystyle}c|} c}
\cline{1-7}
\widehat{G}_{\pi} & \widehat{L}_{\pi} & \mbox{condition} & R & R_{\red} & W_{\widehat{L}_{\pi}^{\circ}}^{\widehat{G}_{\pi}^{\circ}}& W_{\widehat{L}_{\pi}}^{\widehat{G}_{\pi}} & \\ 
\cline{1-7} \cline{1-7}
\multirow{3}*{$\Sp_{m}(\C)$} & \multirow{3}*{$(\C^{\times})^\ell \times \Sp_{m'}(\C)$} & \ell=0 & \varnothing & \varnothing & \{1\} & \{1\}  & \\  \cline{3-7}
 &  & \ell\neq 0, m=0 & C_{\ell} & C_{\ell} & W_{C_{\ell}} & W_{C_{\ell}} & \\ \cline{3-7}
 &  & \ell\neq 0, m \neq 0 & BC_{\ell} & B_{\ell} & W_{B_{\ell}} & W_{B_{\ell}} &  \\ 
\cline{1-7}
\multirow{3}*{$\O_{m}(\C)$} & \multirow{3}*{$(\C^{\times})^\ell \times \O_{m'}(\C)$} & \ell=0 & \varnothing & \varnothing & \{1\} & \{1\} & \\ \cline{3-7}
 &  & \ell\neq 0, m=0 & D_{\ell} & D_{\ell} & W_{D_{\ell}} & W_{D_{\ell}} \rtimes (\Z/2\Z) & (*) \\ \cline{3-7}
 &  & \ell\neq 0, m \neq 0 & BC_{\ell} & B_{\ell} & W_{B_{\ell}} & W_{B_{\ell}} &  \\ 
\cline{1-7}
\multirow{2}*{$\GL_{\ell}(\C)$} & \multirow{2}*{$(\C^{\times})^{\ell}$} & \ell \leqslant 1 & \varnothing & \varnothing & \{1\} & \{1\} & \\ \cline{3-7}
&  & \ell \geqslant 2 & A_{\ell-1} & A_{\ell-1} & W_{A_{\ell-1}} & W_{A_{\ell-1}} & \\ 
\cline{1-7}
\end{array} $$

Si $\pi_{i}^{\vee} \simeq \pi_{i}$, soit $\zeta_{i}$ un cocaractère non-ramifié, tel que $(\pi_{i} \zeta_{i})^{\vee} \simeq \pi_{i} \zeta_{i}$. On voit donc que le choix de $\pi_{i}$ (resp. $\pi_{i}\zeta_{i}$) contribue à un groupe de Weyl de type $B_{2\ell_{\pi_{i}}+m_{\pi_{i}}'}$ (resp. $B_{2\ell_{\pi_{i} \zeta_{i}}+m_{\pi_{i}\zeta_{i}}'}$) (ou $C$ ou $D$ en fonction du type). Ainsi pour maximiser $\mathcal{W}_{\cjp,\varphi}^{\circ}$, on pourra supposer $m_{\pi_{i}}' \geqslant m_{\pi_{i}\zeta_{i}}'$. C'est ce qui correspond à la condition $a_{s_{\alpha}} \geqslant b_{s_{\alpha}}$ de \cite[1.7]{Heiermann:2011}. Enfin, si $\mathcal{T}_{\pi_i}=\mathcal{T}_{\pi_i^{\vee}}$, on voit que si $(\pi_{i} \chi)^{\vee} \not \simeq (\pi_{i} \chi)^{\vee} $, alors ce choix contribue à un groupe de Weyl $W_{A_{\ell_{\pi_i}-1}}$ tandis que le choix de $\pi_{i}^{\vee} \simeq \pi_{i}$ y contribue pour un groupe de Weyl $W_{B_{\ell_{\pi_i}}} \supseteq W_{A_{\ell_{\pi_i}-1}}$  (ou $C$ ou $D$). Ainsi, si $\mathcal{T}_{\pi_i}=\mathcal{T}_{\pi_i^{\vee}}$, alors on peut supposer $\pi_i \simeq \pi_i^{\vee}$.

La condition imposée sur la forme de $\sigma$, se traduit sur $\varphi$ par : \begin{itemize}
\item pour tout $i \in \llbracket 1,r\rrbracket$, si $\mathcal{T}_{\pi_i}=\mathcal{T}_{\pi_i^{\vee}}$, alors $\pi_{i}=\pi_{i}^{\vee}$, i.e. $\pi$ est de type symplectique ou orthogonal ;
\item pour tout $i,j \in \llbracket 1,r\rrbracket$, si $i \neq j$, alors $\pi_{i} \neq \pi_{j}$ et $\pi_{i} \neq \pi_{j}^{\vee}$ ;
\item pour tout $i \in \llbracket 1,r\rrbracket$, si $\pi_{i}=\pi_{i}^{\vee}$, alors $m_{\pi_{i}}' \geqslant m_{\pi_{i}\zeta_{i}}'$.
\end{itemize}

La forme particulière qu'on a imposé à $\varphi$ montre que $\mathcal{W}_{\cjp} \simeq N_{Z_{H_{\varphi}^{G}}(\restriction{\varphi}{\SL_2(\C)})}/Z_{H_{\varphi}^{L}}(\restriction{\varphi}{\SL_2(\C)})$. Écrivons $a=(a_{i,j})_{i \in \llbracket 1, r \rrbracket, j \in\llbracket 1,\ell_i \rrbracket}$, pour tout $i \in \llbracket 1, r \rrbracket, j \in\llbracket 1,\ell_i-1 \rrbracket$, notons $\widehat{s}_{i,j} \in N_{Z_{H_{\varphi}^{G}}(\restriction{\varphi}{\SL_2(\C)})}(A_{\widehat{L}})/Z_{H_{\varphi}^{L}}(\restriction{\varphi}{\SL_2(\C)})$ dont l'action sur $a$ permute les éléments $a_{i,j}$ et $a_{i,j+1}$ et $\widehat{s}_{i,\ell_i} \in N_{Z_{H_{\varphi}^{G_*}}(\restriction{\varphi}{\SL_2(\C)})}(A_{\widehat{L}})/Z_{H_{\varphi}^{L_*}}(\restriction{\varphi}{\SL_2(\C)})$ dont l'action sur $a$ permute $a_{i,\ell_{i}}$ et $a_{i,\ell_{i}}^{-1}$. L'action de ces éléments sur un paramètre de Langlands de $L$ de la forme $$\bigoplus_{i=1}^{r}\bigoplus_{j=1}^{\ell_i} \left(\pi_{i,j} \oplus \pi_{i,j}^{\vee} \right) \bigoplus \varphi',$$ est la suivante : \begin{itemize}
\item pour tout $i \in \llbracket 1, r \rrbracket, j \in\llbracket 1,\ell_i-1 \rrbracket$, $\widehat{s}_{i,j}$ permute $\pi_{i,j}$ et $\pi_{i,j+1}$ (et  $\pi_{i,j}^{\vee}$ et $\pi_{i,j+1}^{\vee}$) ;
\item pour tout $i \in \llbracket 1, r \rrbracket$, $\widehat{s}_{i,\ell_{i}}$ permute $\pi_{i,\ell_i}$ et $\pi_{i,\ell_i}^{\vee}$.
\end{itemize}
La table précédente nous montre en particulier que $\mathcal{W}_{\cjp}^{\circ}$ est le produit direct de $\mathcal{W}_{\cjp,i}^{\circ} (\simeq W_{\widehat{L}_{\pi_i}^{\circ}}^{\widehat{G}_{\pi_{i}}^{\circ}})$ et 
\begin{equation} \label{Wj}
\mathcal{W}_{\cjp,i}^{\circ} = \left\{ \begin{array}{ll}
\langle \widehat{s}_{i,j}, j \in \llbracket 1,\ell_{i} \rrbracket \rangle & \mbox{si $W_{\widehat{L}_{\pi_i}^{\circ}}^{\widehat{G}_{\pi_{i}}^{\circ}} $ est de type $B_{\ell_i}/C_{\ell_i}$} \\
\langle \widehat{s}_{i,j}, j \in \llbracket 1,\ell_{i}-1 \rrbracket,\,\, \widehat{s}_{i,\ell_{i}}\widehat{s}_{i,\ell_{i}-1}\widehat{s}_{i,\ell_{i}}^{-1} \rangle &  \mbox{si $W_{\widehat{L}_{\pi_i}^{\circ}}^{\widehat{G}_{\pi_{i}}^{\circ}}$ est de type $D_{\ell_i}$} \\
\langle \widehat{s}_{i,j}, j \in \llbracket 1,\ell_{i}-1 \rrbracket \rangle &\mbox{si $W_{\widehat{L}_{\pi_i}^{\circ}}^{\widehat{G}_{\pi_{i}}^{\circ}}$ est de type $A_{\ell_i-1}$} \\
\end{array} \right.
\end{equation}
Concernant $\mathcal{R}_{\cjp}$, notons $$
\widehat{C} =\{ i \in \llbracket 1,r \rrbracket \mid \pi_{i} \text{ vérifie $(*)$ dans le tableau précédent} \}, \;\;
\widehat{C}_{\pair} = \{ i \in \widehat{C} \mid n_i \equiv 0 \mod 2 \}, \;\;
\widehat{C}_{\imp} = \{ i \in \widehat{C} \mid n_i \equiv 1 \mod 2 \}.
$$ Il vient alors :
\begin{equation} \label{Rj}
\mathcal{R}_{\cjp}=\left\{ \begin{array}{ll}
\prod_{i \in \widehat{C}} \langle \widehat{s}_{i,\ell_{i}} \rangle & \text{si $G=\Sp_{N}(F)$ ou $G=\SO_{N}(F)$ avec $N$ impair} \\
\prod_{i \in \widehat{C}} \langle \widehat{s}_{i,\ell_{i}} \rangle & \text{si $G=\SO_{N}$ et $L=\GL_{d_1}(F)^{\ell_1} \times \ldots \times \GL_{d_r}(F)^{\ell_r} \times \SO_{N'}(F)$ avec $N$ pair et $N' \geqslant 4$} \\
\prod_{i \in \widehat{C}_{\pair}} \langle \widehat{s}_{i,\ell_{i}} \rangle  \times \langle \widehat{s}_{i,\ell_{i}}\widehat{s}_{j,\ell_{j}} \mid i,j \in \widehat{C}_{\imp} \rangle & \text{si $G=\SO_{N}(F)$ et $L=\GL_{d_1}(F)^{\ell_1} \times \ldots \times \GL_{d_r}(F)^{\ell_r}$ avec $N$ pair} \\
\end{array} \right.
\end{equation} 

On a un isomorphisme naturel $N_G(M)/M \simeq N_{\widehat{G}}(\widehat{M})/\widehat{M}$ et en comparant $(\ref{Ws})$ et $(\ref{Wj})$, puis $(\ref{Rs})$ et $(\ref{Rj})$, on voit qu'il se restreint en un isomorphisme $W_{\mathfrak{s}} \simeq \mathcal{W}_{\cjp}$. De plus, d'après la correspondance de Langlands pour le groupe linéaire, on a $\mathcal{X}(\GL_{n_i}(F))(\pi_{i}) \simeq \mathcal{X}(\GL_{n_i}(\C))(\pi_{i})$. Ainsi, la correspondance de Langlands pour les caractères et ce qui précède montre qu'on a une bijection $T_{\mathfrak{s}} \simeq \mathcal{T}_{\cjp}$ envoyant $\sigma\chi$ sur $\phi \widehat\chi$. Il reste à vérifier que l'action de cette bijection est équivariante pour les actions de $W_{\mathfrak{s}}$ sur $T_{\mathfrak{s}}$ et de $\mathcal{W}_{\cjp}$ sur $\mathcal{T}_{\cjp}$.

Si $\chi$ est un caractère non-ramifié de $L$ qu'on décompose en $(\chi_{i,j})$, où $\chi_{i,j}$ est un caractère non-ramifié de $\GL_{n_i}(F)$, alors \begin{itemize}
\item pour tout $i \in \llbracket 1,r \rrbracket, j \in \llbracket 1, \ell_{i}-1 \rrbracket$,\begin{align*}
 (\sigma \otimes \chi)^{s_{i,j}} &\simeq \pi_{1}\chi_{1,1} \boxtimes \ldots  \boxtimes \pi_{i} \chi_{i,j+1} \boxtimes \pi_{i} \chi_{i,j} \boxtimes \ldots \boxtimes \pi_{r} \chi_{r,\ell_{r}} \boxtimes \sigma' \\
 {}^{\widehat{s}_{i,j}} \left(\varphi \chi \right) &= \pi_{1} \chi_{1,1} \oplus \ldots	\oplus \pi_{i} \chi_{i,j+1} \oplus \pi_{i} \chi_{i,j} \oplus \ldots \oplus \pi_{r} \chi_{r,\ell_{r}} \oplus \varphi' \\
 &\quad \oplus \pi_{r}^{\vee} \chi_{r,\ell_{r}}^{-1} \oplus \ldots \oplus \pi_{i}^{\vee} \chi_{i,j} \oplus  \pi_{i}^{\vee} \chi_{i,j+1}^{-1}  \oplus  \ldots \pi_{1}^{\vee} \chi_{1,1}^{-1}
 \end{align*}
 \item pour tout $i \in \llbracket 1,r \rrbracket$, \begin{align*}
 (\sigma \otimes \chi)^{s_{i,\ell_i}} &\simeq \pi_{1}\chi_{1,1} \boxtimes \ldots  \boxtimes \pi_{i} \chi_{i,\ell_{i}-1} \boxtimes \pi_{i}^{\vee} \chi_{i,\ell_{i}}^{-1} \boxtimes \ldots \boxtimes \pi_{r} \chi_{r,\ell_{r}} \boxtimes \sigma' \\
 {}^{\widehat{s}_{i,\ell_{i}}} \left(\varphi \chi \right) &= \pi_{1} \chi_{1,1} \oplus \ldots	\oplus \pi_{i} \chi_{i,\ell_{i}-1} \oplus \pi_{i}^{\vee} \chi_{i,\ell_{i}}^{-1} \oplus \ldots \oplus \pi_{r} \chi_{r,\ell_{r}} \oplus \varphi' \\
 &\quad \oplus \pi_{r}^{\vee} \chi_{r,\ell_{r}}^{-1} \oplus \ldots \oplus \pi_{i} \chi_{i,\ell_{i}} \oplus  \pi_{i}^{\vee} \chi_{i,\ell_{i}-1}^{-1}  \oplus  \ldots \pi_{1}^{\vee} \chi_{1,1}^{-1} 
\end{align*}
\end{itemize}

Ceci montre que la bijection $T_{\mathfrak{s}} \simeq \mathcal{T}_{\cjp}$ est équivariante pour les actions de $W_{\mathfrak{s}}$ et $\mathcal{W}_{\cjp}$. Ce qui achève la démonstration.
\end{proof}

\subsection{Représentations et algèbres de Hecke affines}\phantomsection\label{equivcat}

Soit $\mathfrak{s}=[L,\sigma] \in \mathcal{B}(G)$. On suppose qu'on a décomposé $L$ et $\sigma$ comme dans la démonstration du théorème précédent. Dans \cite{Heiermann:2011}, Heiermann associe à la paire inertielle $\mathfrak{s}$, une donnée radicielle basée $\Psi_{\mathfrak{s}}=(\Lambda_{\mathfrak{s}},\Sigma_{\mathfrak{s}},\Lambda_{\mathfrak{s}}^{\vee},\Sigma_{\mathfrak{s}}^{\vee},\Delta_{\mathfrak{s}})$ et des fonctions paramètres (au sens des algèbres de Hecke affine) $(\lambda_{\mathfrak{s}},\lambda_{\mathfrak{s}}^{*})$, où $\Lambda_{\mathfrak{s}}$ est un sous-réseau de $\Lambda(L)$, $\Sigma_{\mathfrak{s}}$ un système de racines et $\Delta_{\mathfrak{s}}$ une base de racines simples. De plus, l'image de $\Lambda_{\mathfrak{s}}^{\vee} \otimes_{\Z} \C$ par l'application (\ref{appcarnr}) est isomorphe à $T_{\mathfrak{s}}$.\\

On a des décompositions $\Lambda_{\mathfrak{s}}=\bigoplus_{i=1}^{r} \Lambda_{\mathfrak{s},i}$ et $\Sigma_{\mathfrak{s}}=\sqcup_{i=1}^{r} \Sigma_{\mathfrak{s},i}$ en systèmes de racines irréductibles. De plus, le groupe de Weyl associé à $\Sigma_{\mathfrak{s},i}$ est $W_{\mathfrak{s},i}^{\circ}$ et $\Sigma_{\mathfrak{s},i}$ est de type $A,B,C$ ou $D$ (voir \cite[1.13]{Heiermann:2011}).\\ Soit $i \in \llbracket 1,r \rrbracket$ et notons $\Delta_{\mathfrak{s},i}=\Delta_{\mathfrak{s}} \cap \Sigma_{\mathfrak{s},i}$. Si $\sigma_i$ est autoduale, soit $\zeta_i$ un caractère non-ramifié de $\GL_{n_i}(F)$ tel que $\sigma_i \zeta_i$ soit autoduale, non isomorphe à $\sigma_i$. Soit $x_i^{+} \in \R_{+} $ (resp. $x_i^{-} \in \R_{+} $), l'unique réel positif tel que $\sigma_i  |\,\,|^{x_i^+} \rtimes \sigma'$ (resp. $\sigma_i \zeta_i  |\,\,|^{x_i^-} \rtimes \sigma'$) soit réductible. On peut supposer que $x_i^+ \geqslant x_i^-$. \begin{itemize}
\item Si $\Sigma_{\mathfrak{s},i}$ est de type $A,C$ ou $D$, pour tout $\alpha \in \Delta_{\mathfrak{s},i}$, $$\lambda_{\mathfrak{s}}(\alpha)=1.$$
\item Si $\Sigma_{\mathfrak{s},i}$ est de type $B$, pour tout $\alpha \in \Delta_{\mathfrak{s},i}$ racine longue, $$\lambda_{\mathfrak{s}}(\alpha)=1.$$  Si $\alpha_i  \in \Delta_{\mathfrak{s},i}$ est la racine courte, alors $$\lambda_{\mathfrak{s}}(\alpha_i)=x_i^+ + x_i^-, \quad \lambda_{\mathfrak{s}}^{*}(\alpha_i)=x_i^+-x_i^-.$$
\end{itemize}

On peut dès lors considérer l'algèbre de Hecke affine $\mathcal{H}_{i}$ définie par la donnée radicielle basée $\Psi_{\mathfrak{s},i}=(\Lambda_{\mathfrak{s},i},\Sigma_{\mathfrak{s},i},\Lambda_{\mathfrak{s},i}^{\vee},\Sigma_{\mathfrak{s},i}^{\vee},\Delta_{\mathfrak{s},i})$. C'est une $\C[v_{i}^{ \pm 1}]$-algèbre associative unitaire (où $v_{i}$ est une indéterminée) définie par les générateurs $(T_w)_{ w \in W_{\mathfrak{s},i}^{\circ}}$ et $(\theta_x)_{x \in \Lambda_{\mathfrak{s},i}}$ vérifiant certaines relations qu'il n'est pas nécessaire d'expliciter (voir \cite[7.1]{Heiermann:2011}).\\

Soient $t_{i}$ l'ordre du groupe cyclique $\mathcal{X}(\GL_{n_i}(F))(\sigma_{i})=\left\{ \chi \in \mathcal{X}(\GL_{n_i}(F)) \mid \sigma \simeq \sigma \otimes \chi \right\}$ et $\restriction{\mathcal{H}_{i}}{v_{i}=q^{t_i/2}}$ l'algèbre obtenue par spécialisation de l'indéterminée $v_{i}$ en $q^{t_i/2}$. Notons enfin $$\mathcal{H}_{\mathfrak{s}}=\bigotimes_{i=1}^{r} \restriction{\mathcal{H}_{i}}{v_{i}=q^{t_i/2}} \,\, \text{et} \,\,  \mathcal{H}_{\mathfrak{s}}'=\mathcal{H}_{\mathfrak{s}} \rtimes \C[R_{\mathfrak{s}}].$$ En notant $\module\left(\mathcal{H}_{\mathfrak{s}}'\right)$ la catégorie des modules à droite sur $\mathcal{H}_{\mathfrak{s}}'$, on a :
\begin{theo}[Heiermann,{\cite[7.7,7.8]{Heiermann:2011},\cite{Heiermann:2012}}]\phantomsection\label{thmHeiermann}
Il y a une équivalence de catégories $$\Rep(G)_{\mathfrak{s}} \simeq \module\left(\mathcal{H}_{\mathfrak{s}}'\right).$$ De plus, cette équivalence de catégories préserve les objets tempérés et ceux de la série discrète.
\end{theo}

\subsection{Réduction à une algèbre de Hecke graduée}

En introduisant les algèbres de Hecke graduées, Lusztig a montré qu'on pouvait étudier certains modules d'une algèbre de Hecke affine en se ramenant à des modules d'une algèbre de Hecke graduée. Nous allons suivre ce procédé concernant l'algèbre de Hecke affine étendue du théorème précédent.\\

On reprend les notations de la section précédente. On rappelle que $T_{\mathfrak{s}} \simeq \Lambda_{\mathfrak{s}} \otimes_{\Z} \C^{\times}$. Notons $\mathfrak{t}_{\mathfrak{s}}=\Lambda_{\mathfrak{s}}^{\vee} \otimes_{\Z} \C$, $\mathfrak{t}_{\mathfrak{s}}^{*}=\Lambda_{\mathfrak{s}} \otimes_{\Z} \C$ et $\mathfrak{t}_{\mathfrak{s},\R}=\Lambda_{\mathfrak{s}}^{\vee}\otimes_{\Z} \R$. Soient $r$ une indéterminée et $S(\mathfrak{t}_{\mathfrak{s}}^{*})$ l'algèbre symétrique de $\mathfrak{t}_{\mathfrak{s}}^{*}$.\\

Soient $\zeta \in T_{\mathfrak{s}}$, $\mathcal{O}=W_{\mathfrak{s}} \cdot \zeta \in T_{\mathfrak{s}}/W_{\mathfrak{s}}$ l'orbite de $\zeta$ sous $W_{\mathfrak{s}}$. À la donnée radicielle $\mathcal{R}_{\mathfrak{s}}$ et à l'orbite $\mathcal{O}$, Lusztig a associé une algèbre de Hecke graduée $\mathbb{H}_{\mathfrak{s},\mathcal{O}}'$ qui est une $\C[r]$-algèbre engendrée par $(t_w)_{w \in W_{\mathfrak{s}}}$, $S(\mathfrak{t}_{\mathfrak{s}}^{*})$ et un ensemble d'idempotents orthogonaux $(E_{t})_{t \in \mathcal{O}}$, qui vérifient des relations faisant intervenir une fonction paramètre $\mu_{\mathcal{O}}$ qui s'exprime en fonction de $\lambda_{\mathfrak{s}}$ et $\lambda_{\mathfrak{s}}^{*}$. Pour plus de précisions, voir \cite{Lusztig:1989}, \cite{Barbasch:1993} et en particulier \cite{Barbasch:2013} qui nous a été très utile. \\

Par ailleurs, la construction suivante, permet en quelque sorte de supposer que l'orbite $\mathcal{O}$ est réduit à un singleton.\\

On définit la donnée radicielle basée suivante $\Psi_{\mathfrak{s},\zeta}=(\Lambda_{\mathfrak{s}},\Sigma_{\mathfrak{s},\zeta},\Lambda_{\mathfrak{s}}^{\vee},\Sigma_{\mathfrak{s},\zeta}^{\vee},\Delta_{\mathfrak{s},\zeta})$ par \begin{align*}
\Sigma_{\mathfrak{s},\zeta} &= \left\{ \alpha \in \Sigma_{\mathfrak{s}} \bigm| \theta_{\alpha}(\zeta)=  \left\{ \begin{array}{ll} 1 & \text{si $\alpha^{\vee} \not \in 2 \Lambda_{\mathfrak{s}}^{\vee}$} \\ \pm 1 & \text{si $\alpha^{\vee} \in 2 \Lambda_{\mathfrak{s}}^{\vee}$} \end{array} \right. \right\} ; \\
\Sigma_{\mathfrak{s},\zeta}^{+} &= \Sigma_{\mathfrak{s},\zeta}\cap \Sigma_{\mathfrak{s}}^{+} ; \\
\Sigma_{\mathfrak{s},\zeta}^{\vee} &= \left\{ \alpha^{\vee}, \alpha \in \Sigma_{\mathfrak{s},\zeta} \right\}  ; \\
\Delta_{\mathfrak{s},\zeta} &= \left\{ \alpha \in \Sigma_{\zeta}^{+} \mid \alpha \,\,\text{ n'est pas de la forme  $\alpha'+\alpha''$ avec $\alpha',\alpha'' \in \Sigma_{\mathfrak{s},\zeta}^{+}$}\right\} ;\\
W_{\mathfrak{s},\zeta} &= \langle s_{\alpha}, \alpha \in \Delta_{\mathfrak{s},\zeta} \rangle ;\\
R_{\mathfrak{s},\zeta} &= \left\{w \in Z_{W_{\mathfrak{s}}}(\zeta) \mid w\Sigma_{\mathfrak{s},\zeta}^{+}=\Sigma_{\mathfrak{s},\zeta}^{+}  \right\} 
\end{align*}

On peut considérer l'algèbre de Hecke graduée étendue $$\mathbb{H}_{\mathfrak{s},\zeta,\mu_{\zeta}}'=\mathbb{H}_{\mathfrak{s},\zeta,\mu_{\zeta}} \rtimes \C[R_{\mathfrak{s},\zeta}].$$ C'est une $\C[r]$-algèbre engendrée par $(t_w)_{w \in W_{\mathfrak{s},\zeta}}$, $(j_r)_{r \in R_{\mathfrak{s},\zeta}}$ et $S(\mathfrak{t}_{\mathfrak{s}}^{*})$ vérifiant les relations :
\begin{itemize}
\item pour tout $w,w' \in W_{\mathfrak{s},\zeta}$, $t_w t_{w'}=t_{w w'}$ ;
\item pour tout $\alpha \in \Delta_{\mathfrak{s},\zeta}$, $\gamma \in \mathfrak{t}_{\mathfrak{s}}^{*}$, $\gamma t_{s_{\alpha}}-t_{s_{\alpha}} s_{\alpha}(\gamma)=r \mu_{\zeta}(\alpha) \langle \gamma , \alpha^{\vee} \rangle$ où $$\mu_{\zeta}(\alpha)= \left\{ \begin{array}{*2{>{\displaystyle}l}}
2\lambda_{\mathfrak{s}}(\alpha) & \text{si $\alpha^{\vee} \not \in 2 \Lambda_{\mathfrak{s}}^{\vee}$}\\
\lambda_{\mathfrak{s}}(\alpha)+\lambda_{\mathfrak{s}}^{*}(\alpha) \theta_{-\alpha}(\zeta) & \text{si $\alpha^{\vee}  \in 2 \Lambda_{\mathfrak{s}}^{\vee}$}
\end{array}\right. .$$
\end{itemize}

Soit $\{w_1,\ldots,w_m\}$ un ensemble de représentants de $W_{\mathfrak{s}}/Z_{W_{\mathfrak{s}}}(\zeta)$ (avec $w_1=1$). Pour tout $i,j \in \llbracket 1,m \rrbracket$, posons $$E_{i,j}=t_{w_{i}^{-1}}E_{\zeta}t_{w_j}, \,\, \mathcal{M}_{n}=\C[E_{i,j}]_{1 \leqslant i,j \leqslant m}, \,\, \mathcal{C}_{n}=\sum_{i=1}^{n} \C E_{i,1} \,\, \text{et} \,\,\mathcal{E}=\C[E_{w_i \cdot \zeta}]_{1 \leqslant i \leqslant m }.$$

D'après \cite[4.5]{Lusztig:1989}, le centre de $\mathbb{H}_{\mathfrak{s},\mathcal{O}}$ est $Z=\left( S(\mathfrak{t}_{\mathfrak{s}}^{*}) \otimes_{\C} \C[r] \otimes_{\C} \mathcal{E} \right)^{W_\mathfrak{s}}$. Ainsi, tout caractère central $\omega : Z \rightarrow \C$ correspond à un élément $(\sigma,r_{0}) \in \mathfrak{t}_{\mathfrak{s}}/Z_{W_{\mathfrak{s}}}(\zeta) \times \C$. De plus, si $\omega : Z \rightarrow \C$ est un caractère central, nous noterons $\module(\mathbb{H}_{\mathfrak{s},\mathcal{O}})_{\omega}$ la catégorie des $\mathbb{H}_{\mathfrak{s},\mathcal{O}}$-modules admettant $\omega$ pour caractère central.\\

On suppose à présent que $\zeta \in T_{\mathfrak{s}}$ est un élément elliptique. L'application $$e_{\zeta}: \begin{array}[t]{rcl}
\mathfrak{t}_{\mathfrak{s},\R} \oplus \C &  \longrightarrow & T_{\mathfrak{s}} \times \C^{\times} \\
(\nu,r_{0}) & \longmapsto &(\zeta e^{\nu},e^{r_0})
\end{array},$$ est $Z_{W_{\mathfrak{s}}}(\zeta)$-invariante et elle fait correspondre $W_{\mathfrak{s}} \cdot (\zeta e^{\nu},e^{r_0})$ à $Z_{W_{\mathfrak{s}}}(\zeta) \cdot (\nu,r_{0})$. Ainsi, elle induit une bijection entre les caractères centraux de $\mathcal{H}_{\mathfrak{s}}'$ de partie elliptique dans $\mathcal{O}$ et les caractères centraux hyperboliques de $\mathbb{H}_{\mathfrak{s},\mathcal{O}}'$.

\begin{theo}[Lusztig]\phantomsection\label{theoaffingrad} Soient $\zeta \in T_{\mathfrak{s}}$ un élément elliptique, $\nu \in \mathfrak{t}_{\mathfrak{s},\R}$, $r_{0}>1$ un réel. Soient $\overline{\omega}=Z_{W_{\mathfrak{s}}}(\zeta) \cdot (\nu,r_{0})$ un caractère central hyperbolique de $\mathbb{H}_{\mathfrak{s},\mathcal{O}}'$ et $\omega=W_{\mathfrak{s}} \cdot (\zeta e^{\nu},e^{r_0})$ le caractère central de $\mathcal{H}_{\mathfrak{s}}'$ correspond à $\overline{\omega}$ par l'application $e_{\zeta}$.
\begin{enumerate}
\item \cite[8.6]{Lusztig:1989} L'algèbre $\mathcal{M}_{n}$ est un algèbre de matrices et on a un isomorphisme :  $$\mathbb{H}_{\mathfrak{s},\mathcal{O}}' \simeq \mathcal{M}_{n} \otimes_{\C} \mathbb{H}_{\mathfrak{s},\zeta,\mu_{\zeta}}' = \mathcal{M}_{n} \otimes_{\C} (\mathbb{H}_{\mathfrak{s},\zeta,\mu_{\zeta}} \rtimes \C[R_{\mathfrak{s},\zeta}]).$$
\item Le foncteur $$\mathcal{F} : \begin{array}[t]{rcl}
\module(\mathbb{H}_{\mathfrak{s},\zeta,\mu_{\zeta}}') & \longrightarrow & \module(\mathbb{H}_{\mathfrak{s},\mathcal{O}}') \\
\mathcal{V} & \longmapsto & \mathcal{C}_{n} \otimes_{\C} \mathcal{V}
\end{array},$$ est une équivalence de catégories et en tant que $\C[W_{\mathfrak{s}}]$-modules, $\mathcal{F}(\mathcal{V})=\Ind_{Z_{W_{\mathfrak{s}}}(\zeta)}^{W_{\mathfrak{s}}} \mathcal{V}$.
\item \cite[9.3]{Lusztig:1989} Il y a une équivalence de catégories $$\module(\mathcal{H}_{\mathfrak{s}}')_{\omega} \simeq \module(\mathbb{H}_{\mathfrak{s},\mathcal{O}}')_{\overline{\omega}},$$ qui combinée avec la précédente donne lieu à l'équivalence de catégories $$\module(\mathcal{H}_{\mathfrak{s}}')_{\omega} \simeq \module(\mathbb{H}_{\mathfrak{s},\zeta,\mu_{\zeta}}')_{\overline{\omega}}.$$
\item \cite[4.3,4.4]{Lusztig:2002a} Cette dernière équivalence de catégories, préservent les modules tempérées et ceux de la série discrète.
\end{enumerate}
\end{theo}

À présent, on note $\zeta$ un caractère non-ramifié \emph{unitaire} de $L$. La fonction paramètre $\mu_{\zeta}$ associée à $(\lambda_{\mathfrak{s}},\lambda_{\mathfrak{s}}^{*})$ est définie de la façon suivante : pour tout $\alpha \in \Delta_{\zeta}$, $$\mu_{\zeta}(\alpha)= \left\{ \begin{array}{*2{>{\displaystyle}l}}
2\lambda_{\mathfrak{s}}(\alpha) & \mbox{si  } \alpha^{\vee} \not \in 2\Lambda_{\mathcal{O}}^{\vee}\\
\lambda_{\mathfrak{s}}(\alpha)+\lambda_{\mathfrak{s}}^{*}(\alpha) \theta_{-\alpha}(\zeta) & \mbox{si  }  \alpha^{\vee}  \in 2\Lambda_{\mathcal{O}}^{\vee}
\end{array}\right. $$
Puisque $s_{\alpha}(\zeta) = \zeta$, $ \theta_{-\alpha}(\zeta)= \pm 1$. Plus précisément, $$ \theta_{-\alpha}(\zeta)=\left\{ \begin{array}{ll}
1 & \mbox{si $(\sigma \otimes \zeta)^{s_{\alpha}} \simeq \sigma $} \\
-1 & \mbox{si $(\sigma \otimes \zeta)^{s_{\alpha}} \not \simeq \sigma$} 
\end{array}
\right.$$

On peut décomposer $\Lambda_{\mathfrak{s}} = \bigoplus_{i=1}^{r} \Lambda_{\mathfrak{s},i}= \bigoplus_{i=1}^{r}\bigoplus_{j=1}^{\ell_{i}} \Lambda_{i,j}$, où $\Lambda_{i,j}$ est un sous-réseau de $\Lambda(\GL_{n_i})$ d'indice $t_{i}$. Pour tout $\nu_{i,j} \in \Lambda_{i,j} \otimes_{\Z} \R $, notons $\varsigma_{i,j}=(\zeta_{i,j}\chi_{\nu_{i,j}},q^{t_i/2})$, $\overline{\varsigma}_{i,j}=(\nu_{i,j},t_{i}(\log q)/2)$, $\varsigma=(\varsigma_{i,j})$, $\overline{\varsigma}=(\overline{\varsigma}_{i,j})$. Soient $\omega=W_{\mathfrak{s}} \cdot \varsigma$ le caractère central de $\mathcal{H}_{\mathfrak{s}}'$ de partie elliptique dans l'orbite de $\zeta$ et $\overline{\omega}$ le caractère central hyperbolique correspondant.\\

D'après les résultats de Lusztig (Théorème \ref{theoaffingrad}), on a une équivalence de catégories : $$\module(\mathcal{H}_{\mathfrak{s}}')_{\omega} \simeq \module(\mathbb{H}_{\mathfrak{s},\zeta,\mu_{\zeta}}')_{\overline{\omega}},$$

induisant des bijections : $$\Irr(G)_{\omega} \leftrightarrow \Irr(\mathcal{H}_{\mathfrak{s}}')_{\omega} \leftrightarrow \Irr(\mathbb{H}_{\mathfrak{s},\zeta,\mu_{\zeta}}')_{\overline{\omega}}.$$ La première bijection résulte de l'équivalence de catégories prouvée par Heiermann, la deuxième par les réductions dues à Lusztig.\\

Les valeurs de la fonction paramètre $\mu_{\zeta}$ se lisent sur les diagrammes suivants (il s'agit du diagramme de Dynkin associé à $\Sigma_{i,\zeta}=\Sigma_{\zeta} \cap \Sigma_{\mathfrak{s},i}$) : 

\begin{center}
\renewcommand{\arraystretch}{1.5}
\begin{tabular}{ | >{\centering\arraybackslash}m{.11\textwidth} | >{\centering\arraybackslash}m{.45\textwidth}| l }
\hhline{--~}
type de $\Sigma_{i,\zeta}$ & diagramme de Dynkin pondéré par $\mu_{\zeta}(\alpha)$, $\alpha \in \Sigma_{i,\zeta}$ \\
\hhline{==~}
 $A_{\ell_i-1}$ & \begin{tikzpicture}

	\draw (0.1,0) -- (0.9,0);
	\draw (1.1,0) -- (1.9,0);
	\draw[dotted] (2.1,0) -- (2.9,0);
	\draw (3.1,0) -- (3.9,0);
		
	\draw (0,0) circle(.1);
	\draw (1,0) circle(.1);
	\draw (2,0) circle(.1);
	\draw (3,0) circle(.1);
	\draw (4,0) circle(.1);
	
	 \node at (0,0.3) {$2$};
	 \node at (1,0.3) {$2$};
	 \node at (2,0.3) {$2$};
	 \node at (3,0.3) {$2$};
	 \node at (4,0.3) {$2$};	
\end{tikzpicture} &  \\  \cline{1-2}
$B_{\ell_{i}}$ & \begin{tikzpicture}

	\draw (0.1,0) -- (0.9,0);
	\draw (1.1,0) -- (1.9,0);
	\draw[dotted] (2.1,0) -- (2.9,0);
	\draw (3,0.1) -- (4,0.1);
	\draw (3,-0.1) -- (4,-0.1);
		
	\draw (0,0) circle(.1);
	\draw (1,0) circle(.1);
	\draw (2,0) circle(.1);
	\draw (3,0) circle(.1);
	\draw (4,0) circle(.1);
	
	 \node at (0,0.3) {$2$};
	 \node at (1,0.3) {$2$};
	 \node at (2,0.3) {$2$};
	 \node at (3,0.3) {$2$};
	 \node at (4,0.3) {$2x_{i}^{\pm}$};	
\end{tikzpicture} & si $\sigma_{i} \in\Jord(\sigma')$ \\ \cline{1-2}
$B_{\ell_i}/C_{\ell_i}$ & \begin{tikzpicture}

	\draw (0.1,0) -- (0.9,0);
	\draw (1.1,0) -- (1.9,0);
	\draw[dotted] (2.1,0) -- (2.9,0);
	\draw (3,0.1) -- (4,0.1);
	\draw (3,-0.1) -- (4,-0.1);
		
	\draw (0,0) circle(.1);
	\draw (1,0) circle(.1);
	\draw (2,0) circle(.1);
	\draw (3,0) circle(.1);
	\draw (4,0) circle(.1);
	
	 \node at (0,0.3) {$2$};
	 \node at (1,0.3) {$2$};
	 \node at (2,0.3) {$2$};
	 \node at (3,0.3) {$2$};
	 \node at (4,0.3) {$2$};	
\end{tikzpicture} & si $\sigma_{i} \not\in\Jord(\sigma')$ \\ \cline{1-2}
$ D_{\ell_i}$ & \begin{tikzpicture}

	\draw (0.1,0) -- (0.9,0);
	\draw (1.1,0) -- (1.9,0);
	\draw[dotted] (2.1,0) -- (2.9,0);
	\draw (3.0866,-0.05) -- (3.9134,-0.45);
	\draw (3.0866,0.05) -- (3.9134,0.45);
		
	\draw (0,0) circle(.1);
	\draw (1,0) circle(.1);
	\draw (2,0) circle(.1);
	\draw (3,0) circle(.1);
	\draw (4,0.5) circle(.1);
	\draw (4,-0.5) circle(.1);
	
	 \node at (0,0.3) {$2$};
	 \node at (1,0.3) {$2$};
	 \node at (2,0.3) {$2$};
	 \node at (3,0.3) {$2$};
	 \node at (4.2,0.8) {$2$};	
 \node at (4.2,-0.8) {$2$};	
\end{tikzpicture}  & si $\sigma_{i} \in\Jord(\sigma')$ \\ \cline{1-2}
\end{tabular}\captionof{table}{Système de racines et paramètres associés à $\Sigma_{i,\zeta}$}\phantomsection\label{fonctionparametrerep}
\end{center}

\subsection{Classification en terme de module simples d'une algèbre de Hecke graduée étendue}

Rappelons brièvement la définition des algèbres de Hecke graduées associées à des triplets cuspidaux et le théorème de classification qui s'ensuit.\\

Soient $H$ un groupe réductif connexe complexe, $P=LU$ un sous-groupe parabolique de $H$. Notons $\mathfrak{h}$ l'algèbre de Lie de $H$ et $\mathfrak{p}=\mathfrak{l}+\mathfrak{u}$ celle de $P$. Soit $\mathcal{C} \subset \mathfrak{l}$ une $L$-orbite nilpotente supportant un système local irréductible cuspidal $L$-équivariant noté $\mathcal{L}$. Notons $\mathfrak{t}=[L,\mathcal{C},\mathcal{L}]$ le triplet cuspidal correspondant.\\ Soient $T$ le plus grand tore central dans $L$, c'est-à-dire $T=Z_{L}^{\circ}$ et $\mathfrak{t}$ son algèbre de Lie. On sait que $W_{L}^{H}=N_{H}(T)/L$ est un groupe de Coxeter. Plus précisément, pour toute forme linéaire $\alpha : \mathfrak{l} \longrightarrow \C$, notons $$\mathfrak{h}_{\alpha}=\{ X \in \mathfrak{h} \mid \forall t \in \mathfrak{t}, \, [t,X]=\alpha(t) X\},$$ son espace de poids. Notons $$\Sigma=\{\alpha \in \mathfrak{t}^{*} \mid \alpha \neq 0, \, \mathfrak{h}_{\alpha} \neq 0 \},$$ et $$\Sigma^{+}= \{ \alpha \in \Sigma \mid \mathfrak{h}_{\alpha} \subset \mathfrak{u} \}.$$ Soient $P_1,\ldots,P_m$ les sous-groupes paraboliques de $H$ qui contiennent strictement $P$ et qui sont minimal pour cette propriété. Pour tout $i \in \llbracket 1,m \rrbracket$, notons $L_i$ le sous-groupe de Levi de $P_i$ qui contient $L$ et $\Sigma_i^{+}=\{\alpha \in \Sigma \mid \restriction{\alpha}{\mathfrak{z}(\mathfrak{l}_i)}=0\}$. On a alors $\mathfrak{l}_i \cap \mathfrak{u} = \bigoplus_{\alpha \in \Sigma_{i}^{+}} \mathfrak{h}_{\alpha}$ et $\Sigma$ est un système de racines (non nécessairement réduit) dans $\mathfrak{t}^{*}$ qui est engendré par les formes linéaires de $\mathfrak{t}$ qui sont nulles sur le centre de $\mathfrak{h}$. De plus, $\Sigma_{i}^{+}$ contient un unique élément $\alpha_i$ tel que $\alpha_i/2 \not \in \Sigma$. L'ensemble $\Delta=\{\alpha_1, \ldots, \alpha_m\}$ est un système de racines simples de $\Sigma$. De plus,$W_{L}^{H}$ est le groupe de Coxeter engendré par les réflexions simples $s_i$, où $s_i$ est l'unique élément non trivial de $N_{L_i}(T)/L$.\\ Soit $N \in \mathcal{C}$. Pour tout $\alpha \in \Delta$, soit $\mu_{\mathfrak{t}}(\alpha) \geqslant 2$ le plus petit entier tel que $$\ad(N)^{\mu_{\mathfrak{t}}(\alpha)-1} : \mathfrak{l}_{\alpha} \cap \mathfrak{u} \longrightarrow  \mathfrak{l}_{\alpha} \cap \mathfrak{u},$$ est nul. Si $\alpha \in \Delta$ est conjuguée à $\alpha' \in \Delta$ dans $W_{L}^{H}$, alors $\mu_{\mathfrak{t}}(\alpha)=\mu_{\mathfrak{t}}(\alpha')$. La fonction $\mu_{\mathfrak{t}} : \Delta \longrightarrow \N$ est une fonction paramètre dans le sens vu dans la section précédente.\\

Considérons l'algèbre de Hecke graduée associé à ce triplet cuspidal. C'est le $\C[r]$-module $\C[W] \otimes_{\C} S(\mathfrak{t}^{*}) \otimes_{\C} \C[r]$ que l'on note $\mathbb{H}_{\mu_{\mathfrak{t}}}=\mathbb{H}_{\mu_{\mathfrak{t}}}(\mathfrak{t}^{*},\Sigma)$. Elle est engendrée par $(t_w)_{w \in W}$ et $(\gamma)_{\gamma \in \mathfrak{t}^{*}}$ vérifiant les relations 
\begin{itemize}
\item pour tout $w,w' \in W$, $t_w t_{w'}=t_{w w'}$ ;
\item pour tout $\alpha \in \Delta$, $\gamma \in \mathfrak{t}^{*}$, $\gamma t_{s_{\alpha}}-t_{s_{\alpha}} s_{\alpha}(\gamma)=r \mu_{\mathfrak{t}}(\alpha) \langle \gamma , \alpha^{\vee} \rangle$.\\
\end{itemize}

Soit $x \in \mathfrak{h}$ un élément nilpotent et soit $\Irr(A_{H}(x))_{\mathfrak{t}}$ l'ensemble des représentations irréductibles $\widetilde{\eta}$ de $A_{H}(x)$ telles que $\Phi_{H}(\mathcal{C}_X^{H},\widetilde{\eta})=\mathfrak{t}$, i.e. telles que $(\mathcal{C}_X^{H},\widetilde{\eta})$ soit associé par la correspondance de Springer généralisée à $\mathfrak{t}$. Soit $(s,r_{0}) \in \mathfrak{h} \oplus \C$ un élément semi-simple tel que $[s,x]=2r_{0}x$. On a une injection de $A_{H}(x,s)$ dans $A_{H}(x)$ et on note $\Irr(A_{H}(x,s))_{\mathfrak{t}}$ l'ensemble des représentations irréductibles de $A_{H}(x,s)$ apparaissant dans la restriction à $A_{H}(x,s)$ d'une représentation $\widetilde{\eta} \in \Irr(A_{H}(x))_{\mathfrak{t}}$.\\

En utilisant des techniques de cohomologie équivariante, Lusztig définit des modules standards $M_{\mathfrak{t}}(x,r_0,s,\eta)$ qui sont des $\mathbb{H}_{\mu_{\mathfrak{t}}}$-modules et qui vont classifier les objets que l'on veut. Pour être plus précis, on a le théorème suivant :

\begin{theo}[Lusztig]\phantomsection\label{thmlusztigalghecke}
\begin{enumerate}
\item \cite[8.10]{Lusztig:1988} $M_{\mathfrak{t}}(x,s,r_0,\eta) \neq 0$, si et seulement si, $\eta \in \Irr(A_{H}(x,s))_{\mathfrak{t}}$\\
\item \cite[8.15]{Lusztig:1988} Tout $\mathbb{H}_{\mu_{\mathfrak{t}}}$-module simple sur lequel $r$ agit par $r_0$ est un quotient $M_{\mathfrak{t}}(x,s,r_0,\eta)$ d'un $M_{\mathfrak{t}}(x,s,r_0,\eta)$, où $\eta \in \Irr(A_{H}(x,s))_{\mathfrak{t}}$\\
\item \cite[8.17]{Lusztig:1988}, \cite[8.18]{Lusztig:1995a} L'ensemble des classes de $\mathbb{H}_{\mu_{\mathfrak{t}}}$-module simple de caractère central $(s,r_0)$ est en bijection avec $$\mathcal{M}_{(s,r_{0})}=\{(x,\eta) \mid x \in \mathfrak{h}, \,\, [s,x]=2r_0 x, \eta \in \Irr(A_{H}(x,s))_{\mathfrak{t}} \}$$
\item \cite[1.21]{Lusztig:2002a} Le $\mathbb{H}_{\mu_{\mathfrak{t}}}$-module simple $\overline{M_{\mathfrak{t}}}(x,s,r_0,\eta)$ est tempéré, si et seulement si, il existe un $\mathfrak{sl}_2$-triplet $(x,y,z)$ vérifiant $[s,x]=2r_{0}x, \,\, [s,z]=0, \,\, [s,y]=-2r_{0}y$ et $s-r_{0}z$ est elliptique. Dans ce cas, $\overline{M_{\mathfrak{t}}}(x,s,r_0,\eta)=M_{\mathfrak{t}}(x,s,r_0,\eta)$\\
\item  \cite[1.22]{Lusztig:2002a} Le $\mathbb{H}_{\mu_{\mathfrak{t}}}$-module simple $\overline{M_{\mathfrak{t}}}(x,s,r_0,\eta)$ est de la série discrète, si et seulement si, $s$ et $x$ ne sont contenus dans aucune sous-algèbre de Levi propre.
\end{enumerate}
\end{theo}

Décrivons le système de racines, le groupe de Weyl et les paramètres associés à un triplet cuspidal dans le cas des groupes classiques. Ceci est complètement traité dans \cite[2.13]{Lusztig:1988}.

\begin{center}
\renewcommand{\arraystretch}{1.5}
\begin{tabular}{ | >{\centering\arraybackslash}m{.07\textwidth} | >{\centering\arraybackslash}m{.14\textwidth} | >{\centering\arraybackslash}m{.17\textwidth} | >{\centering\arraybackslash}m{.05\textwidth} | >{\centering\arraybackslash}m{.05\textwidth} | >{\centering\arraybackslash}m{.3\textwidth} | l }
\cline{1-6}
$H$ & $L$ & partition & $R$ & $R_{\red}$ & paramètres & \\
\hhline{======~}
\multirow{2}*{$\Sp_{N}(\C)$} & $(\C^{\times})^{\ell} \times \Sp_{N'}(\C)$ & $(1^\ell) \times (2,4,\ldots,2d)$ & $BC_{\ell}$ & $B_{\ell}$ & \begin{tikzpicture}

	\draw (0.1,0) -- (0.9,0);
	\draw (1.1,0) -- (1.9,0);
	\draw[dotted] (2.1,0) -- (2.9,0);
	\draw (3,0.1) -- (4,0.1);
	\draw (3,-0.1) -- (4,-0.1);
		
	\draw (0,0) circle(.1);
	\draw (1,0) circle(.1);
	\draw (2,0) circle(.1);
	\draw (3,0) circle(.1);
	\draw (4,0) circle(.1);
	
	 \node at (0,0.3) {$2$};
	 \node at (1,0.3) {$2$};
	 \node at (2,0.3) {$2$};
	 \node at (3,0.3) {$2$};
	 \node at (4,0.3) {$a+1$};	
\end{tikzpicture}  & $N' \geqslant 2$ \\  \cline{2-6}
  & $(\C^{\times})^{\frac{N}{2}}$ & $(1^{\frac{N}{2}})$ & $C_{\frac{N}{2}}$ & $C_{\frac{N}{2}}$ & \begin{tikzpicture}

	\draw (0.1,0) -- (0.9,0);
	\draw (1.1,0) -- (1.9,0);
	\draw[dotted] (2.1,0) -- (2.9,0);
	\draw (3,0.1) -- (4,0.1);
	\draw (3,-0.1) -- (4,-0.1);
		
	\draw (0,0) circle(.1);
	\draw (1,0) circle(.1);
	\draw (2,0) circle(.1);
	\draw (3,0) circle(.1);
	\draw (4,0) circle(.1);
	
	 \node at (0,0.3) {$2$};
	 \node at (1,0.3) {$2$};
	 \node at (2,0.3) {$2$};
	 \node at (3,0.3) {$2$};
	 \node at (4,0.3) {$2$};	
\end{tikzpicture}  & \\  \cline{1-6}
  & $(\C^{\times})^\ell \times \SO_{N'}(\C)$ & $(1^{\ell}) \times (1,3,\ldots,2d+1)$ & $B_\ell$ & $B_\ell$ & \begin{tikzpicture}

	\draw (0.1,0) -- (0.9,0);
	\draw (1.1,0) -- (1.9,0);
	\draw[dotted] (2.1,0) -- (2.9,0);
	\draw (3,0.1) -- (4,0.1);
	\draw (3,-0.1) -- (4,-0.1);
		
	\draw (0,0) circle(.1);
	\draw (1,0) circle(.1);
	\draw (2,0) circle(.1);
	\draw (3,0) circle(.1);
	\draw (4,0) circle(.1);
	
	 \node at (0,0.3) {$2$};
	 \node at (1,0.3) {$2$};
	 \node at (2,0.3) {$2$};
	 \node at (3,0.3) {$2$};
	 \node at (4,0.3) {$a+1$};	
\end{tikzpicture}  & $N' \geqslant 3$ \\ \cline{2-6}

\multirow{2}*{$\SO_{N}(\C)$} & $(\C^{\times})^{\frac{N-1}{2}}$  & $(1^{\frac{N-1}{2}})$ & $B_{\frac{N-1}{2}}$ & $B_{\frac{N-1}{2}}$ & \begin{tikzpicture}

	\draw (0.1,0) -- (0.9,0);
	\draw (1.1,0) -- (1.9,0);
	\draw[dotted] (2.1,0) -- (2.9,0);
	\draw (3,0.1) -- (4,0.1);
	\draw (3,-0.1) -- (4,-0.1);
		
	\draw (0,0) circle(.1);
	\draw (1,0) circle(.1);
	\draw (2,0) circle(.1);
	\draw (3,0) circle(.1);
	\draw (4,0) circle(.1);
	
	 \node at (0,0.3) {$2$};
	 \node at (1,0.3) {$2$};
	 \node at (2,0.3) {$2$};
	 \node at (3,0.3) {$2$};
	 \node at (4,0.3) {$2$};	
\end{tikzpicture} & $N$ impair \\ \cline{2-6}

 & $(\C^{\times})^{\frac{N}{2}}$  & $(1^{\frac{N}{2}})$ & $D_{\frac{N}{2}}$ & $D_{\frac{N}{2}}$ & \begin{tikzpicture}

	\draw (0.1,0) -- (0.9,0);
	\draw (1.1,0) -- (1.9,0);
	\draw[dotted] (2.1,0) -- (2.9,0);
	\draw (3.0866,-0.05) -- (3.9134,-0.45);
	\draw (3.0866,0.05) -- (3.9134,0.45);
		
	\draw (0,0) circle(.1);
	\draw (1,0) circle(.1);
	\draw (2,0) circle(.1);
	\draw (3,0) circle(.1);
	\draw (4,0.5) circle(.1);
	\draw (4,-0.5) circle(.1);
	
	 \node at (0,0.3) {$2$};
	 \node at (1,0.3) {$2$};
	 \node at (2,0.3) {$2$};
	 \node at (3,0.3) {$2$};
	 \node at (4.2,0.8) {$2$};	
 \node at (4.2,-0.8) {$2$};	
\end{tikzpicture} & $N$ pair \\ \cline{1-6}
\end{tabular}\captionof{table}{Système de racines et paramètres associé à un triplet cuspidal}\phantomsection\label{tableparametregal} 
\end{center}

\begin{rema}
Dans la table précédente, on a noté $a$ la plus grande part de la partition non triviale, c'est-à-dire $a=2d$ dans le cas $\Sp_{N}(\C)$ et $a=2d+1$ dans le cas $\SO_N(\C)$. 
\end{rema}

\subsection{Paramétrage des paramètres de Langlands enrichis}

Soient $G$ un groupe classique et $\cj=[\widehat{L},\varphi,\varepsilon] \in \mathcal{B}_{e}^{\st}(G)$ un triplet inertiel de $G$. On suppose que $\varphi$ est normalisé comme précédemment. Soient $\zeta \in \mathcal{X}(\widehat{L})$ (resp. $\chi$) un cocaractère non-ramifié unitaire (resp. hyperbolique) de $\widehat{L}$. \\

On s'intéresse à tout les paramètres de Langlands enrichis $(\phi,\eta)$ qui admettent $(\widehat{L},\varphi \zeta \chi,\varepsilon)$ pour support cuspidal. On note $\lambda=\lambda_{\varphi\zeta \chi}$ le caractère infinitésimal associé.
Soit $(\phi,\eta) \in \Phi_{e}(G)$ tel que $\cSc(\phi,\eta)=(\widehat{L},\varphi \zeta \chi,\varepsilon)$. On a vu qu'on avait alors $\lambda=\lambda_{\phi}=\lambda_{\varphi \zeta \chi}=\restriction{\varphi}{W_F} \zeta \chi \chi_{\varphi}^{-1}$. Puisque $\varphi$ est en particulier tempéré, la partie elliptique de $\lambda$ est $\restriction{\varphi}{W_F} \zeta$ et la partie hyperbolique est $\chi \chi_{\varphi}^{-1}$. Notons $\lambda(\Fr)=s_{\lambda}t_{\lambda}$ la décomposition de l'élément semi-simple $\lambda(\Fr)$ en produit de l'élément semi-simple hyperbolique (resp. elliptique) $s_{\lambda}$ (resp. $t_{\lambda}$). On rappelle qu'on a défini, en section \ref{groupesweildeligne}, le groupe $J_{\lambda}^{G}=Z_{\widehat{G}}(\restriction{\lambda}{I_F},t_{\lambda})$. Il se trouve qu'ici on a : $J_{\lambda}^{G}=H_{\varphi\zeta}^{G}$. Soit $\overline{s_{\lambda}} \in \mathfrak{j}_{\lambda}^{L}$ l'unique élément semi-simple hyperbolique de $\mathfrak{j}_{\lambda}^{L}$ tel que $\exp(\overline{s_{\lambda}})=s_{\lambda}$, on a : $$A_{\widehat{G}}(\phi)\simeq  A_{\widehat{G}}(\lambda,N_{\phi})=A_{J_{\lambda}^{G}}(s_{\lambda},N_{\phi}) \quad \text{et} \quad A_{\widehat{L}}(\varphi\zeta \chi) \simeq A_{\widehat{G}}(\lambda,N_{\varphi})=A_{J_{\lambda}^{L}}(s_{\lambda},N_{\varphi}).$$ Considérons le triplet cuspidal $\mathfrak{t}_{\zeta}=(J_{\lambda}^{L},\mathcal{C}_{N_{\varphi}}^{J_{\lambda}^{L}},\varepsilon)$ de $J_{\lambda}^{G}$.\\

On a alors $$\mathcal{W}_{\cjp,\zeta}=\mathcal{W}_{\cjp,\zeta}^{\circ} \rtimes \mathcal{R}_{\cjp,\zeta}, \quad \mathcal{W}_{\cjp,\zeta}=W_{H_{\varphi \zeta}^{L}}^{H_{\varphi\zeta}^{G}}=W_{J_{\lambda}^{L}}^{J_{\lambda}^{G}}  \quad \text{et} \quad \mathcal{W}_{\cjp,\zeta}^{\circ} = W_{{J_{\lambda}^{L}}^{\circ}}^{{J_{\lambda}^{G}}^{\circ}}.  $$

\begin{prop}\label{propparamparamlang}
Soit $G$ un groupe linéaire ou un groupe classique. Soient $\cj \in \mathcal{B}_{e}^{\st}(G)$ un triplet inertiel, $\omega=\mathcal{W}_{\cjp} \cdot (\varphi\zeta \chi) \in \mathcal{T}_{\cjp}/\mathcal{W}_{\cjp}$ un caractère infinitésimal avec $\zeta$ unitaire et $\chi$ hyperbolique. On considère d'une part, l'ensemble des paramètres de Langlands enrichis  $\Phi_e(G)_{\cjp,\omega}$ qui admettent $\omega$ pour caractère infinitésimal et d'autre part, l'algèbre de Hecke graduée étendue $\Irr(\mathbb{H}_{\cjp,\zeta,\mu_{\zeta}} \rtimes \C[\mathcal{R}_{\cjp,\zeta}])$ définie par le triplet cuspidal $(J_{\lambda}^{L},\mathcal{C}_{N_{\varphi}}^{J_{\lambda}^{L}},\varepsilon) \in \mathcal{S}_{J_{\lambda}^{G}}$ de $J_{\lambda}^{G}$. On a une bijection :
$$\Irr(\mathbb{H}_{\cjp,\zeta,\mu_{\zeta}} \rtimes \C[\mathcal{R}_{\cjp,\zeta}]) \leftrightarrow \Phi_e(G)_{\cjp,\omega} $$ qui induit des bijections entre $$\Irr(\mathbb{H}_{\cjp,\zeta,\mu_{\zeta}} \rtimes \C[\mathcal{R}_{\cjp,\zeta}])_{2} \leftrightarrow \Phi_e(G)_{\cjp,\omega,2} \quad \text{et} \quad \Irr(\mathbb{H}_{\cjp,\zeta,\mu_{\zeta}} \rtimes \C[\mathcal{R}_{\cjp,\zeta}])_{\bdd} \leftrightarrow \Phi_e(G)_{\cjp,\omega,\bdd}. $$
\end{prop}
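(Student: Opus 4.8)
Le principe consiste à reconnaître les deux membres comme deux descriptions d'un même ensemble combinatoire : celui, fourni par la classification de Lusztig (théorème \ref{thmlusztigalghecke}), des modules simples de l'algèbre de Hecke graduée attachée au triplet cuspidal $\mathfrak{t}_{\zeta}=(J_{\lambda}^{L},\mathcal{C}_{N_{\varphi}}^{J_{\lambda}^{L}},\varepsilon)$ de $J_{\lambda}^{G}$, de caractère central la classe de $(\overline{s_{\lambda}},r_{0})$ avec $r_{0}=-\tfrac{\log q}{2}$. On se sert pour cela de ce qui a été rappelé avant l'énoncé : pour un paramètre enrichi de support cuspidal $(\widehat{L},\varphi\zeta\chi,\varepsilon)$ on a $J_{\lambda}^{G}=H_{\varphi\zeta}^{G}$, des isomorphismes $A_{\widehat{G}}(\phi)\simeq A_{J_{\lambda}^{G}}(s_{\lambda},N_{\phi})$ et $A_{\widehat{L}}(\varphi\zeta\chi)\simeq A_{J_{\lambda}^{L}}(s_{\lambda},N_{\varphi})$, et la décomposition $\mathcal{W}_{\cjp,\zeta}=W_{J_{\lambda}^{L}}^{J_{\lambda}^{G}}=\mathcal{W}_{\cjp,\zeta}^{\circ}\rtimes\mathcal{R}_{\cjp,\zeta}$ ; de plus $\mathbb{H}_{\cjp,\zeta,\mu_{\zeta}}$ est, par définition, l'algèbre de Hecke graduée associée à $\mathfrak{t}_{\zeta}$ comme dans la section \ref{springertordue}, le facteur $\C[\mathcal{R}_{\cjp,\zeta}]$ (et, pour $J_{\lambda}^{G}$ de type orthogonal, la correspondance de Springer étendue de l'appendice) rendant compte de la non-connexité éventuelle.

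Je construirais d'abord l'application $\Phi_{e}(G)_{\cjp,\omega}\longrightarrow\Irr(\mathbb{H}_{\cjp,\zeta,\mu_{\zeta}}\rtimes\C[\mathcal{R}_{\cjp,\zeta}])$. Partant de $(\phi,\eta)$ tel que $\cSc(\phi,\eta)=(\widehat{L},\varphi\zeta\chi,\varepsilon)$, on passe au paramètre originel $(\lambda,N_{\phi})$, on note $s_{\lambda}$ la partie semi-simple hyperbolique de $\lambda(\Fr)$ et $\overline{s_{\lambda}}\in\mathfrak{j}_{\lambda}^{G}$ son logarithme ; d'après la section \ref{groupesweildeligne} et la proposition \ref{actionss}, $[\overline{s_{\lambda}},N_{\phi}]=2r_{0}N_{\phi}$. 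La représentation $\widetilde{\eta}$ obtenue en tirant $\eta$ en arrière sur $A_{\widehat{G}}(\phi)\simeq A_{J_{\lambda}^{G}}(\overline{s_{\lambda}},N_{\phi})$ appartient, par construction même de $\cSc$ (démonstrations des théorèmes \ref{supportcuspidalpartiel} et \ref{theoremesupportcuspidal}), à $\Irr(A_{J_{\lambda}^{G}}(\overline{s_{\lambda}},N_{\phi}))_{\mathfrak{t}_{\zeta}}$ : c'est exactement la traduction de ce que la correspondance de Springer généralisée appliquée dans $(J_{\lambda}^{G})^{\circ}=(H_{\varphi\zeta}^{G})^{\circ}$ envoie $(\mathcal{C}_{N_{\phi}},\eta_{0})$ sur $\mathfrak{t}_{\zeta}^{\circ}$. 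Le triplet $(N_{\phi},\overline{s_{\lambda}},\widetilde{\eta})$ est donc un point de l'ensemble $\mathcal{M}_{(\overline{s_{\lambda}},r_{0})}$ du théorème \ref{thmlusztigalghecke}, d'où un module simple $\overline{M_{\mathfrak{t}_{\zeta}}}(N_{\phi},\overline{s_{\lambda}},r_{0},\widetilde{\eta})$, le groupe $\mathcal{R}_{\cjp,\zeta}$ étant incorporé en passant à l'algèbre étendue. Réciproquement, d'un module simple on récupère $(N_{\phi},\overline{s_{\lambda}},\widetilde{\eta})$ à conjugaison près, donc $\phi$ via la construction de Jacobson--Morozov--Kostant de la section \ref{groupesweildeligne}, et $\eta$ comme descente de $\widetilde{\eta}$ à $\mathcal{S}_{\phi}^{G}$, bien définie grâce au calcul du scalaire $\widetilde{\eta}(-1)=\widetilde{\varepsilon}(-1)$ de la preuve du théorème \ref{theoremesupportcuspidal}. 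Le fait que ces deux applications soient inverses l'une de l'autre se ramène à la compatibilité des deux emplois de la correspondance de Springer généralisée.

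Enfin, pour le raffinement aux parties discrètes et bornées, j'invoquerais les critères de Lusztig \ref{thmlusztigalghecke}(4) et (5). D'une part, $\phi$ est discret si et seulement si $Z_{\widehat{G}}(\phi)^{\circ}\subseteq Z_{\widehat{G}}$, ce qui, via l'égalité $Z_{\widehat{G}}(\lambda,N_{\phi})=Z_{\widehat{G}}(\phi)Z_{U}(f_{\lambda})$ de la section \ref{groupesweildeligne}, revient à dire que $(\overline{s_{\lambda}},N_{\phi})$ n'est contenu dans aucune sous-algèbre de Levi propre de $\mathfrak{j}_{\lambda}^{G}$ : c'est le critère \ref{thmlusztigalghecke}(5). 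D'autre part, la bornitude de $\restriction{\phi}{W_F}$ équivaut à l'ellipticité de $s_{\phi}$, que l'on exprime en fonction de $\overline{s_{\lambda}}$, du cocaractère $z$ d'un $\mathfrak{sl}_{2}$-triplet associé à $N_{\phi}$ et du cocaractère de correction $\chi_{c}$, de façon à retrouver exactement le critère d'ellipticité \ref{thmlusztigalghecke}(4). La difficulté principale sera double : d'abord s'assurer que l'ensemble de la machinerie (construction de l'algèbre de Hecke graduée, classification, critères de tempérance et de série discrète) s'étend correctement au cas non connexe, ce que permettent l'appendice et la décomposition $\mathcal{W}_{\cjp,\zeta}=\mathcal{W}_{\cjp,\zeta}^{\circ}\rtimes\mathcal{R}_{\cjp,\zeta}$ ; ensuite, mener à bien la traduction précise entre la notion galoisienne « $\restriction{\phi}{W_F}$ bornée » et le critère d'ellipticité de Lusztig, ce qui demande un contrôle soigneux du cocaractère de correction $\chi_{c}$ et de la torsion hyperbolique $\chi$ dans l'écriture de $s_{\phi}$. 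Le reste n'est qu'un exercice de dictionnaire identifiant les deux membres à $\mathcal{M}_{(\overline{s_{\lambda}},r_{0})}$.
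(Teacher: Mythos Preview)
Votre démonstration est correcte et suit essentiellement la même trame que celle de l'article : identifier $\Phi_{e}(G)_{\cjp,\omega}$ à l'ensemble $\mathcal{M}_{(\overline{s_{\lambda}},r_{0})}$ de Lusztig via le passage au paramètre originel $(\lambda,N_{\phi})$ et l'isomorphisme $A_{\widehat{G}}(\phi)\simeq A_{J_{\lambda}^{G}}(s_{\lambda},N_{\phi})$, puis appliquer les théorèmes de l'appendice (\ref{thm:paramfibremodsimple1} et \ref{thm:paramfibremodsimple2}) pour gérer la non-connexité, et enfin invoquer les critères (4) et (5) du théorème \ref{thmlusztigalghecke} pour les raffinements tempéré et discret. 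La seule nuance est que, pour la discrétion, l'article raisonne par contraposée en termes de factorisation par un sous-groupe de Levi propre $\widehat{M}$ (donnant une sous-algèbre de Levi propre $\mathfrak{j}_{\lambda}^{M}$ de $\mathfrak{j}_{\lambda}^{G}$), tandis que vous passez par l'égalité $Z_{\widehat{G}}(\lambda,N_{\phi})=Z_{\widehat{G}}(\phi)Z_{U}(f_{\lambda})$ ; les deux arguments sont équivalents.
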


\begin{proof}
Comme précédemment, soit $\lambda=\lambda_{\varphi\zeta \chi}$ le caractère infinitésimal de $\varphi\zeta \chi$. Nous avons déjà expliqué que la donnée d'un paramètre de Langlands $\phi$ de caractère infinitésimal $\lambda$ est équivalent à la donnée de $\lambda$ et $N_{\phi}=dS_{\phi}\left( \begin{smallmatrix}0 & 1 \\  0 & 0 \end{smallmatrix} \right)$. De plus, on a : $A_{\widehat{G}}(\phi)=A_{J_{\lambda}^{G}}(s_{\lambda},N_{\phi})$. Par conséquent, l'ensemble des paramètres de Langlands enrichis $(\phi,\eta)$ de $G$ de support cuspidal $(\widehat{L},\varphi \zeta \chi,\varepsilon)$ est en bijection avec $\{(x,\eta) \mid x \in \mathfrak{j}_{\lambda}^{G}, [\overline{s_{\lambda}},x]=2r_{0}x, \eta \in \Irr(A_{J_{\lambda}^{G}}(s_{\lambda},x))_{\mathfrak{t}_{\zeta}}\}$ où $\mathfrak{t}_{\zeta}=[J_{\lambda}^{L},\mathcal{C}_{N_{\varphi}}^{J_{\lambda}^{L}},\varepsilon] \in \mathcal{S}_{J_{\lambda}^{G}}$. Puisque $J_{\lambda}^{G}$ est un produit de groupes linéaires, symplectiques ou de type orthogonal, en procédant comme dans la démonstration du théorème \ref{theoremesupportcuspidal} et en utilisant les théorèmes \ref{thm:paramfibremodsimple1} et \ref{thm:paramfibremodsimple2}, on obtient une bijection : $$\Irr(\mathbb{H}_{\cjp,\zeta,\mu_{\zeta}} \rtimes \C[\mathcal{R}_{\cjp,\zeta}]) \leftrightarrow \Phi_e(G)_{\cjp,\omega}.$$

Reste à voir que cette bijection envoie un paramètre de Langlands discret sur un module simple de la série discrète et un paramètre tempéré sur un module simple tempéré. Le $\mathbb{H}_{\cjp,\zeta,\mu_{\zeta}} \rtimes \C[\mathcal{R}_{\cjp,\zeta}]$-module simple de caractère central $(\overline{s_{\lambda}},r_{0})$ associé $(N_{\phi},\eta)$ est de la série discrète, si et seulement si, $\overline{s_{\lambda}}$ et $N_{\phi}$ ne sont contenus dans aucune sous-algèbre de Levi propre de $\mathfrak{j}_{\lambda}^{G}$. Or, si l'image du paramètre $\phi$, correspondant à $(\lambda,N_{\phi})$, se factorise à un sous-groupe de Levi $\widehat{M} \supseteq \widehat{L} $ de $\widehat{G}$, alors $\mathfrak{j}_{\lambda}^{M}$ est une sous-algèbre de Levi de propre de $\mathfrak{j}_{\lambda}^{G}$ qui contient $\overline{s_{\lambda}}$ et $N_{\phi}$. Par conséquent, le paramètre de Langlands $\phi$ est discret.\\

D'après le théorème de Lusztig \ref{thmlusztigalghecke}, le $\mathbb{H}_{\cjp,\zeta,\mu_{\zeta}} \rtimes \C[\mathcal{R}_{\cjp,\zeta}]$-module simple associé à $(\overline{s_{\lambda}},N_{\phi})$ est tempéré, si et seulement si, il existe un $\mathfrak{sl}_{2}$-triplet $(x,y,z)$ avec $x=N_{\phi}$ tel que $[\overline{s_{\lambda}},x]=2r_{0}x, \,\, [\overline{s_{\lambda}},z]=0, \,\, [\overline{s_{\lambda}},y]=-2r_{0}y$ et $\overline{s_{\lambda}}-r_0 z$ elliptique. Puisque $\overline{s_{\lambda}}-r_0 z$ est un élément semi-simple hyperbolique, ceci est équivalent à $\overline{s_{\lambda}}=r_0 z$. En d'autres termes, ceci signifie que $\chi \chi_{\varphi}^{-1}=\chi_{\phi}^{-1}$, c'est-à-dire $\chi=\chi_{\varphi}/\chi_{\phi}^{-1}=\chi_{c}^{-1}$. Puisque $\restriction{\phi}{W_F}=\restriction{\varphi}{W_F} \zeta \chi \chi_{c}=\restriction{\varphi}{W_F} \zeta$, ceci est bien équivalent à ce que le paramètre $\phi$ est tempéré.
\end{proof}

\subsection{Paramétrage du dual admissible}

Dans cette section, ayant fixé une paire inertielle $\mathfrak{s} \in \mathcal{B}(G)$ et un $L$-triplet inertiel correspondant $\cj \in \mathcal{B}_{e}^{\st}(G)$, on établit une bijection entre les paramètres de Langlands enrichis dans $\Phi_{e}(G)_{\cjp}$ et les représentations irréductibles dans le bloc de Bernstein $\Irr(G)_{\mathfrak{s}}$. Pour cela, nous allons utiliser les résultats de Lusztig concernant la classification de certains « paramètres » avec des modules simples d'une algèbre de Hecke graduée.

\begin{theo}\phantomsection\label{theorembijection}
Soit $G$ un groupe classique déployé. Soient $\mathfrak{s}=[L,\sigma] \in \mathcal{B}(G)$ et $\cj=[\widehat{L},\varphi,\varepsilon] \in \mathcal{B}_{e}^{\st}(G)$ correspondant. On a une bijection $$\Irr(G)_{\mathfrak{s}} \leftrightarrow \Phi_{e}(G)_{\cjp},$$ induisant des bijections $$\Irr(G)_{\mathfrak{s},2} \leftrightarrow\Phi_{e}(G)_{\cjp,2},$$ et $$\Irr(G)_{\mathfrak{s},\temp} \leftrightarrow \Phi_{e}(G)_{\cjp,\bdd}.$$ 
\end{theo}

\begin{proof}
On suppose avoir normalisé $L$, $\sigma$ et $\varphi$ comme dans les sections précédentes. Lorsque tous les nombres de torsions $t_i$ valent $1$, on peut identifier $\mathcal{T}_{\cjp}$ à un tore maximal de $Z_{\widehat{G}}(\varphi)^{\circ}$. Sinon, on peut l'identifier à l'image par $z \mapsto z^{t_i}$ sur chaque composante indexée par $i$. Ainsi, on peut définir la donnée radicielle basée $\Psi_{\cjp}$ associée à ce groupe et ce tore maximal. D'après la démonstration du théorème \ref{verifcompatibilite}, par dualité on peut identifier $\Psi_{\mathfrak{s}}$ et $\Psi_{\cjp}$.\\

Soient $\zeta \in \mathcal{X}(L)$ un caractère non-ramifié unitaire et $\zeta \in \mathcal{X}(\widehat{L})$ le cocaractère non-ramifié correspondant. Soient $\chi \in \mathcal{X}(L)$ un caractère non-ramifié hyperbolique, $\omega=W_{\mathfrak{s}} \cdot (\sigma \zeta \chi)$ un caractère infinitésimal dont la partie elliptique est dans l'orbite de $W_{\mathfrak{s}} \cdot (\sigma \zeta)$. Soit $\widehat{\omega}=\mathcal{W}_{\cjp} \cdot (\varphi \zeta \chi)$ le caractère infinitésimal correspondant (il faudrait ajouter $\varepsilon$ pour être plus précis). Notons $\overline{\omega}=Z_{\mathcal{W}_{\cjp}} \cdot (s)$ le caractère infinitésimal hyperbolique correspondant avec $s \in \mathfrak{t}_{\cjp,\R}$. On a vu que l'équivalence de catégorie du théorème \ref{thmHeiermann} et la réduction de Lusztig induisent des bijections :

$$\Irr(G)_{\omega} \leftrightarrow \Irr(\mathcal{H}_{\mathfrak{s}}')_{\omega} \leftrightarrow \Irr(\mathbb{H}_{\mathfrak{s},\zeta,\mu_{\zeta}}')_{\overline{\omega}}.$$

Puisque nous avons montré que $\mathcal{T}_{\cjp}$ et $T_{\mathfrak{s}}$ sont isomorphes et que les actions respectives de $\mathcal{W}_{\cjp}$ et de $W_{\mathfrak{s}}$ sont compatibles. Ainsi, $\Psi_{\mathfrak{s},\zeta} \simeq \Psi_{\cjp,\zeta}$. En particulier, $W_{\mathfrak{s},\zeta} \simeq \mathcal{W}_{\cjp,\zeta}$ et $R_{\mathfrak{s},\zeta} \simeq \mathcal{R}_{\cjp,\zeta}$. Puisque $\mathfrak{t}_{\mathfrak{s}}^{*} \simeq \mathfrak{t}_{\cjp}^{*}$ et $\Sigma_{\mathfrak{s},\zeta} \simeq \Sigma_{\cjp,\zeta}$, pour montrer que l'algèbre de Hecke graduée $\mathbb{H}_{\cjp,\zeta,\mu_{\zeta}}$ définie par Lusztig coïncide avec $\mathbb{H}_{\mathfrak{s},\zeta,\mu_{\zeta}}$, il suffit de montrer que la fonction paramètre est la même. D'une part, la fonction paramètre de $\mathbb{H}_{\mathfrak{s},\zeta,\mu_{\zeta}}$ est décrite dans la table \ref{fonctionparametrerep} et d'autre part, la fonction paramètre de $\mathbb{H}_{\cjp,\zeta,\mu_{\zeta}}$ est décrite dans la table \ref{tableparametregal}. D'après le théorème \ref{thmmoeglinreduc} de M\oe glin sur les points de réductibilités d'induites de cuspidales, on a : $2x=a_{\sigma_i}+1$, où $a_{\sigma_i}=\max \{a \in \N \mid (\sigma ,a) \in \Jord(\sigma')\}$. Ainsi, lorsque $\sigma_{i} \in \Jord(\sigma')$, alors  $a_{\sigma_i}+1$ est exactement la valeur apparaissant dans la table \ref{tableparametregal}. Ceci montre donc que les deux algèbres de Hecke graduée $\mathbb{H}_{\mathfrak{s},\zeta,\mu_{\zeta}}$ et $\mathbb{H}_{\cjp,\zeta,\mu_{\zeta}}$ sont égales. \\

En combinant les précédentes bijections avec celle obtenue à la proposition \ref{propparamparamlang}, on obtient donc une bijection $$\Irr(G)_{\omega} \leftrightarrow \Phi_{e}(G)_{\omega}.$$
\end{proof}

Faisons quelques remarques à la suite de ce théorème avant d'annoncer quelques conséquences.

\begin{rema}
La bijection entre $\Irr(G)_{\omega}$ et $\Phi_e(G)_{\omega}$ ne suppose pas la correspondance de Langlands. Plus précisément, ce qu'on a utilisé est : \begin{enumerate}[label=(\arabic*)]
\item la bijection entre $T_{\mathfrak{s}}$ et $\mathcal{T}_{\cjp}$ qui est une conséquence de la correspondance de Langlands pour les caractères ;
\item la bijection entre $W_{\mathfrak{s}}$ et $\mathcal{W}_{\cjp}$ obtenue par un calcul explicite ;
\item la description des fonctions paramètres d'une part pour $\mathbb{H}_{\mathfrak{s}}$ en terme de points de réductibilités d'induites parabolique, d'autre part pour $\mathbb{H}_{\cjp}$ par les calculs explicites de Lusztig et le lien se faisant par un résultat de M\oe glin qui est très difficile à obtenir.
\end{enumerate}
Ceci montre en particulier, \textit{a posteriori}, que si les $L$-paquets des groupes classiques sont paramétrés par les représentations irréductibles de $S$-groupe, alors nécessairement les représentations supercuspidales sont paramétrées par les paramètres de Langlands enrichis cuspidaux.\\ 

En effet, pour commencer, dans le cas d'un tore $T$, on sait d'après la correspondance de Langlands pour les tores que les paramètres de Langlands enrichis cuspidaux sont les paramètres des représentations irréductibles supercuspidales.\\ Ensuite, fixons un groupe classique $G$ et supposons que pour tout groupe classique $G'$ de même type que $G$ mais de rang semi-simple strictement inférieur à celui de $G$, les paramètres de Langlands des représentations supercuspidales de $G'$ correspondent aux paramètres de Langlands cuspidaux enrichis de $G'$. Soit $\mathfrak{s}=[L,\sigma] \in \mathcal{B}(G)$ une paire inertielle de $G$, où $L$ est un sous-groupe de Levi propre de $G$. Puisque $L$ est un produit de groupes linéaires et d'un groupe classique de même type que $G$ mais de rang semi-simple strictement inférieur, par hypothèse de récurrence, le paramètre de Langlands de $\sigma$ est un paramètre de Langlands enrichi cuspidal de $L$. Notons donc $\cj=[\widehat{L},\varphi,\varepsilon] \in \mathcal{B}_{e}^{\st}(G)$ le triplet inertiel correspondant. En admettant le point (3) ci-dessus, le théorème \ref{theorembijection} montre donc qu'on a une bijection entre $\Irr(G)_{\mathfrak{s}}$ et $\Phi_{e}(G)_{\cjp}$. Ainsi, pour tout paramètre de Langlands $\phi \in \Phi(G)$, la partie non cuspidale de $\Irr(\mathcal{S}_{\phi}^{G})$ paramètre une représentation irréductible qui n'est pas cuspidale. Par conséquent, les représentations supercuspidales de $G$ ne peuvent être paramétrés que par des paramètres de Langlands enrichis cuspidaux.
\end{rema}

En conséquence du théorème précédent on obtient :

\begin{theo}\phantomsection \label{compatibiliteinductionpreuve}
La conjecture \ref{conjindpar} est vraie pour les groupes classiques. Autrement dit, la correspondance de Langlands est compatible avec l'induction parabolique.
\end{theo}

\begin{proof}
En effet, nous savons grâce aux travaux d'Harris-Taylor, Henniart, Scholze pour le groupe linéaire et Arthur et M\oe glin pour les groupes classiques que les paramètres de Langlands des supercuspidales sont ceux qu'on avait prédit par la conjecture. Ce qui précède décrit les paramètres de Langlands des sous-quotients d'une induite de supercuspidale. Or, ces paramètres sont compatibles avec la conjecture.
\end{proof}

\subsection{Changement de paramétrage dans un $L$-paquet, support cuspidal et généricité}

Soient $G$ (les $F$-points d') un groupe classique, $\phi \in \Phi(G)_{\bdd}$ un paramètre de Langlands tempéré. Il existe un modèle de Whittaker $\mathfrak{m}$ de $G$ et une représentation générique $\pi_{\mathfrak{m}}$ dans le $L$-paquet $\Pi_{\phi}(G)$. Par ailleurs, la théorie de l'endoscopie montre qu'il existe une unique bijection $$\iota : \Pi_{\phi}(G) \rightarrow \Irr(\mathcal{S}_{\phi}^{G}),$$ telle que $\pi_{\mathfrak{m}}$ s'envoie sur la représentation triviale de $\mathcal{S}_{\phi}^{G}$. Si $\pi_{\mathfrak{m}'} \in \Pi_{\phi}(G)$ est une représentation générique pour un modèle de Whittaker $\mathfrak{m}'$, alors on dispose d'une autre bijection $\iota' : \Pi_{\phi}(G) \rightarrow \Irr(\mathcal{S}_{\phi}^{G})$ telle que $\iota'(\pi_{\mathfrak{m}'})$ est la représentation triviale. Les deux paramétrages sont reliés par un certain caractère $\omega_{\mathfrak{m},\mathfrak{m}'} : \mathcal{S}_{\phi}^{G} \rightarrow \C^{\times}$, si bien que pour tout $\pi \in \Pi_{\phi}(G)$ : $$\iota'(\pi)=\iota(\pi) \otimes \omega_{\mathfrak{m},\mathfrak{m}'}.$$ Ce caractère est décrit dans le cas des groupes classiques dans \cite[\textsection 10]{Gan:2012} et nous rappelons sa description ci-dessous.\\

Notons $V_{\phi}$ l'espace sur lequel la représentation $\phi$ se réalise et décomposons $\Std_G \circ \phi : W_F' \rightarrow \widehat{G} \hookrightarrow \GL(V_{\phi}) $ : $$\Std_G \circ \phi = \bigoplus_{\pi \in I^{\O}} \bigoplus_{a \in \N}  \left( \pi \boxtimes S_a \right) \otimes M_{\pi,a}  \bigoplus_{\pi \in I^{\S}} \bigoplus_{a \in \N}\left( \pi \boxtimes S_a \right) \otimes M_{\pi,a} \oplus \bigoplus_{\pi \in I^{\GL}} \bigoplus_{a \in \N} \left(\left(\pi\oplus \pi^{\vee} \right) \boxtimes S_a \right) \otimes M_{\pi,a},$$ où $M_{\pi,a}$ désigne l'espace de multiplicité de la représentation irréductible $\pi \boxtimes S_a$. Soit $z \in Z_{\widehat{G}}(\phi)$ un élément semi-simple. Cet élément définit un isométrie de $V_{\phi}$ qui se restreint en une isométrie sur chacun des sous-espaces de multiplicité. Notons $V_{\phi}^{z,-1}=\{v \in V_{\phi} \mid z \cdot v =-v\}$ et $\phi^{z,-1}$ la sous-représentation de $\phi$  sur $V_{\phi}^{z,-1}$. On a : $$\phi^{z,-1}=\bigoplus_{\pi \in I^{\O}} \bigoplus_{a \in \N}  \left( \pi \boxtimes S_a \right) \otimes M_{\pi,a}^{z_{\pi,a},-1} \oplus \bigoplus_{\pi \in I^{\S}} \bigoplus_{a \in \N}\left( \pi \boxtimes S_a \right) \otimes M_{\pi,a}^{z_{\pi,a},-1} \oplus \bigoplus_{\pi \in I^{\GL}} \bigoplus_{a \in \N} \left(\left(\pi\oplus \pi^{\vee} \right) \boxtimes S_a \right) \otimes M_{\pi,a}^{z_{\pi,a},-1}.$$ 

Il s'ensuit que $\det(\phi^{z,-1})$ définit un caractère quadratique de $W_F'$  qui se factorise par $W_F^{\ab} \simeq F^{\times}$ et donc par $F^{\times}/F^{\times 2}$. Pour tout $f \in F^{\times}/F^{\times 2}$, notons $$\omega_{f}(z)=\det(\phi^{z,-1}(f)).$$ Comme il est expliqué dans \cite[\textsection 4]{Gan:2012}, $\omega_{f}$ définit un caractère quadratique qui se factorise par $A_{\widehat{G}}(\phi)$. Lorsque $G$ est un groupe symplectique ou spécial orthogonal pair (déployé), l'ensemble des orbites sous $T$ des caractères génériques est un espace homogène principal sous $F^{\times}/F^{\times 2}$. Par conséquent, si $\mathfrak{m}'=f \cdot \mathfrak{m}$ où $f \in F^{\times}/F^{\times 2}$, alors $\omega_{\mathfrak{m},\mathfrak{m}'}=\omega_{f}$.\\

\begin{prop}\phantomsection \label{restrictionomega}
Pour tout $f \in F^{\times}/F^{\times 2}$, la restriction du caractère $\omega_{f}$ à $A_{(H_{\phi}^{G})^{\circ}}(\restriction{\phi}{\SL_{2}{(\C)}})$ est triviale.
\end{prop}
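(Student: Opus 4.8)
The plan is to evaluate $\omega_{f}$ on a single semisimple element lying in $(H_{\phi}^{G})^{\circ}$ and to show that the quadratic character $\det(\phi^{z,-1})$ of $W_{F}$ is in fact \emph{trivial}, so that its value at every $f\in F^{\times}/F^{\times 2}$ is $1$. First I would note that any element of $A_{(H_{\phi}^{G})^{\circ}}(\restriction{\phi}{\SL_2(\C)})=Z_{(H_{\phi}^{G})^{\circ}}(\restriction{\phi}{\SL_2(\C)})/Z_{(H_{\phi}^{G})^{\circ}}(\restriction{\phi}{\SL_2(\C)})^{\circ}$ is the image of a semisimple element $z$: the group $Z_{(H_{\phi}^{G})^{\circ}}(\restriction{\phi}{\SL_2(\C)})$ is reductive (Kostant's theorem, as used around Definition~\ref{defcusp}), so each of its connected components meets the semisimple locus. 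As $z\in Z_{(H_{\phi}^{G})^{\circ}}(\restriction{\phi}{\SL_2(\C)})\subseteq (H_{\phi}^{G})^{\circ}=Z_{\widehat{G}}(\restriction{\phi}{W_F})^{\circ}$ and $\omega_{f}(z)=\det(\phi^{z,-1}(f))$ depends only on $z$ as an element of $Z_{\widehat{G}}(\phi)$, it suffices to prove $\det(\phi^{z,-1})=1$ for such a $z$.

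Next I would invoke the description of $Z_{\widehat{G}}(\restriction{\phi}{W_F})$ recalled in Section~\ref{sec:suppcusp}: with respect to the isotypic decomposition $\Std_{G}\circ\restriction{\phi}{W_F}=\bigoplus_{\pi}(\cdots)\otimes M_{\pi}$ one has $Z_{\widehat{G}_{*}}(\restriction{\phi}{W_F})\simeq\prod_{\pi}\widehat{G}_{\phi,\pi}$, where $\widehat{G}_{\phi,\pi}$ is $\O_{m_{\pi}}(\C)$ for $\pi$ of orthogonal type, $\Sp_{m_{\pi}}(\C)$ for $\pi$ of symplectic type, and $\GL_{m_{\pi}}(\C)$ for $\pi\in I^{\GL}$; in the special orthogonal case $Z_{\widehat{G}}(\restriction{\phi}{W_F})$ acquires in addition the factor $\widehat{G}_{\phi,i}=S\bigl(\prod_{\pi\in I^{\O,\imp}}\O_{m_{\pi}}(\C)\bigr)$. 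The key linear-algebra fact is: if $w$ is a semisimple element of the identity component of one of these classical groups, then its $(-1)$-eigenspace is even-dimensional. For $\O_{m}(\C)$ the identity component is $\SO_{m}(\C)$, and a determinant-$1$ orthogonal transformation has an even-dimensional $(-1)$-eigenspace (eigenvalues different from $\pm 1$ pair up with product $1$, the $(+1)$-eigenspace contributes $1$ to the determinant, so the $(-1)$-eigenspace must contribute $1$); for $\Sp_{m}(\C)$ the $(-1)$-eigenspace is a non-degenerate symplectic subspace, hence automatically of even dimension; and the identity component of $\widehat{G}_{\phi,i}$ is $\prod_{\pi\in I^{\O,\imp}}\SO_{m_{\pi}}(\C)$, so the same holds componentwise there. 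The $\GL$-factors will play no role.

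Finally I would compute the character itself. Writing $z$ with components $(z_{\pi,a})$ acting on the multiplicity spaces $M_{\pi,a}$, we have $\phi^{z,-1}=\bigoplus_{\pi,a}(\pi\boxtimes S_{a})\otimes M_{\pi,a}^{z_{\pi,a},-1}$, and restricting to $W_F$ (on which $S_{a}$ has trivial determinant) gives $\det(\phi^{z,-1})|_{W_F}=\prod_{\pi,a}\bigl(\det\pi\bigr)^{a\,\dim M_{\pi,a}^{z_{\pi,a},-1}}$, where for $\pi\in I^{\GL}$ the contribution involves $\det(\pi\oplus\pi^{\vee})=\det\pi\cdot(\det\pi)^{-1}=1$ and drops out. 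For every self-dual $\pi$ (types $\O$ and $\S$) the character $\det\pi$ is quadratic, since $\det\pi=\det\pi^{\vee}=(\det\pi)^{-1}$; by the previous step each exponent $a\,\dim M_{\pi,a}^{z_{\pi,a},-1}$ is even because $z$ lies in the identity component; hence each factor is trivial, $\det(\phi^{z,-1})=1$, and therefore $\omega_{f}(z)=\det(\phi^{z,-1}(f))=1$ for all $f\in F^{\times}/F^{\times 2}$, which is the asserted triviality of $\restriction{\omega_{f}}{A_{(H_{\phi}^{G})^{\circ}}(\restriction{\phi}{\SL_2(\C)})}$.

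The main obstacle, modest as it is, lies in the component-group bookkeeping for $\widehat{G}=\SO_{N}(\C)$: one must check that membership of $z$ in the identity component of $Z_{\widehat{G}}(\restriction{\phi}{W_F})$ genuinely forces $\det(z_{\pi})=1$ on \emph{every} orthogonal-type factor, including the constrained diagonal factor $\widehat{G}_{\phi,i}$, and that the eigenvalue argument is insensitive to the parity of $m_{\pi}$. Everything else is routine and rests only on the structure theory already recalled in Section~\ref{sec:suppcusp}.
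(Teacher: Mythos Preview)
Your approach is essentially the paper's: both decompose $\restriction{\phi}{W_F}$ into isotypic pieces, observe that $z\in (H_{\phi}^{G})^{\circ}$ forces each $z_{\pi}$ to have $\det(z_{\pi})=1$ on $M_{\pi}$ (hence $\dim M_{\pi}^{z_{\pi},-1}$ is even), and then compute that the $\pi$-block of $\det(\phi^{z,-1})$ equals $\det(\pi)^{\dim M_{\pi}^{z_{\pi},-1}}=1$ since $\det\pi$ is quadratic.

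One small over-claim to fix: you assert that \emph{each} exponent $a\,\dim M_{\pi,a}^{z_{\pi,a},-1}$ is even. That is not what your ``previous step'' shows, and it can fail (take $\widehat{G}_{\phi,\pi}$ orthogonal, $a$ odd, and $z_{\pi,a}$ with determinant $-1$ on the orthogonal space $M_{\pi,a}$; this is compatible with $z_{\pi}\in\SO_{m_{\pi}}$ provided another odd-$a$ factor compensates). What you actually proved, and what the paper uses, is that for each fixed $\pi$ the \emph{sum} $\sum_{a} a\,\dim M_{\pi,a}^{z_{\pi,a},-1}=\dim M_{\pi}^{z_{\pi},-1}$ is even; since the base $\det\pi$ depends only on $\pi$, this is exactly what is needed to conclude $(\det\pi)^{\sum_{a} a\,\dim M_{\pi,a}^{z_{\pi,a},-1}}=1$. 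With that correction, your proof is complete and coincides with the paper's.
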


\begin{proof}
Décomposons la restriction de $\phi$ à $W_F$ et pour tout $z \in H_{\phi}^{G}$, on a : $$\restriction{\phi}{W_F}=\bigoplus_{\pi \in I^{\O}} \pi \otimes M_{\pi} \bigoplus_{\pi \in I^{\S}} \pi \otimes M_{\pi} \bigoplus_{\pi \in I^{\GL}} \left(\pi \oplus \pi^{\vee} \right) \otimes M_{\pi}, $$ $$\restriction{\phi}{W_F}^{z,-1}=\bigoplus_{\pi \in I^{\O}} \pi \otimes M_{\pi}^{z_{\pi},-1} \bigoplus_{\pi \in I^{\S}} \pi \otimes M_{\pi}^{z_{\pi},-1} \bigoplus_{\pi \in I^{\GL}} \left(\pi \oplus \pi^{\vee} \right) \otimes M_{\pi}^{z_{\pi},-1}.$$

Pour tout $z \in H_{\phi}^{G}$ et $\pi \in I^{\O} \sqcup I^{\S}$, on a : $\det(z_{\pi})=(-1)^{\dim(M_{\pi}^{z_{\pi},-1})}$. En particulier, pour $z \in \left(H_{\phi}^{G}\right)^{\circ}$ et $\pi \in I^{\O} \sqcup I^{\S}$, $\dim(M_{\pi}^{z_{\pi},-1})$ est pair. Revenons à présent au calcul du déterminant de $\phi^{z,-1}$.\\ Pour tout $z \in Z_{\left(H_{\phi}^{G}\right)^{\circ}}(\restriction{\phi}{\SL_2(\C)})$ et $\pi \in I^{\O} \sqcup I^{\S}$, on a $M_{\pi}^{z_{\pi}}=\bigoplus_{a \in \N} S_a \boxtimes M_{\pi,a}^{z_{\pi,a},-1}$ et : \begin{align*}
 \det \left( \bigoplus_{a \in \N}  \left( \pi \boxtimes S_a \right) \otimes M_{\pi,a}^{z_{\pi,a},-1} \right)&=\prod_{a \in \N} \det(\pi)^{\dim(S_a) \dim(M_{\pi,a})} \det(S_a)^{\dim(\pi) \dim(M_{\pi,a}^{z_{\pi,a},-1})} \\
&= \det(\pi)^{\sum_{a \in \N} \dim(S_a) \dim(M_{\pi,a}^{z_{\pi,a},-1}) } \\
&= \det(\pi)^{\dim({M_{\pi}^{z_{\pi},-1}})}
\end{align*}
Pour tout $z \in Z_{\left(H_{\phi}^{G}\right)^{\circ}}(\restriction{\phi}{\SL_2(\C)})$ et pour tout $\pi \in I^{\O} \sqcup I^{\S}$,  $\dim({M_{\pi}^{z_{\pi},-1}})$ est pair, donc $\det(\pi)^{\dim({M_{\pi}^{z_{\pi},-1}})}=1$.\\ Ceci montre donc que pour tout $f \in F^{\times}/F^{\times 2}$, la restriction de $\omega_{f}$ à $A_{\left(H_{\phi}^{G}\right)^{\circ}}(\restriction{\phi}{\SL_2(\C)})$ est triviale.
\end{proof}

\begin{prop}
Soient $\pi \in \Irr(G)$ une représentation irréductible de $G$ et $(L,\sigma) \in \Omega(G)$ son support cuspidal. Soient $\iota$ et $\iota'$ deux bijections $\Pi_{\phi}(G) \rightarrow \Irr(\mathcal{S}_{\phi}^{G})$ correspondant au choix de modèles de Whittaker. Soient $\eta=\iota(\pi)$ et $\eta'=\iota'(\pi)$. Soient $(\widehat{L},\varphi,\varepsilon) \in \Omega_{e}^{\st}(G)$ (resp. $(\widehat{L}',\varphi',\varepsilon') \in \Omega_{e}^{\st}(G)$) le support cuspidal de $(\phi,\eta)$ (resp. $(\phi,\eta')$) tel qu'il a été défini dans le théorème \ref{theoremesupportcuspidal}. Alors (à $\widehat{G}$-conjugaison près), on a : $\widehat{L}=\widehat{L}'$ et $\varphi=\varphi'$.
\end{prop}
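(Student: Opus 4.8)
The plan is to trace through the construction of the cuspidal support map from Theorem \ref{theoremesupportcuspidal} and show that the only step which depends on the enhancement $\eta$ (as opposed to just $\phi$) is the choice of the irreducible cuspidal representation $\varepsilon$, while the Levi $\widehat{L}$ and the parameter $\varphi$ are determined by $\phi$ alone. Recall that in that construction one fixes $\widetilde{\eta}_0$, an irreducible constituent of the restriction of $\widetilde{\eta}$ to $A_{(H_\phi^G)^\circ}(N_\phi)$, applies the generalized Springer correspondence for $(H_\phi^G)^\circ$ to the pair $(\mathcal{C}_{N_\phi}^{(H_\phi^G)^\circ},\widetilde{\eta}_0)$, obtains a quasi-Levi $(H_\varphi^L)^\circ$, a nilpotent orbit $\mathcal{C}_{N_\varphi}$, and a cuspidal local system; from these data alone one builds the cocharacter decomposition $\chi_\phi=\chi_\varphi\chi_c$, the morphism $\theta$, the Levi $\widehat{L}=Z_{\widehat{G}}(A)$ (where $A$ is the maximal central torus of $H_\varphi^L$), and finally $\varphi(w,x)=\lambda(w)\chi_\varphi(w)\theta(x)$. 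None of these steps uses $\eta$ beyond $\widetilde{\eta}_0$, and by the last paragraph of the proof of Theorem \ref{supportcuspidalpartiel} the orbit $\mathcal{C}$ and the data attached to it are independent of the choice of $\widetilde{\eta}_0$ among the constituents of $\restriction{\widetilde{\eta}}{A_{(H_\phi^G)^\circ}(N_\phi)}$.

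First I would observe that since $\pi$ is fixed, its Langlands parameter $\phi$ is the same whether we use the bijection $\iota$ or $\iota'$; only the $S$-group representation changes, $\eta'=\eta\otimes\omega_{\mathfrak{m},\mathfrak{m}'}$ by the formula recalled just above, where $\omega_{\mathfrak{m},\mathfrak{m}'}=\omega_f$ for some $f\in F^\times/F^{\times 2}$ (in the symplectic or even special orthogonal case; in the odd orthogonal case $\mathcal{S}_\phi^G=A_{\widehat{G}}(\phi)$ has trivial center contribution anyway and $\omega_f$ is trivial, so there is nothing to prove). Thus $\widetilde{\eta'}=\widetilde{\eta}\otimes\widetilde{\omega_f}$ as representations of $A_{\widehat{G}}(\phi)=A_{H_\phi^G}(N_\phi)$, where $\widetilde{\omega_f}$ is the pullback of $\omega_f$ to $A_{H_\phi^G}(N_\phi)$.

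The key step is then Proposition \ref{restrictionomega}: the restriction of $\omega_f$ to $A_{(H_\phi^G)^\circ}(\restriction{\phi}{\SL_2(\C)})$ is trivial, hence via the Kostant identification $A_{(H_\phi^G)^\circ}(\restriction{\phi}{\SL_2(\C)})\simeq A_{(H_\phi^G)^\circ}(N_\phi)$ so is its restriction to $A_{(H_\phi^G)^\circ}(N_\phi)$. Consequently $\restriction{\widetilde{\eta'}}{A_{(H_\phi^G)^\circ}(N_\phi)}=\restriction{\widetilde{\eta}}{A_{(H_\phi^G)^\circ}(N_\phi)}$, so the two enhancements have exactly the same set of irreducible constituents upon restriction to the connected component group. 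We may therefore use the \emph{same} $\widetilde{\eta}_0$ in the construction of the cuspidal support of $(\phi,\eta)$ and of $(\phi,\eta')$. Running the construction of Theorem \ref{theoremesupportcuspidal} with this common choice produces literally the same quasi-Levi $(H_\varphi^L)^\circ$, the same orbit $\mathcal{C}_{N_\varphi}$, the same decomposition $\chi_\phi=\chi_\varphi\chi_c$ (it depends only on $\theta$, hence only on $\widetilde{\eta}_0$ and $\phi$), the same central torus $A$, the same Levi $\widehat{L}=Z_{\widehat{G}}(A)$, and the same parameter $\varphi(w,x)=\lambda(w)\chi_\varphi(w)\theta(x)$. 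Since by the uniqueness clause of Theorem \ref{supportcuspidalpartiel} these data are well-defined up to $Z_{\widehat{G}}(\restriction{\phi}{W_F})^\circ$-conjugacy, we conclude that $\widehat{L}=\widehat{L}'$ and $\varphi=\varphi'$ up to $\widehat{G}$-conjugacy, as claimed.

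The only mild subtlety — and the step I would be most careful about — is bookkeeping the isomorphism $A_{\widehat{G}}(\phi)\simeq\prod_{\pi\in I'}A_{\widehat{G}_{\phi,\pi}}(S_{\phi,\pi})$ and checking that $\omega_f$ respects this factorization, so that its triviality on the connected part can be checked factor by factor exactly as in the proof of Proposition \ref{restrictionomega}; this is precisely the content of that proof (the parity of $\dim(M_\pi^{z_\pi,-1})$ for $z$ in the connected component group), so no new work is needed. I would also note explicitly that only $\widehat{L}$ and $\varphi$ are asserted to be invariant: the cuspidal datum $\varepsilon$ genuinely changes, being twisted by (the restriction to $\mathcal{S}_\varphi^L$ of) $\omega_f$, consistently with the expectation that the change of Whittaker normalization in a packet for $G$ corresponds to a change of Whittaker normalization in the packet of the cuspidal support on $L$.
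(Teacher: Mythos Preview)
Your proposal is correct and follows essentially the same approach as the paper: from $\eta'=\eta\otimes\omega_{\mathfrak{m},\mathfrak{m}'}$ and Proposition~\ref{restrictionomega} one deduces that $\restriction{\widetilde{\eta}}{A_{(H_\phi^G)^\circ}(\restriction{\phi}{\SL_2(\C)})}=\restriction{\widetilde{\eta'}}{A_{(H_\phi^G)^\circ}(\restriction{\phi}{\SL_2(\C)})}$, and then observes that in the construction of Theorem~\ref{theoremesupportcuspidal} the data $\widehat{L}$ and $\varphi$ depend only on this restriction. The paper's proof is considerably terser (it simply cites the construction rather than retracing it), but the logical content is identical.
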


\begin{proof}
En reprenant les notations de la proposition, on a vu que $\eta' = \eta \otimes \omega_{\mathfrak{m},\mathfrak{m}'}$. Il résulte de la proposition \ref{restrictionomega} que $\restriction{\eta}{A_{{\left(H_{\phi}^{G} \right)}^{\circ}}(\restriction{\phi}{\SL_{2}(\C)})}=\restriction{\eta'}{A_{{\left(H_{\phi}^{G} \right)}^{\circ}}(\restriction{\phi}{\SL_{2}(\C)})}$. Or, les constructions du support cuspidal détaillées dans la preuve du théorème \ref{theoremesupportcuspidal} montrent que (la $\widehat{G}$-classe de conjugaison) de $\widehat{L}$ et $\varphi$ ne dépendent que $\restriction{\eta}{A_{{\left(H_{\phi}^{G} \right)}^{\circ}}(\restriction{\phi}{\SL_{2}(\C)})}$.
\end{proof}

\begin{prop}
Soit $\sigma$ une représentation irréductible supercuspidale orpheline. Alors $\sigma$ n'est pas générique. De façon équivalente, une représentation supercuspidale générique est nécessairement ordinaire.
\end{prop}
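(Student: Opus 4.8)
The statement asserts that an orphan (orpheline) supercuspidal representation $\sigma$ of a classical group is never generic. The strategy is to combine the parametrization of supercuspidal representations by cuspidal enhanced Langlands parameters with the description of the change-of-Whittaker-datum character from Proposition \ref{restrictionomega}, and with the characterization of generic members of tempered $L$-packets via the endoscopic normalization $\iota$. Concretely, let $\varphi$ be the Langlands parameter of $\sigma$; since $\sigma$ is supercuspidal it is discret, and since $\sigma$ is orpheline the restriction $\restriction{\varphi}{\SL_2(\C)}$ is nontrivial. By Proposition \ref{verifparamcusp} (the verification of Conjecture \ref{conjcusp} for classical groups), $\sigma$ corresponds to an enhanced parameter $(\varphi,\varepsilon)$ with $\varepsilon \in \Irr(\mathcal{S}_{\varphi}^{G})_{\cusp}$, i.e.\ the pullback $\widetilde{\varepsilon}$ restricts on $A_{(H_{\varphi}^{G})^{\circ}}(u_{\varphi})$ to a sum of Lusztig-cuspidal representations.

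**Key steps.** First I would fix a Whittaker datum $\mathfrak{m}$ and use the endoscopic normalization $\iota : \Pi_{\varphi}(G) \to \Irr(\mathcal{S}_{\varphi}^{G})$, under which the $\mathfrak{m}$-generic member $\pi_{\mathfrak{m}}$ goes to the trivial character of $\mathcal{S}_{\varphi}^{G}$. The trivial character of $\mathcal{S}_{\varphi}^{G}$ pulls back to the trivial character $\widetilde{\mathbf{1}}$ of $A_{\widehat{G}}(\varphi)$, whose restriction to $A_{(H_{\varphi}^{G})^{\circ}}(u_{\varphi})$ is again trivial; hence the trivial local system on $\mathcal{C}_{u_{\varphi}}^{(H_{\varphi}^{G})^{\circ}}$. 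Since $\sigma$ is orpheline, $u_{\varphi} = \varphi(1,(\begin{smallmatrix}1&1\\0&1\end{smallmatrix}))$ is a nontrivial unipotent element of $(H_{\varphi}^{G})^{\circ}$ (it is nontrivial precisely because $\restriction{\varphi}{\SL_2(\C)}$ is nontrivial), so its orbit is not the regular/distinguished orbit supporting the unique cuspidal pair of a classical group (the cuspidal orbits of Table \ref{pairecusp} are attached to partitions with distinct parts of a very specific form, and the trivial local system on \emph{any} nontrivial orbit is never cuspidal: the trivial representation of $A_H(u)$ for $u\ne 1$ appears in some $S_{P,u,v}$, cf.\ Definition \ref{suv}). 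Therefore the trivial character of $\mathcal{S}_{\varphi}^{G}$ is \emph{not} in $\Irr(\mathcal{S}_{\varphi}^{G})_{\cusp}$, which means $\pi_{\mathfrak{m}} = \iota^{-1}(\mathbf{1})$ is \emph{not} supercuspidal. Thus $\sigma \ne \pi_{\mathfrak{m}}$, and more precisely $\sigma$ corresponds to some $\varepsilon \in \Irr(\mathcal{S}_{\varphi}^{G})_{\cusp}$ which is nontrivial. To upgrade this from "$\sigma$ is not $\mathfrak{m}$-generic" to "$\sigma$ is generic for \emph{no} Whittaker datum", I would invoke the change-of-datum formula $\iota'(\pi) = \iota(\pi)\otimes \omega_{\mathfrak{m},\mathfrak{m}'}$ together with Proposition \ref{restrictionomega}: the character $\omega_{\mathfrak{m},\mathfrak{m}'} = \omega_f$ restricts trivially to $A_{(H_{\varphi}^{G})^{\circ}}(\restriction{\varphi}{\SL_2(\C)})$. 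Hence for every Whittaker datum $\mathfrak{m}'$ the $\mathfrak{m}'$-generic member $\pi_{\mathfrak{m}'} = {\iota'}^{-1}(\mathbf{1})$ has $\iota(\pi_{\mathfrak{m}'}) = \omega_f^{-1}$, whose restriction to $A_{(H_{\varphi}^{G})^{\circ}}(u_{\varphi})$ is trivial — so by the same argument as above $\pi_{\mathfrak{m}'}$ is not supercuspidal. Consequently no generic member of $\Pi_{\varphi}(G)$ is supercuspidal, which is exactly the claim: a generic supercuspidal is necessarily ordinaire.

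**Main obstacle.** The delicate point is the passage from the $S$-group $\mathcal{S}_{\varphi}^{G}$ to the component group $A_{(H_{\varphi}^{G})^{\circ}}(u_{\varphi})$: one must check that the constraint "$\restriction{\widetilde{\varepsilon}}{A_{(H_{\varphi}^{G})^{\circ}}(u_{\varphi})}$ is a sum of Lusztig-cuspidal representations" genuinely excludes \emph{every} character of $\mathcal{S}_{\varphi}^{G}$ whose pullback restricts trivially there — equivalently, that the trivial local system on the nontrivial orbit $\mathcal{C}_{u_{\varphi}}$ is not cuspidal, which follows from Lusztig's classification (Table \ref{pairecusp}) because in a product of classical groups a cuspidal pair lives on a very special orbit carrying a very non-trivial local system, never the constant sheaf on a nonzero nilpotent. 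The other subtlety, which I would treat carefully but expect to be routine given the excerpt, is the bookkeeping showing $\iota(\pi_{\mathfrak{m}'})$ restricts trivially to $A_{(H_{\varphi}^{G})^{\circ}}(u_{\varphi})$ for \emph{all} $\mathfrak{m}'$ — this is precisely the content of Proposition \ref{restrictionomega} applied with $\phi = \varphi$, noting $\varphi \in \Phi(G)_{\bdd}$ since $\sigma$ is tempered.
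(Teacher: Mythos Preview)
Your proposal is correct and follows essentially the same approach as the paper's proof. Both arguments hinge on the same two facts: (i) the characters of $\mathcal{S}_{\varphi}^{G}$ parametrizing generic members of a tempered $L$-paquet are exactly the $\omega_f$, which by Proposition~\ref{restrictionomega} restrict trivially to $A_{(H_{\varphi}^{G})^{\circ}}(u_{\varphi})$, and (ii) by Lusztig's classification the trivial local system on a nontrivial unipotent orbit is never cuspidal; the paper simply packages this as a direct contradiction (assume $\sigma$ generic, so $\varepsilon=\omega_f$, hence $u_{\varphi}=1$) rather than iterating over Whittaker data as you do.
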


\begin{proof}
Supposons que $\sigma$ est générique et soit $(\varphi,\varepsilon)$ « le » paramètre de Langlands enrichi de $\sigma$. Puisque $\sigma$ est générique, alors $\varepsilon=\omega_{f}$ pour un certain $f \in F^{\times}/F^{\times 2}$. D'après la proposition \ref{restrictionomega}, la restriction de $\omega_{f}$ à $A_{(H_{\phi}^{G})^{\circ}}(u_{\varphi})$ est triviale. Par ailleurs, par hypothèse, $\varepsilon$ est cuspidale. D'après la classification des systèmes locaux cuspidaux de Lusztig (ou par la correspondance de Springer ordinaire) il s'ensuit que $u_{\varphi}=1$. Ainsi, $\sigma$ est ordinaire.
\end{proof}

\section{Comparaison des supports cuspidaux}

En supposant qu'on a la correspondance de Langlands pour les représentations supercuspidales, nous avons défini un paramétrage des représentations qui ne sont pas supercuspidales par les représentations irréductibles de $S$-groupe. \textit{A priori}, on ne sait pas que le paramétrage qu'on obtient et celui obtenu par les travaux d'Arthur coïncident. Dans cette section nous montrons que le support cuspidal qu'on a défini au théorème \ref{theoremesupportcuspidal} est le même que celui qui a été défini par M\oe glin et M\oe glin-Tadi{{\'c}} dans \cite{Moeglin:2002} et \cite{Moeglin:2002a} pour les séries discrètes. Pour cela, on calcule d'abord explicitement la correspondance de Springer généralisée pour les unipotents distingués des groupes symplectiques et spéciaux orthogonaux. Ensuite, on relie nos constructions à celles de \cite{Moeglin:2002} et \cite{Moeglin:2002a}.  Puis, en utilisant les résultats de M\oe glin ou de \cite{Xu:2015} qui montre que le paramétrage de M\oe glin coïncide avec celui obtenu par les travaux d'Arthur, on en déduit que le paramétrage qu'on propose est cohérent.

\subsection{Groupe symplectique}

\subsubsection{Description combinatoire de la correspondance de Springer généralisée}

On note $\N=\{0,1,\ldots\}$ l'ensemble des entiers naturels.\\ Soit $N \geqslant 2$ un entier pair. Notons $\widetilde{\Psi}_{N}$ l'ensemble des couples $(A,B)$ où $A$ (resp. $B$) est un sous-ensemble fini de $\N$ (resp. $\N^*$) vérifiants les conditions suivantes :
\begin{itemize}
\item pour tout $i \in \N$, $\{i,i+1\} \not \subset A$ et $\{i,i+1\} \not \subset B$ ;
\item $|A|+|B|$ est impair ;
\item $\sum_{a \in A} a + \sum_{b \in B} b = \frac{1}{2}N+\frac{1}{2}(|A|+|B|)(|A|+|B|-1)$.
\end{itemize}
Considérons la relation d'équivalence sur $\widetilde{\Psi}_N$ engendrée par $$(A,B) \sim (\{0\}\cup (A+2),\{1\}\cup (B+2)),$$ et notons $\Psi_N$ l'ensemble des classes d'équivalences. \\ Soient $(A,B), (A',B') \in \Psi_N$. On dit que $(A,B)$ et $(A',B')$ sont \emph{similaires} lorsque $$A\cup B=A' \cup B' \quad \text{et} \quad A\cap B=A' \cap B'.$$ Dans chaque classes de similitudes, il existe un unique élément $(A_{\varnothing},B_{\varnothing})$ dit \emph{distingué}, où $A_{\varnothing}=\{a_1,\ldots,a_{m+1}\}$ et $B_{\varnothing}=\{b_1,\ldots,b_m\}$ vérifiant $$a_1 \leqslant b_1 \leqslant a_2 \leqslant b_2 \leqslant \ldots \leqslant a_m \leqslant b_m \leqslant a_{m+1}.$$ Afin de décrire les éléments d'une classe de similitudes, nous fixons un élément distingué $(A_{\varnothing},B_{\varnothing}) \in \Psi_N$ et nous posons $C=A_{\varnothing} \Delta B_{\varnothing}$. On appelle intervalle de $C$ tout sous-ensemble de $C$ non vide $I$ de la forme $I=\llbracket i,j \rrbracket$ tel que $i-1 \not \in C, \,\, j+1 \not \in C$ et $i \neq 0$. Notons $\mathcal{I}$ l'ensemble des intervalles de $C$ et $H$ l'ensemble des éléments de $C$ qui ne sont dans aucun intervalle. Ainsi, $H=\varnothing$ ou $H=\llbracket 0, h \rrbracket$. Pour tout $\alpha \in \mathcal{P}(\mathcal{I})$, notons $$A_{\alpha}=\bigcup_{I \in \mathcal{I}-\alpha} (I \cap A_{\varnothing}) \cup \bigcup_{I \in \alpha} (I \cap B_{\varnothing}) \cup (H \cap A_{\varnothing}) \cup (A_{\varnothing} \cap B_{\varnothing}) \quad \text{et} \quad B_{\alpha}=\bigcup_{I \in \mathcal{I}-\alpha} (I \cap B_{\varnothing}) \cup \bigcup_{I \in \alpha} (I \cap A_{\varnothing}) \cup (H \cap B_{\varnothing}) \cup (A_{\varnothing} \cap B_{\varnothing}) .$$ Pour tout $\alpha \in \mathcal{P}(\mathcal{I})$, $(A_{\alpha},B_{\alpha})$ est similaire à $(A_{\varnothing},B_{\varnothing})$. Plus précisément, l'application $$\begin{array}[t]{ccc}
\mathcal{P}(\mathcal{I}) & \longrightarrow & \{ \text{classe de similitude de } (A_{\varnothing},B_{\varnothing})\} \\
\alpha  & \longmapsto & (A_{\alpha},B_{\alpha})
\end{array}$$ est une bijection. Par ailleurs, $\mathcal{P}(\mathcal{I})$ est un $\kF_2$-espace vectoriel pour la différence symétrique $\Delta$ avec pour base naturelle les intervalles formés d'un seul élément.\\

Soit $\mathbf{p}=(p_1,\ldots,p_{k})=(q_1^{r_{q_1}},\ldots,q_m^{r_{q_m}})$ une partition de $N$ avec $p_1 \leqslant \ldots \leqslant p_{k}$ et $u_{\mathbf{p}} \in \Sp_{N}(\C)$ un élément unipotent paramétré par $\mathbf{p}$. Rappelons qu'en notant $\Delta_{\mathbf{p}}=\{q \in \mathbf{p} \mid q \text{ pair et  }  r_{q} \geqslant 1 \}$, on a : $A_{\Sp_{N}(\C)}(u_{\mathbf{p}}) = \prod_{q \in \Delta_{\mathbf{p}}} \langle z_{q} \rangle \simeq (\Z/2\Z)^{|\Delta_{\mathbf{p}}|}$. Quitte à supposer $p_1$ nul, on peut supposer que $k=2m$ est pair. Pour tout $i \in \llbracket 1,k \rrbracket$, définissons $$p_i'=p_i+(i-1).$$ Ainsi $p_1'<p_2'<\ldots<p_{2m}'$. Dans cette suite, il y a exactement $m$ nombre pairs (resp. impairs) $2y_1 < 2 y_2 < \ldots < 2y_m$ (resp. $2y_1'+1<\ldots<2y_m'+1$). vérifiant $$0 \leqslant y_1+1 \leqslant y_1'+2\leqslant y_2+2 \leqslant y_2'+3\leqslant \ldots \leqslant y_m+m\leqslant y_m'+(m+1).$$ Définissons $$A_{\varnothing}=\{0,y_1'+2,y_2'+3,\ldots,y_m'+(m+1)\} \quad \text{et} \quad B_{\varnothing}=\{y_1+1,y_2+2,\ldots,y_m+m\}.$$ Alors $(A_{\varnothing},B_{\varnothing})$ est distingué. Par ailleurs, on a une bijection entre l'ensemble des intervalles $\mathcal{I}_{\mathbf{p}}$ de $C=A_{\varnothing} \Delta B_{\varnothing}$ et $\Delta_{\mathbf{p}}$ définie de la façon suivante. Ordonnons les éléments de $\mathcal{I}_{\mathbf{p}}$ de façon croissante $I_1,\ldots,I_f$ si bien que pour tout $i,j \in \llbracket 1,f \rrbracket, x \in I_i, y \in I_j$, $i\leqslant j  \Rightarrow x \leqslant y$. Soit $\Delta_{\mathbf{p}}=\{q_1,\ldots,q_{f'}\}$ une énumération croissante de $\Delta_{\mathbf{p}}$. On a alors $f=f'$ et on fait correspondre $I_r$ à $a_r$.  De plus, $|I_r|=r_{q_r}$.

\subsubsection{Calcul de la correspondance de Springer pour les unipotents distingués}\label{section}

Soit $u \in \Sp_{N}(\C)$ un élément unipotent distingué admettant pour partition $\mathbf{p}=(p_1,\ldots,p_k)$. Les entiers $p_i$ sont distincts deux à deux, ont pour multiplicités un et sont pairs. On a $\Delta_{\mathbf{p}}=\{p_{i}, i \in \llbracket 1, k \rrbracket\}$ et $A_{\Sp_{N}(\C)}(u) \simeq \prod_{i=1}^{k} \langle z_{p_{i}} \rangle$. Décrivons le $u$-symbole distingué associé à $\mathbf{p}$ et sa classe de similitude.\\
\begin{itemize}
\item Si $k=2m$ est pair, on a : $$p_1 \leqslant p_2 +1 \leqslant p_3 +2 \leqslant \ldots	\leqslant p_{2m-1}+2m-2\leqslant p_{2m}+2m-1.$$

$$2 \left( \frac{p_1}{2}\right) < 2 \left(\frac{p_3+2}{2}\right) < \ldots < 2 \left(\frac{p_{2m-1}+2m-2}{2}\right) \quad \text{et} \quad 2 \left(\frac{p_2}{2}\right)+1 < 2 \left(\frac{p_4+2}{2}\right)+1 < \ldots < 2 \left(\frac{p_{2m}+2m-2}{2}\right)+1$$

D'où $$A_{\varnothing}=\left\{ 0, \frac{p_2}{2}+2, \ldots, \frac{p_{2i}}{2}+2i	,\ldots,\frac{p_{2m}}{2} +2m \right\} \quad \text{et} \quad B_{\varnothing}=\left\{\frac{p_1}{2}+1,\ldots, \frac{p_{2i-1}}{2}+2i-1,\ldots,\frac{p_{2m-1}}{2}+2m-1\right\}$$

$$C=\left\{0,\frac{p_1}{2}+1,\frac{p_2}{2}+2, \ldots, \frac{p_i}{2}+i ,\ldots, \frac{p_{2m}}{2}+2m \right\}$$

Les intervalles de $C$ sont les singletons $I_{i}=\{\frac{p_i}{2}+i\}$ pour $i \in \llbracket 1,k \rrbracket$. De plus, $H=\{0\}$, $H \cap A_{\varnothing}=\{0\}$, $H \cap B_{\varnothing} = \varnothing$ et $A_{\varnothing} \cap B_{\varnothing} = \varnothing$.\\
\item Si $k=2m-1$ est impair, on a : $$0 \leqslant p_1+1 \leqslant p_2 +2 \leqslant p_3 +3 \leqslant \ldots	\leqslant p_{2m-1}+2m-2\leqslant p_{2m-1}+2m-1.$$

$$2 \frac{0}{2} < 2 \left( \frac{p_2+2}{2}\right) +1< 2 \left(\frac{p_4+4}{2}\right) < \ldots < 2 \left(\frac{p_{2m-2}+2m-2}{2}\right) \quad \text{et} \quad 2 \left(\frac{p_1}{2}\right)+1 < 2 \left(\frac{p_3+2}{2}\right)+1 < \ldots < 2 \left(\frac{p_{2m-1}+2m-2}{2}\right)+1$$

D'où $$A_{\varnothing}=\left\{ 0, \frac{p_1}{2}+2, \ldots, \frac{p_{2i-1}}{2}+2i	,\ldots,\frac{p_{2m-1}}{2} +2m \right\} \quad \text{et} \quad B_{\varnothing}=\left\{1,\frac{p_2}{2}+3,\ldots, \frac{p_{2i-2}}{2}+2i-1,\ldots,\frac{p_{2m-2}}{2}+2m-1\right\}$$
$$C=\left\{0,1,\frac{p_1}{2}+2,\frac{p_2}{2}+3, \ldots, \frac{p_i}{2}+i+1 ,\ldots, \frac{p_{2m-1}}{2}+2m \right\}$$

Les intervalles de $C$ sont les singletons $I_{i}=\{\frac{p_i}{2}+i+1\}$ pour $i \in \llbracket 1,k \rrbracket$. De plus, $H=\{0,1\}$, $H \cap A_{\varnothing}=\{0\}$, $H \cap B_{\varnothing} = \{1\}$ et $A_{\varnothing} \cap B_{\varnothing} = \varnothing$.
\end{itemize}

Soit $\eta \in \Irr(A_{\Sp_{N}(\C)}(u))$. Le caractère $\eta$ détermine une partie de $\mathcal{I}_{\mathbf{p}}$ définie par $\alpha_{\eta}=\{I_{p_{i}} \mid i \in \llbracket 1,r \rrbracket, \eta(z_{p_i}) \neq 1 \}$. 

\begin{prop}
Le $u$-symbole $(A_{\eta},B_{\eta})$ défini par $\eta$ est : \begin{itemize}
\item si $k$ est pair 
$$A_{\eta}=\{0\} \sqcup \bigsqcup_{\substack{i \in \llbracket 1,k \rrbracket \\ \eta(z_{p_i})=1 \\ i \,\, \text{pair} }} I_i  \sqcup \bigsqcup_{\substack{i \in \llbracket 1,k \rrbracket \\ \eta(z_{p_i})=-1 \\ i \,\, \text{impair} }} I_i    \quad \text{et} \quad B_{\eta}=\bigsqcup_{\substack{i \in \llbracket 1,k \rrbracket \\ \eta(z_{p_i})=1 \\ i \,\, \text{impair} }} I_i  \sqcup \bigsqcup_{\substack{i \in \llbracket 1,k \rrbracket \\ \eta(z_{p_i})=-1 \\ i \,\, \text{pair} }} I_i $$
\item si $k$ est impair 
$$A_{\eta}=\{0\} \sqcup \bigsqcup_{\substack{i \in \llbracket 1,k \rrbracket \\ \eta(z_{p_i})=1 \\ i \,\, \text{impair} }} I_i  \sqcup \bigsqcup_{\substack{i \in \llbracket 1,k \rrbracket \\ \eta(z_{p_i})=-1 \\ i \,\, \text{pair} }} I_i   \quad \text{et} \quad B_{\eta}=\{1\} \sqcup \bigsqcup_{\substack{i \in \llbracket 1,k \rrbracket \\ \eta(z_{p_i})=1 \\ i \,\, \text{pair} }} I_i  \sqcup \bigsqcup_{\substack{i \in \llbracket 1,k \rrbracket \\ \eta(z_{p_i})=-1 \\ i \,\, \text{impair} }} I_i  .$$
\end{itemize} 
\end{prop}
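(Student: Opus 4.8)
L'idée est de dérouler explicitement, étape par étape, la construction du support cuspidal (théorème \ref{supportcuspidalpartiel} et sa version pour les groupes classiques, théorème \ref{theoremesupportcuspidal}) dans le cas particulier où $H=\Sp_N(\C)$, $u$ est un unipotent distingué de partition $\mathbf{p}=(p_1,\ldots,p_k)$ (parts paires, distinctes, de multiplicité $1$) et $\eta\in\Irr(A_{\Sp_N(\C)}(u))$ est un caractère quelconque du groupe $A_{\Sp_N(\C)}(u)\simeq\prod_{i=1}^k\langle z_{p_i}\rangle$. Comme $u$ est distingué, le seul sous-groupe de Levi qui intervient dans la correspondance de Springer généralisée est le plus gros sous-groupe de Levi portant une paire cuspidale, autrement dit $(A_\eta,B_\eta)$ sera un $u$-symbole de la classe de similitude de $(A_\varnothing,B_\varnothing)$ décrite juste avant l'énoncé, et tout revient à identifier dans quelle partie $\alpha_\eta\subseteq\mathcal I_{\mathbf p}$ de l'espace vectoriel $\mathcal P(\mathcal I_{\mathbf p})$ envoie l'application $\eta\mapsto(A_\eta,B_\eta)$. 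On a déjà observé que $\eta$ détermine la partie $\alpha_\eta=\{I_{p_i}\mid \eta(z_{p_i})\neq 1\}$ ; il s'agit de vérifier que sous la bijection $\mathcal P(\mathcal I_{\mathbf p})\leftrightarrow\{\text{classe de similitude de }(A_\varnothing,B_\varnothing)\}$ c'est bien $\alpha_\eta$ qui est associé à $\eta$ via la correspondance de Springer généralisée de Lusztig.

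Concrètement, je procéderais ainsi. D'abord, rappeler (d'après \cite{Lusztig:1984}, \textsection 10--12, ou la reformulation combinatoire en termes de symboles de la section précédente) que pour le groupe symplectique la correspondance de Springer généralisée, restreinte au bloc de la paire cuspidale, est décrite par l'application $(\mathbf p,\alpha)\mapsto(A_\alpha,B_\alpha)$ où $\alpha=\alpha_\eta\in\mathcal P(\mathcal I_{\mathbf p})$ est lu sur $\eta$ ; la normalisation choisie (remarque \ref{remarquethm92} : on a tensorisé par le signe) fixe les valeurs extrêmes $\triv\mapsto(\mathcal C^{\min},\mathcal L^{\min})$ et $\sgn\mapsto(\mathcal C^{\max},\mathcal L^{\max})$, ce qui impose que $\eta=\triv$ (i.e. $\alpha_\eta=\varnothing$) donne bien l'élément distingué $(A_\varnothing,B_\varnothing)$, compatible avec la formule annoncée (tous les $I_i$ d'indice pair vont dans $A$, tous ceux d'indice impair dans $B$, plus $\{0\}$ resp. $\{0,1\}$ dans $A$ resp. $A$ et $B$ selon la parité de $k$). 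Ensuite, pour un $\eta$ général, il faut montrer que basculer la valeur de $\eta(z_{p_i})$ de $1$ à $-1$ revient exactement à échanger $I_i\cap A_\varnothing$ et $I_i\cap B_\varnothing$, c'est-à-dire à ajouter $I_i$ à $\alpha$ au sens de la différence symétrique ; ici les intervalles $I_i$ sont des singletons $\{\frac{p_i}{2}+i\}$ (resp. $\{\frac{p_i}{2}+i+1\}$), donc l'effet est simplement de déplacer l'élément $\frac{p_i}{2}+i$ d'un ensemble à l'autre. Cette compatibilité « génératrice par génératrice » suit de la structure de $\mathbb F_2$-espace vectoriel sur $\mathcal P(\mathcal I_{\mathbf p})$ et de la multiplicativité de la correspondance : il suffit de la vérifier sur un générateur $z_{p_i}$, ce qui est un calcul direct sur les $\mathfrak{sl}_2$-triplets / sur les symboles. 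Enfin, on réécrit $A_\eta=(H\cap A_\varnothing)\sqcup\bigsqcup_{I\in\mathcal I_{\mathbf p}\setminus\alpha_\eta}(I\cap A_\varnothing)\sqcup\bigsqcup_{I\in\alpha_\eta}(I\cap B_\varnothing)$ et on distingue selon la parité de $i$ (qui contrôle si $I_i\subseteq A_\varnothing$ ou $I_i\subseteq B_\varnothing$ quand $I_i\notin\alpha_\eta$) et selon la parité de $k$ (qui contrôle $H$ : $\{0\}$ si $k$ pair, $\{0,1\}$ si $k$ impair, avec $0\in A_\varnothing$ toujours et $1\in B_\varnothing$ dans le cas impair) pour obtenir exactement les deux formules de l'énoncé ; idem pour $B_\eta$ en échangeant les rôles.

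\textbf{Principale difficulté.} Le point délicat n'est pas le bilan combinatoire final (purement mécanique une fois la description par symboles admise), mais bien de fixer sans ambiguïté le dictionnaire entre le caractère $\eta$ de $A_{\Sp_N(\C)}(u)$ et la partie $\alpha_\eta\in\mathcal P(\mathcal I_{\mathbf p})$ intervenant dans la paramétrisation de Lusztig, et de contrôler précisément la normalisation (tensorisation par le signe, cf. remarque \ref{remarquethm92}, et choix des élements distingués dans chaque classe de similitude). Il faut en particulier s'assurer que la bijection entre $\mathcal I_{\mathbf p}$ et $\Delta_{\mathbf p}=\{p_1,\ldots,p_k\}$ décrite dans la sous-section précédente (intervalle $I_r$ $\leftrightarrow$ part $q_r$, ici $I_i\leftrightarrow p_i$) est bien celle compatible avec l'action des générateurs $z_{p_i}$ sur le groupe des composantes, ce qui demande de revenir aux conventions de \cite{Lusztig:1984} sur les correspondances de Springer pour $\Sp_N$. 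Une fois ce cœur établi, la distinction de cas $k$ pair / $k$ impair et le dépouillement des unions disjointes achèvent la preuve.
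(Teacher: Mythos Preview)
Your final combinatorial step is exactly what the paper does, and in fact is the entire content of the proof: the proposition in the paper is stated without a separate demonstration because it is an immediate unpacking of the formula
\[
A_{\alpha}=\bigcup_{I \in \mathcal{I}-\alpha} (I \cap A_{\varnothing}) \cup \bigcup_{I \in \alpha} (I \cap B_{\varnothing}) \cup (H \cap A_{\varnothing}) \cup (A_{\varnothing} \cap B_{\varnothing})
\]
(and the analogous one for $B_\alpha$) with $\alpha=\alpha_\eta$, using the explicit description of $A_\varnothing$, $B_\varnothing$, $H$ and the singleton intervals $I_i$ computed in the paragraphs just before the statement. For $k$ even one has $I_i\subset A_\varnothing$ iff $i$ is even, $H\cap A_\varnothing=\{0\}$, $H\cap B_\varnothing=\varnothing$; for $k$ odd one has $I_i\subset A_\varnothing$ iff $i$ is odd, $H\cap A_\varnothing=\{0\}$, $H\cap B_\varnothing=\{1\}$. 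Substituting gives the stated formulas directly.

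Where you diverge from the paper is in what you flag as the ``principale difficult\'e''. You propose to \emph{verify} that the dictionary $\eta\leftrightarrow\alpha_\eta$ is the one compatible with Lusztig's Springer correspondence, checking it on generators $z_{p_i}$ and invoking the normalisation of remark~\ref{remarquethm92}. In the paper this is not proved but \emph{taken as input}: the sentence immediately preceding the proposition defines $\alpha_\eta=\{I_{p_i}\mid \eta(z_{p_i})\neq 1\}$, and this identification is part of Lusztig's combinatorial description of the generalized Springer correspondence for $\Sp_N$ recalled in \S5.1.1 (with reference to \cite[\S10--12]{Lusztig:1984}). So your plan is correct but does more work than the paper intends; the proposition is purely a bookkeeping lemma once Lusztig's parametrisation by symbols is admitted.
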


Soit $j \in \llbracket 1 , k-1 \rrbracket$ et supposons que $\eta(z_{p_{j}})=\eta(z_{p_{j+1}})$. Posons $N'=N-p_{j}-p_{j+1}$, $(p_1',\ldots,p_{k-2}')=(p_1,\ldots,p_{j-1},\widehat{p_{j}},\widehat{p_{j+1}},p_{j+1},\ldots,p_{k})$ (partition où l'on a retiré $p_j$ et $p_{j+1}$). Remarquons que l'indice correspondant à une part $p$ dans $(p_1',\ldots,p_{k-2}')$ a la même parité que que l'indice correspondant à la part $p$ dans $(p_1,\ldots,p_{k})$.

\begin{prop}\phantomsection\label{propdefautusymb}
Soit $u' \in \Sp_{N'}(\C)$ un élément unipotent distingué admettant pour partition $\mathbf{p}'=(p_1',\ldots,p_{k-2}')$. Le groupe $A_{\Sp_{N'}(\C)}(u')$ est un sous-groupe de $A_{\Sp_{N}(\C)}(u)$ et on considère $\eta'$ la restriction de $\eta$ à $A_{\Sp_{N'}(\C)}(u')$. Le $u$-symbole $(A_{\eta'},B_{\eta'})$ défini par $\eta'$ est :
\begin{itemize}
\item si ($j$ est pair et $\eta(z_{p_j})=1$) ou ($j$ est impair et $\eta(z_{p_j})=-1$) alors : $$A_{\eta'}=
\left\{ \begin{array}{ll}
A_{\eta}-I_{j}  & \text{si $k$ pair} \\
A_{\eta}-I_{j+1}  & \text{si $k$ impair} 
\end{array} \right.  \quad \text{et} \quad
B_{\eta'}=
\left\{ \begin{array}{ll}
B_{\eta}-I_{j+1}  & \text{si $k$ pair} \\
B_{\eta}-I_{j}  & \text{si $k$ impair} 
\end{array} \right.$$
\item si ($j$ est impair et $\eta(z_{p_j})=1$) ou ($j$ est pair et $\eta(z_{p_j})=-1$) alors : 
$$A_{\eta'}=
\left\{ \begin{array}{ll}
A_{\eta}-I_{j+1}  & \text{si $k$ pair} \\
 A_{\eta}-I_{j}  &\text{si $k$ impair} 
\end{array} \right.  \quad \text{et} \quad
B_{\eta'}=
\left\{ \begin{array}{ll}
B_{\eta}-I_{j}  & \text{si $k$ pair} \\
B_{\eta}-I_{j+1} & \text{si $k$ impair} 
\end{array} \right.$$
\end{itemize}
En particulier, les $u$-symboles $(A_{\eta},B_{\eta})$ et $(A_{\eta'},B_{\eta'})$ ont même \emph{défaut} $$d'_{\eta}=|A_{\eta}|-|B_{\eta}|=|A_{\eta'}|-|B_{\eta'}|=d'_{\eta'}.$$
\end{prop}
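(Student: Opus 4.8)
The statement (Proposition \ref{propdefautusymb}) is a purely combinatorial assertion about how the $u$-symbol changes when one removes a pair of equal parts $p_j, p_{j+1}$ from the partition $\mathbf{p}$, together with the claim that the \emph{defect} $d'_\eta = |A_\eta|-|B_\eta|$ is unchanged. The plan is to unwind the explicit formulas already established. First I would recall, from the previous proposition, the description of the two-row array $(A_\eta, B_\eta)$: each intervalle $I_i$ (a singleton $\{\tfrac{p_i}{2}+i\}$ or $\{\tfrac{p_i}{2}+i+1\}$ according to the parity of $k$) is routed into the top row $A_\eta$ or the bottom row $B_\eta$ depending jointly on the parity of $i$ and on the sign $\eta(z_{p_i})$, with the constant "head" elements $\{0\}$ (and $\{1\}$ if $k$ is odd) and the common part $A_\varnothing \cap B_\varnothing = \varnothing$ sitting in fixed positions. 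The key point is that this routing of $I_i$ depends only on $i$ and $\eta(z_{p_i})$, \emph{not} on the other parts.

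**Key steps.** (1) Pass from $\mathbf{p}$ to $\mathbf{p}' = (p_1,\dots,\widehat{p_j},\widehat{p_{j+1}},\dots,p_k)$. As the statement already observes, deleting the two parts $p_j$ and $p_{j+1}$ shifts the index of each remaining part $p$ by $0$ if it precedes $p_j$ and by $-2$ if it follows $p_{j+1}$; in particular the \emph{parity} of the index of every surviving part is preserved. Hence, for a surviving part, the top/bottom routing rule "is $i$ even or odd, is $\eta(z_{p_i}) = \pm1$" produces exactly the same row in $\mathbf{p}'$ as in $\mathbf{p}$. This shows $A_{\eta'}$ and $B_{\eta'}$ are obtained from $A_\eta, B_\eta$ by deleting precisely the two intervalles $I_j$ and $I_{j+1}$, one from each row. (2) Determine which row loses $I_j$ and which loses $I_{j+1}$. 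Since $\eta(z_{p_j}) = \eta(z_{p_{j+1}})$ and $j, j+1$ have opposite parities, the routing rule sends $I_j$ and $I_{j+1}$ to opposite rows; a direct case split on the parity of $j$ and the sign $\eta(z_{p_j})=\pm1$, reading off the formula of the previous proposition, gives exactly the four cases displayed in the statement (with the extra swap "$k$ pair / $k$ impair" coming from the fact that for odd $k$ the intervalle attached to the $i$-th part is $\{\tfrac{p_i}{2}+i+1\}$, shifting the correspondence by one). (3) The defect: since exactly one element is removed from $A_\eta$ and exactly one from $B_\eta$, we get $|A_{\eta'}| = |A_\eta|-1$ and $|B_{\eta'}| = |B_\eta|-1$, so $d'_{\eta'} = |A_{\eta'}|-|B_{\eta'}| = |A_\eta|-|B_\eta| = d'_\eta$. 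One also checks that $(A_{\eta'},B_{\eta'})$ is a legitimate element of $\widetilde{\Psi}_{N'}$ (the non-consecutivity condition is inherited; the cardinality-sum condition follows from $N' = N - p_j - p_{j+1}$ and the two cardinalities dropping by one each, using that $|A_\eta|+|B_\eta|$ drops by $2$).

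**Main obstacle.** There is no conceptual difficulty here; the only thing requiring care is the bookkeeping of the case distinctions — keeping track simultaneously of (a) the parity of $k$, which shifts the intervalle-to-part correspondence, (b) the parity of the deleted index $j$, and (c) the common sign $\eta(z_{p_j}) = \eta(z_{p_{j+1}})$ — and verifying that in each of the resulting cases the element removed from the top row is $I_j$ or $I_{j+1}$ as claimed. I would organize this as a single table indexed by these parameters, citing the formula of the previous proposition in each box, so that the reader can check all cases at once. The defect invariance then drops out for free, since in every case one intervalle leaves $A_\eta$ and one leaves $B_\eta$.
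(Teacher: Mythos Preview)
Your proposal is correct and is exactly the natural argument; the paper itself states this proposition without proof, so there is nothing to compare against. Your key observation --- that removing two consecutive parts shifts later indices by $-2$ and hence preserves their parity, so the routing rule of the preceding proposition sends each surviving $I_i$ to the same row, while $I_j$ and $I_{j+1}$ (having opposite parities but equal $\eta$-values) go to opposite rows --- is precisely what is needed, and the defect invariance follows immediately.

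One small point worth making explicit in your write-up: the equality ``$A_{\eta'}=A_\eta - I_j$'' is to be read at the level of which \emph{indices} contribute to the top versus bottom row, not as a literal equality of subsets of $\N$ (indeed the numerical value of the interval attached to a part $p_\ell$ with $\ell>j+1$ shifts by $-2$ in passing to $\mathbf{p}'$). This is harmless --- the cardinalities $|A_{\eta'}|,|B_{\eta'}|$ and hence the defect are unaffected --- but you should say it so that the reader is not confused by the abuse of notation.
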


\begin{prop}\phantomsection\label{defautsum}
Le défaut du $u$-symbole $(A_{\eta},B_{\eta})$ est $$d_{\eta}'=\left\{ \begin{array}{ll}
\displaystyle 1+\sum_{i=1}^{k} (-1)^{i} \eta(z_{p_i})  & \text{si $k$ pair} \\
\displaystyle \sum_{i=1}^{k} (-1)^{i+1} \eta(z_{p_i}) & \text{si $k$ impair} 
\end{array} \right.
.$$
\end{prop}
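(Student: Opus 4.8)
The plan is to obtain the formula by reading the cardinalities $|A_\eta|$ and $|B_\eta|$ directly off the explicit description of the $u$-symbol $(A_\eta,B_\eta)$ furnished by the preceding proposition, and then converting the count of those indices $i$ with $\eta(z_{p_i})=1$ against those with $\eta(z_{p_i})=-1$ into a single signed sum. The starting observation is that, $u$ being distinguished, every interval $I_i$ of $C=A_\varnothing\Delta B_\varnothing$ is a singleton --- this is exactly the content of the computation carried out just above, where $I_i=\{\frac{p_i}{2}+i\}$ when $k$ is even and $I_i=\{\frac{p_i}{2}+i+1\}$ when $k$ is odd --- and that $A_\varnothing\cap B_\varnothing=\varnothing$. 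Consequently $A_\eta$ (resp. $B_\eta$) is a disjoint union of singleton intervals together with the fixed part $H\cap A_\varnothing=\{0\}$ (resp. $H\cap B_\varnothing$, which is empty when $k$ is even and equal to $\{1\}$ when $k$ is odd), so that $|A_\eta|$ equals $1$ plus the number of intervals $I_i$ contained in $A_\eta$, while $|B_\eta|$ equals the number of intervals $I_i$ contained in $B_\eta$, augmented by $1$ exactly when $k$ is odd.

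I would then substitute the membership rules of the preceding proposition. For $k$ even this gives
\[
|A_\eta|=1+\#\{ i \text{ pair} : \eta(z_{p_i})=1 \}+\#\{ i \text{ impair} : \eta(z_{p_i})=-1 \},\qquad
|B_\eta|=\#\{ i \text{ impair} : \eta(z_{p_i})=1 \}+\#\{ i \text{ pair} : \eta(z_{p_i})=-1 \},
\]
and for $k$ odd the analogous identities with the roles of odd and even indices interchanged and an extra $+1$ added to $|B_\eta|$. Forming the difference and using that $\#\{i\in S:\eta(z_{p_i})=1\}-\#\{i\in S:\eta(z_{p_i})=-1\}=\sum_{i\in S}\eta(z_{p_i})$ for every subset $S\subseteq\llbracket 1,k\rrbracket$ (since each $\eta(z_{p_i})\in\{\pm 1\}$), the even case simplifies to $d_\eta'=|A_\eta|-|B_\eta|=1+\sum_{i\text{ pair}}\eta(z_{p_i})-\sum_{i\text{ impair}}\eta(z_{p_i})=1+\sum_{i=1}^k(-1)^i\eta(z_{p_i})$, and the odd case to $d_\eta'=\sum_{i\text{ impair}}\eta(z_{p_i})-\sum_{i\text{ pair}}\eta(z_{p_i})=\sum_{i=1}^k(-1)^{i+1}\eta(z_{p_i})$, which is precisely the asserted formula.

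There is no genuinely difficult step: the argument is pure bookkeeping, and the only points requiring care are keeping track of whether the fixed elements $0$ and $1$ of $H$ belong to $A_\varnothing$ or to $B_\varnothing$ (this depends on the parity of $k$) and matching the parity of each index $i$ with the side on which the singleton $I_i$ lies (both read off from the computation above). As a consistency check one may specialise to the trivial character, where $\alpha_\eta=\varnothing$ and $(A_\eta,B_\eta)=(A_\varnothing,B_\varnothing)$: then $\sum_{i=1}^k(-1)^i=0$ when $k$ is even and $\sum_{i=1}^k(-1)^{i+1}=1$ when $k$ is odd, so $d_\varnothing'=|A_\varnothing|-|B_\varnothing|=1$ in both cases, in agreement with the defining inequalities of a distinguished symbol. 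Alternatively one could proceed by induction on $k$ using Proposition~\ref{propdefautusymb}, which deletes a pair of parts $p_j,p_{j+1}$ with $\eta(z_{p_j})=\eta(z_{p_{j+1}})$ without altering the defect nor the parities of the indices of the remaining parts; but a character need not possess two consecutive equal values, so this reduction does not always apply directly, and the direct count above is the more economical route.
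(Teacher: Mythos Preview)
Your proposal is correct and follows essentially the same approach as the paper: both read off $|A_\eta|$ and $|B_\eta|$ directly from the explicit description of $(A_\eta,B_\eta)$ in the preceding proposition and convert the counts into a signed sum. The only cosmetic difference is that the paper packages the membership criterion as the single condition $I_i\subset A_\eta \Leftrightarrow (-1)^{i}\eta(z_{p_i})=1$ (respectively $(-1)^{i+1}\eta(z_{p_i})=1$ when $k$ is odd), whereas you keep the parity of $i$ and the value of $\eta(z_{p_i})$ separate before recombining; the arithmetic is identical.
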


\begin{proof}
Si $k$ est pair remarquons pour tout $i \in \llbracket 1, k \rrbracket$, $I_{i} \subset A_{\eta} \Leftrightarrow (-1)^{i} \eta(z_{p_i})=1$ et $I_{i} \subset B_{\eta} \Leftrightarrow (-1)^{i} \eta(z_{p_i})=-1$. Il s'ensuit 

$$|A_{\eta}|=1+\sum_{\substack{i \in \llbracket 1,k \rrbracket \\ I_{i} \subset A_{\eta}}} (-1)^{i}\eta(z_{p_i}), \quad  | B_{\eta} | =-\sum_{\substack{i \in \llbracket 1,k \rrbracket \\ I_{i} \subset B_{\eta}}} (-1)^{i}\eta(z_{p_i}) \quad \text{et} \quad d_{\eta}'=1+\sum_{i=1}^{k} (-1)^{i} \eta(z_{p_i}).$$
Si $k$ est pair remarquons pour tout $i \in \llbracket 1, k \rrbracket$, $I_{i} \subset A_{\eta} \Leftrightarrow (-1)^{i+1} \eta(z_{p_i})=1$ et $I_{i} \subset B_{\eta} \Leftrightarrow (-1)^{i+1} \eta(z_{p_i})=-1$. Il s'ensuit 

$$|A_{\eta}|=1+\sum_{\substack{i \in \llbracket 1,k \rrbracket \\ I_{i} \subset A_{\eta}}} (-1)^{i+1}\eta(z_{p_i}), \quad | B_{\eta} | =1-\sum_{\substack{i \in \llbracket 1,k \rrbracket \\  I_{i} \subset B_{\eta}}} (-1)^{i+1}\eta(z_{p_i}) \quad \text{et} \quad d_{\eta}'=\sum_{i=1}^{k} (-1)^{i+1} \eta(z_{p_i}).$$
\end{proof}

Pour tout entier impair $d' \in \Z$, notons $\mathbf{p}_{d'}^{\S}$ la partition de $d'(d'-1)$ définie par $$\mathbf{p}_{d'}^{\S}=\left\{ \begin{array}{ll}
(2d'-2,2d-4,\ldots,4,2) & \text{si $d' \geqslant 1 $} \\
(2|d'|,2|d'|-2,\ldots,4,2) & \text{si $d\leqslant -1 $ } 
\end{array} \right. .$$

Appelons procédé d'élimination le procédé décrit pour obtenir $(u',\eta')$ à partir de $(u,\eta)$. Puisque la longueur de la partition associée à $u'$ est strictement plus petite que celle associée à $u$, il est clair qu'en appliquant autant que faire se peut le procédé d'élimination à partir de $(u,\eta)$ on obtiendra une partition $\widetilde{\mathbf{p}}=(\widetilde{p}_1,\ldots,\widetilde{p}_{\widetilde{k}})$ de $\widetilde{N}=\widetilde{p}_1+\ldots+\widetilde{p}_{\widetilde{k}}$ et un couple $(\widetilde{u},\widetilde{\eta})$ tel que pour tout $i \in \llbracket 1, \widetilde{k}-1 \rrbracket, \,\, \widetilde{\eta}(z_{\widetilde{p}_i}) \neq \widetilde{\eta}(z_{\widetilde{p}_{i+1}})$. Le couple $(\widetilde{u},\widetilde{\eta})$ ne dépend pas du choix des parts à chaque étape et on a vu dans la proposition \ref{propdefautusymb} que les $u$-symboles associés à $(u,\eta)$ et $(u',\eta')$ ont le même défaut. Par conséquent $(u,\eta)$ et $(\widetilde{u},\widetilde{\eta})$ ont même défaut $d_{\eta}'$. D'après \cite[12.4]{Lusztig:1984}, la correspondance de Springer généralisée associe à $(u,\eta)$ (resp. $(\widetilde{u},\widetilde{\eta})$) : \begin{itemize}
\item le sous-groupe de Levi $(\C^{\times})^{\frac{N-d_{\eta}'(d_{\eta}'-1)}{2}} \times \Sp_{d_{\eta}'(d_{\eta}'-1)}(\C)$ (resp. $(\C^{\times})^{\frac{\widetilde{N}-d_{\eta}'(d_{\eta}'-1)}{2}} \times \Sp_{d_{\eta}'(d_{\eta}'-1)}(\C)$) ; 
\item l'orbite unipotente correspondante à la partition $(1^{\frac{N-d_{\eta}'(d_{\eta}'-1)}{2}}) \times \mathbf{p}_{d_{\eta}'}^{\S}$ (resp. $(1^{\frac{\widetilde{N}-d_{\eta}'(d_{\eta}'-1)}{2}}) \times \mathbf{p}_{d_{\eta}'}^{\S}$) ;
\item la représentation irréductible cuspidale $\varepsilon^{\S}_{d_{\eta}'(d_{\eta}'-1)}$ (resp. $\varepsilon^{\S}_{d_{\eta}'(d_{\eta}'-1)}$).
\end{itemize} 

\begin{prop}\phantomsection\label{denfonctiondedprime}
Soit $d_{\eta} \in \N$ l'entier tel que $2d_{\eta}$ soit la plus grande part de $\mathbb{p}_{d_{\eta}'}^{\S}$. On a : 
$$d_{\eta}=\left\{ \begin{array}{ll}
\widetilde{k}-1 & \text{si $\widetilde{\eta}(z_{\widetilde{p}_1})=1$} \\
\widetilde{k} & \text{si $\widetilde{\eta}(z_{\widetilde{p}_1})=-1$} 
\end{array} \right.$$
\end{prop}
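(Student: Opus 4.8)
The plan is to reduce everything to an explicit evaluation of the defect $d'_\eta$ and then read off $d_\eta$ from the shape of $\mathbf{p}_{d'_\eta}^{\S}$. First I would invoke Proposition~\ref{propdefautusymb}, which says the elimination procedure does not change the defect of the $u$-symbol; hence $d'_\eta=d'_{\widetilde\eta}$ and it is enough to work with the reduced pair $(\widetilde u,\widetilde\eta)$. By construction of the elimination procedure, this pair satisfies $\widetilde\eta(z_{\widetilde p_i})\neq\widetilde\eta(z_{\widetilde p_{i+1}})$ for all $i\in\llbracket 1,\widetilde k-1\rrbracket$, so the signs strictly alternate, i.e. $\widetilde\eta(z_{\widetilde p_i})=\widetilde\eta(z_{\widetilde p_1})(-1)^{i-1}$ for $i\in\llbracket 1,\widetilde k\rrbracket$.

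Next I would substitute this into the formula of Proposition~\ref{defautsum}, splitting into the four cases given by the parity of $\widetilde k$ and the value $\widetilde\eta(z_{\widetilde p_1})\in\{1,-1\}$. Each sum telescopes to a constant; for instance, when $\widetilde\eta(z_{\widetilde p_1})=1$ and $\widetilde k$ is even, $\sum_{i=1}^{\widetilde k}(-1)^i\widetilde\eta(z_{\widetilde p_i})=\sum_{i=1}^{\widetilde k}(-1)^{2i-1}=-\widetilde k$, so $d'_{\widetilde\eta}=1-\widetilde k$. Carrying out the other three cases the same way yields $d'_{\widetilde\eta}=\widetilde k$ when $(\widetilde\eta(z_{\widetilde p_1}),\widetilde k)=(1,\text{odd})$, $d'_{\widetilde\eta}=1+\widetilde k$ when it is $(-1,\text{even})$, and $d'_{\widetilde\eta}=-\widetilde k$ when it is $(-1,\text{odd})$. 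In particular $d'_{\widetilde\eta}$ is always odd, consistently with the fact that $|A_\eta|+|B_\eta|$ is odd (so $d'_\eta\neq 0$).

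Finally I would feed these four values into the definition of $\mathbf{p}_{d'}^{\S}$: its largest part is $2(d'-1)$ when $d'\geqslant 1$ and $2|d'|$ when $d'\leqslant -1$, so $d_\eta=d'_\eta-1$ in the first regime and $d_\eta=-d'_\eta$ in the second. Checking the sign of $d'_{\widetilde\eta}$ in each of the four cases (for $\widetilde\eta(z_{\widetilde p_1})=1$ one has $1-\widetilde k\leqslant-1$ or $\widetilde k\geqslant 1$, for $\widetilde\eta(z_{\widetilde p_1})=-1$ one has $1+\widetilde k\geqslant 1$ or $-\widetilde k\leqslant-1$) and substituting, one obtains $d_\eta=\widetilde k-1$ in both cases with $\widetilde\eta(z_{\widetilde p_1})=1$ and $d_\eta=\widetilde k$ in both cases with $\widetilde\eta(z_{\widetilde p_1})=-1$, which is the assertion. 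There is no genuine obstacle here: the argument is a bounded case analysis, and the only things to watch are keeping track of which branch of the definition of $\mathbf{p}_{d'}^{\S}$ is active (equivalently, the sign of $d'_{\widetilde\eta}$) and the degenerate small value $\widetilde k=1$, where $\mathbf{p}_{d'}^{\S}$ is empty or has a single part — which is checked directly.
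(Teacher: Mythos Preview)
Your proposal is correct and follows essentially the same approach as the paper: apply Proposition~\ref{defautsum} to the alternating pair $(\widetilde u,\widetilde\eta)$ (using Proposition~\ref{propdefautusymb} to reduce to it), obtain the four values of $d'_{\widetilde\eta}$ according to the parity of $\widetilde k$ and the sign $\widetilde\eta(z_{\widetilde p_1})$, and then read off $d_\eta$ from the definition of $\mathbf{p}_{d'}^{\S}$ via $d_\eta=d'_\eta-1$ if $d'_\eta\geqslant 1$ and $d_\eta=-d'_\eta$ if $d'_\eta\leqslant -1$. Your write-up is in fact slightly more careful than the paper's (you make explicit the invocation of Proposition~\ref{propdefautusymb} and the check that $d'_{\widetilde\eta}$ lands in the correct branch in each case).
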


\begin{proof}
D'après la proposition\ref{defautsum}, on voit que : 
$$\text{si $k$ est pair :} \,\, d_{\eta}'=\left\{ \begin{array}{ll}
1-\widetilde{k} & \text{si $\widetilde{\eta}(z_{\widetilde{p}_1})=1$} \\
1+\widetilde{k} & \text{si $\widetilde{\eta}(z_{\widetilde{p}_1})=-1$} 
\end{array} \right. ,  \quad 
\text{si $k$ est impair :}\,\; d_{\eta}'=\left\{ \begin{array}{ll}
\widetilde{k} & \text{si $\widetilde{\eta}(z_{\widetilde{p}_1})=1$} \\
-\widetilde{k} & \text{si $\widetilde{\eta}(z_{\widetilde{p}_1})=-1$} 
\end{array} \right. .$$
Enfin, si $d_{\eta}' \geqslant 1$ alors $d_{\eta}=d_{\eta}'-1$ et si $d_{\eta}' \leqslant 1$ alors $d_{\eta}=-d_{\eta}'$. 
\end{proof}

\subsection{Groupe orthogonal}

\subsubsection{Description combinatoire de la correspondance de Springer généralisée}

Soit $N \geqslant 3$ un entier. Notons $\widetilde{\Psi}_{N}'$ l'ensemble des ensembles $\{A,B\}$ où $A$ et $B$ sont des sous-ensembles finis de $\N$  vérifiants les conditions suivantes :
\begin{itemize}
\item pour tout $i \in \N$, $\{i,i+1\} \not \subset A$ et $\{i,i+1\} \not \subset B$ ;
\item $\sum_{a \in A} a + \sum_{b \in B} b = \frac{1}{2}N+\frac{1}{2}((|A|+|B|-1)^{2}-1)$.
\end{itemize}
Considérons la relation d'équivalence sur $\widetilde{\Psi}_N$ engendrée par $$\{A,B\} \sim \{\{0\}\cup (A+2),\{0\}\cup (B+2)\},$$ et notons $\Psi_N'$ l'ensemble des classes d'équivalences. \\ Un élément de $\Psi_N'$ de la forme $\{A,A\}$ sera dit dégénéré; les autres éléments seront dit non-dégénérés. Soient $\{A,B\}, \{A',B'\} \in \Psi_N'$. On dit que $\{A,B\}$ et $\{A',B'\}$ sont \emph{similaires} lorsque $$A\cup B=A' \cup B' \quad \text{et} \quad A\cap B=A' \cap B'.$$ Dans chaque classes de similitudes, il existe un unique élément $\{A_{\varnothing},B_{\varnothing}\}$ dit \emph{distingué}, où  $A_{\varnothing}=\{a_1,\ldots,a_{m'}\}$ et $B_{\varnothing}=\{b_1,\ldots,b_m\}$ avec si $N$ est impair $m'=m+1$ , si $N$ est pair $m'=m$, vérifiant $$a_1 \leqslant b_1 \leqslant a_2 \leqslant b_2 \leqslant \ldots \leqslant a_m \leqslant b_m \; \text{et} \; b_m \leqslant a_{m+1} \; \text{si $N$ est impair}.$$ Afin de décrire les éléments d'une classe de similitudes, nous fixons un élément distingué $\{A_{\varnothing},B_{\varnothing}\} \in \Psi_N'$ et nous posons $C=A_{\varnothing} \Delta B_{\varnothing}$. On appelle intervalle de $C$ tout sous-ensemble de $C$ non vide $I$ de la forme $I=\llbracket i,j \rrbracket$ tel que $i-1 \not \in C, \,\, j+1 \not \in C$. Notons $\mathcal{I}$ l'ensemble des intervalles de $C$ et $H$ l'ensemble des éléments de $C$ qui ne sont dans aucun intervalle. Ainsi, $H=\varnothing$. Pour tout $\alpha \in \mathcal{P}(\mathcal{I})$, notons $$A_{\alpha}=\bigcup_{I \in \mathcal{I}-\alpha} (I \cap A_{\varnothing}) \cup \bigcup_{I \in \alpha} (I \cap B_{\varnothing}) \cup (H \cap A_{\varnothing}) \cup (A_{\varnothing} \cap B_{\varnothing}) \quad \text{et} \quad B_{\alpha}=\bigcup_{I \in \mathcal{I}-\alpha} (I \cap B_{\varnothing}) \cup \bigcup_{I \in \alpha} (I \cap A_{\varnothing}) \cup (H \cap A_{\varnothing}) \cup (A_{\varnothing} \cap B_{\varnothing}) .$$ Pour tout $\alpha \in \mathcal{P}(\mathcal{I})$, $\{A_{\alpha},B_{\alpha}\}$ est similaire à $\{A_{\varnothing},B_{\varnothing}\}$. Plus précisément, l'application $$\begin{array}[t]{ccc}
\mathcal{P}(\mathcal{I}) & \longrightarrow & \{ \text{classe de similitude de } \{A_{\varnothing},B_{\varnothing} \} \} \\
\alpha  & \longmapsto & \{A_{\alpha},B_{\alpha} \}
\end{array}$$ est une bijection. Par ailleurs, $\mathcal{P}(\mathcal{I})$ est un $\kF_2$-espace vectoriel pour la différence symétrique $\Delta$ avec pour base naturelle les intervalles formés d'un seul élément.\\

Soit $\mathbf{p}=(p_1,\ldots,p_{k})=(q_1^{r_{q_1}},\ldots,q_m^{r_{q_m}})$ une partition de $N$ avec $p_1 \leqslant \ldots \leqslant p_{k}$. On dira que $\mathbf{p}$ est dégénérée si toutes ses parts sont paires et de multiplicités paires. Dans ce cas ... Si $\mathbf{p}$ n'est pas dégénérée, soit $u_{\mathbf{p}} \in \Sp_{N}(\C)$ un élément unipotent paramétré par $\mathbf{p}$. Rappelons qu'en notant $\Delta_{\mathbf{p}}=\{q \in \mathbf{p} \mid q \text{ impair et  }  r_{q} \geqslant 1 \}$, on a : $A_{\SO_{N}(\C)}(u_{\mathbf{p}}) = \prod_{q \in \Delta_{\mathbf{p}}} \langle z_{q} \rangle / \langle \prod_{q \in \Delta_{\mathbf{p}}} z_q \rangle \simeq (\Z/2\Z)^{|\Delta_{\mathbf{p}}|-1}$. Quitte à supposer $p_1$ nul, on peut supposer que $k$ a même parité que $N$. Pour tout $i \in \llbracket 1,k \rrbracket$, définissons $$p_i'=p_i+(i-1).$$ Ainsi $p_1'<p_2'<\ldots<p_{k}'$. Dans cette suite, il y a exactement $\left[\frac{k}{2}\right]$ nombres pairs et $\left[\frac{k+1}{2}\right]$ nombres impairs $2y_1 < 2 y_2 < \ldots < 2y_{\left[\frac{k}{2}\right]}$ (resp. $2y_1'+1<\ldots<2y_{\left[\frac{k+1}{2}\right]}'+1$) vérifiant $$0 \leqslant y_1+1 \leqslant y_1'+2\leqslant y_2+2 \leqslant y_2'+3\leqslant \ldots \leqslant y_{\left[\frac{k}{2}\right]}'+\left[\frac{k}{2}\right]-1 \leqslant y_{\left[\frac{k}{2}\right]}+\left[\frac{k}{2}\right]-1 \; \text{et si $N$ est impair} \;  y_{\frac{1}{2}(k-1)}+\frac{1}{2}(k-1)-1 \leqslant y_{\frac{1}{2}(k+1)}'+\frac{1}{2}(k+1)-1 .$$ Définissons $$A_{\varnothing}=\{y_1',y_2'+1,\ldots,y_{\left[\frac{k+1}{2}\right]}'+\left[\frac{k+1}{2}\right]-1\} \quad \text{et} \quad B_{\varnothing}=\{y_1,y_2+1,\ldots,y_{\left[\frac{k}{2}\right]}+\left[\frac{k}{2}\right]-1\}.$$ Alors $(A_{\varnothing},B_{\varnothing})$ est distingué. Par ailleurs, on a une bijection entre $\Delta_{\mathbf{p}}$ et $\mathcal{I}_{\mathbf{p}}$ définie de la façon suivante. Ordonnons les éléments de $\mathcal{I}_{\mathbf{p}}$ de façon croissante $I_1,\ldots,I_f$ si bien que pour tout $i,j \in \llbracket 1,f \rrbracket, x \in I_i, y \in I_j$, $i\leqslant j  \Rightarrow x \leqslant y$. On a $\Delta_{\mathbf{p}}=\{q \in \mathbf{p} \mid q \text{ pair et  }  r_{q} \geqslant 1 \}$. Soit $\Delta_{\mathbf{p}}=\{q_1,\ldots,q_{f'}\}$ une énumération croissante de $\Delta_{\mathbf{p}}$. On a alors $f=f'$ et on correspondre $I_r$ à $a_r$.  De plus, $|I_r|=r_{q_r}$.

\subsubsection{Calcul de la correspondance de Springer pour les unipotents distingués}

Soit $u \in \SO_{N}(\C)$ un élément unipotent distingué admettant pour partition $\mathbf{p}=(p_1,\ldots,p_k)$. Les entiers $p_i$ sont distincts deux à deux, ont pour multiplicités un et sont impairs. On a $\Delta_{\mathbf{p}}=\{p_{i}, i \in \llbracket 1, k \rrbracket\}$,  $A_{\O_{N}(\C)}(u) \simeq \prod_{i=1}^{k} \langle z_{p_{i}} \rangle$ et $A_{\SO_{N}(\C)}(u) \simeq \prod_{i=1}^{k} \langle z_{p_{i}} \rangle / \langle z_{p_{1}} \ldots z_{p_{k}} \rangle$. La parité des $p_i$ implique que $k \equiv N \mod 2$. Décrivons le $u$-symbole distingué associé à $\mathbf{p}$ et sa classe de similitude.\\
On a : $$p_1 \leqslant p_2 +1 \leqslant p_3 +2 \leqslant \ldots	\leqslant p_{k-1}+k-2\leqslant p_{k}+k-1.$$

$$2 \left( \frac{p_2+1}{2}\right) < 2 \left(\frac{p_4+3}{2}\right) < \ldots < 2 \left(\frac{p_{[k/2]}+[k/2]-1}{2}\right) \quad \text{et} \quad 2 \left(\frac{p_{1}-1}{2}\right)+1 < 2 \left(\frac{p_{3}+1}{2}\right)+1 < \ldots < 2 \left(\frac{p_{\left[\frac{k+1}{2}\right]}+\left[\frac{k+1}{2}\right]-2}{2}\right)+1$$ D'où $$A_{\varnothing}=\left\{\frac{p_{1}-1}{2}, \ldots, \frac{p_{2i-1}-3}{2}+2i-1, \ldots, \frac{p_{2\left[\frac{k+1}{2}\right]-1}-3}{2} +2\left[\frac{k+1}{2}\right]-1 \right\} \quad \text{et} \quad B_{\varnothing}=\left\{ \frac{p_2+1}{2}, \ldots, \frac{p_{2i}-3}{2}+2i,\ldots, \frac{p_{2 \left[\frac{k}{2}\right]}-3}{2}+2\left[\frac{k}{2}\right] \right\}$$ $$C=\left\{\frac{p_1-1}{2}+1,\frac{p_2+1}{2}+2, \ldots, \frac{p_i-3}{2}+i-1 ,\ldots, \frac{p_{k}-3}{2}+k-1\right\}.$$ Les intervalles de $C$ sont les singletons $I_{i}=\{\frac{p_i-3}{2}+i-1 \}$  pour tout $i \in \llbracket 1,k \rrbracket$. De plus, $H=\varnothing$, $H \cap A_{\varnothing}=\varnothing$, $H \cap B_{\varnothing} = \varnothing$ et $A_{\varnothing} \cap B_{\varnothing} = \varnothing$.

Soit $\eta \in \Irr(A_{\SO_{N}(\C)}(u))$. On voit le caractère $\eta$ comme un caractère de $A_{\O_{N}(\C)}(u)$ trivial sur $\langle z_{p_{1}} \ldots z_{p_{k}} \rangle$. Le caractère $\eta$ détermine une partie de $\mathcal{I}_{\mathbf{p}}$ définie par $\alpha_{\eta}=\{I_{p_{i}} \mid i \in \llbracket 1,r \rrbracket, \eta(z_{p_i}) \neq 1 \}$. 

\begin{prop}
Le $u$-symbole $(A_{\eta},B_{\eta})$ défini par $\eta$ est :
$$A_{\eta}=\bigsqcup_{\substack{i \in \llbracket 1,k \rrbracket \\ \eta(z_{p_i})=1 \\ i \,\, \text{impair} }} I_i \sqcup \bigsqcup_{\substack{i \in \llbracket 1,k \rrbracket \\ \eta(z_{p_i})=-1 \\ i \,\, \text{pair} }} I_i \quad \text{et} \quad B_{\eta}=\bigsqcup_{\substack{i \in \llbracket 1,k \rrbracket \\ \eta(z_{p_i})=1 \\ i \,\, \text{pair} }} I_i \sqcup \bigsqcup_{\substack{i \in \llbracket 1,k \rrbracket \\ \eta(z_{p_i})=-1 \\ i \,\, \text{impair} }} I_i $$
\end{prop}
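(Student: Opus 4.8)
The proof is a direct unwinding of the combinatorial description of the similarity class of the distinguished symbol attached to $\mathbf{p}$, so the plan is essentially bookkeeping. By definition the $u$-symbol $(A_\eta,B_\eta)$ is the pair $(A_{\alpha_\eta},B_{\alpha_\eta})$ associated to the subset $\alpha_\eta\in\mathcal{P}(\mathcal{I}_{\mathbf{p}})$, where $\alpha_\eta=\{I_i\mid \eta(z_{p_i})\neq 1\}$; since $\eta$ is quadratic this is $\{I_i\mid \eta(z_{p_i})=-1\}$. It therefore suffices to evaluate
$$A_{\alpha_\eta}=\bigcup_{I\in\mathcal{I}-\alpha_\eta}(I\cap A_\varnothing)\cup\bigcup_{I\in\alpha_\eta}(I\cap B_\varnothing)\cup(H\cap A_\varnothing)\cup(A_\varnothing\cap B_\varnothing),$$
together with the analogous expression for $B_{\alpha_\eta}$, using the explicit distinguished symbol $(A_\varnothing,B_\varnothing)$ and the intervals $I_1<\dots<I_k$ computed just above.

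The key structural input is the observation that, $\mathbf{p}$ being a distinguished partition (odd, pairwise distinct parts), every interval $I_i$ of $C=A_\varnothing\Delta B_\varnothing$ is a singleton and, as recorded above, $H=\varnothing$ and $A_\varnothing\cap B_\varnothing=\varnothing$; in particular the last two terms in the displayed union vanish, in contrast with the symplectic case where $H$ contributed the extra elements $\{0\}$ (and $\{1\}$). Moreover $C=A_\varnothing\sqcup B_\varnothing$, and the defining inequalities $a_1\leqslant b_1\leqslant a_2\leqslant\dots$ together with the disjointness of $A_\varnothing$ and $B_\varnothing$ force strict alternation $a_1<b_1<a_2<b_2<\dots$; hence, listing the singletons of $C$ in increasing order, the $i$-th one lies in $A_\varnothing$ exactly when $i$ is odd and in $B_\varnothing$ exactly when $i$ is even. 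Since the bijection $\mathcal{I}_{\mathbf{p}}\leftrightarrow\{p_1,\dots,p_k\}$ is order-preserving and the parts satisfy $p_1<\dots<p_k$, this says $I_i\subseteq A_\varnothing\iff i$ odd and $I_i\subseteq B_\varnothing\iff i$ even; equivalently, in the construction of $A_\varnothing$ and $B_\varnothing$ the entries coming from odd-indexed parts populate $A_\varnothing$ and those from even-indexed parts populate $B_\varnothing$.

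Substituting this into the displayed formula finishes the proof. If $I_i\notin\alpha_\eta$, i.e. $\eta(z_{p_i})=1$, then $I_i\cap A_\varnothing=I_i$ when $i$ is odd and $=\varnothing$ when $i$ is even; if $I_i\in\alpha_\eta$, i.e. $\eta(z_{p_i})=-1$, then $I_i\cap B_\varnothing=I_i$ when $i$ is even and $=\varnothing$ when $i$ is odd. Collecting the non-empty contributions (which are pairwise disjoint, the $I_i$ being distinct singletons) gives exactly
$$A_\eta=\bigsqcup_{\substack{i\in\llbracket 1,k\rrbracket\\ \eta(z_{p_i})=1,\ i\ \text{impair}}}I_i\ \sqcup\ \bigsqcup_{\substack{i\in\llbracket 1,k\rrbracket\\ \eta(z_{p_i})=-1,\ i\ \text{pair}}}I_i,$$
and the same computation with the roles of $A_\varnothing$ and $B_\varnothing$ interchanged yields the stated formula for $B_\eta$. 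There is no genuine difficulty; the only points deserving attention are the matching of the interval indexing with the labelling of the generators $z_{p_i}$ of $A_{\O_N(\C)}(u)$, and the compatibility remark that $\eta$ being trivial on $z_{p_1}\cdots z_{p_k}$ forces the number of indices with $\eta(z_{p_i})=-1$ to be even, so that $(A_\eta,B_\eta)$ indeed meets the cardinality constraint in the definition of $\widetilde{\Psi}_N'$.
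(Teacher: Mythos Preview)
Your proof is correct and follows exactly the approach implicit in the paper: the paper itself states this proposition without a written proof, relying on the preceding explicit computation of $A_\varnothing$, $B_\varnothing$, the singleton intervals $I_i$, and the facts $H=\varnothing$, $A_\varnothing\cap B_\varnothing=\varnothing$; your argument simply unwinds the formula for $(A_{\alpha_\eta},B_{\alpha_\eta})$ against these data, which is what the reader is meant to do. The key observation you isolate, that $I_i\subseteq A_\varnothing$ if and only if $i$ is odd, is precisely what the paper's formulas for $A_\varnothing$ and $B_\varnothing$ encode (the $j$-th element of $C$ in increasing order being $\frac{p_j-3}{2}+j$, lying in $A_\varnothing$ or $B_\varnothing$ according to the parity of $j$).
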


Soit $j \in \llbracket 1 , k-1 \rrbracket$ et supposons que $\eta(z_{p_{j}})=\eta(z_{p_{j+1}})$. Posons $N'=N-p_{j}-p_{j+1}$, $(p_1',\ldots,p_{k-2}')=(p_1,\ldots,p_{j-1},\widehat{p_{j}},\widehat{p_{j+1}},p_{j+1},\ldots,p_{k})$ (partition où l'on a retiré $p_j$ et $p_{j+1}$). Remarquons que l'indice correspondant à une part $p$ dans $(p_1',\ldots,p_{k-2}')$ a la même parité que que l'indice correspondant à la part $p$ dans $(p_1,\ldots,p_{k})$.

\begin{prop}\phantomsection\label{propdefautusymb2}
Soit $u' \in \SO_{N'}(\C)$ un élément unipotent distingué admettant pour partition $\mathbf{p}'=(p_1',\ldots,p_{k-2}')$. Le groupe $A_{\O_{N'}(\C)}(u')$ est un sous-groupe de $A_{\O_{N}(\C)}(u)$ et on considère $\eta'$ la restriction de $\eta$ à $A_{\O_{N'}(\C)}(u')$. Le $u$-symbole $(A_{\eta'},B_{\eta'})$ défini par $\eta'$ est :
\begin{itemize}
\item si ($j$ est pair et $\eta(z_{p_j})=1$) ou ($j$ est impair et $\eta(z_{p_j})=-1$) alors : $$A_{\eta'}=A_{\eta}-I_{j+1}
  \quad \text{et} \quad
B_{\eta'}=B_{\eta}-I_{j}
$$

\item si ($j$ est impair et $\eta(z_{p_j})=1$) ou ($j$ est pair et $\eta(z_{p_j})=-1$) alors : 
$$A_{\eta'}=A_{\eta}-I_{j} \quad \text{et} \quad
B_{\eta'}=B_{\eta}-I_{j+1}
$$
\end{itemize}
On a : $|A_{\eta}|-|B_{\eta}|=|A_{\eta'}|-|B_{\eta'}|$. En particulier les $u$-symboles $\{ A_{\eta},B_{\eta} \}$ et $ \{ A_{\eta'},B_{\eta'} \}$ ont même défaut $$d_{\eta}'= | |A_{\eta}|-|B_{\eta}| |=| |A_{\eta'}|-|B_{\eta'}| |=d_{\eta'}'.$$
\end{prop}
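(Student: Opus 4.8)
The plan is to argue exactly as in the symplectic case (cf.\ Proposition~\ref{propdefautusymb}), relying on the explicit description of the $u$-symbol of $(u,\eta)$ furnished by the preceding proposition. Recall from it that the intervals of $C=A_\varnothing\Delta B_\varnothing$ are the singletons $I_i=\{\tfrac{p_i-3}{2}+i-1\}$, $i\in\llbracket 1,k\rrbracket$, that $H=\varnothing$, and that $I_i\subset A_\eta$ precisely when $(-1)^{i+1}\eta(z_{p_i})=1$ while $I_i\subset B_\eta$ precisely when $(-1)^{i+1}\eta(z_{p_i})=-1$. Note also that $I_j$ and $I_{j+1}$ are distinct (and even non-adjacent) singletons, since $p_j<p_{j+1}$ are distinct odd integers, so that ``removing $I_j$'' deletes exactly one element.

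First I would record how the combinatorics transforms under the elimination step. Since $\mathbf{p}'$ is obtained from $\mathbf{p}$ by deleting the two \emph{consecutive} indices $j$ and $j+1$, a part $p_m$ with $m\leqslant j-1$ keeps its index, a part $p_m$ with $m\geqslant j+2$ acquires the index $m-2$, and in both cases the \emph{parity} of the index is unchanged; moreover $\eta'(z_{p_m})=\eta(z_{p_m})$ because $\eta'$ is merely the restriction of $\eta$. By the membership criterion applied to $\mathbf{p}'$, each surviving interval therefore contributes to $A_{\eta'}$, resp.\ $B_{\eta'}$, in the same role it played for $A_\eta$, resp.\ $B_\eta$. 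The only real bookkeeping is that, in the primed sequence attached to $\mathbf{p}'$, the entry coming from a part $p_m$ with $m\geqslant j+2$ equals $p'_m-2$, so that $C'=A'_\varnothing\Delta B'_\varnothing$ is obtained from $C$ by deleting $I_j\sqcup I_{j+1}$ and translating the part of $C$ lying above $I_{j+1}$ uniformly by $-2$; I would check that this operation does send the increasing list $\{p'_1,\ldots,p'_k\}$ to the increasing list defining the symbol of $\mathbf{p}'$, using that $p'_j$ and $p'_{j+1}$ have opposite parities (their difference $p_{j+1}-p_j+1$ is odd), so the even/odd balance survives, and that $p'_{j-1}<p'_{j+2}-2$.

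Next I would analyse the pair that is removed. Since $j$ and $j+1$ have opposite parities while $\eta(z_{p_j})=\eta(z_{p_{j+1}})$, the two signs $(-1)^{j+1}\eta(z_{p_j})$ and $(-1)^{j+2}\eta(z_{p_{j+1}})$ are opposite; hence exactly one of $I_j,I_{j+1}$ lies in $A_\eta$ and the other in $B_\eta$. Splitting into the two cases according to the sign of $(-1)^{j+1}\eta(z_{p_j})$ --- which is precisely the dichotomy appearing in the statement --- one gets $I_j\subset A_\eta$ and $I_{j+1}\subset B_\eta$ in one case, and the reverse in the other, whence the announced equalities $A_{\eta'}=A_\eta-I_j$, $B_{\eta'}=B_\eta-I_{j+1}$ (resp.\ with $I_j$ and $I_{j+1}$ exchanged). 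In either case $|A_{\eta'}|=|A_\eta|-1$ and $|B_{\eta'}|=|B_\eta|-1$, so $|A_\eta|-|B_\eta|=|A_{\eta'}|-|B_{\eta'}|$ and therefore $d'_\eta=\bigl||A_\eta|-|B_\eta|\bigr|=\bigl||A_{\eta'}|-|B_{\eta'}|\bigr|=d'_{\eta'}$.

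The main obstacle is the middle step: checking cleanly that the symbol obtained from $(A_\eta,B_\eta)$ by deleting $I_j$, $I_{j+1}$ and shifting the tail by $-2$ is genuinely the one produced by the direct construction applied to $(\mathbf{p}',\eta')$, i.e.\ that the resulting bijection $\mathcal{I}_{\mathbf{p}'}\leftrightarrow\mathcal{I}_{\mathbf{p}}-\{I_j,I_{j+1}\}$ is compatible both with the parametrisation $\alpha\mapsto(A_\alpha,B_\alpha)$ and with the equivalence relation defining $\Psi'_{N'}$ (adjunction of $\{0\}$ to both rows). One also verifies in passing that $\mathbf{p}'$ is again non-degenerate and $u'$ distinguished, so that the construction applies --- automatic since $\mathbf{p}'$ still consists of distinct odd parts of multiplicity one. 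Once the parity-preservation of the indexing is in hand this is all routine, and it is the exact orthogonal counterpart of the computation proving Proposition~\ref{propdefautusymb}.
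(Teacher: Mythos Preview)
Your argument is correct and is precisely the verification one would write down from the explicit description of $(A_\eta,B_\eta)$ given in the preceding proposition. Note however that the paper does \emph{not} supply a proof of this statement (nor of its symplectic analogue, Proposition~\ref{propdefautusymb}): both are stated as immediate consequences of the explicit formulas for $A_\eta$ and $B_\eta$ as disjoint unions of the singleton intervals $I_i$, and the paper moves on directly to compute the defect in Proposition~\ref{defautsum2}. Your write-up is thus not a comparison with an existing proof but a fleshing-out of what the author left implicit; the parity-preservation of the indexing under removal of two consecutive parts, together with the observation that $I_j$ and $I_{j+1}$ lie on opposite sides because $(-1)^{j+1}\eta(z_{p_j})$ and $(-1)^{j+2}\eta(z_{p_{j+1}})$ differ in sign, is exactly the point. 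One small remark: your reference to ``the computation proving Proposition~\ref{propdefautusymb}'' is slightly misleading, since that proposition is equally unproved in the text.
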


\begin{prop}\phantomsection\label{defautsum2}
Le défaut du $u$-symbole $ \{ A_{\eta},B_{\eta} \}$ est $$d_{\eta}'=\Big| \sum_{i=1}^{k} (-1)^{i+1} \eta(z_{p_i}) \Big|
.$$
\end{prop}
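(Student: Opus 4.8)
Le plan est de reprendre, \emph{mutatis mutandis}, l'argument de la proposition \ref{defautsum} du cas symplectique, en s'appuyant sur la description explicite du $u$-symbole $\{A_{\eta},B_{\eta}\}$ donnée dans la proposition précédente et sur le fait que, dans le cas orthogonal distingué traité ici, les intervalles $I_i$ sont tous des singletons et $H=A_{\varnothing}\cap B_{\varnothing}=\varnothing$. La première étape consiste à noter la dichotomie suivante : puisque chaque $I_i$ est un singleton et qu'il n'y a ni partie $H$ ni intersection $A_{\varnothing}\cap B_{\varnothing}$, les deux réunions définissant $A_{\eta}$ et $B_{\eta}$ montrent que, pour tout $i\in\llbracket 1,k\rrbracket$, exactement l'une des deux inclusions $I_i\subset A_{\eta}$ ou $I_i\subset B_{\eta}$ a lieu, et plus précisément
$$I_i\subset A_{\eta}\iff(-1)^{i+1}\eta(z_{p_i})=1,\qquad I_i\subset B_{\eta}\iff(-1)^{i+1}\eta(z_{p_i})=-1.$$
En effet, $I_i\subset A_{\eta}$ équivaut à ($i$ impair et $\eta(z_{p_i})=1$) ou ($i$ pair et $\eta(z_{p_i})=-1$), ce qui est exactement la condition $(-1)^{i+1}\eta(z_{p_i})=1$.

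On sommerait ensuite ces contributions. Comme $|I_i|=1$ pour tout $i$, on a $|A_{\eta}|=\#\{i \mid (-1)^{i+1}\eta(z_{p_i})=1\}$ et $|B_{\eta}|=\#\{i \mid (-1)^{i+1}\eta(z_{p_i})=-1\}$, d'où
$$|A_{\eta}|-|B_{\eta}|=\sum_{i=1}^{k}(-1)^{i+1}\eta(z_{p_i}).$$
Comme les symboles orthogonaux $\{A,B\}$ sont des paires non ordonnées, le défaut est par définition $d_{\eta}'=\big||A_{\eta}|-|B_{\eta}|\big|$, ce qui fournit la formule annoncée ; en guise de contrôle, on vérifierait que cette quantité a bien la même parité que $k$, donc que $N$, conformément à la proposition \ref{propdefautusymb2}.

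La démonstration ne présente pas de véritable difficulté : le seul point réclamant un peu d'attention est le suivi des conventions de parité dans les deux réunions de la proposition précédente, ainsi que la remarque que l'hypothèse $\prod_i\eta(z_{p_i})=1$ --- provenant de ce que $\eta$ est vu comme un caractère de $A_{\SO_N(\C)}(u)$ plutôt que de $A_{\O_N(\C)}(u)$ --- n'intervient pas dans ce décompte de cardinaux : elle ne sert qu'à assurer que $\{A_{\eta},B_{\eta}\}$ appartient effectivement à $\widetilde{\Psi}_N'$.
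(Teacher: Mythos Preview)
Your proof is correct and follows essentially the same approach as the paper's own argument: both rest on the dichotomy $I_i\subset A_\eta\iff(-1)^{i+1}\eta(z_{p_i})=1$ (and its complement for $B_\eta$), then sum these $\pm1$ contributions to obtain $|A_\eta|-|B_\eta|$ and finally take the absolute value. Your additional remarks on the parity check and on the (irrelevance of the) condition $\prod_i\eta(z_{p_i})=1$ are not in the paper's proof but are harmless and even clarifying.
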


\begin{proof}
Remarquons pour tout $i \in \llbracket 1, k \rrbracket$, $I_{i} \subset A_{\eta} \Leftrightarrow (-1)^{i+1} \eta(z_{p_i})=1$ et $I_{i} \subset B_{\eta} \Leftrightarrow (-1)^{i+1} \eta(z_{p_i})=-1$. Il s'ensuit 
$$|A_{\eta}|=\sum_{\substack{i \in \llbracket 1,k \rrbracket \\ I_{i} \subset A_{\eta}}} (-1)^{i+1}\eta(z_{p_i}), \quad  | B_{\eta} | =-\sum_{\substack{i \in \llbracket 1,k \rrbracket \\ I_{i} \subset B_{\eta}}} (-1)^{i+1}\eta(z_{p_i}) \quad \text{et} \quad d_{\eta}'=\Big| \sum_{i=1}^{k} (-1)^{i+1} \eta(z_{p_i}) \Big|.$$
\end{proof}

Pour tout entier naturel $d \in \N$, notons $\mathbf{p}_{d}^{\O}$ la partition de $d^2$ définie par $$\mathbf{p}_{d}^{\O}=(2d-1,2d-3,\ldots,3,1).$$

Appelons procédé d'élimination procédé décrit pour obtenir $(u',\eta')$ à partir de $(u,\eta)$. Puisque la longueur de la partition associée à $u'$ est strictement plus petite que celle associée à $u$, il est clair qu'en appliquant autant que faire se peut le procédé d'élimination à partir de $(u,\eta)$ on obtiendra une partition $\widetilde{\mathbf{p}}=(\widetilde{p}_1,\ldots,\widetilde{p}_{\widetilde{k}})$ de $\widetilde{N}=\widetilde{p}_1+\ldots+\widetilde{p}_{\widetilde{k}}$ et un couple $(\widetilde{u},\widetilde{\eta})$ tel que pour tout $i \in \llbracket 1, \widetilde{k}-1 \rrbracket, \,\, \widetilde{\eta}(z_{\widetilde{p}_i}) \neq \widetilde{\eta}(z_{\widetilde{p}_{i+1}})$. Le couple $(\widetilde{u},\widetilde{\eta})$ ne dépend pas du choix des parts à chaque étape et on a vu dans la proposition \ref{propdefautusymb2} que les $u$-symboles associés à $(u,\eta)$ et $(u',\eta')$ ont le même défaut. Par conséquent $(u,\eta)$ et $(\widetilde{u},\widetilde{\eta})$ ont même défaut $d$. D'après \cite[13.4]{Lusztig:1984}, la correspondance de Springer généralisée associe à $(u,\eta)$ (resp. $(\widetilde{u},\widetilde{\eta})$) : \begin{itemize}
\item le sous-groupe de Levi $(\C^{\times})^{\frac{N-d_{\eta}'^2}{2}} \times \SO_{d_{\eta}'^2}(\C)$ (resp. $(\C^{\times})^{\frac{\widetilde{N}-d_{\eta}'^2}{2}} \times \SO_{d_{\eta}'^2}(\C)$) ; 
\item l'orbite unipotente correspondante à la partition $(1^{\frac{N-d_{\eta}'^2}{2}}) \times \mathbf{p}_{d_{\eta}'}^{\O}$ (resp. $(1^{\frac{\widetilde{N}-d_{\eta}'^2}{2}}) \times \mathbf{p}_{d_{\eta}'}^{\O}$) ;
\item la représentation irréductible cuspidale $\varepsilon^{\O}_{d_{\eta}'^2}$ (resp. $\varepsilon^{\O}_{d_{\eta}'^2}$).
\end{itemize} Pour harmoniser les notations avec le cas symplectique, on notera aussi $d_{\eta}\in \N$ l'entier $d_{\eta}'$. D'après la proposition \ref{defautsum2}, on voit que : $d_{\eta}=d_{\eta}'=\widetilde{k}$.

\subsection{Supports cuspidaux}

Soient $\phi \in \Phi(G)_2$ un paramètre de Langlands discret d'un groupe classique $G$ et $\eta \in \Irr(A_{\widehat{G}}(\phi))$. Décomposons $$\Std_G \circ \phi = \bigoplus_{\pi \in I} \pi \boxtimes S_{\pi},$$ où $I$ est l'ensemble des représentations irréductibles de $W_F$ apparaissant dans $\restriction{\phi}{W_F}$ et $S_{\pi}$ la représentation de $\SL_2(\C)$ correspondante au facteur $\pi \in I$. En prenant le centralisateur de $\phi$ dans le groupe orthogonal correspondant si $G$ est un groupe spécial orthogonal, on a vu qu'on a une décomposition : $$A_{\widehat{G}_{*}}(\phi) = \prod_{\pi \in I} A_{\widehat{G}_{\pi}}(S_{\pi}) \quad \text{et} \quad \eta \simeq \boxtimes_{\pi \in I} \eta_{\pi}.$$ Notons $\Jord(\phi)$ le bloc de Jordan de $\phi$. Pour tout $\pi \in I$, notons $n_{\pi}$ la dimension de $\pi$, $m_{\pi}$ la dimension de $S_{\pi}$ et $d_{\pi} \in \N$ l'entier tel que la correspondance de Springer généralisée associe à $(u_{\pi},\eta_{\pi})$ l'unipotent admettant pour partition $(2,\ldots,2d_{\pi}-2,2d_{\pi})$ dans le cas symplectique et $(1,\ldots,2d_{\pi}-3,2d_{\pi}-1)$ dans le cas orthogonal.

Dans ce qui suit, nous noterons un multiensemble, c'est-à-dire un ensemble où on autorise des multiplicités, par $\{\!\!\{  \ldots \}\!\!\}$.\\ Soit $\pi \in I$, décomposons $S_{\pi}=\bigoplus_{i=1}^{k_{\pi}} S_{p_{\pi,i}}$ et notons $E_{\pi}$ la réunion des multiensembles formés des exposants de $|\cdot|$ dans $S_{\pi}\left( \begin{smallmatrix} |w|^{1/2} & 0 \\0 &  |w|^{-1/2} \end{smallmatrix} \right)$ : $$E_{\pi}=\bigcup_{i=1}^{k_{\pi}} \lBrace \frac{p_{\pi,i}-1}{2}-j,j \in \llbracket 0, p_{\pi,i}-1 \rrbracket \rBrace.$$

Supposons que $\widehat{G}_{\pi}=\Sp_{m_{\pi}}(\C)$. Rappelons que tous les $p_{\pi,i}$ sont pairs, ainsi pour tout $i \in \llbracket 1,k_{\pi} \rrbracket, \,\, 2i\leqslant p_{\pi,i}$. Il s'ensuit que l'ensemble des exposants de $|\cdot|$ dans $\left(\bigoplus_{i=1}^{d_{\pi}} S_{2i} \right)\left( \begin{smallmatrix} |w|^{1/2} & 0 \\0 &  |w|^{-1/2} \end{smallmatrix} \right)$ est un sous-ensemble de $E_{\pi}$. Notons ainsi $$E_{c,\pi}=\bigcup_{i=1}^{k_{\pi}} \lBrace \frac{p_{\pi,i}-1}{2}-j,j \in \llbracket 0, p_{\pi,i}-1 \rrbracket \rBrace - \bigcup_{i=1}^{d_{\pi}} \lBrace \frac{2i-1}{2}-j,j \in \llbracket 0, 2i-1 \rrbracket \rBrace.$$  Le multiensemble $E_{c,\pi}$ est le multiensemble des exposants qui du cocaractère correcteur $\chi_{c,\pi}$. Comme on peut le voir à sa définition, si $e \in E_{c,\pi}$ alors $e \neq 0$ et $-e \in E_{c,\pi}$ avec la même multiplicité que $e$. Notons $E_{c,\pi}'$ le sous-multiensemble de $E_{c,\pi}$ constitué des éléments positifs, si bien que $E_{c,\pi}=E_{c,\pi}' \sqcup -E_{c,\pi}'$.\\

Supposons que $\widehat{G}_{\pi}=\O_{m_{\pi}}(\C)$. Rappelons que tous les $p_{\pi,i}$ sont impairs, ainsi pour tout $i \in \llbracket 1,k_{\pi} \rrbracket, \,\, 2i-1\leqslant p_{\pi,i}$. Il s'ensuit que l'ensemble des exposants de $|\cdot|$ dans $\left(\bigoplus_{i=1}^{d_{\pi}} S_{2i-1} \right)\left( \begin{smallmatrix} |w|^{1/2} & 0 \\0 &  |w|^{-1/2} \end{smallmatrix} \right)$ est un sous-ensemble de $E_{\pi}$. Notons ainsi $$E_{c,\pi}=\bigcup_{i=1}^{k_{\pi}} \lBrace \frac{p_{\pi,i}-1}{2}-j,j \in \llbracket 0, p_{\pi,i}-1 \rrbracket \rBrace - \bigcup_{i=1}^{d_{\pi}} \lBrace \frac{2i-2}{2}-j,j \in \llbracket 0, 2i-2 \rrbracket \rBrace.$$  Le multiensemble $E_{c,\pi}$ est le multiensemble des exposants qui du cocaractère correcteur $\chi_{c,\pi}$. Comme on peut le voir à sa définition, si $e \in E_{c,\pi}$ alors $-e \in E_{c,\pi}$ avec la même multiplicité que $e$. De plus, $0 \in E_{c,\pi}$ avec multiplicité $m_{\pi}-d_{\pi}$. Puisque $m_{\pi} \equiv d_{\pi}^2 \equiv d_{\pi} \mod 2$, $m_{\pi}-d_{\pi}$ est divisible par $2$. Notons $E_{c,\pi}'$ le sous-multiensemble de $E_{c,\pi}$ constitué des éléments strictement positifs et $0$ compté avec multiplicité $\frac{m_{\pi}-d_{\pi}}{2}$, si bien que $E_{c,\pi}=E_{c,\pi}' \sqcup -E_{c,\pi}'$.

\begin{prop}\label{suppcuspidalexplicite}
Soit $\phi \in \Phi(G)_2$ un paramètre de Langlands discret d'un groupe classique $G$, soient $N$ le rang semi-simple de $\widehat{G}$ et $\eta \in \Irr(A_{\widehat{G}}(\phi))$.  Comme précédemment on décompose $\Std_G \circ \phi = \bigoplus_{\pi \in I} \pi \boxtimes S_{\pi}$, où $I$ est l'ensemble des représentations irréductibles de $W_F$ apparaissant dans $\restriction{\phi}{W_F}$ et écrivons $\eta \simeq \boxtimes_{\pi \in I} \eta_{\pi}$.\\
Pour tout $\pi \in I$, on pose : $$n_{\pi}=\dim \pi, \,\, m_{\pi}=\dim S_{\pi}, \,\, d_{\pi}'=\left\{ \begin{array}{ll}
 \sum_{i=1}^{k_{\pi}} (-1)^{i+k_{\pi}} \eta_{\pi}(z_{p_{\pi,i}}) +2k_{\pi}+2-2 \left[ \frac{k_\pi+1}{2}\right] & \text{si $\widehat{G}_{\pi}$ est symplectique} \\
 \sum_{i=1}^{k_{\pi}} (-1)^{i+1} \eta_{\pi}(z_{p_{\pi,i}}) & \text{si $\widehat{G}_{\pi}$ est orthogonal} \\
 \end{array} \right. ,$$ $$\ell_{\pi}=\left\{ \begin{array}{ll}
 \frac{m_{\pi}-d_{\pi}'(d_{\pi}'-1)}{2} & \text{si $\widehat{G}_{\pi}$ est symplectique} \\
 \frac{m_{\pi}-d_{\pi}'^2}{2} & \text{si $\widehat{G}_{\pi}$ est orthogonal} \\
 \end{array} \right. \quad \text{et} \quad N^{\sharp}=\sum_{\substack{\pi \in I \\ \widehat{G}_{\pi} \;\; \text{symp.}}} n_{\pi} d_{\pi}'(d_{\pi}'-1) + \sum_{\substack{\pi \in I \\ \widehat{G}_{\pi} \;\; \text{orth.}}} n_{\pi} d_{\pi}'^2.$$ Si $\widehat{G}_{\pi}$ est symplectique, soit $d_{\pi} \in \N$ l'unique entier naturel tel que $(d_{\pi}+1)d_{\pi}=d_{\pi}'(d_{\pi}'-1)$ et si $\widehat{G}_{\pi}$ est orthogonal, soit $d_{\pi}= |d_{\pi}'|$. Le support cuspidal de $(\phi,\eta)$ que nous avons défini au théorème \ref{theoremesupportcuspidal} est $(\widehat{L},\varphi,\varepsilon)$ avec : \begin{itemize}
\item $\displaystyle \widehat{L}=\prod_{\pi \in I} \GL_{n_{\pi}}(\C)^{\ell_{\pi}} \times \widehat{G}'$, avec $\widehat{G^{\sharp}}$ un groupe classique de rang $N^{\sharp}$ de même type que $\widehat{G}$ ;
\item $\displaystyle \varphi=\left(\bigoplus_{\pi \in I} \bigoplus_{e \in E_{c,\pi}'} |\cdot|^{e} \pi \oplus \left( |\cdot|^{e} \pi \right)^{\vee} \right)\oplus \left( \bigoplus_{\substack{\pi \in I \\ \widehat{G}_{\pi} \;\; \text{symp.}}} \bigoplus_{a=1}^{d_{\pi}} \pi \boxtimes S_{2a} \oplus \bigoplus_{\substack{\pi \in I \\ \widehat{G}_{\pi} \;\; \text{orth.}}} \bigoplus_{a=1}^{d_{\pi}} \pi \boxtimes S_{2a-1} \right)$;
\item $\varepsilon \simeq \boxtimes_{\substack{\pi \in I \\ \widehat{G}_{\pi} \;\; \text{symp.}}} \varepsilon_{d_{\pi}(d_{\pi}+1)}^{\S} \boxtimes \boxtimes_{\substack{\pi \in I \\ \widehat{G}_{\pi} \;\; \text{orth.}}} \varepsilon_{d_{\pi}^2}^{\O \pm}$.
\end{itemize}
\end{prop}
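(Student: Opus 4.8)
The statement is essentially an explicit unwinding of the construction of $\cSc$ given in the proof of Theorem~\ref{theoremesupportcuspidal}, specialised to a discrete parameter and fed by the combinatorial computations of the two preceding subsections. First I would reduce to one factor $\pi$ at a time. Since $\phi\in\Phi(G)_2$ is discrete, $Z_{\widehat G}(\restriction{\phi}{W_F})$ contains no nontrivial torus; hence in the product decomposition $H_\phi^G\simeq\prod_{\pi\in I'}\widehat G_\pi$ of Section~\ref{sec:suppcusp} there is no $\GL$-factor (that is, $I^{\GL}=\varnothing$ and $I'=I$, apart from the auxiliary factor $\widehat G_{\phi,i}$ when $\widehat G$ is an even special orthogonal group), the unipotent $u_\phi=\exp(N_\phi)$ corresponds to a family $(u_\pi)_{\pi\in I}$ of \emph{distinguished} unipotent elements $u_\pi\in\widehat G_\pi$ with associated partition $(p_{\pi,1},\dots,p_{\pi,k_\pi})$ — pairwise distinct even parts when $\widehat G_\pi$ is symplectic, pairwise distinct odd parts when $\widehat G_\pi$ is orthogonal — and $\widetilde\eta\simeq\boxtimes_{\pi\in I}\widetilde\eta_\pi$. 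Because the construction of $\cSc$ in the proof of Theorem~\ref{theoremesupportcuspidal} is carried out factorwise, it suffices to compute, for each $\pi$, the triple attached by the generalized Springer correspondence (Lusztig for symplectic groups, the appendix for orthogonal groups and for $\widehat G_{\phi,i}$) to $(\widehat G_\pi,u_\pi,\eta_\pi)$, together with the associated correction cocharacter, and then to reassemble.

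Next I would run the combinatorial computation. Fix $\pi$ with $\widehat G_\pi=\Sp_{m_\pi}(\C)$, the orthogonal case being entirely parallel. The $u$-symbol $(A_{\eta_\pi},B_{\eta_\pi})$ attached to $(u_\pi,\eta_\pi)$ has a defect computed by Proposition~\ref{defautsum}; by Proposition~\ref{propdefautusymb} the defect is invariant under the elimination procedure, so the fully eliminated pair $(\widetilde u,\widetilde\eta)$ has the same defect, and Proposition~\ref{denfonctiondedprime} expresses the parameter of the resulting cuspidal orbit in terms of it. Consequently, by \cite[12.4]{Lusztig:1984}, the generalized Springer correspondence attaches to $(u_\pi,\eta_\pi)$ a Levi of $\widehat G_\pi$ of the form $(\C^\times)^{\ell_\pi}\times\Sp_{d_\pi(d_\pi+1)}(\C)$, the distinguished unipotent orbit with partition $\mathbf p^{\S}_{d'_\pi}=(2,4,\dots,2d_\pi)$, and the cuspidal representation $\varepsilon^{\S}_{d_\pi(d_\pi+1)}$, where $d_\pi$ is the natural number with $d_\pi(d_\pi+1)=d'_\pi(d'_\pi-1)$ and $\ell_\pi=\tfrac12(m_\pi-d'_\pi(d'_\pi-1))$. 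Here $d'_\pi$ is, up to the reflection $d'\mapsto 1-d'$ that leaves $d'(d'-1)$ unchanged, Lusztig's defect, and I would check by a direct sign computation — using the defect formulas of Propositions~\ref{defautsum} and~\ref{denfonctiondedprime}, and accounting for the padding of the partition to an even number of parts — that it is given by the closed formula in the statement. For $\widehat G_\pi=\O_{m_\pi}(\C)$ the same argument with Propositions~\ref{propdefautusymb2}, \ref{defautsum2} and \cite[13.4]{Lusztig:1984} gives the Levi $(\C^\times)^{\ell_\pi}\times\SO_{d_\pi^2}(\C)$, the orbit $\mathbf p^{\O}_{d_\pi}=(1,3,\dots,2d_\pi-1)$, the cuspidal $\varepsilon^{\O}_{d_\pi^2}$, with $d_\pi=|d'_\pi|$ and $\ell_\pi=\tfrac12(m_\pi-d_\pi^2)$.

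Then I would translate the Springer-side data back to Langlands parameters. By construction (proof of Theorem~\ref{supportcuspidalpartiel}) the cocharacter $S_{\phi,\pi}$ of $\widehat G_\pi$ has partition $(p_{\pi,i})_i$, while the cuspidal cocharacter $\theta=S_{\varphi,\pi}$ landing in the residual classical factor has partition $\mathbf p^{\S}_{d_\pi}=(2,4,\dots,2d_\pi)$ (resp. $\mathbf p^{\O}_{d_\pi}=(1,3,\dots,2d_\pi-1)$); hence the correction cocharacter $\chi_{c,\pi}=\chi_{\phi,\pi}\chi_{\varphi,\pi}^{-1}$, read on the standard representation, has for exponents precisely the multiset difference $E_{c,\pi}$ introduced before the statement. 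This multiset is stable under $e\mapsto -e$ (and in the orthogonal case contains $0$ with the even multiplicity $m_\pi-d_\pi$), so writing $E_{c,\pi}=E_{c,\pi}'\sqcup -E_{c,\pi}'$ shows that the $\pi$-contribution to $\widehat L=Z_{\widehat G}(A)$ is $\GL_{n_\pi}(\C)^{\ell_\pi}$ with $\ell_\pi=|E_{c,\pi}'|$ (which matches the value found above by a direct count), that the $\pi$-contribution to $\varphi$ is $\bigoplus_{e\in E_{c,\pi}'}|\cdot|^{e}\pi\oplus(|\cdot|^{e}\pi)^{\vee}$ on the linear part and $\bigoplus_{a=1}^{d_\pi}\pi\boxtimes S_{2a}$ (resp. $\bigoplus_{a=1}^{d_\pi}\pi\boxtimes S_{2a-1}$) on the residual classical part, and that the $\pi$-contribution to the cuspidal representation is $\varepsilon^{\S}_{d_\pi(d_\pi+1)}$ (resp. $\varepsilon^{\O\pm}_{d_\pi^2}$), the sign $\pm$ being the one pinned down by the relation $\widetilde\varepsilon_\pi(-1)=\widetilde\eta_\pi(-1)$ established in the proof of Theorem~\ref{theoremesupportcuspidal} and the appendix description of the Springer correspondence for orthogonal groups. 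Summing over $\pi\in I$ gives $\widehat L=\prod_{\pi\in I}\GL_{n_\pi}(\C)^{\ell_\pi}\times\widehat{G^\sharp}$ with $\widehat{G^\sharp}$ of the same type as $\widehat G$ and of semisimple rank $N^\sharp=\sum_{\pi}n_\pi\cdot(\text{rank of the residual classical factor of }\widehat G_\pi)$, and $\varphi$, $\varepsilon$ as stated; since $\lambda_\varphi=\lambda_\phi$ by Theorem~\ref{supportcuspidalpartiel}, all three pieces coincide with $\cSc(\phi,\eta)$.

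The main obstacle is the bookkeeping that identifies the closed formula for $d'_\pi$ in the statement with (a reflection of) Lusztig's combinatorial defect: in the symplectic case one must keep track of the parity-dependent padding of the partition to an even number of parts — encoded by the correction term $2k_\pi+2-2\left[\tfrac{k_\pi+1}{2}\right]$ — and verify in particular that $d'_\pi(d'_\pi-1)=d_\pi(d_\pi+1)$ holds on the correct branch, so that $\ell_\pi\geq 0$ and the residual classical factor has the right size. A second, more routine, point to watch is the passage between $\widehat G$ and $\widehat G_*$ (the auxiliary factor $\widehat G_{\phi,i}$ when $\widehat G$ is an even special orthogonal group, together with the role of the center of $\widehat G$): this passage affects neither $\widehat L$ nor $\varphi$, and only enters through the determination of the sign $\pm$ in $\varepsilon^{\O\pm}_{d_\pi^2}$ — which is precisely why that sign is left implicit in the statement.
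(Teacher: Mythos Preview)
Your proposal is correct and follows exactly the paper's approach: the paper's own proof is a one-line reference to ``nos constructions, des propositions~\ref{defautsum} et~\ref{defautsum2} et de la discussion pr\'ec\'edente'', and your argument is simply the detailed unwinding of that sentence --- factor-by-factor reduction via Theorem~\ref{theoremesupportcuspidal}, computation of the defect $d_\pi'$ via Propositions~\ref{defautsum} and~\ref{defautsum2} (together with the elimination invariance of Propositions~\ref{propdefautusymb}, \ref{propdefautusymb2} and the relation $d_\pi\leftrightarrow d_\pi'$ of Proposition~\ref{denfonctiondedprime}), and reassembly through the multisets $E_{c,\pi}$, $E_{c,\pi}'$ introduced just before the statement.
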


\begin{proof}
Ceci résulte directement de nos constructions, des propositions \ref{defautsum} et \ref{defautsum2} et de la discussion précédente.
\end{proof}

Dans ce qui suit, nous allons montrer que le support cuspidal que nous avons défini est celui construit par M\oe glin et M\oe glin-Tadi{{\'c}} dans \cite{Moeglin:2002a}. Commençons par rappeler les résultats de \cite{Moeglin:2002a} dont nous aurons besoin.\\

Tout d'abord, dans \cite{Moeglin:2002a} est défini le \emph{support cuspidal partiel} d'une représentation irréductible d'un groupe classique. Si $\pi $ est une représentation irréductible de $G$, le support cuspidal partiel de $\pi$ est une représentation irréductible supercuspidale $\sigma^{'}$ d'un groupe classique $G^{\sharp}$ de même type que $G$ telle que $\pi$ est sous-quotient de $i_P^G(\pi' \boxtimes \sigma')$ avec $\pi'$ une représentation irréductible d'un groupe linéaire $\GL_{m}(F)$. Du côté galoisien, si $(\phi,\eta) \in \Phi(G)_2$ est un paramètre de Langlands discret d'un groupe classique, son support cuspidal partiel $(\varphi^{\sharp},\varepsilon)$ est un paramètre de Langlands enrichi cuspidal d'un groupe classique $G^{\sharp}$ de même type que $G$. Rappelons que si $\phi \in \Phi(G)_{2}$, alors on note (\cite[p. 716]{Moeglin:2002a}) : $\Jord^{+}(\phi)=\Jord(\phi) \sqcup \{ (\pi,0) \mid \exists a \in 2\N, (\pi,a) \in \Jord(\phi) \}$ et on étend $\eta$ à $\Jord^{+}(\phi)$ en posant pour tout $\pi \in I$ telle que $(\pi,0) \in \Jord^{+}(\phi), \,\, \eta_{\pi}^{+}(z_{0})=1$. De plus, pour tout $\pi \in I$, on notera $k_{\pi}$ le cardinal de $\Jord(\phi)_{\pi}$. Le support cuspidal partiel de $(\phi,\eta)$ est alors défini récursivement de la façon suivante :

\begin{itemize}
\item si $(\phi,\eta)$ est alterné, c'est-à-dire si pour tout $\pi \in I$, pour tout $i \in \llbracket 1,k_{\pi}-1 \rrbracket $, $\eta_{\pi}(z_{p_{\pi,i}}) \neq \eta_{\pi}(z_{p_{\pi,i+1}})$, alors le support cuspidal partiel $(\varphi',\varepsilon)$ doit satisfaire l'existence d'une application injective $$\psi : \Jord(\phi) \rightarrow \Jord^{+}(\varphi'),$$ telle que l'image de $\Jord(\phi)$ contient $\Jord(\varphi')$. De plus, pour tout $\pi \in I$, il existe une application croisante $\psi_{\pi} : \Jord(\phi)_{\pi} \rightarrow \Jord(\varphi')_{\pi}$  telle que pour tout $a \in \Jord(\phi)_{\pi}$,  $$\psi(\pi,a)=(\pi,\psi_{\pi}(a)) \quad \text{et} \quad \eta_{\pi}(z_{a})=\varepsilon^{+}(z_{\psi_{\pi}(a)}).$$
\item si $(\phi,\eta)$ n'est pas alterné, alors il existe $\pi \in I$, $j \in \in \llbracket 1,k_{\pi}-1 \rrbracket$ tels que $\eta_{\pi}(z_{p_{\pi,j}}) = \eta_{\pi}(z_{p_{\pi,j+1}})$. On pose alors $$\phi'=\bigoplus_{(\pi,a) \in \Jord(\phi) \setminus \{(\pi,p_{\pi,j}),(\pi,p_{\pi,j+1})\}} \pi \boxtimes S_a,$$ et la restriction $\eta'$ de $\eta$ définie comme précédemment. Ainsi, $(\phi',\eta')$ est un paramètre de Langlands discret enrichi pour un groupe de même type que $G$ mais de rang plus petit. Le support cuspidal partiel de $(\phi,\eta)$ est alors celui défini par $(\phi',\eta')$.
\end{itemize}

Revenons au support cuspidal que nous avons défini, notons $$\varphi^{\sharp}=\bigoplus_{\substack{\pi \in I \\ \widehat{G}_{\pi} \;\; \text{symp.}}} \bigoplus_{a=1}^{d_{\pi}} \pi \boxtimes S_{2a} \oplus \bigoplus_{\substack{\pi \in I \\ \widehat{G}_{\pi} \;\; \text{orth.}}} \bigoplus_{a=1}^{d_{\pi}} \pi \boxtimes S_{2a-1},$$ et montrons que $(\varphi^{\sharp},\varepsilon)$ est le support cuspidal partiel que M\oe glin et Tadi{{\'c}} définissent.\\

Soit $\pi \in I$. Le procédé d'élimination décrit plus haut et dans \cite{Moeglin:2002a} associe au sous-bloc de Jordan $\Jord(\phi)_{\pi}=\{ p_{\pi,1}, \ldots,p_{\pi,k_{\pi}} \}$ et $\eta_{\pi}$, c'est-à-dire à la partition $(p_{\pi,1},\ldots,p_{\pi,k_{\pi}})$ et $\eta_{\pi}$, une sous-partition $(\widetilde{p}_{\pi,1},\ldots,\widetilde{p}_{\pi,\widetilde{k}_{\pi}})$ et un caractère $\widetilde{\eta}_{\pi}$. Pour tout $\pi \in I$, on note $\widetilde{S}_{\pi}=\bigoplus_{i=1}^{\widetilde{k}_{\pi}} S_{\widetilde{p}_{\pi,i}}$ et $\widetilde{\phi}=\bigoplus_{\pi \in I} \pi \boxtimes \widetilde{S}_{\pi}$. Le paramètre $\widetilde{\phi}$ est obtenu à partir de $\phi$ et $\eta$ par le procédé d'élimination et on note $\widetilde{\eta}$ le caractère obtenu à partir de $\eta$.\\

Soit $\pi \in I$. 

Supposons que $\widehat{G}_{\pi}$ est un groupe symplectique et $\widetilde{k}_{\pi}>0$. \begin{itemize}
\item 
D'après la proposition \ref{denfonctiondedprime}, si $\eta_{\pi}(z_{\widetilde{p}_{\pi,1}})=1$, alors $d_{\pi}=\widetilde{k}_{\pi}-1$ et on définit $\psi_{\pi} : \{ \widetilde{p}_{\pi,1}, \ldots, \widetilde{p}_{\pi,\widetilde{k}_{\pi}} \} \rightarrow \{ 0 , 2 , \ldots, 2(\widetilde{k}_{\pi}-1)-2,2(\widetilde{k}_{\pi}-1) \} $ par $\psi_{\pi}(\widetilde{p}_{\pi,i})=2(i-1)$. 
\item 
D'après la proposition \ref{denfonctiondedprime}, si $\eta_{\pi}(z_{\widetilde{p}_{\pi,1}})=-1$, alors $d_{\pi}=\widetilde{k}_{\pi}$ et on définit $\psi_{\pi} : \{ \widetilde{p}_{\pi,1}, \ldots, \widetilde{p}_{\pi,\widetilde{k}_{\pi}} \} \rightarrow \{ 0 , 2 , \ldots, 2(\widetilde{k}_{\pi}-1),2\widetilde{k}_{\pi} \}$ par $\psi_{\pi}(\widetilde{p}_{\pi,i})=2i$. 
\end{itemize}

Supposons que $\widehat{G}_{\pi}$ est un groupe orthogonal et $\widetilde{k}_{\pi}>0$. On a alors $d_{\pi}=\widetilde{k}_{\pi}$ et on définit $\psi_{\pi} : \{ \widetilde{p}_{\pi,1}, \ldots, \widetilde{p}_{\pi,\widetilde{k}_{\pi}} \} \rightarrow \{1 , \ldots, 2\widetilde{k}_{\pi}-3,2\widetilde{k}_{\pi}-1\} $ par $\psi_{\pi}(\widetilde{p}_{\pi,i})=2i-1$.

\begin{prop}\label{comparaisonsupportcuspidalmoeglin}
L'application $\psi_{\pi}$ définit une injection $\Jord(\widetilde{\phi})_{\pi}\rightarrow \Jord^{+}(\varphi^{\sharp})_{\pi}$ telle que pour tout $(\pi,a) \in \Jord(\widetilde{\phi})_{\pi}$, $\widetilde{\eta}_{\pi}(z_{a})=\varepsilon_{\pi}^{+}(z_{\psi_{\pi}(a)})$. Par conséquent, le paramètre de Langlands enrichi $(\varphi^{\sharp},\varepsilon)$ que nous avons construit est bien le support cuspidal partiel défini dans \cite{Moeglin:2002a}. De plus, le support cuspidal de $(\widetilde{\phi},\widetilde{\eta})$ est : \begin{itemize}
\item $\prod_{\pi \in I} \GL_{n_{\pi}}(\C)^{\sum_{i=1}^{\widetilde{k}_{\pi}}\psi_{\pi}(\widetilde{p}_{\pi,i})} \times \widehat{G^{\sharp}}$ ;
\item $\displaystyle \bigoplus_{\pi \in I} \bigoplus_{i=1}^{\widetilde{k}_{\pi}} \bigoplus_{f \in \llbracket 0, \frac{\widetilde{p}_{\pi,i}-\psi_{\pi}(\widetilde{p}_{\pi,i})}{2}-1 \rrbracket} |\cdot|^{\frac{\widetilde{p}_{\pi,i}-1}{2}-f} \pi \oplus (|\cdot|^{\frac{\widetilde{p}_{\pi,i}-1}{2}-f} \pi )^{\vee} \oplus \varphi^{\sharp}$ ;
\item $\varepsilon$
\end{itemize}
\end{prop}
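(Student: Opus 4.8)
The proof of Proposition~\ref{comparaisonsupportcuspidalmoeglin} should proceed by unwinding the two recursive definitions—ours (via the elimination procedure of Section~\ref{section} and \ref{springerorth}, plugged into Theorem~\ref{theoremesupportcuspidal}) and that of M\oe glin and M\oe glin–Tadi{\'c} in \cite{Moeglin:2002a}—and checking that they coincide step by step. The plan is to treat each $\pi \in I$ separately, since both constructions are ``block-diagonal'' over the decomposition $A_{\widehat{G}_*}(\phi) \simeq \prod_{\pi \in I} A_{\widehat{G}_\pi}(S_\pi)$ and $\eta \simeq \boxtimes_{\pi \in I} \eta_\pi$. For a fixed $\pi$, the elimination procedure applied to the partition $(p_{\pi,1},\dots,p_{\pi,k_\pi})$ with the character $\eta_\pi$ produces the same reduced data $(\widetilde{p}_{\pi,1},\dots,\widetilde{p}_{\pi,\widetilde{k}_\pi})$, $\widetilde{\eta}_\pi$ in both settings: on our side this is the stabilisation of the $u$-symbol (Propositions~\ref{propdefautusymb}, \ref{propdefautusymb2}), and on M\oe glin's side it is literally the inductive ``non-alternating'' reduction step recalled just before the proposition. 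So the first step is to observe that the two elimination procedures are the same combinatorial operation — removing a consecutive pair $(\pi,p_{\pi,j}),(\pi,p_{\pi,j+1})$ on which $\eta_\pi$ takes equal values — and hence yield the same $(\widetilde{\phi},\widetilde{\eta})$.

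Second, I would verify that $\psi_\pi$ as defined just above is exactly the increasing injection required in the definition of the partial cuspidal support when $(\widetilde{\phi},\widetilde{\eta})$ is alternating. Here the key input is Proposition~\ref{denfonctiondedprime} (symplectic case) and the remark $d_\eta = d_\eta' = \widetilde{k}$ (orthogonal case), which pin down $d_\pi$ in terms of $\widetilde{k}_\pi$ and $\widetilde{\eta}_\pi(z_{\widetilde{p}_{\pi,1}})$. Once $d_\pi$ is known, the target block $\Jord^+(\varphi^\sharp)_\pi$ is $\{0,2,\dots,2d_\pi\}$ or $\{1,3,\dots,2d_\pi-1\}$, and one checks directly that $\psi_\pi(\widetilde{p}_{\pi,i}) = 2(i-1)$, $2i$, or $2i-1$ (according to type and sign) lands in this set, is increasing, and is injective, and that its image contains $\Jord(\varphi^\sharp)_\pi$ (the difference being exactly the added zeros in $\Jord^+$). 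The compatibility of characters $\widetilde{\eta}_\pi(z_a) = \varepsilon_\pi^+(z_{\psi_\pi(a)})$ then follows from the explicit description of the cuspidal characters $\varepsilon_N^{\S}$, $\varepsilon_N^{\O\pm}$ after the table in Section~\ref{pairecusp} — since $\widetilde{\eta}_\pi$ is alternating on the reduced block, it matches (up to the global sign recorded by $\widehat{\eta}_i(-1)=\widehat{\varepsilon}_i(-1)$ handled in Theorem~\ref{theoremesupportcuspidal}) the alternating character attached to the cuspidal pair on the distinguished unipotent orbit.

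Third, the statement about the cuspidal support of $(\widetilde{\phi},\widetilde{\eta})$ itself is obtained by reading off the correction cocharacter $\chi_{c,\pi}$: each pair $(\pi,\widetilde{p}_{\pi,i})$ that is ``pushed down'' to $(\pi,\psi_\pi(\widetilde{p}_{\pi,i}))$ contributes, via the difference of $\SL_2$-exponents, the segment of twists $|\cdot|^{(\widetilde{p}_{\pi,i}-1)/2-f}\pi \oplus (|\cdot|^{(\widetilde{p}_{\pi,i}-1)/2-f}\pi)^\vee$ for $f \in \llbracket 0, (\widetilde{p}_{\pi,i}-\psi_\pi(\widetilde{p}_{\pi,i}))/2 - 1\rrbracket$, exactly as in the multisets $E_{c,\pi}'$ of the discussion preceding Proposition~\ref{suppcuspidalexplicite}. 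Summing over $i$ and $\pi$ gives the stated $\widehat{L}$ and $\varphi$; the $\varepsilon$ part is unchanged since the elimination procedure does not alter the cuspidal datum. I expect the main obstacle to be bookkeeping: matching the index conventions (parity of $i$, the roles of $k_\pi$ versus $\widetilde{k}_\pi$, the shift between $\Jord$ and $\Jord^+$, and the $\pm$ ambiguity in $\varepsilon_{d_\pi^2}^{\O\pm}$) between our symbol-theoretic computation and M\oe glin–Tadi{\'c}'s recursive formulation, and in particular checking carefully in the symplectic case that the two branches of Proposition~\ref{denfonctiondedprime} correspond precisely to the two choices of $\psi_\pi$; the underlying mathematics is a direct comparison once the dictionary is set up, so the proof can legitimately be condensed to ``this follows from our constructions, Propositions~\ref{defautsum} and \ref{defautsum2}, and the discussion above'' after the $\psi_\pi$ have been exhibited.
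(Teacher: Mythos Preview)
Your proposal is correct and follows essentially the same approach as the paper. The paper's own proof is a two-sentence summary---``La première partie de la proposition résulte de la discussion précédent la proposition. Concernant le support cuspidal de $(\widetilde{\phi},\widetilde{\eta})$, il s'agit d'écrire explicitement les ensembles $E_{c,\pi}$ à l'aide de l'application $\psi$''---and your three steps unpack exactly this: the discussion preceding the proposition (which defines $\psi_\pi$ and invokes Proposition~\ref{denfonctiondedprime}) handles the injection and the character compatibility, while rewriting the multisets $E_{c,\pi}'$ in terms of $\psi_\pi$ gives the explicit cuspidal support. Your first paragraph on the elimination procedure is background rather than part of the proof proper (the proposition is already stated for the post-elimination data $(\widetilde{\phi},\widetilde{\eta})$), but it does no harm and sets up the comparison with M\oe glin--Tadi{\'c} correctly.
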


\begin{proof}
La première partie de la proposition résulte de la discussion précédent la proposition. Concernant le support cuspidal de $(\widetilde{\phi},\widetilde{\eta})$, il s'agit d'écrire explicitement les ensembles $E_{c,\pi}$ à l'aide de l'application $\psi$.
\end{proof}

À présent, on peut montrer dans le cas d'un paramètre discret quelconque que le support cuspidal que nous avons défini correspond à celui défini dans \cite{Moeglin:2002a} et \cite{Moeglin:2002}. Pour être plus précis, on ne trouvera pas mention explicite de support cuspidal d'un paramètre de Langlands enrichi dans \cite{Moeglin:2002a} ou \cite{Moeglin:2002}, en revanche, il apparait dans les constructions contenues dans ces articles.

\begin{theo}\label{suppcuspicompatible}
Soit $\pi \in \Irr(G)_{2}$, $(M,\sigma) \in \Omega(G)$ son support cuspidal. Soient $(\phi,\eta) \in \Phi_{e}(G)_{2}$ le paramètre de Langlands enrichi de $\pi$ et $(\widehat{L},\varphi,\varepsilon) \in \Omega_{e}^{\st}(G)$ son support cuspidal. Alors, après conjugaison on peut supposer, $\widehat{M}=\widehat{L}$ et le paramètre de Langlands enrichi de $\sigma$ est $(\varphi,\varepsilon)$.
\end{theo}

\begin{proof}
En effet, le cas $(\widetilde{\phi},\widetilde{\eta})$ que nous avons traité précédemment coïncide avec \cite[7-10]{Moeglin:2002a}. Pour le cas général, on peut se ramener à ce cas par procédé d'élimination. Remarquons qu'on a :\begin{footnotesize}
\begin{align*}
\lBrace \frac{p_{\pi,j+1}-1}{2}-f, f \in \llbracket 0,p_{\pi,j+1}-1 \rrbracket \rBrace &= \lBrace \frac{p_{\pi,j+1}-1}{2}-f, f \in \llbracket 0,\frac{p_{\pi,j}+p_{\pi,j+1}}{2}-1 \rrbracket \rBrace \cup \lBrace \frac{p_{\pi,j+1}-1}{2}-f, f \in \llbracket \frac{p_{\pi,j}+p_{\pi,j+1}}{2},p_{\pi,j+1}-1 \rrbracket \rBrace,\\
\lBrace f-\frac{p_{\pi,j+1}-1}{2}, f \in \llbracket 0,\frac{p_{\pi,j}+p_{\pi,j+1}}{2}-1 \rrbracket \rBrace &= \lBrace \frac{p_{\pi,j}-1}{2}-f, f \in \llbracket 0,\frac{p_{\pi,j}+p_{\pi,j+1}}{2}-1 \rrbracket \rBrace,  \quad f \leftrightarrow \frac{p_{\pi,j}+p_{\pi,j+1}}{2}-1-f\\
&=\lBrace \frac{p_{\pi,j}-1}{2}-f, f \in \llbracket 0,p_{\pi,j}-1 \rrbracket \rBrace \cup \lBrace \frac{p_{\pi,j}-1}{2}-f, f \in \llbracket p_{\pi,j},\frac{p_{\pi,j}+p_{\pi,j+1}}{2}-1 \rrbracket \rBrace \\
&= \lBrace \frac{p_{\pi,j}-1}{2}-f, f \in \llbracket 0,p_{\pi,j}-1 \rrbracket \rBrace \cup \lBrace \frac{p_{\pi,j+1}-1}{2}-f, f \in \llbracket \frac{p_{\pi,j}+p_{\pi,j+1}}{2},p_{\pi,j+1}-1 \rrbracket \rBrace, \,f \leftrightarrow f-\frac{p_{\pi,j+1}-p_{\pi,j}}{2} 
\end{align*}
\end{footnotesize}
Par conséquent, on obtient : \begin{footnotesize}
$$
\lBrace \frac{p_{\pi,j+1}-1}{2}-f, f \in \llbracket 0,p_{\pi,j+1}-1 \rrbracket \rBrace \cup \lBrace \frac{p_{\pi,j}-1}{2}-f, f \in \llbracket 0,p_{\pi,j}-1 \rrbracket \rBrace = \lBrace \frac{p_{\pi,j+1}-1}{2}-f, f \in \llbracket 0,\frac{p_{\pi,j}+p_{\pi,j+1}}{2}-1 \rrbracket \rBrace \cup \lBrace f-\frac{p_{\pi,j+1}-1}{2}, f \in \llbracket 0,\frac{p_{\pi,j}+p_{\pi,j+1}}{2}-1 \rrbracket \rBrace.
$$
\end{footnotesize}

On fixe $(\widetilde{\phi},\widetilde{\eta})$, $\pi$ une représentation irréductible de $W_F$ qui apparait dans $\restriction{\widetilde{\phi}}{W_F}$. Soit $(\phi,\eta) \in \Phi_e(G)_{2}$ un paramètre de Langlands discret enrichi d'un groupe classique $G$ tel que $(\widetilde{\phi},\widetilde{\eta})$ soit obtenu par procédé d'élimination à partir de $(\phi,\eta)$. On raisonne par récurrence sur la longueur de la partition $(p_{\pi,1},\ldots,p_{\pi,k_{\pi}})$. Le cas initial résulte de la proposition \ref{comparaisonsupportcuspidalmoeglin}. Supposons la propriété acquise pour une certaine longueur de partition. Supposons qu'il existe $j \in \Jord(\phi)_{\pi}$ tel que $\eta_{\pi}(z_{p_{\pi,j}})=\eta_{\pi}(z_{p_{\pi,j+1}})$.\\ Soit $(\phi',\eta') \in \Phi_e(G')_2$ le paramètre de Langlands discret défini par $\Jord(\phi')=\Jord(\phi) \setminus \{(\pi,p_{\pi,j}),(\pi,p_{\pi,j+1})\}$.\\ On a vu que les $u$-symboles associés à $(u_{\phi,\pi},\eta_{\pi})$ et $(u_{\phi',\pi},\eta_{\pi}')$ (voir les deux sections précédentes) ont même défaut $d_{\pi}'$. Par conséquent, les supports cuspidaux de $(\widehat{L},\varphi,\varepsilon)$ et $(\widehat{L}',\varphi',\varepsilon')$ les supports cuspidaux respectifs de $(\phi,\eta)$ et $(\phi',\eta')$ sont reliés par :
$\widehat{L}=\GL_{n_{\pi}}(\C)^{\frac{p_{\pi,j}+p_{\pi,j+1}}{2}} \times \widehat{L}'$ et $$\Std_{L}\circ \varphi=\bigoplus_{f \in \llbracket 0,p_{\pi,j+1}-1 \rrbracket}  |\cdot|^{\frac{p_{\pi,j+1}-1}{2}-f} \pi \oplus  \bigoplus_{f \in \llbracket 0,p_{\pi,j}-1 \rrbracket}  |\cdot|^{\frac{p_{\pi,j}-1}{2}-f} \pi \oplus \Std_{L'}\circ\varphi'.$$

En utilisant l'égalité $
\lBrace \frac{p_{\pi,j+1}-1}{2}-f, f \in \llbracket 0,p_{\pi,j+1}-1 \rrbracket \rBrace \cup \lBrace \frac{p_{\pi,j}-1}{2}-f, f \in \llbracket 0,p_{\pi,j}-1 \rrbracket \rBrace = \lBrace \frac{p_{\pi,j+1}-1}{2}-f, f \in \llbracket 0,\frac{p_{\pi,j}+p_{\pi,j+1}}{2}-1 \rrbracket \rBrace \cup \lBrace f-\frac{p_{\pi,j+1}-1}{2}, f \in \llbracket 0,\frac{p_{\pi,j}+p_{\pi,j+1}}{2}-1 \rrbracket \rBrace
$, on peut réorganiser les termes de $\varphi$ et voir que cela coïncide également avec \cite[5.1.3]{Moeglin:2002}, c'est-à-dire : $$\Std_{L}\circ \varphi=\bigoplus_{f \in \llbracket 0,\frac{p_{\pi,j}+p_{\pi,j+1}}{2}-1 \rrbracket}  |\cdot|^{\frac{p_{\pi,j+1}-1}{2}-f} \pi \oplus  (|\cdot|^{\frac{p_{\pi,j+1}-1}{2}-f} \pi)^{\vee} \oplus \Std_{L'}\circ\varphi' .$$

Par conséquent, d'une part avec les travaux de M\oe glin et M\oe glin-Tadi{{\'c}}, on sait que les propriétés combinatoires donnent lieu à un support cuspidal pour $\pi$ et pour $(\phi,\eta)$, qui coïncide avec nos constructions. Et de plus, Xu a montré dans \cite{Xu:2015} que ces propriétés coïncident avec la correspondance de Langlands pour les groupes classiques.

\end{proof}

\newpage

\appendix

\section{Correspondance de Springer généralisée pour le groupe orthogonal}\phantomsection\label{sectionspringorth}

Nous avons vu précédemment que la correspondance de Springer généralisée pour un groupe réductif connexe $H$ établit une bijection (à $H$-conjugaison près) $$ \Sigma_{H} : (\mathcal{C}_{u}^{H},\eta) \longmapsto (L,\mathcal{C}_{v}^{L},\varepsilon,\rho),$$ avec \begin{itemize}
\item une $H$-orbite $\mathcal{C}_{u}^{H}$ d'un élément unipotent $u \in H$ ;
\item une représentation irréductible $\eta$ de $A_{H}(u)$ ;
\item un sous-groupe de Levi $L$ de $H$ ;
\item une $L$-orbite $\mathcal{C}_{v}^{L}$ d'un élément unipotent $v \in L$ ;
\item une représentation irréductible \emph{cuspidale} $\varepsilon$ de $A_{L}(v)$ ;
\item une représentation irréductible $\rho$ de $N_{H}(L)/L$. 
\end{itemize} Nous souhaitons étendre cette bijection au groupe orthogonal. Pour cela, précisons quels objets seront en bijection et décrivons notre démarche. 

\begin{defi}
Soient $H$ un groupe réductif non nécessairement connexe, $A \subset H$ un tore et $L=Z_H(A)$. On appelle sous-groupe de quasi-Levi de $H$, le centralisateur dans $H$ d'un tore contenu dans $H$. Le groupe de Weyl de $L$ dans $H$ est $W_{L}^{H}=N_{H}(A)/Z_H(A)$.
\end{defi}

\begin{rema}
Soient $A \subset H$ un tore contenu dans $H$ et $L=Z_{H}(A)$ un sous-groupe de quasi-Levi de $H$. Alors, $L^{\circ}=Z_{H}(A)^{\circ}=Z_{H^{\circ}}(A)^{\circ}=Z_{H^{\circ}}(A)$ est un sous-groupe de Levi de $H^{\circ}$. Réciproquement, tout sous-groupe de Levi de $H^{\circ}$ est la composante neutre d'un sous-groupe de quasi-Levi de $H$. De plus, le groupe de Weyl de $L^{\circ}$ dans $H^{\circ}$, $W_{L^{\circ}}^{H^{\circ}}=N_{H^{\circ}}(A)/Z_{H^{\circ}}(A)$ est un sous-groupe distingué de $W_{L}^{H}$.
\end{rema}

Décrivons les sous-groupes de quasi-Levi du groupe orthogonal et leurs groupes de Weyl relatifs. 

\renewcommand{\arraystretch}{1.3}
$$\begin{array}{|c|c|c|c|c| l}
\hhline{-----~}
H & L^{\circ} & L & L/L^{\circ} & W_{L}^{H}/W_{L^{\circ}}^{H^{\circ}} &\\
\hhline{=====~}
\O_{2n+1}(\C) & \prod_{i=1}^{k} \GL_{n_i}(\C) \times \SO_{2n'+1}(\C) & \prod_{i=1}^{k} \GL_{n_i}(\C) \times \O_{2n'+1}(\C) & \Z/2\Z & \{1\} &  n_i \geqslant 0, n' \geqslant 0\\ 
\hhline{-----~}
\multirow{2}*{$\O_{2n}(\C)$} & \prod_{i=1}^{k} \GL_{n_i}(\C) \times \SO_{2n'}(\C) & \prod_{i=1}^{k} \GL_{n_i}(\C) \times \O_{2n'}(\C) & \Z/2\Z & \{1\} & n_i \geqslant 0, n' \geqslant 2 \\ \hhline{~----~}
 & \prod_{i=1}^{k} \GL_{n_i}(\C) & \prod_{i=1}^{k} \GL_{n_i}(\C) & \{ 1 \} & \Z/2\Z & n_i \geqslant 0\\ \hhline{-----~}
\end{array} $$\captionof{table}{Sous-groupes de quasi-Levi du groupe orthogonal} 

Nous allons associer à tout couple formé d'une $H$-classe de conjugaison d'un élément unipotent $u \in H^{\circ}$ et d'une représentation irréductible de $A_{H}(u)$, une $H$-classe de conjugaison d'un quadruplet formé d'un sous-groupe de quasi-Levi $L$ de $H$, d'une $L$-classe de conjugaison d'un élément unipotent $v \in L^{\circ}$, d'une représentation irréductible \emph{cuspidale} de $A_{L}(v)$ (voir défition ci-dessous) et d'une représentation irréductible de $W_{L}^{H}$. Pour cela nous allons procéder en trois étapes. Tout d'abord nous étendons la correspondance de Springer généralisée au sous-groupe $H^{L}$ de $H$ tel que $H^{L}/H^{\circ}=L/L^{\circ}$. Ensuite, nous l'étendons au sous-groupe $H^{\mathcal{C}_{u}^{H^{\circ}}}$ de $H$ qui stabilise l'orbite $\mathcal{C}_{u}^{H^{\circ}}$ et enfin à $H$.\\

Le groupe des composantes du groupe orthogonal étant d'ordre $2$, une seule de ces étapes apparaitra dans la section \ref{sectionspringerorth}. Néanmoins, dans la section \ref{sectionspringerorth2} ces trois étapes apparaitront. Avant de commencer, définissons la notion de cuspidalité et introduisons une notation.

\begin{defi}\phantomsection\label{defcusporth}
Soient $L$ un sous-groupe de quasi-Levi de $H$, $v \in L^{\circ}$ un élément unipotent et $\varepsilon \in \Irr(A_{L}(v))$. On dit que $\varepsilon$ est \emph{cuspidale} lorsque la restriction de $\varepsilon$ à $A_{L^{\circ}}(v)$ est somme de représentations cuspidales. On notera $\Irr(A_{L}(v))_{\cusp}$ l'ensemble des (classes de) représentations irréductibles cuspidales de $A_{L}(v)$.
\end{defi}

Notons $$\mathcal{N}_{H}^{+}=\{(\mathcal{C}_{u}^{H},\eta), \,\, u \in H^{\circ} \, \text{ unipotent}, \,\eta \in \Irr(A_{H}(u) \}_{ / H-\rm{conj}},$$ et $$\mathcal{S}_{H}=\{(L,\mathcal{C}_{v}^{L},\varepsilon), \,\, L \text{ quasi-Levi de } H, \,\, v\in L^{\circ} \text{ unipotent et } \varepsilon \in \Irr(A_{L}(v))_{\cusp}\}_{ / H-\rm{conj}}.$$ Pour tout $\mathfrak{t}=[L,\mathcal{C}_{v}^{L},\tau] \in \mathcal{S}_{H}$, notons $W_{\mathfrak{t}}=W_{L}^{H}$.

\subsection{Correspondance de Springer généralisée pour le groupe orthogonal}\phantomsection\label{sectionspringerorth}

Jusqu'à la fin de section nous notons $H=\O_N(\C)$ le groupe orthogonal.\\

Soit $u \in H^{\circ}$ un élément unipotent et supposons que $A_{H}(u) \neq A_{H^{\circ}}(u)$. Dans ce cas, pour tout $s \in H \setminus H^{\circ}, \,\, s\mathcal{C}^{H^{\circ}}_{u}s^{-1}=\mathcal{C}^{H^{\circ}}_{u}$ et puisque $A_{H}(u)$ est un produit de $\Z/2\Z$, on a : $$A_{H}(u)=A_{H^{\circ}}(u) \times (\Z/2\Z).$$ Tout caractère $\eta_{0}$ de $A_{H^{\circ}}(u)$ s'étend deux façons en un caractère de $A_{H}(u)$ : $\eta_{0} \boxtimes 1$ et $\eta_{0} \boxtimes \xi$, où $\xi$ est le caractère non trivial de $\Z/2\Z$.\\

\begin{theo}\phantomsection\label{springerorth}
Il existe une application surjective $$\Psi_{H} : \mathcal{N}_{H}^{+} \longrightarrow \mathcal{S}_{H},$$ qui étend $\Psi_{H^{\circ}}$ et qui induit une décomposition $$\mathcal{N}_{H}^{+}=\bigsqcup_{\mathfrak{t} \in \mathcal{S}_{H}} \mathcal{M}_{\mathfrak{t}},$$ où $\mathcal{M}_{\mathfrak{t}}=\Psi^{-1}(\mathfrak{t})$. De plus, pour tout $\mathfrak{t}\in \mathcal{S}_{H}$, on a une bijection $$\Sigma_{\mathfrak{t}} : \mathcal{M}_{\mathfrak{t}} \longrightarrow \Irr(W_{\mathfrak{t}}).$$ Ainsi, $$\mathcal{N}_{H}^{+} \simeq \bigsqcup_{\mathfrak{t} \in \mathcal{S}_H} \Irr(W_\mathfrak{t}).$$
\end{theo}

\begin{proof}
Soit $(u,\eta) \in \mathcal{N}_{H}^{+}$. Notons $\eta_{0}$ la restriction de $\eta$ à $A_{H^{\circ}}(u)$, $\Psi_{H^{\circ}}(\mathcal{C}_{u}^{H^{\circ}},\eta_{0})=(L^{\circ},\mathcal{C}_{v}^{L^{\circ}},\varepsilon_{0})$ le triplet cuspidal défini par $(\mathcal{C}_{u}^{H^{\circ}},\eta_{0})$ et $\rho_{0} \in \Irr(W_{L^{\circ}}^{H^{\circ}})$ la représentation irréductible du groupe de Weyl associée par la correspondance de Springer généralisée pour $H^{\circ}$.

\begin{enumerate}[label={(\Roman*)}]
\item Supposons ($N$ est impair) ou ($N$ est pair et $L=(\C^{\times})^{\ell} \times \O_{N'}(\C)$ avec $N' \geqslant 4$).\\ Dans ce cas, on a : $$A_{H}(u)=A_{H^{\circ}}(u) \times S, \quad A_{L}(v)=A_{L^{\circ}}(v) \times S' \quad \text{et} \quad W_{L}^{H}=W_{L^{\circ}}^{H^{\circ}},$$ où $S=\langle s \rangle \simeq \Z/2 \Z$ et $S'=\langle s' \rangle \simeq \Z/2 \Z$.
On peut donc écrire $\eta=\eta_0 \boxtimes \chi$, avec $\chi \in \Irr(\Z/2\Z)$ et on pose alors : $$\Psi_{H}(\mathcal{C}_{u}^{H},\eta_{0} \boxtimes \chi)=(L,\mathcal{C}_{v}^{L},\varepsilon_{0} \boxtimes \chi) \quad \text{et} \quad \Sigma_{\mathfrak{t}}(\mathcal{C}_{u}^{H},\eta_{0} \boxtimes \chi)=\rho_{0}.$$

\item Supposons $N$ pair, $L^{\circ}=T=(\C^{\times})^{\ell}$ et que la partition associée à $u$ contient une part paire de multiplicité impaire.\\ Dans ce cas, on a : $$A_{H}(u)=A_{H^{\circ}}(u) \times S, \quad A_{T}(1)=\{1\} \quad \text{et} \quad W_{T}^{H}=W_{T}^{H^{\circ}} \rtimes R,$$ où $S=\langle s \rangle \simeq \Z/2 \Z$ et $R=\langle r \rangle \simeq \Z/2\Z$. Pour tout $h \in N_{H}(T) \setminus N_{H^{\circ}}(T)$, par équivariance de la correspondance de Springer généralisée pour $H^{\circ}$, on a : $$(T,\{1\},1,\rho_{0})=\Sigma(\mathcal{C}_{u}^{H^{\circ}},\eta_{0})=\Sigma \left(h \cdot (\mathcal{C}_{u}^{H^{\circ}},\eta_{0}) \right) =h \cdot \Sigma(\mathcal{C}_{u}^{H^{\circ}},\eta)=(T,\{1\},1,\rho_{0}^{h}).$$ Ainsi,  $\rho_{0}^{r} \simeq \rho_{0}$. Puisque le groupe $\Z/2\Z$ est cyclique, la représentation $\rho_{0}$ s'étend de deux façons en une représentation irréductible de $W_{T}^{H}$ : $\rho_{0} \boxtimes 1$ ou $\rho_{0} \boxtimes \xi$. On pose alors :

$$\Psi_{H}(\mathcal{C}_{u}^{H},\eta_{0} \boxtimes \chi)=(T,\{1\},1) \quad \text{et} \quad \Sigma_{\mathfrak{t}}(\mathcal{C}_{u}^{H},\eta_{0} \boxtimes \chi)=\rho_{0} \boxtimes \chi.$$

\item Supposons $N$ pair et que la partition associée à $u$ ne contient que des parts paires de multiplicités paires.\\ Dans ce cas, $L^{\circ}=T$ et on a : $$A_{H}(u)=A_{H^{\circ}}(u)=\{1\}, \quad A_{T}(1)=\{1\} \quad \text{et} \quad W_{T}^{H}=W_{T}^{H^{\circ}} \rtimes R,$$ avec $R=\langle r \rangle \simeq \Z/2\Z$. La différence avec le cas précédent est qu'ici $\mathcal{C}_{u}^{H}=\mathcal{C}_{u}^{H^{\circ}} \sqcup \mathcal{C}_{u'}^{H^{\circ}}$ est réunion de deux orbites unipotentes de $H^{\circ}$. Pour tout $h \in N_{H}(T) \setminus N_{H^{\circ}}(T)$, par équivariance de la correspondance de Springer généralisée pour $H^{\circ}$, puisque $(\mathcal{C}_{u}^{H^{\circ}},1) \neq (\mathcal{C}_{huh^{-1}}^{H^{\circ}},1)$, on a : $\rho_{0}^{r} \not \simeq \rho_{0}$. Ceci implique que $\Ind_{W_{T}^{H^{\circ}}}^{W_{T}^{H}} (\rho_{0})$ est irréductible. On pose alors :
$$\Psi_{H}(\mathcal{C}_{u}^{H},1)=(T,\{1\},1) \quad \text{et} \quad \Sigma_{\mathfrak{t}}(\mathcal{C}_{u}^{H},1)=\Ind_{W_{T}^{H^{\circ}}}^{W_{T}^{H}} (\rho_{0}).$$
\end{enumerate}
\end{proof}

\begin{prop}
Soient $(\mathcal{C}_{u}^{H},\eta) \in \mathcal{N}_{H}^{+}$ et $(L,\mathcal{C}_{v}^{L},\varepsilon)=\Psi_{H}(\mathcal{C}_{u}^{H},\eta)$. On a des morphismes $Z_{H} \rightarrow A_{H}(u)$ et $Z_{L} \rightarrow A_{L}(v)$. Alors $\eta(-1)=\varepsilon(-1)$ (où l'on voit $-1$ à travers les morphismes précédents).
\end{prop}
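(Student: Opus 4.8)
The assertion is that the central sign $\eta(-1)=\varepsilon(-1)$ is preserved under the generalized Springer correspondence for the orthogonal group $H=\O_N(\C)$. I would argue case by case, following exactly the trichotomy $(\mathrm{I})$, $(\mathrm{II})$, $(\mathrm{III})$ introduced in the proof of Theorem~\ref{springerorth}, and reduce in each case to the analogous statement for the connected group $H^\circ=\SO_N(\C)$. The point is that $-1\in Z_H$ maps into $A_H(u)$ and into $A_L(v)$ through the natural maps $Z_H\to A_H(u)$, $Z_L\to A_L(v)$, and that in all three cases the image of $-1$ already lies in the connected-group part $A_{H^\circ}(u)$ (resp. $A_{L^\circ}(v)$), so the claim only depends on the restrictions $\eta_0=\restriction{\eta}{A_{H^\circ}(u)}$ and $\varepsilon_0=\restriction{\varepsilon}{A_{L^\circ}(v)}$.

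\textbf{The connected case.} First I would record the statement for $H^\circ=\SO_N(\C)$: if $\Psi_{H^\circ}(\mathcal{C}_u^{H^\circ},\eta_0)=(L^\circ,\mathcal{C}_v^{L^\circ},\varepsilon_0)$, then $\eta_0(-1)=\varepsilon_0(-1)$. This is read off directly from Lusztig's explicit description of the generalized Springer correspondence for classical groups in \cite[\textsection 10, 13.4]{Lusztig:1984} (and is also implicit in the combinatorics of $u$-symbols recalled in Section~\ref{section}): the element $-1\in\SO_N(\C)$ corresponds, on the side of $u$-symbols, to a fixed character whose value is determined by $N\bmod 4$ (equivalently by the total number of boxes), an invariant that the correspondence preserves inside a fixed block because $\mathcal{C}_v^{L^\circ}$ has the same associated invariant as $\mathcal{C}_u^{H^\circ}$. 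Concretely, $\eta_0(-1)=\prod_i\eta_0(z_{p_i})^{?}$ can be matched termwise with $\varepsilon_0(-1)$ using the elimination procedure of Propositions~\ref{propdefautusymb2} and~\ref{defautsum2}, which shows each elimination step removes a pair $(z_{p_{\pi,j}},z_{p_{\pi,j+1}})$ on which $\eta_0$ takes equal values, hence does not change the product computing $\eta_0(-1)$; and the terminal cuspidal datum has the prescribed value $\varepsilon_{d^2}^{\O\pm}(-1)$.

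\textbf{Descent to $H$.} In case $(\mathrm{I})$ one has $A_H(u)=A_{H^\circ}(u)\times S$ and $A_L(v)=A_{L^\circ}(v)\times S'$ with $S,S'\simeq\Z/2\Z$, and $-1$ lies in the first factor on both sides; since $\Psi_H(\mathcal{C}_u^H,\eta_0\boxtimes\chi)=(L,\mathcal{C}_v^L,\varepsilon_0\boxtimes\chi)$, the equality follows from the connected case applied to $(\eta_0,\varepsilon_0)$. In case $(\mathrm{II})$ we have $L^\circ=T$, so $A_L(v)=A_T(1)=\{1\}$ and $\varepsilon$ is trivial, giving $\varepsilon(-1)=1$; on the other hand $-1\in Z_{\SO_N(\C)}$ already lies in $A_{H^\circ}(u)$, and for the Levi $T$ the connected-case statement forces $\eta_0(-1)=1$, hence $\eta(-1)=\eta_0(-1)=1=\varepsilon(-1)$. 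Case $(\mathrm{III})$ is identical: $A_H(u)=A_{H^\circ}(u)=\{1\}$, so both signs are $1$. The only genuine work is therefore the verification of the connected-group statement, i.e. that $-1$ is a \emph{block invariant} of the generalized Springer correspondence for $\SO_N(\C)$; I expect this — rather than the descent — to be the main obstacle, and I would handle it by the explicit $u$-symbol bookkeeping of Section~\ref{section} together with a direct inspection of the cuspidal pairs listed in Table~\ref{pairecusp}, where $\varepsilon_N^{\O\pm}(-1)$ is computed from the stated values $\varepsilon_N^{\O+}(z_{2i-1})=(-1)^{i+1}$, $\varepsilon_N^{\O-}(z_{2i-1})=(-1)^i$.
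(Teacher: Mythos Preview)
Your reduction to the connected case is the right idea, but there is a genuine gap: the claim that ``in all three cases the image of $-1$ already lies in the connected-group part $A_{H^\circ}(u)$'' is false when $N$ is odd. Indeed $\det(-I_N)=(-1)^N$, so for $N$ odd we have $-1\notin H^\circ=\SO_N(\C)$, and the image of $-1$ in $A_H(u)=A_{H^\circ}(u)\times S$ lands in the nontrivial coset of $S\simeq\Z/2\Z$, not in $A_{H^\circ}(u)$. Your case (I) argument therefore breaks down precisely in the odd case (cases (II) and (III) only occur for $N$ even, so there your reasoning is fine). The paper treats $N$ odd separately: since by construction $\eta=\eta_0\boxtimes\chi$ and $\varepsilon=\varepsilon_0\boxtimes\chi$ carry the \emph{same} character $\chi$ on the $\Z/2\Z$ factors $S$ and $S'$, and since $-1$ maps to the generator of $S$ (resp.\ $S'$), one gets directly $\eta(-1)=\chi(-1)=\varepsilon(-1)$ without ever touching $\eta_0$ or $\varepsilon_0$.

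A secondary remark: for the connected statement in the even case, the paper does not unwind the $u$-symbol combinatorics but simply invokes \cite[5.23]{Lusztig:1995}, which already records that the central character is a block invariant of the generalized Springer correspondence. Your combinatorial outline via the elimination procedure and the explicit values $\varepsilon_N^{\O\pm}(z_{2i-1})$ could presumably be made to work, but it is substantially more laborious than the one-line citation, and you would still need to supply the missing argument for $N$ odd.
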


\begin{proof}
Dans le (I), on peut écrire $\eta=\eta_{0} \boxtimes \chi$ et $\varepsilon=\varepsilon_0 \boxtimes \chi$, où $\eta_{0} \in \Irr(A_{H^{\circ}}(u))$ et $\varepsilon_0 \in \Irr(A_{L^{\circ}}(v))$.\\
Supposons $N$ pair. Dans ce cas, $-1 \in H^{\circ}$.  D'après, \cite[5.23]{Lusztig:1995}, $\eta_{0}(-1)=\varepsilon_0(-1)$, d'où $\eta(-1)=\varepsilon(-1)$.\\
Supposons $N$ impair. Dans ce cas, $-1 \in A_{H}(u) \setminus A_{H^{\circ}}(u)$. Donc, $\eta(-1)=\chi(-1)=\varepsilon(-1)$. Dans les cas (II) et (III), on conclue de la même façon.
\end{proof}

\begin{rema}
On remarque d'après la forme des représentations cuspidales de $A_{L}(v)$, que pour tout $w \in W_{L}^{H}$ et toute représentation irréductible cuspidale $\varepsilon$ de $A_{L}(v)$, $\varepsilon^{w} \simeq \varepsilon$.
\end{rema}

À présent, fixons un triplet cuspidal $\mathfrak{t}=(L,\mathcal{C}_{v}^{L},\varepsilon) \in \mathcal{S}_{H}$ et notons $\mathfrak{t}^{\circ}=(L^{\circ},\mathcal{C}_{v}^{L^{\circ}},\varepsilon_{0}) \in \mathcal{S}_{H^{\circ}}$ le triplet cuspidal obtenu par restriction. On considère l'algèbre de Hecke graduée $\mathbb{H}_{\mu_{\mathfrak{t}^{\circ}}}$ associé à $\mathfrak{t}^{\circ}$ définie et étudiée par Lusztig dans \cite{Lusztig:1988}, \cite{Lusztig:1995a} et \cite{Lusztig:2002a}.\\

Soient $(s,r_{0}) \in \mathfrak{h} \oplus \C$ un élément semi-simple et $x \in \mathfrak{h}$ un élément nilpotent tel que $[s,x]=2r_0 x$. Rappelons que l'on note $\Irr(A_H(s,x))_{\mathfrak{t}}$ l'ensemble des représentations irréductibles de $A_{H}(s,x)$ qui apparaissent dans la restriction d'une représentation irréductible $\widetilde{\eta} \in A_{H}(x)$ telle que $\Psi_{H}(\mathcal{C}_{x}^{H},\widetilde{\eta})=\mathfrak{t}$. On s'intéresse aux couples $(x,\eta)$ où $\eta \in \Irr(A_H(s,x))_{\mathfrak{t}}$. On a vu que le théorème de Jacobson-Morozov montre l'existence d'un élément semi-simple $s_{0} \in \mathfrak{h}$ tel que $[s_0,s]=0$, $[s_0,x]=2r_0x$ et $A_{H}(s,x)=A_{H}(s s_{0}^{-1},x)$. Par conséquent, $A_{H}(s,x)=A_{Z_{H}(s s_{0}^{-1})}(x)$ et $Z_{H}(s s_{0}^{-1})$ est un sous-groupe de quasi-Levi de $H$. D'après la compatibilité de la correspondance de Springer généralisée avec l'induction parabolique (voir \cite[\textsection 8]{Lusztig:1984}), il s'ensuit que si $\eta \in \Irr(A_{H}(s,x))_{\mathfrak{t}}$, si et seulement si, $\Psi_{Z_H(s s_{0}^{-1})}(\mathcal{C}_{x}^{Z_H(s s_{0}^{-1})},\eta)=(L,\mathcal{C}_{v}^{L},\varepsilon)$.\\

\begin{theo}\phantomsection\label{thm:paramfibremodsimple1}
Soient $\mathfrak{t}=(L,\mathcal{C}_{v}^{L},\varepsilon) \in \mathcal{S}_{H}$ un triplet cuspidal de $H$, $W_{L}^{H}=W_{L^{\circ}}^{H^{\circ}} \rtimes R$ le groupe de Weyl relatif de $L$ dans $H$.
L'ensemble des classes de $\mathbb{H}_{\mu_{\mathfrak{t}}} \rtimes \C[R]$-module simple de caractère central $(s,r_0)$ est en bijection avec $$\mathcal{M}_{(s,r_{0})}=\{(x,\eta) \mid x \in \mathfrak{h}, \,\, [s,x]=2r_0 x, \eta \in \Irr(A_{H}(s,x))_{\mathfrak{t}} \}$$
\end{theo}

\begin{proof}
Soit $(x,\eta) \in A_{H}(s,x)_{\mathfrak{t}}$. Reprenons les cas (I), (II) et (III) décrit précédemment.
\begin{enumerate}[label={(\Roman*)}]
\item Dans ce cas, on a donc : $$A_{H}(s,x)=A_{H^{\circ}}(s,x) \times (\Z/2\Z), \quad A_{L}(v)=A_{L^{\circ}}(v) \times (\Z/2\Z) \quad \text{et} \quad W_{L}^{H}=W_{L^{\circ}}^{H^{\circ}}.$$
On peut donc écrire $\varepsilon=\varepsilon_{0}\boxtimes \chi$, $\eta=\eta_{0} \boxtimes \chi$ et $M$ le $\mathbb{H}_{\mu_{\mathfrak{t}^{\circ}}}$-module irréductible associé à $(x,\eta_0)$. 
\item Dans ce cas, on a donc : $$A_{H}(s,x)=A_{H^{\circ}}(s,x) \times (\Z/2\Z), \quad A_{T}(1)=\{1\} \quad \text{et} \quad W_{T}^{H}=W_{T}^{H^{\circ}} \rtimes R,$$ où $R \simeq \Z/2\Z$. Considérons l'algèbre de Hecke graduée étendue $\mathbb{H}_{\mu_{\mathfrak{t}^{\circ}}} \rtimes \C[R]$. Soient $\overline{M}(s,r_{0},x,\eta_{0})$ le $\mathbb{H}_{\mu_{\mathfrak{t}^{\circ}}}$-module irréductible associé à $(x,\eta_0)$ et $r \in R$. Par équivariance, on a alors : $$r \cdot M(s,r_{0},x,\eta_{0})=M(\Ad(r)s,r_{0},\Ad(r)x,\eta_{0}^{r}).$$ On a vu précédemment que dans le (II), $\eta_{0}^{r} = \eta_{0}$, $\mathcal{C}_{\Ad(r)x}^{H^{\circ}}=\mathcal{C}_{x}^{H^{\circ}}$. Il s'ensuit que $r \cdot \overline{M}(s,r_{0},x,\eta_{0}) \simeq \overline{M}(s,r_{0},x,\eta_{0})$. On associe $(x,\eta)$ le $\mathbb{H}_{\mu_{\mathfrak{t}^{\circ}}} \rtimes \C[R]$-module irréductible $\overline{M}(s,r_{0},x,\eta_{0}) \boxtimes \C_{\chi}$.
\item Dans ce cas, on a donc : $$A_{H}(s,x)=A_{H^{\circ}}(s,x)=\{1\}, \quad A_{T}(1)=\{1\} \quad \text{et} \quad W_{T}^{H}=W_{T}^{H^{\circ}} \rtimes (\Z/2\Z).$$ Soient $\overline{M}(s,r_{0},x,1)$ le $\mathbb{H}_{\mu_{\mathfrak{t}^{\circ}}}$-module irréductible associé à $(x,1)$ et $r \in R$. Comme précédemment, puisque $\mathcal{C}_{x}^{H}$ est réunion de deux orbites nilpotentes de $H^{\circ}$, par équivarience on a $r \cdot \overline{M}(s,r_{0},x,1) \not \simeq \overline{M}(s,r_{0},x,1)$. On associe $(x,1)$ le $\mathbb{H}_{\mu_{\mathfrak{t}^{\circ}}} \rtimes \C[R]$-module irréductible $\Ind_{\mathbb{H}_{\mu_{\mathfrak{t}^{\circ}}}}^{\mathbb{H}_{\mu_{\mathfrak{t}^{\circ}}} \rtimes \C[R]} \overline{M}(s,r_{0},x,1)$.
\end{enumerate}
\end{proof}

\subsection{Un sous-groupe d'indice deux d'un produit de groupes orthogonaux}\phantomsection\label{sectionspringerorth2}

Soient $r \geqslant 2$ un entier, $m_1,\ldots,m_r \geqslant 1$ des entiers. Considérons $H=\prod_{i=1}^{r} \O_{m_i}(\C)$ et $$\widetilde{H}=\left\{(x_i) \in H \mid \prod_{i=1}^{r} \det(x_i)=1\right\}.$$ Soient $u \in H^{\circ}$ un élément unipotent et $\eta_{0}$ une représentation irréductible de $A_{H^{\circ}}(u)$. On a des décompositions : $$u=(u_i) \in \prod_{i=1}^{r} \SO_{m_i}(\C), \quad A_{H^{\circ}}(u)=\prod_{i=1}^{r} A_{\SO_{m_i}(\C)}(u_i), \quad \text{et} \quad \eta_{0} = \eta_{1} \boxtimes \ldots \boxtimes \eta_{r},$$ avec pour tout $i \in \llbracket 1,r \rrbracket, \,\, \eta_{i}$ une représentation irréductible de $A_{\SO_{m_i}}(u_i)$. On peut supposer avoir arrangé les indices de la façon suivante : \begin{itemize}
\item pour tout $i \in \llbracket 1,p \rrbracket$, $m_i$ est impair ou $(u_i,\sigma_i)$ est associé par la correspondance de Springer généralisée à un sous-groupe de Levi de la forme $(\C^{\times})^{\ell_i} \times \SO_{m_i'}(\C)$ avec $m_i' \geqslant 4$ ;
\item pour tout $i \in \llbracket p+1, q \rrbracket$, la partition associé à $u_i$ contient une part paire de multiplicité impaire ;
\item pour tout $i \in \llbracket q+1,r \rrbracket$, la partition associé à $u_i$ ne contient que des parts paires de multiplicités paires.
\end{itemize}

Pour tout $i \in \llbracket 1, r \rrbracket$, notons $H_{i}=\O_{m_i}$, et ajoutons un indice $i$ aux objets que nous avons dans la section précédente ($L_i$ le sous-groupe de quasi-Levi de $H_i$ associé à $(\mathcal{C}_{u_i}^{H_{i}},\eta_i \boxtimes \chi_i)$, $s_i$, $s_i'$, etc).

Si $p=0$, notons \begin{align*}
C_{\widetilde{L}}=\{1\},\,\, C_{\mathcal{O}}=\left\langle s_{p+1}s_{p+2}, \ldots, s_{q-1} s_q \right\rangle ,\,\, C_{I}=\left\langle s_q s_{q+1}, \ldots, s_{r-1}s_{r} \right\rangle, \\
C_{\widetilde{L}}'=\{1\},\,\, C_{\mathcal{O}}'=\left\langle s_{p+1}' s_{p+2}', \ldots, s_{q-1} ' s_q '\right\rangle ,\,\, C_{I}'=\left\langle s_q ' s_{q+1}', \ldots, s_{r-1}'s_{r}' \right\rangle.
\end{align*}

Si $p \geqslant 1$, notons \begin{align*}
C_{\widetilde{L}}=\left\langle s_1 s_2, \ldots, s_{p-1} s_{p} \right\rangle,\,\, C_{\mathcal{O}}=\left\langle s_p s_{p+1}, s_p s_{p+2} \ldots, s_{p} s_q \right\rangle ,\,\, C_{I}=\left\langle s_p s_{q+1}, \ldots, s_{p}s_{r} \right\rangle, \\
C_{\widetilde{L}}'=\left\langle s_1' s_2', \ldots, s_{p-1}' s_{p}' \right\rangle,\,\, C_{\mathcal{O}}'=\left\langle s_p' s_{p+1}', s_{p}' s_{p+2}',\ldots, s_{p} ' s_q '\right\rangle ,\,\, C_{I}'=\left\langle s_p ' s_{q+1}', \ldots, s_{p}'s_{r}' \right\rangle.
\end{align*}

Et dans chacun de ces deux cas, définissons les sous-groupes $H^{\circ} \subseteq \widetilde{H}^{\widetilde{L}} \subseteq \widetilde{H}^{\mathcal{O}} \subseteq \widetilde{H}$ par :
\begin{align*}
\widetilde{H}^{\widetilde{L}}=H^{\circ} \rtimes C_{\widetilde{L}}, \,\, \widetilde{H}^{\mathcal{O}}=H^{\circ} \rtimes (C_{\widetilde{L}} \times C_{\mathcal{O}}), \,\, \widetilde{H} =H^{\circ} \rtimes (C_{\widetilde{L}} \times C_{\mathcal{O}} \times C_{I}), \,\, \widetilde{L}=\widetilde{H} \cap \prod_{i=1}^{r} L_i. \end{align*}

\begin{rema}
La façon dont on a défini ces groupe suppose que l'on a $p\geqslant 2, \,\, q-p\geqslant 2,\,\, r-q \geqslant 2$ pour $C_{\widetilde{L}},\,\, C_{\mathcal{O}}, \,\, C_{I}$ respectivement. Si ce n'est pas le cas, alors le groupe considéré est trivial.
\end{rema}

Afin d'étendre la correspondance de Springer généralisée à $\widetilde{H}$, nous avons expliqué que l'on va l'étendre par étapes. À chaque étape, des objets (classes unipotentes, groupe de Weyl, groupe des composantes du centralisateur, etc) peuvent changer ou rester identique. Listons ci-dessous à gauche d'une part, les objets qui sont préservés et à droite d'autre part, les objets qui changent.

\begin{enumerate}[label=(\arabic*)]
\item $\mathcal{C}_{u}^{\widetilde{H}^{\widetilde{L}}}=\mathcal{C}_{u}^{H^{\circ}}$, $\mathcal{C}_{u}^{\widetilde{L}}=\mathcal{C}_{u}^{L^{\circ}}$, $W_{\widetilde{L}}^{\widetilde{H}^{\widetilde{L}}}=W_{L^{\circ}}^{H^{\circ}}$ \hfill $A_{\widetilde{H}^{\widetilde{L}}}(u)=A_{H^{\circ}}(u) \times  C_{\widetilde{L}}$, $A_{\widetilde{L}}(v)=A_{L^{\circ}}(v) \times C_{\widetilde{L}}'$
\item $\mathcal{C}_{u}^{\widetilde{H}^{\mathcal{O}}}=\mathcal{C}_{u}^{\widetilde{H}^{\widetilde{L}}}$, $\mathcal{C}_{u}^{\widetilde{L}}=\mathcal{C}_{u}^{L^{\circ}}$ \hfill $A_{\widetilde{H}^{\mathcal{O}}}(u)=A_{\widetilde{H}^{\widetilde{L}}}(u) \times C_{\mathcal{O}}$, $W_{\widetilde{L}}^{\widetilde{H}^{\mathcal{O}}}= W_{\widetilde{L}}^{\widetilde{H}^{\widetilde{L}}} \rtimes C_{\mathcal{O}}'$.
\item $A_{\widetilde{H}}(u)=A_{\widetilde{H}^{\mathcal{O}}}(u)$ \hfill $\mathcal{C}_{u}^{\widetilde{H}}=\sqcup_{h \in C_I} \mathcal{C}_{huh^{-1}}^{\widetilde{H}^{\mathcal{O}}}$, $W_{\widetilde{L}}^{\widetilde{H}}=W_{\widetilde{L}}^{\widetilde{H}^{\mathcal{O}}} \rtimes C_I$.
\end{enumerate}

\begin{theo}\phantomsection\label{springerorth}
Il existe une application surjective $$\Psi_{\widetilde{H}} : \mathcal{N}_{\widetilde{H}}^{+} \longrightarrow \mathcal{S}_{\widetilde{H}},$$ qui étend $\Psi_{H^{\circ}}$ et qui induit une décomposition $$\mathcal{N}_{\widetilde{H}}^{+}=\bigsqcup_{\mathfrak{t} \in \mathcal{S}_{\widetilde{H}}} \mathcal{M}_{\mathfrak{t}},$$ où $\mathcal{M}_{\mathfrak{t}}=\Psi^{-1}(\mathfrak{t})$. De plus, pour tout $\mathfrak{t}\in \mathcal{S}_{\widetilde{H}}$, on a une bijection $$\Sigma_{\mathfrak{t}} : \mathcal{M}_{\mathfrak{t}} \longrightarrow \Irr(W_{\mathfrak{t}}).$$ Ainsi, $$\mathcal{N}_{\widetilde{H}}^{+} \simeq \bigsqcup_{\mathfrak{t} \in \mathcal{S}_{\widetilde{H}}} \Irr(W_\mathfrak{t}).$$
\end{theo}

\begin{proof}
La correspondance de Springer généralisée pour $H^{\circ}$ associe à $\left(\mathcal{C}^{H^{\circ}}_{u},\eta_{0} \right)$ le quadruplet $\left( L^{\circ},\mathcal{C}^{L^{\circ}}_{v},\varepsilon_{0},\rho_{0} \right) $.

On construit la correspondance de Springer généralisée pour $\widetilde{H}$ par étapes de la façon suivante :
\begin{enumerate}[label=(\arabic*)]
\item on l'étend à $\widetilde{H}^{\widetilde{L}}$ en associant à $\left(\mathcal{C}^{H^{\circ}}_{u},\eta_{0} \boxtimes \chi_{\widetilde{L}}\right)$ le quadruplet $\left( \widetilde{L},\mathcal{C}^{\widetilde{L}}_{v},\varepsilon_{0} \boxtimes \chi_{\widetilde{L}},\rho_{0} \right)$ ;
\item on l'étend à $\widetilde{H}^{\mathcal{O}} $ en associant à $\left(\mathcal{C}^{H^{\circ}}_{u},\eta_{0} \boxtimes \chi_{\widetilde{L}} \boxtimes \chi_{\mathcal{O}} \right)$ le quadruplet $\left( \widetilde{L},\mathcal{C}^{\widetilde{L}}_{v},\varepsilon_{0} \boxtimes \chi_{\widetilde{L}},\rho_{0} \boxtimes \chi_{\mathcal{O}} \right)$ ;
\item on l'étendu à $\widetilde{H}$ en associant à $\left( \mathcal{C}^{\widetilde{H}}_{u},\eta_{0} \boxtimes \chi_{\widetilde{L}} \boxtimes \chi_{\mathcal{O}} \right)$ le quadruplet $\left( \widetilde{L},\mathcal{C}^{\widetilde{L}}_{v},\varepsilon_{0} \boxtimes \chi_{\widetilde{L}}, \Ind_{W_{\widetilde{L}}^{\widetilde{H}^{\mathcal{O}}}}^{W_{\widetilde{L}}^{\widetilde{H}}} \left( \rho_{0} \boxtimes \chi_{\mathcal{O}} \right) \right)$.
\end{enumerate}
\end{proof}

À présent, fixons un triplet cuspidal $\mathfrak{t}=(L,\mathcal{C}_{v}^{L},\varepsilon) \in \mathcal{S}_{H}$ et notons $\mathfrak{t}^{\circ}=(L^{\circ},\mathcal{C}_{v}^{L^{\circ}},\varepsilon_{0}) \in \mathcal{S}_{H^{\circ}}$ le triplet cuspidal obtenu par restriction. On considère l'algèbre de Hecke graduée $\mathbb{H}_{\mu_{\mathfrak{t}^{\circ}}}$ associé à $\mathfrak{t}^{\circ}$ définie et étudiée par Lusztig dans \cite{Lusztig:1988}, \cite{Lusztig:1995a} et \cite{Lusztig:2002a}.\\

Soient $(s,r_{0}) \in \mathfrak{h} \oplus \C$ un élément semi-simple et $x \in \mathfrak{h}$ un élément nilpotent tel que $[s,x]=2r_0 x$. Rappelons que l'on note $\Irr(A_H(s,x))_{\mathfrak{t}}$ l'ensemble des représentations irréductibles de $A_{H}(s,x)$ qui apparaissent dans la restriction d'une représentation irréductible $\widetilde{\eta} \in A_{H}(x)$ telle que $\Psi_{H}(\mathcal{C}_{x}^{H},\widetilde{\eta})=\mathfrak{t}$. On s'intéresse aux couples $(x,\eta)$ où $\eta \in \Irr(A_H(s,x))_{\mathfrak{t}}$. On a vu que le théorème de Jacobson-Morozov montre l'existence d'un élément semi-simple $s_{0} \in \mathfrak{h}$ tel que $[s_0,s]=0$, $[s_0,x]=2r_0x$ et $A_{H}(s,x)=A_{H}(s s_{0}^{-1},x)$. Par conséquent, $A_{H}(s,x)=A_{Z_{H}(s s_{0}^{-1})}(x)$ et $Z_{H}(s s_{0}^{-1})$ est un sous-groupe de quasi-Levi de $H$. D'après la compatibilité de la correspondance de Springer généralisée avec l'induction parabolique (voir \cite[\textsection 8]{Lusztig:1984}), il s'ensuit que si $\eta \in \Irr(A_{H}(s,x))_{\mathfrak{t}}$, si et seulement si, $\Psi_{Z_H(s s_{0}^{-1})}(\mathcal{C}_{x}^{Z_H(s s_{0}^{-1})},\eta)=(L,\mathcal{C}_{v}^{L},\varepsilon)$.\\

\begin{theo}\phantomsection\label{thm:paramfibremodsimple2}
Soient $\mathfrak{t}=(\widetilde{L},\mathcal{C}_{v}^{\widetilde{L}},\varepsilon) \in \mathcal{S}_{\widetilde{H}}$ un triplet cuspidal de $\widetilde{H}$, $W_{\widetilde{L}}^{\widetilde{H}}=W_{L^{\circ}}^{H^{\circ}} \rtimes R$ le groupe de Weyl relatif de $\widetilde{L}$ dans $\widetilde{H}$.
L'ensemble des classes de $\mathbb{H}_{\mu_{\mathfrak{t}}} \rtimes \C[R]$-module simple de caractère central $(s,r_0)$ est en bijection avec $$\mathcal{M}_{(s,r_{0})}=\{(x,\eta) \mid x \in \mathfrak{h}, \,\, [s,x]=2r_0 x, \eta \in \Irr(A_{\widetilde{H}}(s,x))_{\mathfrak{t}} \}$$
\end{theo}

\begin{proof}
Par la théorie de Clifford, les $\mathbb{H}_{\mu_{\mathfrak{t}}} \rtimes \C[R]$-module simples sont obtenus de la façon suivante : soit $M_{0}$ un $\mathbb{H}_{\mu_{\mathfrak{t}}}$-module simple. On note $R^{M}=\{r \in R \mid r \cdot M_{0} \simeq M_{0}\}$. On obtient alors un $\mathbb{H}_{\mu_{\mathfrak{t}}} \rtimes \C[R^{M}]$-module  $M_{0} \boxtimes V$. Enfin, $\Ind_{\mathbb{H}_{\mu_{\mathfrak{t}}} \rtimes \C[R^{M}]}^{\mathbb{H}_{\mu_{\mathfrak{t}}} \rtimes \C[R]} (M_{0} \boxtimes V)$ est irréductible et tous les modules simples sont obtenus de cette façon.\\

Soit $M_{0}$ le $\mathbb{H}_{\mu_{\mathfrak{t}}}$-module simple correspondant à $(x,\eta_0)$. Alors $R^{M_{0}}=C_{\mathcal{O}}$. Par conséquent, on note : $M_{0} \boxtimes \C_{\chi_{\mathcal{O}}}$. Et enfin, $\Ind_{\mathbb{H}_{\mu_{\mathfrak{t}}} \rtimes \C[R^{M}]}^{\mathbb{H}_{\mu_{\mathfrak{t}}} \rtimes \C[R]} (M_{0} \boxtimes \C_{\chi_{\mathcal{O}}})$.
\end{proof}

\printbibliography

\end{document}